\begin{document}

\title{An Iterable Surgery Formula on Involutive Knot Lattice Homotopy}
\author{Seppo Niemi-Colvin}
\maketitle

\begin{abstract}
In ``Knots in lattice homology", Ozsv\'ath, Stipsicz, and Szab\'o showed that knot lattice homology satisfies a surgery formula similar to the one relating knot Floer homology and Heegaard Floer homology, and in previous work, I showed that knot lattice homology is the persistent homology of a doubly filtered space. Here I provide an iterable version of the surgery formula that, provided the initial knot lattice space with flip map, produces a space isomorphic as a doubly-filtered space to the corresponding knot lattice space for the dual knot with the corresponding flip map. If we include the involutive data for the original knot and ambient three-manifold, we can also produce the corresponding involutive data on the new knot lattice space without assuming that the original three-manifold is an \(L\)-space. I construct \(\infty\)-categories where these operations are functorial. Finally, I use the surgery formula to compute some examples of knot lattice spaces including for the regular fiber of \(\Sigma(2,3,7)\) and for a knot in a three-manifold that is not given by an almost rational graph.
\end{abstract}

\tableofcontents

\section{Introduction}

The framework of lattice homology and homotopy  has provided tools for computing Heegaard Floer homology and thus has had applications to 3- and 4-manifold topology.
For example, the algorithm of Can and Karakurt \cite{brieskornAlgorithm} has been helpful at providing a quick way to compute a variety of examples of Heegaard Floer homology for Brieskorn spheres.
Additionally, understanding the involutive structure on lattice homology and its relation to Heegaard Floer homology allowed for the calcuation of invariants that ultimately lead to the discovery of a \(\Z^{\infty}\) summand of the homology cobordism group \cite{cobordSummand,involCheck}.
Having a class of example manifolds where Heegaard Floer homology is more reasonable to compute has helped provide examples that realize said invariants.

However, these examples have been restricted to almost-rational graphs and include few examples of knot Floer homology calculations that were not already available despite the existence of knot lattice homology and homotopy.
A part of the issue is that while lattice homology and knot lattice homology are very explicit, they are still large enough to grow unwieldy, and knot lattice homology has only been explored in limited examples.
This paper addresses both issues at once, using knot lattice spaces with their involutive structures to inductively work out to more and more complicated three-manifolds and knots, while providing the opportunity for simplications.
This leads to the following applications.

\begin{theorem}\label{thm:Sigma237regFib}
The regular fiber of \(\Sigma(2,3,7)\) has Seifert genus and genus in a self homology cobordism both equal to 22.
\end{theorem}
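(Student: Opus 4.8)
The plan is to compute the knot lattice space of the regular fiber \(F\subset\Sigma(2,3,7)\), together with its flip map and involutive data, by realizing \(F\) as a surgery dual and running the iterable surgery formula, and then to pin both genera between the Floer-theoretic lower bound this produces and an explicit Seifert surface.

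First I would fix a surgery presentation. The exterior of \(F\) is Seifert fibered over the disk with three cone points of orders \(2,3,7\); removing in addition the order-\(7\) singular fiber leaves the Seifert piece over the annulus with two cone points of orders \(2,3\), which is precisely the exterior in \(S^3\) of the two-component link \(T_{2,3}\cup T'_{2,3}\) formed by a trefoil and a parallel copy along a nearby Seifert fiber (so linking number \(6\)). Hence a suitable \((\pm 1)\)-surgery on the \(T_{2,3}\) component yields \(\Sigma(2,3,7)\) with \(F\) the image of \(T'_{2,3}\), whereas the core of the surgery torus is the singular fiber rather than \(F\); the point is that I want the former, not the latter. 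The base case is the knot lattice space of \(T'_{2,3}\) sitting in the trefoil exterior, with its flip map and involutive data, which I would assemble from torus-link data and the known lattice homotopy type of \(S^3\) and of \(T_{2,3}\). Running the surgery formula once --- or, if one prefers, through a short chain of surgeries each covered by the iterable formula --- then produces a doubly-filtered space with flip map isomorphic to the knot lattice space of \(F\subset\Sigma(2,3,7)\), and because the formula transports the involutive structure without an \(L\)-space hypothesis, it simultaneously produces the involutive data for \(F\), even though \(\Sigma(2,3,7)\) is not an \(L\)-space. I expect to lean on the functorial \(\infty\)-categorical framework to simplify the intermediate doubly-filtered spaces and keep the bookkeeping finite.

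From the involutive knot lattice space of \(F\) I would extract a \(\tau\)- or \(\nu^{+}\)-type invariant; this bounds below the genus of \(F\) in a self homology cobordism, and hence also its Seifert genus, and I expect the computation to return the value \(22\). For the matching upper bound, the degree-\(42\) horizontal surface, the branched cover of the base orbifold \(D^2(2,3,7)\), is a Seifert surface for \(F\) with Euler characteristic \(42\cdot(-43/42)=-43\), hence of genus \(22\); thus the Seifert genus is \(\le 22\), and since the genus in a self homology cobordism is at most the Seifert genus (push a minimal Seifert surface into the interior of \(\Sigma(2,3,7)\times I\)), all three quantities collapse to \(22\).

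The main obstacle will be the surgery-formula computation itself: identifying the surgery datum that produces \(F\) rather than the singular fiber, organizing the base-case (link) knot lattice space, and then pushing the combinatorial model through the formula while tracking the flip map and the involution --- this last being the point that genuinely uses the version of the surgery formula with no \(L\)-space assumption on the ambient manifold. Keeping the intermediate spaces small enough to finish the calculation, via the functorial reductions, is the practical difficulty.
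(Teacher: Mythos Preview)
Your surgery setup does not match what the iterable surgery formula actually provides, and this is a genuine gap. The functor \(\XK_{\Sigma^2}\) takes as input the knot lattice space of the knot \(K\) being surgered and outputs the knot lattice space of the \emph{dual knot} \(\mu\) (the core of the reglued solid torus) in \(Y_{\Sigma^2}(K)\). It does not tell you how to transport the knot lattice of a second component \(T'_{2,3}\) across surgery on \(T_{2,3}\); that would require a link surgery formula, which is not what is developed here. You correctly observe that in your presentation the dual knot is the singular fiber rather than \(F\), but then you have no mechanism to reach \(F\). Relatedly, ``the knot lattice space of \(T'_{2,3}\) sitting in the trefoil exterior'' is not an object the theory defines: knot lattice homology is for knots in closed plumbed \(3\)-manifolds, encoded by an unweighted vertex on a negative-definite forest.

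The paper's route avoids this entirely by choosing a different surgery description in which the regular fiber \emph{is} the dual knot. Take the star-shaped plumbing for \(\Sigma(2,3,7)\), let \(v_0\) be the central vertex, and let \(u_0\) be a new unweighted vertex adjacent only to \(v_0\); then \(u_0\) represents the regular fiber. Deleting the weight on \(v_0\) gives a knot (the connected sum of cores) in a connected sum of lens spaces, an \(L\)-space, and by a subcontractibility argument its knot lattice space in each \(\Spin^c\) structure is a doubly filtered point. One application of \(\XKI_{\Sigma^2}\) then produces the knot lattice space of \(u_0\) as a doubly filtered \emph{line segment}, computable via N\'emethi's \(\tau\)-function. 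For \(\Sigma(2,3,7)\) one finds \(\alpha=42\), \(\gamma=1\), and the segment runs from \(0\) to \(44\); the maximal Alexander grading is \(22\), achieved at a single vertex, so \(\widehat{HFK}\) detects Seifert genus \(22\) and fiberedness. For the self homology cobordism genus, the paper does not use a \(\tau\)- or \(\nu^+\)-type number; rather, it uses that for almost rational graphs the lattice involution is known to agree with the Heegaard Floer involution, and then argues directly that any \(\iota\)-local map from \(A_{g,!}\CFb\) to \(\CFKb\) must hit a generator of height \((0,-44)\), forcing \(g\ge 22\). Your horizontal-surface computation of the upper bound \(g\le 22\) is correct and pleasantly explicit; the paper instead invokes the Milnor fibration.
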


\begin{theorem}\label{thm:nonARknotEx}
The knot \((Y,K)\) depicted in Figure \ref{fig:nonARknot} has Seifert genus and genus in \(Y\times I\) equal to 6. Its genus in a self homology cobordism is bounded below by 1. This bound increases to 3 if the involutive structure on lattice homology is confirmed to agree with that on Heegaard Floer homology.
\end{theorem}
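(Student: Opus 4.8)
The plan is to obtain $(Y,K)$ as an iterated dual knot. I would start from a knot $(Y_0,K_0)$ whose ambient manifold and embedding are presented by an almost rational graph, so that the knot lattice space, its flip map, and its involutive data can be read off directly from the combinatorics, and then push this data through the surgeries recorded in Figure~\ref{fig:nonARknot} using the iterable surgery formula of this paper. Each application trades the current pair for (surgered manifold, core of the surgery torus) and returns the new knot lattice space as a doubly filtered space with flip map, together with --- because we carry the involutive data along, which the formula permits without assuming any intermediate manifold is an $L$-space --- the homotopy involution. After the final surgery this produces a model for the knot lattice space $\mathcal{X}(Y,K)$ with flip map and involution. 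In practice I would interleave the surgery steps with the simplification moves available on doubly filtered spaces and on local equivalence classes, since the raw models grow quickly; managing this is the bulk of the work.

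For the upper bounds I would argue geometrically: Figure~\ref{fig:nonARknot} displays a genus $6$ Seifert surface for $K$ in $Y$, and pushing its interior into $Y\times I$, and then into an arbitrary homology cobordism from $Y$ to itself, shows that each of the three genera in the statement is at most $6$.

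For the Seifert genus lower bound I would read off the largest Alexander grading carrying nonzero knot lattice homology of $(Y,K)$; by the identification of knot lattice homology with $\widehat{HFK}(Y,K)$ and the detection of the Thurston norm by knot Floer homology, this top grading equals $g(K)$, and the computation should give $6$. For the genus in $Y\times I$ I would extract from $\mathcal{X}(Y,K)$ the concordance-type invariant (of $\tau$- or $\nu^{+}$-type) that bounds from below the genus of any surface bounding $K$ in the homology cylinder $Y\times I$; this invariant is visible in the filtration structure of the knot lattice space, and the computation should give $6$, so together with the upper bound the genus in $Y\times I$ is exactly $6$.

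For the genus in a self homology cobordism only the local equivalence class of the knot lattice space survives, since the cobordism is now allowed to vary; it still carries a genus-bounding invariant, and the model obtained above should yield the bound $1$. Incorporating the homotopy involution and passing to the involutive local equivalence class should improve this to $3$; that improved bound applies to the genuine smooth invariant only once the involutive structure on lattice homotopy is known to agree with the one on Heegaard Floer homology, which is why the statement is conditional there. I expect the main obstacle to be twofold: keeping the iterated surgery computation finite and checkable, which is where the simplification moves must be deployed with care, and pinning down exactly which numerical invariant of the (involutive) local equivalence class of a knot lattice space bounds the self homology cobordism genus and verifying that the model delivers $1$ and $3$ respectively.
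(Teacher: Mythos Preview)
Your overall framework matches the paper's: compute the knot lattice space by iterating the surgery functor, simplify at each stage, and read off genus bounds. The paper's specific iteration starts from the trefoil, applies $\XKI_{-2}$ and $\XKI_{-3}$ separately, takes the \emph{connect sum} of the two results, and then applies $\XKI_{-1/6}$; you should not omit the connect-sum step.

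There are, however, two substantive gaps. First, Figure~\ref{fig:nonARknot} is a plumbing graph, not a drawing of a Seifert surface, so no genus~$6$ surface is ``displayed'' there. The Seifert genus comes entirely from genus detection: the top nonzero Alexander grading in the computed model is $6$, and knot Floer homology detects genus for null-homologous knots in rational homology spheres, giving both bounds at once. For the genus in $Y\times I$ the paper does not extract a $\tau$- or $\nu^{+}$-type number (and it is not clear such an invariant would return the sharp value $6$). Instead it uses that $(Y,K)$ is a strong generalized algebraic knot, hence fibered with Milnor fiber as fiber surface, so the $Y\times I$ genus equals the Seifert genus. Alternatively, a genus-$g$ surface in $Y\times I$ induces a filtered map $A_{g,!}\bigl(\CFb(\tilde{G},\tf)\bigr)\to\CFKb(\tilde{G}_{w_0},\tf)$ homotopic to the identity after $p_{1,!}$; the vertex realizing Alexander grading $6$ is isolated at its filtration level and must be hit, forcing $g\ge 6$.

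Second, for the self-homology-cobordism bounds the paper does not invoke an abstract local-equivalence invariant but directly analyzes which filtered maps $A_{g,!}\bigl(\CFb(\tilde{G},\tf)\bigr)\to\CFKb(\tilde{G}_{w_0},\tf)$ become equivalences after inverting both $\Uc$ and $\Vc$. The computed model has no vertex of bi-height $(0,0)$ but does have one of bi-height $(0,-2)$, so $g\ge 1$ is forced and $g=1$ is realized by mapping everything to that vertex. Under the additional constraint of commuting with $\Ii$ up to homotopy, one inspects how $\Ii$ permutes the four vertices of first height $0$ (second heights $-2,-4,-6,-8$): the $(0,-2)$ vertex is sent to height $(0,-8)$ and the pair $(0,-4),(0,-6)$ is swapped, so any $\Ii$-compatible map must hit second height at most $-6$, giving $g\ge 3$. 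This explicit vertex-by-vertex argument is what you would need in place of an unspecified ``genus-bounding invariant of the involutive local equivalence class.''
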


The main tool we use is a new surgery formula for knot lattice homotopy, which fits into a broader research program of upgrading the surgery formula to track more information.
For example, upgrades to the surgery formula have allowed researchers to compute knot Floer complexes for knots in surgered three-manifolds, such as for the dual knot, i.e. the core of the solid tori that has been reglued in \cite{FilteredSurgeryFormula}, and this has been used in computing a \(\Z^{\infty}\) summand of the homology concordance group \cite{homolConcordSummand,homolConcordance}.
Others have computed a formula for cables of the dual knot \cite{surgeryFormulaCable}, which has been used to provide a family of knots in the boundaries of integral homology balls whose members have arbitrarily large PL-genus, i.e. the minimal genus of a surface in any homology cobordism to any knot in \(S^3\) \cite{PLgenusLarge}.

\begin{figure}
\begin{center}
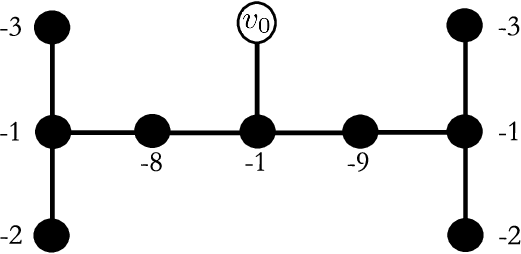
\end{center}
\caption{The knot of Theorem \ref{thm:nonARknotEx}.}\label{fig:nonARknot}
\end{figure}

Another variation on upgrading the surgery formula has focused on incorporating information from involutive Heegaard Floer homology and involutive knot Floer homology, but due to limitations on naturality in the Heegaard Floer complex this has only been done in the case where the ambient three-manifold for the starting knot is an \(L\)-space \cite{InvolMappingCone,InvolDualKnot}.
Zemke has also done a significant amount of work recently analyzing the surgery formula from the perspective of \(A_{\infty}\)-modules, which has made extrapolating new results easier, such as tracking what happens to the flip map on the dual knot \cite{ZemkeWhat,generalSurg}.

Our work also builds off of an already known version of the surgery formula that exists in the context of lattice homology and knot lattice homology, which takes as input the knot lattice complex of the original knot along with an additional map called the flip map and outputs the lattice complex of the surgered three-manifold \cite{knotsAndLattice}.
In fact, the knot lattice complex was defined to be precisely what it needs to be for this surgery formula to hold, and analysis from the perspective of the link surgery formula was ultimately what demonstrated that lattice homology and knot lattice homology agree with Heegaard Floer homology and knot Floer homology respectively \cite{linkLattice,ZemkeLattice}.

In addition to the potential of incorporating the newer surgery formula results into the lattice homology framework, the knot lattice complex has some additional benefits not seen in the knot Floer complexes.
In particular, the contenders for the analogs of the involutive maps are not defined using naturality but directly on the complex and thus can sidestep the naturality issues currently constraining involutive surgery formulas (though there a exist higher order naturality results for knot lattice homology as well).
Further, there exists a refinement of the knot lattice complex to a doubly filtered cube complex, which is itself an invariant for the knot up to doubly-filtered homotopy.
In addition to potential new invariants such as the cohomology ring and Steenrod squares, this also endows the knot lattice chain complex with an extra grading.

While an example is not currently known where the homotopy type detects more information than the chain complex, tools such as the surgery formula could be used to simplify calculations while looking for such an example.
The surgery formula and connect sums can allow us to build up more complicated knots from simpler ones, applying simplifications at each step to keep the overall complexity under control.

The surgery formula uses a fundamentally \(\infty\)-categorical construction, so before diving into the surgery formula itself, we will need to address the model of \(\infty\)-categories we will be using.
While Zemke uses \(A_{\infty}\)-modules to model this behavior, due to the author's background and preference, we will be using quasi-categories instead.
In this framework we will need to appropriately model the information we would like to include for our knot such as the flip map or the involutive information in a quasi-category and show that our particular surgery formula provides an \(\infty\)-functor between these quasi-categories.
The fact that the involutive maps for lattice homotopy and knot lattice homotopy shuffle \Spinc structures and, in the case of the knot lattice involution, are skew-filtered means that additional  technical care needs to be taken in these results. 

Since quasi-categories are less common than \(A_{\infty}\)-modules in the Heegaard Floer homology and knot Floer homology community, we spend Section \ref{sec:quasicat}, establishing the relevant background for quasi-categories, including the model category structures we will use in our proofs later.
In Section \ref{sec:hocolim}, we specifically discuss background on model categories of functors and how this allows us to understand homotopy limits and colimits.
Section \ref{sec:topBack} then outlines some background from low-dimensional topology that will be needed in order to articulate the surgery formula, especially because not assuming we are in an integral homology sphere let alone \(S^3\) eliminates some notational short cuts that are often used in these constructions.

Having established the necessary background, Section \ref{sec:quasicats4CFK} outlines the quasi-categories used to encode our needed information.
We start in Subsection \ref{subsec:FilteredsSet} with modeling doubly filtered simplicial sets and the various functors on doubly filtered simplicial sets that are used in the surgery formula and for connect sums.
This provides the base on which Subsection \ref{subsec:quasicatFlipMap} can provide the quasi-categories modeling the flip map and Subsection \ref{subsec:quasicatInvol} can provide the quasi-categories modeling the involutive information.
Discussion of equivalent choices of quasi-categories is provided here as well.

Section \ref{sec:InfFunc4Surgery} then provides the constructions for the surgery formula functors themselves.
Multiple versions of the functor exist, each layering on more information to the previous functor.
We start by producing the lattice space of the surgered 3-manifold from the knot lattice space of the knot in Section \ref{subsec:Xb}, then incorporate information about the dual knot into this picture in \ref{subsec:XK}, before adding the involutive information in Section \ref{subsec:XKI}. 

Section \ref{sec:Lattice} then starts with Subsections \ref{subsec:LatticeDef} and  \ref{subsec:knotLatticeDef} by providing a review of the construction of the lattice and knot lattice homotopy types, articulating them in a way that naturally fits into the quasi-categories constructed in Section \ref{sec:quasicats4CFK}.
In Subsection \ref{sec:Verify}, we then verify that the surgery functors do indeed work as advertised.
Finally, we spend some time comparing this functor to the other recent work on surgery formulas for knot Floer homology in Section \ref{subsec:CompareForm}.
In particular, we verify that on the level of chain complexes our functors agree with the previous results, with a chunk of the work here showing to translate between the formulation of the surgery functor here and the multiple other notational paradigms used elsewhere that could obscure similarities.
This is especially important when comparing work on the involutive surgery functors because the exact relationship between the involutive maps on the knot lattice complex and knot Floer complex is not completely known.
In particular, the knot lattice complex comes with an extra grading due to its construction from a doubly-filtered simplicial set, and the knot lattice involutive map may only be the portion of the knot Floer involutive map that preserves this extra grading.

Finally, Sections \ref{sec:ARknots} and \ref{sec:IteratedEx} show some applications of the surgery formula, including those of Theorems \ref{thm:Sigma237regFib} and \ref{thm:nonARknotEx}. 
Section \ref{sec:ARknots} highlights how the surgery formula provides a constraints on the doubly filtered homotopy type for a class of knots that we are calling almost rational knots, which are knots in almost rational three-manifolds directly connected to the manifold's almost rational'ness. 
This class of knots includes the regular fibers of Brieskorn spheres and we use these techniques to provide an algorithm for computing the knot lattice space for these regular fibers, using the regular fiber of \(\Sigma(2,3,7)\) as an example.
Meanwhile, Section \ref{sec:IteratedEx} highlights the surgery formula's versatility by using the formula iteratively to compute the knot lattice space for a knot that is not in an almost rational 3-manifold.

\section*{Acknowledgements}
I would like to thank Adam Levine under whose mentorship I started this project.
I appreciate how Daniel Litt introduced me to the idea of homotopy pushouts that helped set me on understanding homotopy colimits more broadly, as well as the help from Kirsten Wickelgren in understanding homotopy colimits.
I thank the Renyi Institute for giving me time to focus on this work and offering a number of relevant workshops that provided me an opportunity to speak with and see talks by Andrew Manion, Ian Zemke, Irving Dai, and Matthew Stoffergen that were relevant to this paper.
My postdoc mentor Dylan Thurston also provided useful conversations around \(A_{\infty}\) categories and modules during that time that helped me see the similarities between my approach and that of Ian Zemke.

\section{Models for \(\infty\)-categories}\label{sec:quasicat}
There are multiple ways to represent \(\infty\)-categories, including topologically, simplicially enriched categories, and quasi-categories.
Each version is an ordinary category, whose objects represent potential \(\infty\)-categories, equipped with a model structure whose fibrant objects are the true \(\infty\)-categories.
As we do not assume the knot theorists who may be interested in these constructions are familiar with \(\infty\)-categories, we review the the basic definitions we will be leveraging here and highlight examples particularly relevant to our setting.
We refer interested parties to \cite{CatHomoTheory}, \cite{HTT}, and \cite{hoveyMC} for more details on the subject.

\subsection{Model Categories}
Before continuing further, in order to avoid size issues, fix a grothendieck universe.
This in particular will allow us to talk above the objects of a large category as being the vertices of a simplicial set, and to be able to talk about taking various small colimits without running into issue.
Furthermore, while not always finite, our main colimits of interest are countable, which is quite small compared to our categories in question, thus avoiding size issues.

Moreover, model categories will provide one avenue for constructing \(\infty\)-categories with nice constructions that can be related back to ideas in ordinary categories.
In particular, model categories will provide both examples of infinity categories and provide the structure needed to describe quasi-categories, a particular model for infinity categories.
A more in depth approach to model categories can be found in \cite{hoveyMC} and \cite{CatHomoTheory}, though we will review the basic definitions we will need below for those more versed in knot theory than homotopy theory.
\begin{definition}
A \emph{weak factorization system} \((\Lc,\Rc)\) for a category \(C\) are two classes of morphisms so that every morphism in \(C\) factors into a morphism in \(\Lc\) followed by a morphism in \(\Rc\), \(\Lc\) are the morphisms that have the left lifting property against all morphisms in \(\Rc\), and \(\Rc\) the morphisms that have the right lifting property with all the morphisms in \(\Lc\).
A model category \(M\) is a complete and cocomplete ordinary category equipped with three classes of morphisms the cofibrations \(\Cc\), fibrations \(\Fc\), and weak equivalences \(\Wc\), where \((\Cc\cap \Wc,\Fc)\) and \((\Cc,\Fc\cap \Wc)\) are weak factorization systems.
The class \(\Cc\cap \Wc\) is called the \emph{acyclic cofibrations} and the class \(\Fc\cap \Wc\) is called the \emph{acylic fibrations}.

An object \(X\) is \emph{cofibrant} if the morphism from the intial object \(\emptyset\) to \(X\) is a cofibration, and an object is \emph{fibrant} if the map to the terminal object \(\bullet\) is a fibration.
\emph{Cofibrant replacement} is the factorization of \(\emptyset\) to \(X\) to \(\emptyset \to QX\to X\) as a cofibration followed by an acyclic fibration, and \emph{fibrant replacement} is the factorization of \(X \to \bullet\) as \(X \to RX \to \bullet\) of an acyclic cofibration followed by a fibration.

A model category is \emph{cofibrantly generated} if \(\Fc\) and \(\Fc\cap \Wc\) can be defined as the morphisms with the right lifting property against a small set of respectively acyclic cofibrations and cofibrations, called the \emph{generating acyclic cofibrations} and \emph{generating fibrations}.
A model category is \emph{monoidal} if the underlying category is closed monoidal, the pushout-product of two cofibrations is a cofibraton, and the pushout-product of an acyclic cofibration with a cofibration is an acylyic cofibration.
A model category \(M\) is \emph{enriched} in a symmetric monoidal model category \(N\) if \(M\) is enriched tensored, and cotensored over \(N\) and pushout-products across the tensor satisfy the same requirements as in the monoidal case.

An adjunction \(F\colon M_1 \leftrightharpoons M_2\colon G\) between model categories is a \emph{Quillen adjunction} if the left adjoint  \(F\) preserves cofibrations and the right adjoint \(G\) preserves cofibrations,  and it is a \emph{Quillen equivalence} if for every cofibrant object \(x\) in \(M_1\) and fibrant object \(y\) in \(M_2\) the adjunction between \(M_2(F(x),y)\) and \(M_1(x,G(y))\) preserves weak equivalences.
The \emph{homotopy category of a model category} \(M\) is the ordinary category of \(M\) localized at the weak equivalences.
\end{definition}

Sometimes the factorizations are also required to be functorial, though choosing a different factorization produces weakly equivalent results, and it can be helpful to reserve the option for example for cofibrant/fibrant replacement to do nothing on objects that are already cofibrant/fibrant.
Sometimes only finite limits and colimits are required rather than all small limits and colimits.

Key to note is that a single ordinary category can have multiple model category structures on it as will come up with \(\sSet_{\Quil}\) and \(\sSet_{\Joyal}\).
Due to model categories being defined using weak factorization systems, the definiton of monoidal and enriched can equivalently be defined using pull-back homs between cofibrations and fibrations being fibrations, and if one of the pair is acyclic, then the result is also acyclic (see \cite{CatHomoTheory} Lemma 11.1.10).

\subsection{Kan-complex enriched categories}

The category of simplicial sets \(\sSet\) has a classical combinatorial model category structure \(\sSet_{\Quil}\)
To describe \(\sSet_{\Quil}\) first recall that a \emph{simplical horn} \(\Lambda^n_k\) is the portion of the boundary of the standard \(n\)-simplex \(\Delta^n\)  that contains the \(k\)th vertex but not the face opposing the \(k\)th vertex, and a \emph{horn inclusion} is the canonical map \(j^n_k\) including \(\Lambda^n_k\) into \(\Delta^n\).
The cofibrations for \(\sSet_{\Quil}\) are the monomorphisms, generated by the inclusions \(\bou\Delta^n\to \Delta^n\), the generating acyclic cofibrations are the horn inclusions, and the weak equivalences are the maps that after geometric realization are weak homotopy equivalences.
A fibrant object in \(\sSet_{\Quil}\) is called a \emph{Kan complex}.

There is a classical cofibrantly generated (but not combinatorial) model category structure on \(\Top\), called \(\Top_{\Quil}\), whose weak equivalences are the weak homotopy equivalences and whose cofibrant objects are the retracts of cell complexes, where the geometric realization and singuliar simplicial set adjunction
\[|-|\colon \sSet_{\Quil} \leftrightharpoons \Top_{\Quil} \colon \Sing(-),\]
form a a Quillen equivalence.

The category of simplicially enriched categories \(\sCat\) has a model structure \(\sCat_{\Quil}\) whose weak equivalences are functors that induce equivalences on the corresponding \(\Ho\sSet_{\Quil}\) enriched categories and whose fibrant objects are categories enriched over Kan complexes.
The Quillen equilvalence \(|-|\dashv \Sing(-)\) ensures that every topologically enriched category can be viewed as a category enriched over Kan-complexes.

\subsection{Homotopy Coherent Functors into Kan-enriched categories}

Given a small category \(I\) one can construct a simplicial category \(FU_{\bullet}(I)\) via repeated iterations of the adjunction
\[ F\colon \Cat \leftrightharpoons \rDirGraph\colon U\]
where \(F\) forgets composition but remembers identities, and \(U\) forms the free category on the non-identity arrows in a reflexive directed graph.
Objects of \(FU_{\bullet}(I)\) are the objects of \(I\), the one simplices strings of composable non-identity morphisms in \(I\), and higher simplices keep track of ways to add and drop parentheses.
For more details on the construction see chapter 11 of \cite{crossedMenag} and 16.3 of \cite{CatHomoTheory}.
Given a Kan-complex enriched category \(M\) the simplicially encriched functors \(\sCat(FU_{\bullet}(I),M)\) form \emph{the homotopy coherent diagrams over \(I\)}.
Instead of forcing diagrams to commute over \(I\) on the nose, the category \(FU_{\bullet}(I)\) provides a simplex for every commutativity relation and thus a homtopy in \(M\) relating those maps.

Unfortunately, while categories of filtered spaces most naturally fit into the framework of enriched categories, the idea of homotopy coherent maps themselves forming an \(\infty\)-category is most directly formulated in terms of a different model for \(\infty\)-categories: quasi-categories.
In particular, while one can construct a simplicial set representing the natural transformations between two homotopy coherent diagrams this is too strict of a notion and won't necesarily agree with \emph{the homotopy coherent natural transforamtions}, i.e. the homotopy coherent diagrams of shape \(I\times [1]\).
However, if you take the more general homotopy coherent natural transformatons, a simplicial set can be constructed, but you are only guaranteed a notion of associativity up to homotopy and not associativity on the nose \cite[p.~28]{coherenceInSimpEnriched}.

\begin{remark}\label{rem:AinftyMod1}
One more familiar with the \(A_{\infty}\)-modules used in for example the work of Zemke in \cite{ZemkeWhat,generalSurg}, might recognize the idea being conveyed here, though the details of how this is implemented are slightly different.
For example, the idempotents in the algebra \(\Ac\) provide the objects of the corresponding small category (or rather the identity morphisms on said objects).
The action of the idempotents on an \(A_{\infty}\) module along with the action of \(\mu_0\) provides a chain-chain complex for each object of \(\Ac\).
The action of \(\mu_1\) provides how a morphism of \(\Ac\) should act on these chain complexes with the \(A_{\infty}\) relation ensuring that it is a chain map.
The action of \(\mu_2\) provides a homotopy between the composition of chain maps given by algebra elements \(a_1\) and \(a_2\) separately, and the chain map given by \(a_1a_2\).
One can check directly that \(\mu_3\) provides a homotopy filling in the square specified on page 502 of \cite{crossedMenag}, so the sum of the two simplifies depicted.
\end{remark}

\subsection{Quasi-categories}

Quasi-categories are defined as a particular type of simplicial sets, which model the objects of a category as the 0-simplices and morphisms of a category as 1-simplices.
However, instead of specifying a way to compose directly, one uses the higher dimensional simplicies to specify that certain homotopy coherent diagrams should exist.

\begin{definition}
An \emph{inner horn} is a \(\Lambda^n_k\) where \(0<k<n\) and an \emph{inner horn inclusion} is the canonical map \(j^n_k\) including \(\Lambda^n_k\) into \(\Delta^n\) when \(0<k<n\).
\emph{quasi-category} \(C\) is a simplicial set that has the right lifting property with respect to the inner horn inclusions, i.e. any map from an inner horn into \(C\) can be filled to a map from the simplex \(\Delta^n\) to \(C\).
The \emph{objects of a quasi-category \(C\)} are the 0-simplicies of \(C\) and \emph{the morphims of a quasi-category \(C\)} are the one-simplicies.
\end{definition}

Like with simplicial categories, the infinity categories can be seen as coming from a particular model strcutre on \(\sSet\).
The following proposition is due to Joyal, though a proof can be found in \cite[2.2.5.1]{HTT}

\begin{proposition}(Joyal)
There is a combinatorial closed monoidal model structure \(\sSet_{\Joyal}\) on \(\sSet\) whose fibrant objects are the quasi-categories and whose cofibrations are the monomorphisms.
\end{proposition}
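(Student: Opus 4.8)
The plan is to produce \(\sSet_{\Joyal}\) directly from a recognition theorem for combinatorial model categories --- Jeff Smith's theorem, in the form found in \cite{HTT} and \cite{CatHomoTheory} --- which requires only that we prescribe the generating cofibrations and the class of weak equivalences and then verify a short list of closure conditions. For the generating cofibrations I take the boundary inclusions \(\{\bou\Delta^n \to \Delta^n\}_{n \ge 0}\); since in a presheaf category the weakly saturated class these generate is exactly the class of monomorphisms, the cofibrations of the resulting model structure are automatically the monomorphisms, which settles one clause of the proposition. For the weak equivalences \(\Wc\) I take the \emph{categorical equivalences}: a map \(f\) lies in \(\Wc\) when its rigidification \(\mathfrak{C}[f]\) --- the image of \(f\) under the left adjoint to the homotopy coherent nerve \(\sCat \to \sSet\) --- is a Dwyer--Kan equivalence of simplicially enriched categories. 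A posteriori this agrees with the more hands-on description: \(f\colon X \to Y\) lies in \(\Wc\) iff for every quasi-category \(C\) the precomposition map on internal homs \(C^Y \to C^X\) is an equivalence of quasi-categories, the last notion being made self-contained via the homotopy category functor \(\Ho\) and the nerve \(J\) of the groupoid with two objects and a single isomorphism between them.

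I would isolate at the outset two combinatorial inputs, both essentially due to Joyal and both available before any model structure is in hand: (i) the pushout-product of a monomorphism with an inner anodyne map is inner anodyne --- it suffices to check this for the pushout-product of \(\bou\Delta^m \to \Delta^m\) with an inner horn inclusion \(\Lambda^n_k \to \Delta^n\), \(0<k<n\); and (ii) the ``equivalence-extension'' package: in a quasi-category \(C\) a morphism invertible in \(\Ho(C)\) extends to a functor \(J \to C\), and the exponentials \(C^{\Delta^n} \to C^{\bou\Delta^n}\) are inner fibrations restricting to Kan fibrations between the maximal sub-Kan complexes. Input (i) already yields, by adjunction, that \(C^X\) is a quasi-category whenever \(C\) is and \(X\) is arbitrary.

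I would then verify Smith's hypotheses. Local presentability of \(\sSet\) is classical. Two-out-of-three and closure under retracts for \(\Wc\) are inherited from Dwyer--Kan equivalences. Accessibility of \(\Wc\), as an accessibly embedded subcategory of the arrow category, holds because \(\mathfrak{C}\) is a left adjoint (hence accessible) and the preimage of the accessibly embedded accessible class of Dwyer--Kan equivalences under an accessible functor is again accessibly embedded accessible. Every map with the right lifting property against all \(\bou\Delta^n \to \Delta^n\), i.e.\ every trivial Kan fibration, is a categorical equivalence, since it is a simplicial homotopy equivalence and rigidification preserves these. The remaining, substantive hypothesis is that the acyclic cofibrations \(\Cc \cap \Wc\) are closed under pushout and transfinite composition; for this I would exhibit a generating supply of acyclic cofibrations --- the inner horn inclusions, the endpoint inclusions \(\{0\}\to J\), and their pushout-products with boundary inclusions --- each of which is a monomorphism and, using (i) and (ii), a categorical equivalence, and then argue that every acyclic cofibration is a retract of a transfinite composition of pushouts of these. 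I expect this last step to be the main obstacle: it is where the combinatorics of anodyne maps and the delicate behavior of the equivalences inside a quasi-category genuinely enter, and it is what the bulk of Joyal's and Lurie's work goes into.

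Granting the model structure, the fibrant objects are pinned down in two steps. A fibrant object lifts against every acyclic cofibration, in particular against each inner horn inclusion, hence is a quasi-category. Conversely, a quasi-category \(C\) must be shown to have the right lifting property against every monomorphism which is a categorical equivalence; by the small object argument it suffices to lift against the generating acyclic cofibrations above, where lifting against inner horns is the definition of a quasi-category and lifting against the \(\{0\}\to J\)-type maps is exactly what (ii) supplies. For monoidality, \(\sSet\) is Cartesian closed, the pushout-product of two monomorphisms is again a monomorphism (true in any presheaf topos, and visible levelwise), and the pushout-product of a cofibration with an acyclic cofibration is an acyclic cofibration --- which reduces, as usual, to the generators, where it follows from (i) for the inner-horn part and from the analogous behavior of \(\{0\}\to J\), a further consequence of (ii), for the remaining part. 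Smith's theorem then supplies cofibrant generation, and together with local presentability this yields combinatoriality, completing the construction.
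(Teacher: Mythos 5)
The paper does not prove this proposition; it cites Lurie's Theorem 2.2.5.1 in \cite{HTT} (and the result is originally Joyal's), so there is no argument in the paper for direct comparison. Your plan -- build the model structure via Jeff Smith's recognition theorem, with generating cofibrations the boundary inclusions and weak equivalences the categorical equivalences detected by rigidification -- is the right family of ideas and matches the standard references in outline.

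However, there is a genuine error in the central step. You propose to verify the stability hypothesis of Smith's theorem and to identify the fibrant objects by exhibiting an explicit generating set of acyclic cofibrations, namely the inner horn inclusions, the endpoint inclusion \(\{0\}\to J\), and their pushout-products with boundary inclusions, and then arguing that every acyclic cofibration is a retract of a transfinite composite of pushouts of these. That last claim is false for \(\sSet_{\Joyal}\). The weak saturation of that explicit set is the class of maps sometimes called ``anodyne'' or ``naive'' Joyal cofibrations, and it is strictly smaller than the class of monomorphisms that are categorical equivalences. Concretely, if that set generated the trivial cofibrations, then the Joyal fibrations would be exactly the isofibrations (inner fibrations with lifting against \(\{0\}\to J\)), but this characterization of fibrations is only valid for maps whose target is a quasi-category. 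While the Joyal model structure, being combinatorial, does have some small generating set of trivial cofibrations, no explicit description of one is known, and this absence is precisely what makes the theorem delicate. Consequently, neither the closure of \(\Cc\cap\Wc\) under pushout and transfinite composition nor the implication ``quasi-category \(\Rightarrow\) fibrant'' can be established by reducing to your proposed generators. The actual arguments (in \cite{HTT} and in Joyal's work) take a different route: closure under pushout is handled using the internal-hom/pushout-product machinery and properties of the \(\sCat\) model structure, and fibrancy of quasi-categories is shown by proving directly that for any monic categorical equivalence \(S\to T\) and any quasi-category \(C\), the map \(C^T\to C^S\) is a trivial Kan fibration (using your inputs (i) and (ii), together with more), rather than by lifting against a known set of generators. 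Your inputs (i) and (ii) are correct and necessary, and the easy direction (fibrant \(\Rightarrow\) quasi-category, via lifting against inner horns) is fine; the gap is confined to, but fatal for, the identification of generating trivial cofibrations.
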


To pass between the models of \(\infty\)-categories we have an Quillen equivalence
\[\Cf\colon \sSet \leftrightharpoons \sCat\colon \Rf\]
between the category of simplicial sets and the category of simplicial enriched categories.
Details of this adjunction can be found in Section 16.3 of \cite{CatHomoTheory}.
If the simplicial enrichment of \(C\) is discrete then \(\Rf(C)= N(C)\),  where \(N\) is the nerve, i.e. the simplicial set defined by how the various \([n]\) map into \(C\) (see Remark 16.3.6 of \cite{CatHomoTheory}), and if \(I\) is a small category then \(\Cf(N(I))= FU_{\bullet}(I)\) (See Remark 16.4.7 of \cite{CatHomoTheory}).

Given a model category \(M\) enriched over \(\sSet_{\Quil}\), from now on a \emph{simplicially enriched model category}, then the full subcategory of cofibrant-fibrant objects \(M^{\circ}\) will be enriched over Kan-complexes.
Additonally, Proposition 5.2.4.6 of \cite{HTT} guarantees that a simplicially enriched Quillen adjunction between two simplicially enriched model categories \(M\) and \(N\), will lift to an appropriate notion of adjunction of quasi-categories \(\Rf(M^{\circ})\) and \(\Rf(N^{\circ})\).

\subsection{Quasi-categories and homotopy coherent diagrams}

It is an already established fact that if \(X\) is a simplicial set and \(C\) is a quasi-category then the function complex \([X,C]\) is a quasi-category (See Proposition 1.2.7.3 of \cite{HTT} and Corollary 15.2.3 in \cite{CatHomoTheory}), though we will review the broad strokes of the argument here, because in Section \ref{subsec:quasicatInvol} we will need to mimic those techniques in order to also model some of the quirks of the involutive maps on lattice homotopy and knot lattice homotopy.
For simplicially enriched categories \(C_1\) and \(C_2\), note that 
\[[\Rf(C_1),\Rf(C_2)]_0=\sSet(\Rf(C_1),\Rf(C_2))=\sCat(\Cf\Rf(C_1),C_2),\]
and by construction \([X,C]_n := \sSet(X\times \Delta^n,C)\).
Thus the objects of this quasi-category are the homotopy coherent functors of shape \(X\) and the morphisms between them the homotopy coherent natural transformations and so on, so naturally \([X,C]\) represents homotopy coherent functors.
By \(\sSet_{\Joyal}\) being closed monoidal and \([X,C]\) representing the pullback hom of \(\emptyset \to X\) and \(C\to \bullet\), we have that if \(C\) is a quasicategory then \([X,C]\) will also be a quasicategory.

\begin{remark}
One should note from our discussion in Remark \ref{rem:AinftyMod1} that \(m_3\) provided a homotopy not for the individual simplicies in \(FU_{\bullet}(\Ac)\) but for each chain of composable morphisms \(a_1,a_2,a_3\) in \(\Ac\).
As such, viewing \(\Ac\) as having the discrete enrichment, we have that the maps \(m_i\) are most directly seen as elements of \([N(\Ac),\Rf(\Kom)]\) where \(\Kom\) is the simplicially enriched category of chain complexes.
\end{remark}

\subsection{The \(\infty\)-category of \(\infty\)-categories} \label{subsec:inftyOfInfty}

Because \(\sSet_{\Joyal}\) is closed monoidal it is enriched in itself instead of \(\sSet_{\Quil}\) and thus is not a simplicially enriched model category.
However, by restricting ourselves to natural transformations made out of equivalences, we can acheive this goal.
One way to acheive this would be for quasicastegories \(C_1\) and \(C_2\) to take the core of \([C_1,C_2]\), which is the largest simplicial subset whose morphisms are all equivalences.
The core construction is functorial and would produce a Kan-complex enriched category and thus an \(\infty\)-category.
There is a way to do this that produces a simplicially enriched model category whose objects represent the quasi-categories with apropriate morphism spaces.
This will in turn allow us to construct homotopy limits of quasi-categories and identify when they are weakly equivalent to other constructions, such as using a Reedy category (see Section \ref{sec:hocolim} below).

\begin{definition}
The category of marked simplical sets \(\sSet^+\) has objects simplicial sets \(X\) with a subset of the 1 simplicies \(X_e\), called the \emph{marked edges} containing all degenerate edges.
A morphism of marked simplicial sets is one that sends marked edges to marked edges.
Given a simplical set \(X\), the minimal marking \(X^{\flat}\) only has degenerate edges marked, and the maximal marking \(X^{\sharp}\) has all edges marked.
\end{definition}

First \(\sSet^+\) is cartesian closed with internal hom \([X,Y]\) given by the marked simplical set whose underlying simplicial set has form \(\sSet^+(X\times (\Delta^n)^{\flat},Y)\) with \((\Delta^n)^{\flat}\) only having the degenerate edges marked, and whose marked edges are \(\sSet^+(X\times (\Delta^1)^{\sharp},Y)\).
There are multiple ways to enrich, tensor and cotensor \(\sSet^+\) over \(\sSet\) and the most most useful for us uses the adjunction
\[(-)^{\sharp}\sSet\colon \leftrightharpoons (-)^{\sharp}\sSet^+,\]
whose right adjoint takes the maximal subsimplical set with edges the marked edges and whose left ajdoint induces the maximal marking with all edges marked.
In particular, for \(X\in \sSet\) and \(Y,Z\in \sSet^+\), the tensoring of \(X\) with \(Y\) is \(X^{\sharp}\times Y\) and the cotensoring is \([X^{\sharp},Y]\), and external hom is given by  \( [Y,Z]^{\sharp}\).
The following theorem is a specialization of results in chapter 3 of \cite{HTT}.

\begin{theorem}
There is a simplicially enriched combinatorial model structure \(\sSet^+_{\Joyal}\) on \(\sSet^+\) using the enrichment and cotensoring described above whose fibrant objects are the quasicategories with the equivalences marked, whose fibrations between fibrant objects are the fibrations in \(\sSet_{\Joyal}\) where the adjunction above becomes a Quillen adjunction.
\end{theorem}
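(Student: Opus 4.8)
The plan is to cite the general machinery of Lurie's \emph{Higher Topos Theory} Chapter 3 (in particular Section 3.1) and specialize it, so the work is to identify which statement we need and to check that the enrichment/cotensoring described above is the one appearing there. First I would recall the cartesian model structure on \(\sSet^+\) (Lurie, Proposition 3.1.3.7): the cofibrations are the monomorphisms, and the model structure is determined by declaring a certain set of morphisms to be marked anodyne, with fibrant objects exactly the quasi-categories equipped with their maximal marking by equivalences. Combinatoriality is automatic since there are only finitely many nondegenerate simplices in each \(\Delta^n\) and the marking data is a subset of the \(1\)-simplices, so the generating (acyclic) cofibrations form a small set; this is again part of Lurie's statement. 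The characterization of fibrations between fibrant objects as the \(\sSet_{\Joyal}\)-fibrations on underlying simplicial sets is Lurie's Proposition 3.1.4.1 (or the discussion around it), which I would quote directly.

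Next I would address the simplicial enrichment. The key point is that \(\sSet^+\) is a cartesian closed category, and one obtains a \(\sSet_{\Quil}\)-enrichment by transferring along the adjunction \((-)^{\sharp}\colon \sSet \rightleftharpoons \sSet^+\) together with the underlying-simplicial-set-of-marked-edges functor. Concretely, for \(Y, Z \in \sSet^+\) the mapping space is the simplicial set with \(n\)-simplices \(\sSet^+\bigl(Y \times (\Delta^n)^{\flat}, Z\bigr)\), the tensor of \(X \in \sSet\) with \(Y\) is \(X^{\sharp} \times Y\), and the cotensor is \([X^{\sharp}, Y]\), exactly as stated above. To conclude that this makes \(\sSet^+_{\Joyal}\) a \emph{simplicial model category} I would verify the pushout-product (SM7) axiom: given a cofibration \(i\) in \(\sSet^+\) and a cofibration \(j\) in \(\sSet_{\Quil}\), the pushout-product \(i \mathbin{\square} (j^{\sharp})\) is a cofibration in \(\sSet^+\), acyclic if either factor is. Since \(i \mapsto\) monomorphism and \((-)^{\sharp}\) sends monomorphisms of simplicial sets to monomorphisms of marked simplicial sets, the cofibration part is a diagram chase with pushouts; the acyclicity part reduces to showing that \((-)^{\sharp}\) is a left Quillen functor \(\sSet_{\Quil} \to \sSet^+_{\Joyal}\) and that smashing a marked anodyne map with any cofibration stays marked anodyne, which is the content of Lurie's Proposition 3.1.2.3 (stability of marked anodyne maps under pushout-product). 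Finally, that \((-)^{\sharp} \dashv (\text{underlying})\) is a Quillen adjunction follows from the left adjoint preserving cofibrations (monos to monos) and acyclic cofibrations (marked anodynes, by the cited stability result), which is exactly the last clause of the theorem.

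The main obstacle I expect is bookkeeping rather than conceptual: making sure the specific enrichment named above (the one built from \((-)^{\sharp}\), which marks \emph{all} edges, rather than the more naive \((-)^{\flat}\) enrichment) is the one for which the SM7 axiom and the Quillen adjunction both hold, and that its fibrant objects pick out quasi-categories-with-equivalences-marked rather than some coarser or finer marking. The subtlety is that \(\sSet^+\) carries several inequivalent \(\sSet\)-enrichments, and only the \((-)^{\sharp}\) one interacts correctly with the cartesian model structure to give a genuine simplicial model category whose mapping spaces are the \emph{cores} of the internal homs \([C_1, C_2]\) of quasi-categories --- i.e. recovers the \(\infty\)-category of \(\infty\)-categories. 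I would handle this by tracing through Lurie's identification (Remark 3.1.4.5 and the surrounding discussion) that for fibrant \(Y, Z\) the mapping space \(\sSet^+\bigl(Y \times (\Delta^{\bullet})^{\flat}, Z\bigr)\) is precisely the core of the Joyal internal hom, so that the resulting homotopy-coherent nerve \(\Rf\bigl((\sSet^+_{\Joyal})^{\circ}\bigr)\) is \(\mathrm{Cat}_{\infty}\); everything else is an application of already-cited model-categorical results with no new input required.
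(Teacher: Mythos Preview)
Your approach is essentially the same as the paper's: the paper does not prove this theorem at all but simply states it as ``a specialization of results in chapter 3 of \cite{HTT}'', and your proposal is to cite the same chapter with more specific pointers (3.1.3.7, 3.1.4.1, 3.1.2.3). That is exactly what is wanted here.

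One inconsistency worth fixing: you write that the mapping space has \(n\)-simplices \(\sSet^+\bigl(Y \times (\Delta^n)^{\flat}, Z\bigr)\), but that is the underlying simplicial set of the internal hom \([Y,Z]\), i.e.\ the Joyal self-enrichment, not the \(\sSet_{\Quil}\)-enrichment described in the paper. The enrichment compatible with your (correctly stated) tensor \(X^{\sharp}\times Y\) has \(n\)-simplices \(\sSet^+\bigl(Y \times (\Delta^n)^{\sharp}, Z\bigr)\), which is \([Y,Z]^{\sharp}\). You yourself flag this as the main subtlety (``only the \((-)^{\sharp}\) one interacts correctly with the cartesian model structure''), so this is presumably a slip rather than a conceptual error, but it is precisely the point on which the SM7 verification hinges: with the \(\flat\)-version the mapping spaces are quasicategories, not Kan complexes, and SM7 over \(\sSet_{\Quil}\) fails.
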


\section{Homotopy Limits and Colimits}\label{sec:hocolim}

\subsection{Model Categories of Functors - Proj and Inj}\label{subsec:modelFunc}

One way to produce new model categories that we will use multiple times is to produce a model structure on diagrams in a model category \(M\).
There are multiple ways to do this with some assumptions on \(M\) but not the indexing category \(D\).
See Propositions A.2.8.2 and A.3.3.2 of \cite{HTT}.

Let \(M\) be a cofibrantly-generated model category, which is enriched over another excellent model category \(S\) and let \(D\) be a small \(S\)-enriched category, all of whose hom sets are cofibrant in \(S\).
The definition of excellent is technical, but A.3.2 of \cite{HTT} provides \(\sSet_{\Quil},\sSet^+_{\Joyal},\) and \(\sSet_{\Joyal}\) as excamples of excellent model categories.  
Then the category of \(S\)-enriched functors comes with an \(S\)-enriched cofibrantly generated model category structure called the \emph{projective model structure} \([D,M]_{\proj}\), where the weak equivalences are pointwise weak equivalences in \(M\), and the fibrations are also defined pointwise.
A generating cofibration of \([D,M]_{\proj}\) is specified by a generating cofibration \(i\) of \(M\) and an object \(d\) of \(D\) to form the natural transformation \(D(d,-)\otimes i\).
The generating acylcic cofibrations are defined similarly.
For example, if \(D\) is a poset then \(D(d,d')\otimes i\) is \(i\) if \(d\leq d'\) and empty otherwise.
If \(M\) is also combinatorial, there exists another \(S\)-enriched model category on \(S\)-enriched functors \([D,M]_{\inj}\) whose weak equivalences and cofibrations are defined pointwise.
In this case, both \([D,M]_{\inj}\) and \([D,M]_{\proj}\) will be combinatorial.

Given an \(S\)-enriched functor \(f\colon D_1\to D_2\) between small \(S\)-enriched categories and a combinatorial \(S\)-enriched model category \(M\), the pullback \(f^{\ast}\) 
admits a left adjoint \(f_!\) given by the left Kan extension of \(f^{\ast}\) and a right adjoint \(f_{\ast}\) given by the right Kan-extension.
The pair \(f_!\dashv f^{\ast}\) form a Quillen adjunction between \([D_1,M]_{\proj}\) and \([D_2,M]_{\proj}\), while \(f^{\ast}\dashv f_{\ast}\) provide a Quillen adjunction between \([D_1,M]_{\inj}\) and \([D_2,M]_{\inj}\).
In the case where \(D_2= \ast\), the derived functors corresponding to \(f_!\) and \(f_{\ast}\) are the homotopy colimits and homotopy limits respectively.

The book \cite{CatHomoTheory} discusses homotopy limits and colimits in detail both in construction and in theory, but the most relevant example of a construction is given in Sections 6.1 and 6.2 of \cite{knotLatticeInvariance}.
These sections describe homotopy colimits in general as well as provides a construction in terms of a homotopy colimit that yields a homotopy equivalent result to the definition of the knot lattice space based off of \cite{knotsAndLattice}.
In this paper, we will be using this construction as the definitions of the lattice simplicial set and the knot lattice simplicial set in Sections \ref{subsec:LatticeDef} and \ref{subsec:knotLatticeDef}.

Meanwhile \cite{HTT} provides definitions of homotopy limits and colimits internal to a general quasi-category so that they fulfill a similar role to colimits and limits in ordinary categories.
In particular, Proposition 1.2.9.3 of \cite{HTT} shows that homotopy coherent (co)cones form a quasi-category, and additionally there is a notation of an object in a quasi-category being terminal, where an appropriate notion of a mapping space to the object is always contractible.
Proposition 4.2.4.1 of \cite{HTT} confirms that this agrees with the definition of homotopy colimit given above.
Further, Proposition 4.2.2.7 \cite{HTT} shows that homotopy colimits behave functorially.

\subsection{Reedy Category Structures}

Depending on the structure of \(D\), one can construct additional model category structures on \([D,M]\) with the same weak equivalences where being cofibrant is less restrictive than in \([D,M]_{\proj}\) or fibrant is less restrictive than \([D,M]_{\inj}\).
As such if one can identify a diagram as fibrant in this model structure then one will have a weak equivalence from the limit to the homotopy limit of that diagram.
Since weak equivalences between cofibrant-fibrant objects are actually the usual homotopy equivalences in a simplicially enriched model category (see Remark
 A.3.1.8 in \cite{HTT}), this will give a homotopy inverse from the homotopy limit to the limit allowing us to lift functors defined on the limit to the homotopy limit.
This new model structure, \([D,M]_{\Reedy}\) will have the identity into \([D,M]_{\inj}\) be a left Quillen adjoint (see A.2.9.23 in \cite{HTT}) and thus there will be a week equivalence from the homotopy limit s computed with \([D,M]_{\Reedy}\) to the homotopy limit as computed with \([D,M]_{\inj}\).

\begin{definition}
A small category \(R\) is \emph{Reedy} if it comes equipped with a degree function from the objects to some ordinal \(\omega\), \(d\colon \mbox{Ob}R\to \omega\), and two wide subcategories \(R_{-}\) and \(R_{+}\) so that
\begin{enumerate}
\item every morphism in \(R_+\) raises degree
\item every morphism in \(R_{-}\) lowers degree
\item every morphism in \(R\) factors uniquely as a map in \(R_{-}\) followed by a map in \(R_{+}\)
\end{enumerate}

\end{definition}

Note that a single small category can come with multiple different Reedy structures, and the model category we describe will depend on the specific Reedy structure given.
For example Figrues \ref{subfig:Reedy1a} and \ref{subfig:Reedy1b} put different Reedy structures on the same underlying category.

Now given a Reedy category \(R\) and functor \(F\colon R \to M\) where \(M\) is a model category, there's several maps we will have to define based on \(F\) before we can state the Reedy model category structure.
In particular given an object \(r\) in \(R\), the \emph{\(r^{th}\) latching object \(L_rF\)} is \(\colim_{s\to_+ r} F\) which is taken over the slice category \(R_+/r\) containing all objects except the identity \(\id_r\).
The \emph{\(r^{th}\) matching object \(M_rF\)} is \(\lim_{r\to_{-} s}F\) which is taken over the coslice category \(r/R_{-}\) containing all objects except \(\id_r\).
There are natural maps \(L_rF\to F(r)\to M_r F\)
The following theorem provides a weaker summary of results in Chapter 14 of \cite{CatHomoTheory}, which in turn references \cite[5.2.5]{hoveyMC} and \cite[18.4.11]{hirsch}
For an intuition as to why such a result should hold, one can view the Reedy category structure as giving instructions on how to inductively lift maps into the limit with the condition on the terminal functor in \(\sSet_{\Quil}\) being cofibrant providing a condition to ensure consistency as the lifts propagate.

\begin{theorem}
Let \(R\) be a Reedy category and \(M\) a simplicially enriched category. Then there exists a model category structure on \(M^R\) so that the cofibrant objects are those which for all \(r\in R\), the map \(L_rF\to F(r)\) is a cofibration and the fibrant objects are those for which all \(r\in R\) the map \(F(r)\to M_rF\) is a fibration.
If the functor in \((\sSet_{\Quil})^R\) which is constantly the terminal object is Reedy cofibrant then the limit functor is right Quillen, and thus computes homotopy limits on Reedy fibrant functors.
\end{theorem}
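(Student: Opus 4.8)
The plan is to follow the standard route for establishing a Reedy model structure and then bootstrap the statement about limits, invoking the cited results from \cite{CatHomoTheory}, \cite{hoveyMC}, and \cite{hirsch} rather than reproving them from scratch. First I would set up the inductive skeleton: for a Reedy category $R$ with degree function valued in an ordinal $\omega$, one builds $F$ over the degree filtration, so that defining $F$ on objects of degree $<\lambda$ and extending to degree $\lambda$ is governed precisely by the latching and matching maps. The cofibrations and fibrations in $M^R$ are then \emph{defined} by the conditions that $L_rF\to F(r)$ is a cofibration and $F(r)\to M_rF$ is a fibration; the content is that these, together with pointwise weak equivalences, satisfy the model category axioms. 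I would check the two weak factorization systems by the usual transfinite small-object-style induction over degree: at stage $\lambda$ one factors the canonical map $L_r F \amalg_{L_r G} G(r) \to M_r G \times_{M_r F} F(r)$ using the factorizations in $M$ on each object of degree $\lambda$, where the simplicial enrichment of $M$ supplies functorial factorizations. Two-out-of-three and retract closure are inherited pointwise.

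Second, once the model structure is in hand, I would address the comparison with $[R,M]_{\inj}$. A Reedy cofibration is in particular a pointwise cofibration (by a downward induction using that $L_rF\to F(r)$ cofibration and $F(r)$ built as a pushout), so the identity $M^R_{\Reedy}\to [R,M]_{\inj}$ preserves cofibrations, and it obviously preserves weak equivalences since both are pointwise; hence it is left Quillen, which is the statement that the homotopy limit computed in $M^R_{\Reedy}$ maps by a weak equivalence to the one computed in $[R,M]_{\inj}$. Dually on the projective side one gets that Reedy fibrant implies pointwise fibrant, so a Reedy fibrant functor is also injectively fibrant and its limit agrees with its homotopy limit up to weak equivalence.

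Third, for the final clause about the constant terminal functor: I would observe that $\lim\colon M^R\to M$ is right adjoint to the constant-diagram functor $c\colon M\to M^R$, so $\lim$ is right Quillen precisely when $c$ is left Quillen, i.e. when $c$ sends (acyclic) cofibrations of $M$ to Reedy (acyclic) cofibrations. Since the constant functor on an object $X$ has latching map $L_r(cX)\to X$ equal to (a connected colimit of copies of) $\mathrm{id}_X$ or $\emptyset\to X$ depending on whether $R_+/r$ is empty, the relevant condition reduces to a statement purely about the shape of $R$, and the standard packaging of this in \cite{CatHomoTheory} is exactly the requirement that the constant terminal functor be Reedy cofibrant in $(\sSet_{\Quil})^R$: this is the universal case, and a retract/transfer argument promotes it to arbitrary simplicially enriched $M$ because cofibrancy of the relevant latching maps of $c(\bullet)$ in $\sSet_{\Quil}$ forces cofibrancy of the latching maps of $c(X)$ for any cofibrant $X$ via the enrichment. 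Right Quillen functors preserve fibrant objects and weak equivalences between them, and Reedy fibrant functors have $\lim F\simeq \holim F$; combining this with the second step gives the asserted weak equivalence to the homotopy limit.

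The main obstacle I anticipate is the factorization/lifting induction in the first step: one has to be careful that the latching and matching constructions interact correctly at limit ordinals and that the factorizations chosen at each degree assemble into an actual functor on $R$ (not merely a degreewise object), which is where functoriality of the factorizations in the enriched model category $M$ is essential. The clause about the constant terminal functor is comparatively soft once one has the dictionary ``$\lim$ right Quillen $\iff$ $c$ left Quillen $\iff$ latching maps of constant diagrams are cofibrations,'' but spelling out why the $\sSet_{\Quil}$ condition suffices for general $M$ requires the small observation that $c(X) \cong X \otimes c(\bullet)$ as $S$-enriched functors (with $X$ regarded via the tensoring of $M$ over $S$), so Reedy cofibrancy is preserved by tensoring a cofibrant object of $M$ against the Reedy-cofibrant constant terminal diagram. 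I would cite \cite[Ch.~14]{CatHomoTheory}, \cite[5.2.5]{hoveyMC}, and \cite[18.4.11]{hirsch} for the first two steps and give only the short adjunction-and-tensoring argument for the last clause in detail.
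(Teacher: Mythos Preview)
The paper does not prove this theorem: it is stated as a summary of results from Chapter~14 of \cite{CatHomoTheory}, \cite[5.2.5]{hoveyMC}, and \cite[18.4.11]{hirsch}, with only a one-sentence intuition offered in lieu of an argument. Your proposal therefore does considerably more than the paper itself, and your overall strategy (inductive construction of the factorizations over the degree filtration, then the adjunction $c \dashv \lim$ combined with the pushout-product axiom via $c(X) \cong X \otimes c(\bullet)$) is exactly the standard route taken in those references.

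There is, however, a genuine slip in your second paragraph. From the fact that the identity $M^R_{\Reedy} \to [R,M]_{\inj}$ is left Quillen you correctly deduce that Reedy cofibrations are pointwise cofibrations, but you then write ``a Reedy fibrant functor is also injectively fibrant.'' This is backwards: the \emph{right} adjoint of that Quillen pair (also the identity) sends injective fibrations to Reedy fibrations, so injectively fibrant implies Reedy fibrant, not the converse. Reedy fibrant only gives you pointwise (i.e.\ projectively) fibrant, which is strictly weaker than injectively fibrant in general. Fortunately this paragraph is a detour you do not need: your third paragraph already establishes directly that $\lim$ is right Quillen on the Reedy model structure whenever the constant terminal diagram is Reedy cofibrant in $(\sSet_{\Quil})^R$, and since all the model structures on $M^R$ share the same weak equivalences, the right derived functor of $\lim$ is model-independent. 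So the conclusion survives, but you should delete the faulty implication rather than rely on it.
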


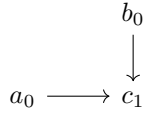
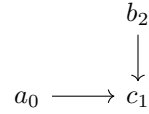
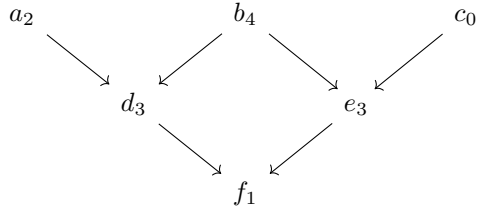
\begin{figure}
\begin{subfigure}[b]{.45\textwidth}
\[\begin{tikzcd}
 & b_0 \arrow[d] \\
a_0 \arrow[r] & c_1
\end{tikzcd}
\]
\caption{A first Reedy structure on the pullback category.}\label{subfig:Reedy1a}
\end{subfigure}
\hfill
\begin{subfigure}[b]{.45\textwidth}
\[\begin{tikzcd}
 & b_2 \arrow[d] \\
a_0 \arrow[r] & c_1
\end{tikzcd}
\]
\caption{A second Reedy structure on the pullback category.}\label{subfig:Reedy1b}
\end{subfigure}

\begin{subfigure}[b]{.9\textwidth}
\[
\begin{tikzcd}
a_2 \arrow[rd] & & b_4\arrow[ld]\arrow[rd] & & c_0 \arrow[ld] \\
& d_3 \arrow[rd] & & e_3\arrow[ld] & \\
& & f_1 & &
\end{tikzcd}
\]

\caption{A third example of a reedy structure on a category.}\label{subfig:Reedy2}
\end{subfigure}

\caption{Examples of Reedy categories where the degree is listed as a subscript on the object. These examples have all morphisms in either \(R_{-}\) or \(R_{+}\) based on how they shift degrees, as there are no notrivial compositions of \(R_{-}\) followed by \(R_{+}\).}\label{fig:ReedyExamples}

\end{figure}

\begin{example}
Both Figure \ref{subfig:Reedy1a} and Figure \ref{subfig:Reedy1b} provide Reedy category structures on the pullback category with the degree of an object denoted by subscript.
In Figure \ref{subfig:Reedy1a} all matching objects are terminal due to \(R_{-}\) being empty, and as such all pointwise fibrant functors are Reedy fibrant in this category.
However, for the constant terminal functor, the latching object at \(c\) is two points, and the map going from two points to a single point is not a monomorphism in \(\sSet_{\Quil}\).
In Figure \ref{subfig:Reedy1b}, the matching object over \(b\) is precisely the object at \(c\) with all other matching objects being terminal. 
Hence, in that Reedy model category an object is fibrant if the map from \(b\) to \(c\) is a fibration of fibrant objects and \(a\) is also fibrant.
Furthermore, for the terminal functor in \(\sSet_{\Quil}\) the only non-initial latching object is at \(c\), but that is simply the point at \(a\), which includes into the point at \(c\).
Hence all latching maps are cofibrations as needed.
\end{example}

\begin{example}\label{ex:Reedy2}
Figure \ref{subfig:Reedy2} presents a more complicated Reedy category, though one can still check that due to no nontrivial compositions of morphisms in \(R_{-}\) followed by morphisms \(R_{+}\) the unique factorization statement does hold.
The nonterminal matching maps are from 
\begin{enumerate}
\item \(a_2\) to \(f_1\),
\item  \(d_3\) to \(f_1\),
\item \(e_3\) to \(f_1\), and
\item \(b_4\) to the pullback of \(d_3\) and \(e_3\) over \(f_1\).
\end{enumerate}
The only non-initial latching maps are from 
\begin{enumerate}
\item \(c_0\) to \(e_3\),
\item \(c_0\) to \(f_1\), and
\item \(a_2\) to \(f_1\).
\end{enumerate}
All of these are cofibrant for the terminal functor in \(\sSet_{\Quil}\).
\end{example}

\section{Topological background} \label{sec:topBack}

Let \(Y\) be a rational homology sphere and \([K]\) an element of \(H_1(Y;\Z)\) which we will think of as representing the homology class of a knot.
We are going to construct some quasi-categories \(\Cc^{Y,[K]}_{\Gamma}\) an \(\Cc^{Y,[K]}_{\Jk}\) that model potential knot lattice spaces with respectively the extra data of the flip map \(\Gamma\) or the involutive structures \(\Ii\) and \(\Jj\).
However, before we construct those, we will need to understand the grading information used in the filtration.
Furthermore, constructing the surgery formula for a two handle attachment will require not only identifying a domain and codomain for the functor but also a more fine tuned understanding of how to specify a two handle attachment equipped with a \Spinc structure.
We will also need how that specification interacts with the actions on those \Spinc stuctures by the core and co-core, the restriction of \Spinc structures to the boundary, as well as conjugation of \Spinc structures.
While we are doing this with an eye towards generalized algebraic knots, this section will not directly be making such assumptions in order to better highlight how these techniques may be more widely applicable in the context of knot Floer.

Note this will parallel the more commonly discussed case of surgery on null-homologous knots, but conventions when knots are rationally null-homologous are not as well established.
In particular, while we will parallel the discussion in Section 22 of \cite{InvolMappingCone}, our notation will differ slightly to more closely match that of \cite{knotsAndLattice}.
Furthermore, a key difference as noted in remark 22.5 of \cite{InvolMappingCone} is that there are inconsistencies in the literature of how a knot associated to a doubly pointed Heegaard diagram is oriented, which lead to further inconsistencies in how the knot should act on \Spinc structures in knot Floer homology.
The action of the knot in our formulation of the knot lattice simplicial set uses the opposite orientation as used in \cite{InvolMappingCone}, and this has subtle effects on various formulas.


\subsection{Indexes for Filtrations}\label{subsec:Cosets}

To start let \([\Q:2\Z]\) represent the groupoid whose elements are cosets of \(2\Z\) in \(\Q\) and the morphisms from \(q_1+2\Z\) to \(q_2+2\Z\) are elements of \((q_2-q_1)+2\Z\).
Composition is given using addition.

\begin{definition}
The \emph{three-dimensional \Spinc cobordism category \(\Cob^{\Spin^c}(3)\)} is the category whose objects are pairs \((Y,\tf)\) where \(Y\) is a closed connected oriented three-manifold and \(\tf \in \SpincX{Y}\) and whose morphisms are given by 
\[(W,\sfk):(Y_1,\tf_1)\to (Y_2,\tf_2).\]
Here \(W\) is diffeomorphism class of  smooth cobordism from \(Y_1\) to \(Y_2\), i.e. a four manifold \(W\) whose boundary is partitioned into two pieces, one of whom has an orientation reversing diffeomorphism to \(Y_1\) and the other has an orientation preserving diffeomorphism to \(Y_2\).
Additionally, \(\sfk \in \SpincX{W}\) must satisfy \(\sfk|_{Y_1}=-\tf_1\) and \(\sfk|_{Y_2}=\tf_2\).
Composition is given by concatenating cobordisms along collared neighborhoods of the boundary.
Denote by \(\Cob^{\Spin^c}_{\Q}(3)\) the full subcategory of \(\Cob^{\Spin^c}(3)\) spanned by the rational homology spheres.
\end{definition}

Note that by assuming our objects are rational homology spheres we have that the square of a \Spinc structure's first chern class, the signature \(\sigma\), and the first betti number \(b_1\) are additive under composing cobordisms.
Furthermore, \(c_1(\sfk)^2-\sigma\) will give an integer divisible by 8 whenever \(W\) is a cobordism between integral homology spheres.
The collection of \Spinc structures on \(W\) that restrict to specific \Spinc structures on \(\bou W\) form a torsor under the action of \(H_2(W;\Z)\).
This action of \(H_2(W;\Z)\) on \(\sfk\) will change \(c_1(\sfk)^2\) by a multiple of 8.
This establishes that there is a functor \(\grf\colon \Cob^{\Spin^c}_{\Q}(3)\to [\Q:2\Z]\) so that
\begin{align*}
\grf(S^3,\tf)&:= 2\Z \\
\grf(W,\sfk)&:= \frac{c_1(\sfk)^2-\sigma(W)}{4}.
\end{align*}
Note that
\[\grf(Y_1\# Y_2,\tf_1\# \tf_2) = \grf(Y_1,\tf_1)+\grf(Y_2,\tf_2).\]

\begin{remark}
The calculation of \(\grf(\tf)\) is be reminiscent of the grading shift formula from Heegaard Floer, but off by the value of \(b_1-b_2^+\)
However, this matches the Seiberg Witten Floer Homotopy theory where \(b_2^+\) contributes to the grading shift in a morphism via a different (de)suspension operator (and in fact the one associated with classical suspension), than the one used for the \(\grf(W,\sfk)\) stated above \cite{SWFhomotopy,PeriodicFloerSpectra}.
Similarly the contribution of the \(b_1\) grading shift occurs has a different source in Seiberg Witten Floer Homotopy and thus would  make sense to model separately \cite{SWFhomotopy}.
Regardless, all of our cobordisms will be negative definite and composed entirely of two handles, and thus this distinction will not matter here.
\end{remark}

\subsection{Specifying two-handles with \Spinc structures}\label{subsec:backTopSpecify}

We are concerned here with the effect on the 3-manifold and knot given by adding a single 2-handle to \(Y\) along \(K\) to give a cobordism \(W\colon Y \to Y'\), and thus we first need to be able to specify a choice of 2-handle attachment. 
This handle attachment is determined by a choice of framing, i.e. an identification up to isotopy of a tubular neighborhood \(N\) of \(K\) with the attaching region of the 2-handle \(D^2\times S^1\), which only needs to be done up to isotopy.
This in turn is determined by the linking of \(K\) and \(\lambda\), where \(\lambda\) is the image of \(p\times S^1\) for some nonzero \(p\in D^2\).
Because \(K\) is only rationally null-homologous this linking not necessarily given by an integer but an element of the coset \(\lkc([K],[K])\in \Q/\Z\), where \(\lkc\) is the linking form on \(H_1(Y;\Z)\).

We will now translate this to information about surfaces in \(W\) and its second homology classes.
Let \(\Sigma_1\) be the rational second homology class in \(H_1(Y,K;\Q)\) with boundary \([K]\), \(C\) the core of the two handle oriented so that \(\bou C\) is \(K\) with the opposite orientation, and \(D\) the cocore oriented so that the intersection of \(C\) with \(D\) is positively oriented. 
The boundary of \(D\) is the dual knot \(\mu\).
Together \([C]\) and \(\Sigma_1\) form a class \(\Sigma \in H_2(W;\Q)\), and the value of \(\Sigma^2\) will belong to the coset \(\lkc([K],[K])\).
Furthermore we will represent the cobordism \(W\) by \(W_{\Sigma^2}(K)\) and the surgered three-manifold by \(Y_{\Sigma^2}(K)\)
Note that \(\Sigma^2\neq 0\) implies that the framing \(\lambda\) is not parallel to a rational seifert surface and thus \(Y_{\Sigma^2}(K)\) is a rational homology sphere.
For ease of notation, we will often denote \(\Sigma^2\) by \(r\), and since all of our surgeries come from two-handle attachments, whenever discussing \(Y_r(K)\) it will be a part of the assumption that \(r\in \lkc([K],[K])\).

The affine modeling of \(\SpincX{W_{\Sigmasq}(K)}\) on second cohomology respects the long exact sequence, so we a short exact sequence of affine spaces given by
\[0 \to \Z \to \SpincX{W_{\Sigmasq}(K)} \to \SpincX{Y}\to 0,\]
where the \(\Z\) represents the action of the cocore of the two-handle.
Let 
\[\hat{A}\colon \SpincX{W_{\Sigmasq}(K)} \to \Q\]
be defined by
\[\hat{A}(\sfk):= \frac{c_1(\sfk)(\Sigma) + \Sigmasq }{2}.\]
Equivalently if \(\hat{A}(\sfk)=i\) then \(c_1(\sfk)[\Sigma]=2i-\Sigma^2\).
Note that
\begin{align*}
\hat{A}(\sfk+n[D]) &= \frac{c_1(\sfk+n[D])(\Sigma)+\Sigmasq}{2} \\
&\frac{(c_1(\sfk)+2n\PD[D])(\Sigma) +\Sigmasq}{2} \\
&= \frac{c_1(\sfk)(\Sigma)+\Sigmasq}{2}+n\\
&= \hat{A}(\sfk)+n.
\end{align*}
As such, \(\hat{A}\) detects the action of the cocore and a choice of \(\sfk \in \SpincX{W_{\Sigmasq}(K)}\) are determiend by \(\sfk|_{Y}\) and \(\hat{A}\).

For \(\tf \in \SpincX{Y}\), define 
\begin{align*}
\Ac(Y,K,\tf)&:= \{\hat{A}(\sfk)\, |\, \sfk \in \SpincX{W_{\Sigmasq}(K)}\, \sfk|_{Y}= \tf\}\\
\Ac(Y,K) &= \{(\tf,i) \in \SpincX{Y}\times \Q\, |\, i \in \Ac(Y,K,\tf)\}.
\end{align*}
The above discussion guarantees that the map 
\[\phi\colon \SpincX{W_{\Sigmasq}(K)}\to \Ac(Y,K)\]
given by \(\phi(\sfk)= (\sfk|_Y,\hat{A}(\sfk))\) is a bijection.
However, we still need to identify the sets \(\Ac(Y,K,\tf)\) in order to know when \((\tf,i)\) actually corresponds to a \Spinc strcutre over \(W_{\Sigmasq}(K)\).

Prima facie, \(\Ac(Y,K,\tf)\) is a coset of \(\Z\) in \(\Q\) which may also depend on \(\Sigmasq\), but the following propositon shows that it does not.
In fact, the coset can be recovered from the \(\grf\) functor that we already will be tracking.
We have chosen to represent the pieces that go in to \(\Ac(Y,K,\tf)\) using varaitions on the letter \(A\), precisely because if \(\grf\) represents coset containing the even portion of the maslov grading, \(\Ac(Y,K,\tf)\) will represent the coset of \(\Z\) containing the Alexander grading. 

\begin{proposition}\label{prop:AlexCoincide}
If \(\Sigmasq\neq 0\), the cosets \(\Ac(Y,K,\tf)\) and \(\frac{\grf(Y,\tf)-\grf(Y,\tf+[K])}{2}\) of \(\Z\) in \(\Q\) coincide.
\end{proposition}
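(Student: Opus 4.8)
The plan is to read off both cosets from the second homology of the two--handle cobordism $W = W_{\Sigmasq}(K)\colon Y \to Y_{\Sigmasq}(K)$, which is a morphism in $\Cob^{\Spin^c}_{\Q}(3)$ precisely because $\Sigmasq \neq 0$ forces $Y_{\Sigmasq}(K)$ to be a rational homology sphere. Recall that $H_2(W;\Q)$ is spanned by $\Sigma$, that $\sigma(W)$ depends only on the sign of $\Sigmasq$, and that $c_1(\sfk)^2 = c_1(\sfk)(\Sigma)^2/\Sigmasq$ for every $\sfk \in \SpincX{W}$, so that $\grf(W,\sfk) = \frac14\bigl(c_1(\sfk)(\Sigma)^2/\Sigmasq - \sigma(W)\bigr)$ while $\hat{A}(\sfk) = \frac12\bigl(c_1(\sfk)(\Sigma)+\Sigmasq\bigr)$.

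First I would pin down the Poincar\'e--Lefschetz dual $\PD[C] \in H^2(W;\Z)$ of the core. Since $\partial C = -K$ lies on the incoming end $Y$, which appears in $\partial W$ with reversed orientation, the orientation--reversal sign in Poincar\'e duality gives $\PD[C]|_Y = \PD_{-Y}(-[K]) = [K]$, while $\PD[C]|_{Y_{\Sigmasq}(K)} = 0$ because $\partial C$ misses the outgoing end. Secondly, $\langle \PD[C],\Sigma\rangle = [C]\cdot[\Sigma]$ is the relative self--intersection of the core against the attaching framing, which by the very definition of the framing parameter equals $\Sigmasq$ (the usual identity $[\Sigma]^2 = [C]^2 = {}$framing coefficient, now in the rational setting).

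Given $\tf$, I would choose $\sfk \in \SpincX{W}$ with $\sfk|_Y = \tf$ (possible since restriction $\SpincX{W}\to\SpincX{Y}$ is onto), put $\tf' := \sfk|_{Y_{\Sigmasq}(K)}$, and set $\sfk' := \sfk + \PD[C]$, so that $\sfk'|_Y = \tf + [K]$ and $\sfk'|_{Y_{\Sigmasq}(K)} = \tf'$. Applying the functor $\grf$ to the morphisms $(W,\sfk)$ and $(W,\sfk')$, which share the codomain $(Y_{\Sigmasq}(K),\tf')$, and using that $\grf$ is conjugation invariant — immediate from $c_1(\bar{\sfk})^2 = c_1(\sfk)^2$ and the self--conjugacy of $(S^3,\cdot)$ — to dispose of the conjugation built into the definition of $\Cob^{\Spin^c}_{\Q}(3)$, subtracting the two morphism values in $[\Q:2\Z]$ gives
\[ \grf(Y,\tf) - \grf(Y,\tf+[K]) \ni \grf(W,\sfk') - \grf(W,\sfk) = \frac{c_1(\sfk')^2 - c_1(\sfk)^2}{4}. \]
Now $c_1(\sfk')(\Sigma) = c_1(\sfk)(\Sigma) + 2\Sigmasq$ by the second fact, whence $c_1(\sfk')^2 - c_1(\sfk)^2 = 4\bigl(c_1(\sfk)(\Sigma)+\Sigmasq\bigr) = 8\hat{A}(\sfk)$, so $\frac12\bigl(\grf(Y,\tf)-\grf(Y,\tf+[K])\bigr)$ contains $\hat{A}(\sfk)$. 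Since $\hat{A}(\sfk) \in \Ac(Y,K,\tf)$ and, as noted in the excerpt, $\Ac(Y,K,\tf)$ is a coset of $\Z$ in $\Q$, while $\frac12\bigl(\grf(Y,\tf)-\grf(Y,\tf+[K])\bigr)$ is visibly also a coset of $\Z$, sharing one element forces the two cosets to coincide. In particular the right--hand side makes no reference to $\Sigmasq$, so $\Ac(Y,K,\tf)$ is independent of it.

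The hard part is the sign and conjugation bookkeeping, not any deep geometric input. Three conventions must be threaded together consistently: the orientation--reversal sign making $\PD[C]|_Y = [K]$ rather than $-[K]$, because the incoming end of $W$ is $-Y$; the ``$-\tf$'' appearing in the definition of $\Cob^{\Spin^c}_{\Q}(3)$, absorbed by conjugation invariance of $\grf$; and the paper's choice of orientation for $K$ (opposite to that of \cite{InvolMappingCone}), which fixes $\partial C = -K$ and the sign of the $[K]$--action and so enters both $\sfk' = \sfk + \PD[C]$ and $\langle\PD[C],\Sigma\rangle = \Sigmasq$. Getting any single one of these wrong would shift the answer by $c_1(\sfk)(\Sigma)$ or by $\Sigmasq$, neither of which is an integer in general, so the real content of the argument is that they conspire correctly; the geometric statement $\langle\PD[C],\Sigma\rangle = \Sigmasq$ is itself standard.
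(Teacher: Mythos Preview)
Your proof is correct and takes a genuinely different route from the paper's. The paper introduces an auxiliary simply connected four--manifold $X$ bounding $Y$, glues $X$ to $W$ along $Y$, and uses that $c_1$ is characteristic on $X\cup_Y W$ to deduce that $\frac{c_1(\sfk_X\#\sfk_W)^2 - c_1(\sfk_X\#\sfk_W + [F\sqcup_K C])^2}{8}$ is an integer; splitting this integer into an $X$--piece and a $W$--piece yields the claim. You instead stay entirely inside $W$ and exploit the functoriality of $\grf$ that the paper has already established: comparing the two morphisms $(W,\sfk)$ and $(W,\sfk')$ with common target $(Y_{\Sigmasq}(K),\tf')$ immediately gives $\grf(W,\sfk')-\grf(W,\sfk)\in\grf(Y,\tf)-\grf(Y,\tf+[K])$, and the remaining computation $c_1(\sfk')^2-c_1(\sfk)^2=8\hat{A}(\sfk)$ is the same one the paper carries out. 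Your approach is more economical since it reuses the paper's own machinery rather than reintroducing an auxiliary filling, while the paper's version is slightly more self--contained in that it makes the underlying integrality mechanism (the characteristic property on a capped--off four--manifold) visible. The bookkeeping you flag---in particular $\PD[C]|_{Y_{\Sigmasq}(K)}=0$ so that both morphisms share a target, and the conjugation--invariance of $\grf$ absorbing the ``$-\tf$'' in the definition of $\Cob^{\Spin^c}_{\Q}(3)$---is handled correctly and is indeed where the care is needed.
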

\begin{proof}
Let \(X\) be some simply connected 4-manifold that \(Y\) bounds, let \(F\) be a surface in \(X\) that \(K\) bounds and let \(\tilde{F}\) be \(F\) capped off with \(-\Sigma_0\) in \(X\).
Let \(\sfk_X\) be a \Spinc structure on \(X\) that restricts to \(\tf\) on \(Y\) and let \(\sfk_W\) be defined similarly but with \(W\) instead of \(X\).
By \(c_1(\sfk_X\#\sfk_W)\) being a characteristic cohomology class, the following must be an integer
\begin{align*}
\frac{c_1(\sfk_X\#\sfk_W)^2-c_1(\sfk_X\#\sfk_W + [F\sqcup_K C])^2}{8} &= \frac{c_1(\sfk_X)^2-c_1(\sfk_X+[F])^2}{8} \\
& - \left( \frac{c_1(\sfk_W +[C])^2 -c_1(\sfk_W)^2}{8}\right) \\
&= \frac{\grf(\sfk)-\grf(\sfk+[K])}{2} \\
&- \left( \frac{c_1(\sfk_W +[C])^2 -c_1(\sfk_W)^2}{8}\right).
\end{align*}
As such it remains to show that
\[ \hat{A}(\sfk_W) = \frac{c_1(\sfk_W +[C])^2 -c_1(\sfk_W)^2}{8}.\]

In calculating \(c_1(\sfk_W+[C])^2\) we are doing so over \(\Q\) using the identification of \(H^2(W_{\Sigmasq}(K);\Q)\) with \(H^2(W_{\Sigmasq}(K), \bou W_{\Sigmasq}(K);\Q)\) that leverages that both \(Y\) and \(Y_{\Sigmasq}(K)\) are rational homology spheres, a fact that comes from the assumption that \(\Sigmasq\neq 0\).
In this identification, the action of \([C]\) becomes identified with  the action of \(\Sigma\).
As such,
\begin{align*}
\frac{c_1(\sfk_W +[C])^2 -c_1(\sfk_W)^2}{8} &= \frac{\left(c_1(\sfk_W)+2\PD[\Sigma]\right)^2 -c_1(\sfk_W)^2}{8} \\
&=\frac{4c_1(\sfk_W)[\Sigma] +4\Sigmasq}{8} \\
&=\hat{A}(\sfk_W).
\end{align*}

\end{proof}
\begin{remark}
The statement is also true if \(\Sigmasq =0\), especially since in that case our knot was null homologous, and both cosets must be \(\Z\).
However, since we care primarily about \(\Sigmasq<0\) that is outside of our focus.
\end{remark}

Finally, we will use our description of \(\SpincX{W_{\Sigmasq}(K)}\) in terms of \(\Ac(Y,K)\) to understand \(\SpincX{Y_{\Sigmasq}(K)}\).
In particular a similar argument as used on \(Y\) works on \(Y_{\Sigmasq}(K)\) and we have a short exact sequence of affine spaces
\[ 0\to \Z \to \SpincX{W_{\Sigmasq}(K)}\to \SpincX{Y_{\Sigmasq}(K)}\to 0,\]
where this time \(\Z\) represents the action of the core rather than the cocore.
Here note that for \(\sfk \in \SpincX{W_{\Sigmasq}(K)}\)
\begin{align*}
(\sfk+n[C])|_Y&= \sfk|_Y+n[K]\\
\hat{A}(\sfk+n[C]) &= \frac{c_1(\sfk+n[C])[\Sigma] +\Sigmasq}{2}\\
&= \frac{c_1(\sfk)+2n\PD[C]\cdot[\Sigma] +\Sigmasq}{2} \\
&=\frac{c_1(\sfk)+\Sigmasq}{2}+n\Sigmasq \\
&=\hat{A}(\sfk)+n\Sigmasq.
\end{align*}
As such, \(\phi\) descends to a well defined map 
\[\phi_{\Sigmasq} \colon\SpincX{Y_{\Sigmasq}(K)} \to \Ac(Y,K)/\sim\]
where \(\sim\) is the relation given by
\[(\tf,i) \sim (\tf+n[K],i+n\Sigmasq).\]
We will denote elements of \(\Ac(Y,K)\) with parenthetical \((\tf,i)\) and drop the paratheticals as they are used as in puts in other functions, while elements of \(\Ac(Y,K)/\sim\) with square brackets that are never dropped \([\tf,i]\) to emphasize that the later is an equivalence class, though we will also attempt to provide enough context to disambiguate whether we are representing an element of \(\SpincX{W_{\Sigmasq}(K)}\) or \(\SpincX{Y_{\Sigmasq}(K)}\).

\subsection{A diagram of \Spinc structures}\label{subsec:Ic}
The \Spinc structures over \(W_{\Sigmasq}(K)\) can be arranged into a small category \(\Ic_{Y,K,\Sigmasq}\), which will not only be useful in describing the functor \(\Xb_{\Sigmasq}\) and its specializations but also has an interpretation in terms of the interactions between knot Floer homology and Heegaard Floer homology.
In particular the objects of \(\Ic_{Y,K,\Sigmasq}\) are a disjoint union of two copies of \(\Ac(Y,K)\) using the the additons of \(a\) and \(b\) to the tupple to distinguish them. 
For morphisms we will have for every \((\tf,i) \in \Ac(Y,K)\),
\begin{align*}
\lambda_{\tf,i}\colon& (\tf,i,a)\to (\tf,i,b) \\
\rho_{\tf,i}\colon &(\tf,i,a)\to (\tf+[K],i+\Sigmasq,b).
\end{align*}
Additionally note that \(\Ic_{Y,K,\Sigmasq}\) decomposes as a direct sum of the categories \(\Ic_{Y,K,\Sigmasq,[\tf,i]}\).

The \((\tf,i,b)\) should be thought of as recording the different Heegaard Floer morphisms for each \((W_{\Sigmasq}(K),\sfk)\). 
Meanwhile the \((\tf,i,a)\) can be (loosely) thought of as providing a particular way to manifest the relation between \((\tf,i,b)\) and \((\tf+[K],i+\Sigmasq,b)\).
The construction of \(\Xb_{\Sigmasq}\) and its specializations will directly use this category, so when we discuss various actions on \(\SpincX{W_{\Sigmasq}(K)}\), it will help for us to view those as actions on \(\Ic_{Y,K,\Sigmasq}\) 
As an example, not only can the cocore \(D\) act on \(\SpincX{W_{\Sigmasq}(K)}\), but it also acts on \(\Ic_{Y,K,\Sigmasq}\) inducing a shifting automorphism \(\Xi\) which simply changes every index of \(i\) to \(i+1\), and this action takes \(\Ic_{Y,K,\Sigmasq,[\tf,i]}\) to \(\Ic_{Y,K,\Sigmasq,[\tf,i]+[\mu]}\).

\subsection{The specified grading shifts}\label{subsec:gradShift}

At this point we will attempt to understand the functions \(\grf(\phi^{-1}(\tf,i))\) and \(\grf(\phi_{\Sigmasq}^{-1}([\tf,i]))\) when \(\Sigmasq<0\).
We will denote the later as \(\grf([\tf,i])\)  taking values in \([\Q:2\Z]\), while we will find the former depends only on \(i\) and \(\Sigmasq\) but not \(\tf\) and thus write it as \(\grf_{\Sigmasq}(i)\).
The later will be directly necessary to identify the codomain of our functors, but given the significance of each \((\tf,i)\) in \(\Ic_{Y,K,\Sigmasq}\), we will end up directly using each \(\grf_{\Sigmasq}(i)\).
To perform this calculation first note that for \(K \in H^2(W_{\Sigmasq}(K);\Q)\), \(K^2= \frac{K(\Sigma)^2}{\Sigmasq}\).
Thus, for \(\sfk\in \SpincX{W_{\Sigmasq}(K)}\) with \(\hat{A}(\sfk)=i\), we have
\begin{align*}
\grf_{\Sigmasq}(i)&:=\grf(\sfk) \\
&= \frac{\frac{c_1(\sfk)[\Sigma]^2}{\Sigmasq} +1}{4} \\
&=\frac{c_1(\sfk)[\Sigma]^2+\Sigmasq}{4\Sigmasq}\\
&=\frac{\left(2i-\Sigmasq\right)^2+\Sigmasq}{4\Sigmasq}.
\end{align*}

\subsection{Action of conjugation on specification}\label{subsec:SpecConj}

We will need to understand how two different types of conjugation of \Spinc structures affect our method of specification.
In particular, there will be \emph{conjugation} \(\overline{\sfk}\) and \emph{\(C\)-translated conjugation} \(\overline{\sfk+[C]}\).

Starting with the usual conjugation, first observe that conjugation respects restriction so \(\overline{\sfk}|_Y=\overline{\sfk|_Y}\).
Additionally, for \(\sfk \in \SpincX{W_{\Sigma^2}(K)}\)
\begin{align*}
\hat{A}(\overline{\sfk}) &= \frac{c_1(\overline{\sfk})[\Sigma]+\Sigmasq}{2} \\
&= \frac{-c_1(\sfk)[\Sigma]+\Sigmasq}{2} \\
&= \frac{-(2\hat{A}(\sfk)-\Sigmasq)+\Sigma^2}{2}\\
&=-\hat{A}(\sfk)+\Sigmasq.
\end{align*}
Therefore, conjugation takes the \Spinc structure specified by \((\tf,i)\) to \((\overline{\tf},-i+\Sigma^2)\).

Now consider \(C\) translated conjugation.
This also plays well with restriction to \(Y\) in that \(\overline{\sfk+[C]}|_Y= \overline{\sfk|_Y +[K]}\).
Furthermore,
\begin{align*}
\hat{A}(\overline{\sfk+[C]}) &= \frac{c_1(\overline{\sfk+[C]})[\Sigma] +\Sigmasq}{2} \\
&=\frac{(-c_1(\sfk)-2\PD[C])(\Sigma) +\Sigmasq}{2} \\
&=\frac{-(2\hat{A}(\sfk)-\Sigma^2)-2\Sigmasq+\Sigmasq}{2} \\
&= -\hat{A}(\sfk).
\end{align*}
Hence, \(C\)-translated conjugation takes the \Spinc structure specified by \((\tf,i)\) to the \Spinc structure specified by \((\overline{\tf+[K]},-i)\).

Together these two types of conjugation define an automorphism \(\psi\) of \(\Ic_{Y,K,\Sigmasq}\), where
\begin{align*}
\psi(\tf,i,a)&=(\overline{\tf+[K]},-i,a) \\
\psi(\tf,i,b)&=(\overline{\tf},\Sigmasq-i,b)\\
\psi(f_{\tf,i})&=g_{\overline{\tf+[K]},-i} \\
\psi(g_{\tf,i})&=f_{\overline{\tf+[K]},-i}.
\end{align*}

Note that because \(c_1(\sfk)^2=c_1(\overline{\sfk})^2\), we have that for all cobordisms in \(\Cob^{\Spin^c}_{\Q}\), \(\grf(Y,\overline{\tf})=\grf(Y,\tf)\).
Furthermore, acting by \([K]\)-translated conjugation on \((Y,\tf)\) will take the pair \(\left(\grf(Y,\tf),\grf(Y,\tf+[K])\right)\) to the pair
\[\left(\grf(Y,\overline{\tf+[K]}),\grf(Y,\overline{\tf})\right)=\left(\grf(Y,\tf+[K]),\grf(Y,\tf)\right)\].

\section{Quasi-Categories to model knot lattice} \label{sec:quasicats4CFK}

We will now define quasi-categories \(\Cc_{Y,[K]}^{\Gamma}\) and \(\Cc_{Y,[K]}^{\Jk}\) to model the additional information needed for surgery and the involutive data.
Note that constructing the quasi-category \(\Cc_{Y,[K]}^{\Jk}\) will be much more intensive than constructing \(\Cc_{Y,[K]}^{\Gamma}\), both due to modeling the symmetries of the involutive data and because the involutions live in different categories.

\subsection{Quasicategories for filtered and doubly-filtered spaces}\label{subsec:FilteredsSet}

Before constructing \(\Cc_{Y,[K]}^{\Gamma}\) and \(\Cc_{Y,[K]}^{\Jk}\), we will need to construct two more specific quasi-categories \(\Cc_{Y}\) and \(\Cc_{Y,[K]}\) which will model the singly filtered and doubly-filtered spaces that lattice homology and knot lattice homology produces.

In particular, from our discussion in Section \ref{subsec:Cosets}, the functor \(\grf\) allows us to associate to a rational homology three-sphere with \Spinc structure \((Y,\tf)\) a coset of \(2\Z\) in \(\Q\).
Note that as a subset of \(\Q\), the set \(\grf(Y,\tf)\) inherits a poset structure, which can be interpreted as a small category enriched over \(\sSet_{\Quil}\) discretely.

Given a poset \(P\) consider the model category \(\Filt_{P}:=[P^{op},\sSet_{\Quil}]_{\proj}\).
Note that we are taking contravariant functors, which may reflect the opposite poset structure than one would expect but is more natural to the context of Heegaard Floer homology.
We will often refer to objects in this category as \(X_{\ast}\) to emphasize the diagrammatic nature, where \(X_n\) represents the object at height \(n\) and \(X:=\colim_{P}X_{\ast}\).
While as a category \(\Filt_{P}\) contains all diagrams of shape \(P^{op}\), the cofibrant objects in this model category are particularly the simplicial sets with a height-induced filtration of shape \(P\), i.e. admitting the following description.
For a filtered simplical set \(X_{\ast}\) each simplex \(\sigma\) of \(X\) has been assigned a height \(h(\sigma)\in P\) so that for any other simplex \(\sigma'\) in the boundary of \(\sigma\), \(h(\sigma)\leq h(\sigma')\).
To recover the diagram associated to this filtration one takes super level sets and the inclusions between them.
Morphisms in this context are filtered if they are increasing with respect to the height function, i.e. \(f\colon X\to Y\) is filtered if for simplices \(\sigma\in X\), \(h_X(\sigma)\leq h_Y(f(\sigma))\).

In the case of \(\Filt_{P\times Q}\), we can view our height function on cofibrant objects as coming from two height functions: one to \(P\) and one to \(Q\).
A morphism of underlying simplicial sets \(f\colon X \to Y\) is in \(\Filt_{P\times Q}\) if it is filtered with respect to both height functions.
Hence, letting \(M^{\circ}\) represent the full subcategory of fibrant-cofibrant objects in a model category \(M\), \(\Filt_{P\times Q}^{\circ}\) is the pullback of \(\Filt_P^{\circ}\) and \(\Filt_Q^{\circ}\) over \(\sSet_{\Quil}^{\circ}\).
As such, we call objects in \(\Filt_{P\times Q}^{\circ}\) doubly filtered simplicial sets.

\begin{remark}
If one prefers one can instead use another simplicially enriched cartesian model category eqivalent to \(\sSet_{\Quil}\) such as \(\Top_{\Quil}\).
In fact, the lattice and knot lattice spaces are best described as cubical sets, and cubical sets with connections (an extra form of degeneracy) form a cartesian model category Quillen equivalent to \(\sSet_{\Quil}\).
However, in the face of \(\infty\)-category theory’s current preference for simplicial sets, using cubical sets would be more trouble than it is currently worth.
\end{remark}
\begin{remark}
Additionally one could instead jump directly to using infinity categories with \([N(P^{op}),\Rf\sSet_{\Quil}^{\circ}]\), and Proposition 4.2.4.4 of \cite{HTT} ensures this is weakly equivalent as a quasi-category to \(\Rf\left([P^{op},\sSet_{\Quil}]^{\circ}\right)\), which we will be using.
Our objects of interest all land in \([P^{op},\sSet_{\Quil}]^{\circ}\) and the model category structure provides extra leverage for constructions so this is what we will use.
However, this observation is relevant interpolating the discussion here with Zemke's work in \cite{ZemkeWhat}, which uses \(A_{\infty}\)-modules, a structure behaving much like homotopy coherent diagrams, in a way that does not distinguish between the morphisms from the poset and the other morphisms we would like to model in \(\Cc_{Y,[K]}^{\Gamma}\) or \(\Cc_{Y,[K]}^{\Jk}\).
\end{remark}

We will now discuss some Quillen adjunctions on the model categories \(\Filt_{[q_1]}\) and \(\Filt_{[q_1,q_2]}\) which will be helpful for us later.
The discussion here will parallel the discussion in Section 3 of \cite{knotLatticeInvariance}.
However, unlike in \cite{knotLatticeInvariance} to verify that these functors play well with the model category structure, we have used defined them using Kan extensions, and thus need to then reverify that they do indeed play as intended.
In particular given \(q_1,q_2,q_3,q_4\in \Q\), we have that the action of \(q_3\) on \(\Q/2\Z\) and \((q_3,q_4)\) on \(\Q\times \Q/(2\Z\times 2\Z)\) provide functors which for \(X_{\ast} \in \Filt_{[q_1]}\) and \(Y_{\ast\ast} \in \Filt_{[q_1,q_2]}\),
\begin{align*}
X[q_3]_q &= X_{q-q_3} \\
Y[q_3,q_4]_{q,q'}&= Y_{q-q_3,q'-q_4}.
\end{align*}
Note that this is the pushforward rather than the pullback, and if \(h_X\) was the height function on \(X_{\ast}\) then the height function on \(X_{\ast}[q_3]\) is \(h_X+q_3\).

Given cosets \([q_1],[q_2]\) cosets of \(2\Z\) in \(\Q\), then the canonical projections \(p_1,p_2\) from \([q_1]\times[q_2]\) to \([q_1]\) and \([q_2]\) respectively induce left Quillen functors \(p_{1,!},p_{2,!}\) using their left-Kan extensions.
There is a left Quillen bifunctor
\[\times\colon \Filt_{[q_1]}\times \Filt_{[q_2]}\to \Filt_{[q_1,q_2]}\]
defined by \((X\times Y)_{(q_1,q_2)}=X_{q_1}\times X_{q_2}\).
By composing \(\times\) with \(+_!\) we get a new left Quillen bifunctor
\[\otimes \Filt_{[q_1]}\times \Filt_{[q_2]}\to \Filt_{[q_1+q_2]}.\]
Note that one can equivalently define this functor for doubly filtered simplical sets to get
\[\otimes \Filt_{[q_1,q_2]}\times \Filt_{[q_3,q_4]}\to \Filt_{[q_1+q_3,q_2+q_4]}.\]
Given \(i \in \frac{[q_1]-[q_2]}{2}\) let \(A_i\colon [q_1] \to [q_1,q_2]\) given by \(A_i(q)=(q,q-2i)\).
The right Quillen functor \(A_i^{\ast}\) can be seen as picking out the Alexander grading \(i\) portion of \([q_1,q_2]\).

Finally, define
\[\sigma\colon \Filt_{[q_1,q_2]} \to \Filt_{[q_2,q_1]}\]
as induced by the map \((\Q\times \Q)/(2\Z\times 2\Z)\) swapping the first and second factors.
In terms of height functions on fitlered simplicial sets, \(\sigma\) presrves the underlying simplicial set but turns the height function \((h_1,h_2)\) into \((h_2,h_1)\).
Given \(X_{\ast\ast}\in \Filt_{[q_1,q_2]}\) and \(Y_{\ast\ast}\in \Filt_{[q_2,q_1]}\), a morphism \(f\colon X\to Y\) of the underlying simplicial sets is called \emph{skew-filtered} if \(f\colon X_{\ast\ast}\to \sigma(Y_{\ast\ast})\) is filtered.

The following propositions allow us to similarly interpret the other functors on filtered simplicial sets in terms of the height functions on those simplicial sets.
For example, 
\begin{proposition}\label{prop:forgeth1}
When evaluated on a doubly filtered simplicial set \(X_{\ast\ast}\in \Filt_{[q_1,q_2]}\) with height functions \(h_1\) and \(h_2\), the functors \(p_{1,!}\) and \(p_{2,!}\) have the effect of preserving the underlying simpicial set remembering respectively \(h_1\) and \(h_2\).
\end{proposition}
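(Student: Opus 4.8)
The plan is to unwind the definitions of the left Kan extensions defining $p_{1,!}$ and $p_{2,!}$ and check what they do on a cofibrant object of $\Filt_{[q_1,q_2]}$, i.e.\ on an honest doubly filtered simplicial set $X_{\ast\ast}$ with height functions $h_1,h_2$. Recall that a cofibrant object of $\Filt_{[q_1,q_2]}=[([q_1]\times[q_2])^{op},\sSet_{\Quil}]_{\proj}$ is recovered from a simplicial set $X$ together with a pair of height functions by taking superlevel sets $X_{(q,q')}=\{\sigma\in X\mid h_1(\sigma)\ge q,\ h_2(\sigma)\ge q'\}$ and the evident inclusions. Since $p_1\colon [q_1]\times[q_2]\to[q_1]$ is the projection, the left Kan extension of $X_{\ast\ast}$ along $p_1^{op}$ is computed pointwise as $(p_{1,!}X)_q=\colim_{(q,q')\in p_1^{op}/q} X_{(q,q')}$, and I would identify this colimit.

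First I would observe that the comma category $p_1^{op}/q$ has as objects pairs $(q'',q')$ with $q''\ge q$ in $[q_1]$ and $q'\in[q_2]$, with a terminal-in-the-second-coordinate behavior: for fixed $q''$, the subposet of $q'$'s is $[q_2]$ itself, which is filtered (downward directed in the $op$ convention), so the colimit over that slice is a directed colimit of the nested superlevel sets $X_{(q'',q')}$ as $q'$ decreases through $[q_2]$. That directed colimit is just $X_{q''}^{h_1}:=\{\sigma\mid h_1(\sigma)\ge q''\}$, the $h_1$-superlevel set with no constraint on $h_2$, because every simplex has \emph{some} finite $h_2$-height and hence appears once $q'$ is small enough. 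Then I would take the remaining colimit over $q''\ge q$, which is again directed (the $q''=q$ term is already terminal in that slice in the $op$ convention), yielding $X_q^{h_1}$. So $(p_{1,!}X)_q=X_q^{h_1}$, which is exactly the diagram obtained from the underlying simplicial set $X$ equipped only with the height function $h_1$; the symmetric computation with the roles of the two factors exchanged gives the statement for $p_{2,!}$.

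To make this fully rigorous I would (i) note that $X_{\ast\ast}$ being cofibrant (hence of the superlevel-set form, as recalled in the paragraph on $\Filt_{P\times Q}$) is what lets me replace the abstract diagram by superlevel sets, and that $p_{1,!}$ being left Quillen means it suffices to check the formula on cofibrant objects; (ii) justify interchanging the iterated colimit, using that colimits commute with colimits and that the relevant slice categories are filtered so the colimits are computed as ordinary (directed) unions of subcomplexes of $X$; and (iii) check that the resulting diagram $q\mapsto X_q^{h_1}$ is itself the cofibrant replacement-free superlevel-set diagram of $(X,h_1)$, so that "remembering $h_1$" is literally the correct description.

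The main obstacle I anticipate is the bookkeeping in step (ii): one must be careful that the comma category $p_1^{op}/q$, while not itself a product, is cofinal-equivalent to the simpler poset $[q_2]$ (for the inner colimit) and then to a point (for the outer one), and that the relevant cofinality statements hold in the $op$ convention being used — it is easy to get the direction of the inequalities backwards. Once the cofinality/filteredness is pinned down, the identification of the colimit with the plain $h_1$-superlevel set, and hence with the object of $\Filt_{[q_1]}$ underlying $(X,h_1)$, is routine. I would also remark in passing that this is consistent with the description of $\sigma$ and of $\otimes=+_!\circ\times$ given just above, since all of these functors are defined via left Kan extension along maps of the indexing posets and are computed on cofibrant objects by the same superlevel-set manipulations.
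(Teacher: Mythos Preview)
Your proposal is correct and follows essentially the same approach as the paper: both unwind the pointwise left Kan extension (the paper via the coend formula, you via the comma-category colimit) and reduce to showing that $(p_{1,!}X)_q=\colim_{[q_2]}X_{q,\ast}$ is the $h_1$-superlevel set. Your extra care about cofinality and the iterated colimit is a slightly more detailed version of the paper's one-line observation that ``each $X_{q_1,q_2}$ will become identified with its image in $X_{q,q_2}$'' and that the remaining colimit picks out exactly the simplices with $h_1(\sigma)\ge q$.
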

\begin{proof}
We will prove this for \(p_{1,!}\) with \(p_{2,!}\) going similarly.
Writing \(p_{1,!}\) as a coend and using the definition of the hom space of \([q_1]\) we have
\[p_{1,!}(X)_q= \coprod_{\substack{q_1\geq q_3\geq q \\ q_2 \geq q_4}}X_{q_3,q_4} \rightrightarrows \coprod_{q_1\geq q,q_2} X_{q_1,q_2},\]
where the two maps would be the inclusion of \(X_{q_3,q_4}\) into \(X_{q_1,q_2}\) and sending \(X_{q_3,q_4}\) to itself.
However, note that all each \(X_{q_1,q_2}\) will become identified with its image in \(X_{q,q_2}\), and the rest of the identifications realize \(p_{1,!}(X)_q=\colim_{[q_2]}X_{q,\ast}\).
That this underlying simplicial set of \(p_{1,!}(X)\) is the same underlying simplicial set of \(X_{\ast\ast}\) comes down to colimits commuting. 
Additionally, for a given simplex \(\sigma\) of \(X\), the inclusion \(i_{q_1,q_1}\colon X_{q_1,q_2} \to X\) has \(\sigma\) in its image if and only if \((q_1,q_2)<(h_1,h_2)\), and \(\sigma\) will be in the image of \(\colim_{[q_2]}X_{q,\ast}\) if and only if \(q<h_1\) as needed.
\end{proof}

\begin{proposition}\label{prop:heightSum}
For \(X_{\ast}\in \Filt_{[q_1]}\) and \(Y_{\ast} \in \Filt_{[q_2]}\) filtered spaces with height functions \(h_X\) and \(h_Y\),\(X_{\ast}\otimes Y_{\ast}\) has underlying simplicial set \(X\times Y\) and the height \(h\) on a simplex is
\[h(\sigma_1,\sigma_2)=h_{X}(\sigma_1)+h_Y(\sigma_2).\]
 Similarly, for doubly filtered simplicial sets the height function on \(X_{\ast\ast}\otimes Y_{\ast\ast}\) is induced by adding the height functions on the product \(X\times Y\). 
\end{proposition}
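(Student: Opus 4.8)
The plan is to mirror the coend computation carried out in the proof of Proposition \ref{prop:forgeth1}. First I would recall that by construction $X_\ast\otimes Y_\ast = +_{!}(X_\ast\times Y_\ast)$, where $X_\ast\times Y_\ast\in\Filt_{[q_1,q_2]}$ is the functor with $(X\times Y)_{(a,b)}=X_a\times Y_b$ and $+_{!}$ is the left Kan extension along the order-preserving addition map $+\colon[q_1]\times[q_2]\to[q_1+q_2]$. Since $X_\ast$ and $Y_\ast$ are cofibrant (they are filtered simplicial sets with honest height functions), $X_\ast\times Y_\ast$ is cofibrant because $\times$ is a left Quillen bifunctor, and $+_{!}$ is left Quillen; hence $X_\ast\otimes Y_\ast$ is again a genuine filtered simplicial set, so it makes sense to speak of its underlying simplicial set and its height function.

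Next I would write the left Kan extension pointwise, exactly as in the proof of Proposition \ref{prop:forgeth1}. For $c$ in the coset $[q_1+q_2]$ one gets
\[(X\otimes Y)_c \;=\; \colim_{a+b\geq c}\bigl(X_a\times Y_b\bigr),\]
the colimit being over the comma category whose objects are pairs $(a,b)$ with $a+b\geq c$ and whose morphisms are induced by the canonical maps $X_a\times Y_b\to X_{a'}\times Y_{b'}$ for $a\geq a'$, $b\geq b'$. Because $X_\ast$ and $Y_\ast$ are cofibrant, $X_a=\{\sigma\in X : h_X(\sigma)\geq a\}$ and $Y_b=\{\tau\in Y : h_Y(\tau)\geq b\}$ are the corresponding super level sets, and every transition map above is the evident inclusion of sub-simplicial-sets of $X\times Y$. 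The point is then to identify this colimit of subobjects with their union inside $X\times Y$: for a fixed simplex $(\sigma,\tau)$ the indices $(a,b)$ with $a+b\geq c$ for which $X_a\times Y_b$ contains $(\sigma,\tau)$ form the set $\{(a,b) : a\leq h_X(\sigma),\ b\leq h_Y(\tau),\ a+b\geq c\}$, which is empty unless $h_X(\sigma)+h_Y(\tau)\geq c$ and, when nonempty, has $(h_X(\sigma),h_Y(\tau))$ as an initial object and so is connected. Hence the canonical map from the colimit to $X\times Y$ is a monomorphism with image $\{(\sigma,\tau):h_X(\sigma)+h_Y(\tau)\geq c\}$, giving
\[(X\otimes Y)_c=\{(\sigma,\tau)\in X\times Y : h_X(\sigma)+h_Y(\tau)\geq c\}.\]
Letting $c$ decrease (the cosets are unbounded below) shows the underlying simplicial set is $X\times Y$, and the displayed super level sets exhibit the height function as $h(\sigma,\tau)=h_X(\sigma)+h_Y(\tau)$. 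For the doubly filtered statement I would run the identical computation in each of the two gradings simultaneously — equivalently, compose with the two forgetful functors $p_{1,!}$ and $p_{2,!}$ of Proposition \ref{prop:forgeth1} and apply the single-grading case twice — to conclude the height function is the coordinatewise sum.

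I expect the only real obstacle to be the middle step: making sure that the pointwise colimit computing the left Kan extension genuinely is the union of the super level sets, rather than a simplicial set carrying spurious identifications or extra components. This is precisely what the connectedness-of-fibers observation rules out; the remainder is bookkeeping with the coset indexing and with the directions of the transition maps, which is routine once the colimit has been set up as above.
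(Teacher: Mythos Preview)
Your argument is correct and follows essentially the same route as the paper's proof: both decompose $\otimes$ as $+_{!}\circ\times$ and compute $+_{!}$ via the pointwise colimit formula for a left Kan extension, mirroring the coend computation from Proposition~\ref{prop:forgeth1}. Your treatment is in fact more careful than the paper's---you make explicit the connectedness-of-fibers step (the observation that the indices containing a given $(\sigma,\tau)$ have $(h_X(\sigma),h_Y(\tau))$ as an initial object), which is exactly what justifies identifying the colimit with the union inside $X\times Y$; the paper leaves this implicit in the phrase ``with their overlap identified together as it is in $Z$.''
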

\begin{proof}
We focus on the singly filtered case as the doubly filtered case follows similarly.
We break this down into two statments, one about \(\times\) having underlying siplicial set \(X\times Y\) and height \((h_X,h_Y)\) and one about \(+_!\) preserving the underlying simplicial set taking a height function \((h_1,h_2)\) to \(h_1+h_2\).
The first statement follows from the colimits in question being directed and the second follows similarly to Proposition \ref{prop:forgeth1}, as it comes from a larger statement that for left kan extensions of poset maps \(\phi\)  the height function changes by applying \(\phi\) to the previous height funciton.
For \(Z_{\ast} \in \Filt_{[q_1,q_2]}\) the coend to describe \(+_!(Z)_{q}\) will be the same as taking the coproduct of \(Z_{q_1,q_2}\) with \(q_1+q_2=1\) with their overlap identified together as it is in \(Z\). 
\end{proof}

\begin{proposition}
Given a doubly filtered simplicial set \(X_{\ast\ast}\in \Filt_{[q_1,q_2]}\) with height functions \(h_{X,1}\) and \(h_{X,2}\), \(A_i^{\ast}(X_{\ast\ast})\) will be a filtered simplicial set with height function \(h(\sigma)=\min\{h_{X,1},h_{X,2}+2i\}\).
\end{proposition}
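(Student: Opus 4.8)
The plan is to compute $A_i^*(X_{**})$ by hand, which is more direct than the analogous computations in Proposition~\ref{prop:forgeth1} and Proposition~\ref{prop:heightSum}: since $A_i^*$ is a pullback, i.e.\ precomposition with the poset map $A_i\colon [q_1]\to [q_1,q_2]$, there is no coend to evaluate and the answer can be read off degreewise. First I would fix the standard description of a doubly filtered simplicial set in terms of superlevel sets: write $X_{m,n}\subseteq X$ for the subcomplex $\{\sigma : h_{X,1}(\sigma)\geq m,\ h_{X,2}(\sigma)\geq n\}$ of simplices sitting above height $(m,n)$, so that $X_{**}$ is the diagram $(m,n)\mapsto X_{m,n}$ and its underlying simplicial set is $X=\colim X_{**}$. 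By definition of precomposition, $A_i^*(X_{**})_q = X_{A_i(q)} = X_{q,\,q-2i}$.

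The heart of the argument is then a one-line rewriting. Because $i\in\frac{[q_1]-[q_2]}{2}$, the shifted value $h_{X,2}(\sigma)+2i$ lies in the coset $[q_1]$ and so is directly comparable with $h_{X,1}(\sigma)$ inside $\Q$; moreover the two conditions $h_{X,1}(\sigma)\geq q$ and $h_{X,2}(\sigma)\geq q-2i$ cutting out $X_{q,q-2i}$ are together equivalent to the single inequality $\min\{h_{X,1}(\sigma),\,h_{X,2}(\sigma)+2i\}\geq q$. Hence $A_i^*(X_{**})_q$ is exactly the superlevel subcomplex of $X$ at height $q$ for the function $h(\sigma):=\min\{h_{X,1}(\sigma),\,h_{X,2}(\sigma)+2i\}$, which is the claimed description — provided $h$ really is a height function defining a filtered simplicial set over $[q_1]$.

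To finish I would verify that last point. If $\sigma'$ lies in the boundary of $\sigma$ then $h_{X,1}(\sigma)\leq h_{X,1}(\sigma')$ and $h_{X,2}(\sigma)\leq h_{X,2}(\sigma')$, hence also $h_{X,2}(\sigma)+2i\leq h_{X,2}(\sigma')+2i$, and since $\min$ is monotone in each argument $h(\sigma)\leq h(\sigma')$; so $h$ is a legitimate height function. For the underlying simplicial set, the subcomplexes $X_{q,q-2i}$ are nested, grow as $q$ decreases through $[q_1]$, and exhaust $X$ (each simplex $\sigma$ lies in $X_{q,q-2i}$ once $q\leq h(\sigma)$, which holds for all sufficiently small $q$ since $[q_1]$ is unbounded below in $\Q$), so $\colim_q A_i^*(X_{**})_q = X$. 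The computation itself is routine; the only things needing care are keeping the variances straight (that $A_i^*$ is a strict precomposition and that superlevel sets are subcomplexes of the total space), the coset bookkeeping that makes $h_{X,1}$ and $h_{X,2}+2i$ comparable, and the mild observation that although $A_i^*$ is only a right Quillen functor the explicit degreewise formula above shows it still carries height-filtered (hence cofibrant) objects to height-filtered ones — which is really what the statement is asserting.
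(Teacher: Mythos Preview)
Your proposal is correct and follows essentially the same route as the paper: both compute $A_i^*(X_{**})_q = X_{q,q-2i}$ directly from the definition of precomposition and then rewrite the pair of inequalities $h_{X,1}(\sigma)\geq q$, $h_{X,2}(\sigma)\geq q-2i$ as the single condition $\min\{h_{X,1}(\sigma),h_{X,2}(\sigma)+2i\}\geq q$. The paper phrases the ``underlying simplicial set is $X$'' step via coinitiality of $A_i$ in $[q_1,q_2]$ rather than your direct exhaustion argument, and it does not spell out the boundary-monotonicity check for $h$ that you include, but these are cosmetic differences; the substance is the same.
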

\begin{proof}
As \(A_i^{\ast}\) is right rather than left Quillen it is not guarunteed to preserve cofibrancy in the projective model structure. 
However, it is preserved that all of the structure maps of the diagram are monomorphisms and thus some filtered space, and since \(A_i\) is coinitial in \([q_1,q_2]\), \(A_i^{\ast}(X_{\ast\ast})\) will have the same underlying simplicial set as \(X\) following a similar argument to that used for \(p_{1,!}\) in  Proposition \ref{prop:forgeth1}.
Additionally, a simplex \(\sigma\) is in \(A_i^{\ast}(X_{\ast\ast})_q\) if \(h_{X,1}(\sigma)\geq q\) and \(h_{X,2}(\sigma)\geq q-2i\).
However, this is equivalent to both \(h_{X,1}(\sigma)\geq q\) and \(h_{X,2}(\sigma)+2i\geq q\) or \(\min\{h_{X,1}(\sigma),h_{X,2}(\sigma)+2i\}\geq q\), as needed.
\end{proof}

\begin{proposition}
Given a singly filtered simplicial set \(X_{\ast} \in \Filt_{[q_1]}\) with height function \(h\) and \(i \in \left[\frac{q_1-q_2}{2}\right]\), we have \(A_{i,!}(X_{\ast})\) has the same underlying simplicial set as \(X_{\ast}\) and has height function \(\tilde{h}\) given by
\[\tilde{h}(\sigma)= (h(\sigma),h(\sigma)-2i).\]
\end{proposition}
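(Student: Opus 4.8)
The statement concerns $A_{i,!}$, the left Kan extension along $A_i\colon [q_1]\to[q_1,q_2]$, where $A_i(q)=(q,q-2i)$. The plan is to run the same strategy used for $p_{1,!}$ in Proposition \ref{prop:forgeth1} and for $+_!$ in Proposition \ref{prop:heightSum}: namely, write $A_{i,!}(X_\ast)$ explicitly as a coend over $[q_1]$, identify the resulting underlying simplicial set, and then read off the height function from which pieces $X_q$ a given simplex belongs to. Concretely, for $X_\ast\in\Filt_{[q_1]}$ with height function $h$, the coend description gives
\[
A_{i,!}(X_\ast)_{q,q'} \;=\; \coprod_{\substack{q_3\in[q_1]\\ q_3\geq q,\ q_3-2i\geq q'}} X_{q_3}\;\Big/\!\sim,
\]
where the identifications come from the structure maps of $X_\ast$. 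The key observation, exactly as in Proposition \ref{prop:forgeth1}, is that $A_i$ is coinitial in $[q_1,q_2]$ (its image is cofinal downward: any $(q,q')$ with $q'\leq q-2i$ receives a map from $A_i(q)$, and any $(q,q')$ with $q'>q-2i$ receives a map from $A_i(q')$... more carefully, from $A_i(q''):=(q'',q''-2i)$ for $q''$ small enough), so the colimit $\colim_{[q_1,q_2]}A_{i,!}(X_\ast)$ agrees with $\colim_{[q_1]}X_\ast = X$; hence the underlying simplicial set of $A_{i,!}(X_\ast)$ is $X$ itself. This is the content of the first assertion.

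For the height function, fix a simplex $\sigma$ of $X$ with $h(\sigma)=h$. Then $\sigma$ appears in the image of the structure map $X_{q_3}\to X$ precisely when $q_3\leq h$. Tracing through the coend, $\sigma$ lies in $A_{i,!}(X_\ast)_{q,q'}$ if and only if there exists $q_3\in[q_1]$ with $q_3\leq h$, $q_3\geq q$, and $q_3-2i\geq q'$; such a $q_3$ exists iff $q\leq h$ and $q'\leq h-2i$. Therefore $\tilde h(\sigma)=(h(\sigma),\,h(\sigma)-2i)$, as claimed. One should also check that $A_{i,!}(X_\ast)$ is genuinely cofibrant — i.e. a bona fide filtered simplicial set in the sense described after the definition of $\Filt_P$ — which follows because $A_{i,!}$ is left Quillen (being a left Kan extension along a poset map, it is the left adjoint in a Quillen adjunction between projective model structures, by the discussion in Section \ref{subsec:modelFunc}) and $X_\ast$ is cofibrant; alternatively one verifies directly that the structure maps of $A_{i,!}(X_\ast)$ are monomorphisms and that $\tilde h$ satisfies the boundary-monotonicity condition, which is immediate from $\tilde h = (h, h-2i)$ and monotonicity of $h$.

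The main obstacle — really the only subtle point — is the cofinality/coinitiality bookkeeping: one must be careful that "coinitial" is used in the sense appropriate to the $\Filt$-convention (contravariant functors, opposite poset), so that the relevant colimit is genuinely computed over the image of $A_i$, and that the two legs of the coend collapse to exactly the identifications already present in $X$ rather than introducing new ones. Since $[q_1]$ is a totally ordered set (a coset of $2\Z$ in $\Q$), these checks reduce to elementary inequalities about real numbers, so once the setup is pinned down the verification is routine, and I would present it compactly by analogy with Propositions \ref{prop:forgeth1} and \ref{prop:heightSum} rather than repeating the coend manipulation in full.
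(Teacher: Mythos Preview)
Your proposal is correct and follows essentially the same approach as the paper: both write $A_{i,!}(X_\ast)_{q,q'}$ as a coend, use coinitiality of $A_i$ to identify the underlying simplicial set with $X$, and then read off the height function by the inequality $\sigma\in A_{i,!}(X_\ast)_{q,q'}$ iff $q\leq h(\sigma)$ and $q'\leq h(\sigma)-2i$. Your additional remarks on cofibrancy and the explicit cofinality bookkeeping are not in the paper's proof but are harmless elaborations of the same argument.
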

\begin{proof}
That the underlying simplicial set is the same relies primarily on \(A_i\) being a co-initial sequence, so all simplices are represented.
The argument for why \(\tilde{h}(\sigma)=A_i(h(\sigma))\) follows similarly to previous proofs such as that of Proposition \ref{prop:forgeth1}, though we will produce some of the argument here.
In particular,
\[A_{i,!}(X_{\ast})_{q_1,q_2}:= \coprod_{\substack{q_1\leq q_3\leq q_4\\ q_2\leq q_3-2i\leq q_4-2i}}X_{q_4}\rightrightarrows \coprod_{\substack{q_1\leq q_3 \\ q_2\leq q_3-2i}} X_{q_3}\]
with the two maps of the coend being the inclusion of \(X_{q_4}\) into \(X_{q_3}\) and identifying \(X_{q_4}\) with itself.
This will lead to a simplex \(\sigma\) appearing in \(A_{i,!}(X_{\ast})_{q_1,q_2}\) precisely when \((q_1,q_2)\leq (h(\sigma),h(\sigma)-2i)\), as needed.
\end{proof}

Finally note that the persistence chain complex \(PC_{\bullet}\) gives a functor from \(\Filt_{[q]}\) to graded \(\Z[U]\) chain complexes with grading in the coset \(q_1+\Z\) with \(U\) having grading \(-2\).
In particular the simplicial chain complex associated to a simplicial set yields a diagram of \(\Z\)-graded \(\Z\)-chain complexes of shape \(U\).
Taking the direct sum of all constituent chain complexes and letting \(U\) act by inclusion yields a \(\Z[U]\)-chain complex with grading the sum of the \(\Z\) grading and the index coming from \([q]\).
Similarly one gets a functor \(PC_{\bullet\bullet}\) from \(\Filt_{[q_1,q_2]}\) to \(\Z[\Uc,\Vc]\)-chain complexes bigraded in gradings \((q_1,q_2)+\Z\) where \(\Uc\) has grading \((-2,0)\) and \(\Vc\) has grading \((0,-2)\).
After taking persistence \(A_i^{\ast}\) becomes equivalent to the functor taking the Alexander grading \(i\) portion of \(PC_{\bullet\bullet}\), \(p_{1,!}\) becomes equivalent to the functor setting \(\Vc=1\) and forgetting the second grading, and \(p_{2,!}\) becomes equivalent to the functor setting \(\Uc=1\) and forgetting the first grading.
These will not be the focus of this paper, but they will be necessary for comparing to previous work and may be useful for those more familiar with Heegaard Floer homology and Knot Floer homology.

Having established our simplicial model categories for filtered simplicial sets, we will now bundle those model categories together to give our quasi-categories.
In particular, given a three-manifold \(Y\) and a first homology class \([K]\) in \(H^1(Y;\Z)\) define
\begin{align*}
\Cc_Y&:= \prod_{\tf\in \SpincX{Y}}\Rf \Filt_{\grf(\tf)}\\
\Cc_{Y,[K]}&:= \prod_{\tf\in\SpincX{Y}}\Rf\Filt_{\grf(\tf),\grf(\tf+\PD[K])}.
\end{align*}
Note that we will be treating \(K\) as if it is a knot, but we emphasize that this category only depends on its first homology class.
We will denote objects in \(\Cc_Y\) generally by \(X\) with \(X_{\ast}^{\tf}\) referring to the filtered space associated to the \Spinc structure \(\tf\) on \(Y\), and similarly for \(X\) in \(\Cc_{Y,[K]}\), \(X_{\ast\ast}^{\tf}\) refers to the doubly-filtered space associated with \(\tf\).

Given two three manifolds equipped with first homology classes \((Y_1,[K_1])\) and \((Y_2,[K_2])\), there exist functors
\begin{align*}
\otimes \colon \Cc_{Y_1}\times \Cc_{Y_2}&\to \Cc_{Y_1\# Y_2} \\
\otimes \colon \Cc_{Y_1,[K_1]}\times \Cc_{Y_2,[K_2]}&\to \Cc_{Y_1\# Y_2,[K_1\#K_2]}
\end{align*}
given by modeling \(\SpincX{Y_1\#Y_2}\) on \(\SpincX{Y_1}\times \SpincX{Y_2}\) and over the \Spinc structure \((\tf_1,\tf_2)\), we can apply the functor \(\otimes\) defined above.

\subsection{A Quasi-Category Modeling the Flip Map}\label{subsec:quasicatFlipMap}

We can define three functors \(p_1\) and \(p_2\) from \(\Cc_{Y,[K]}\) to \(\Cc_Y\), by

\begin{align*}
p_1(X)_{\ast}^{\tf}&:= p_{1,!}(X_{\ast\ast}^{\tf}) \\
p_2(X)_{\ast}^{\tf}&:= p_{2,!}\left(X_{\ast\ast}^{\tf-[K]}\right).
\end{align*}

As such, define \(\Cc_{Y,[K]}^{\Gamma}\) by the following pullback diagram, and \(\Cc_{Y,[K]}^{\Gamma,2}\) as its homotopy pullback in \(\sSet^+_{\Joyal}\), i.e. apply the \((-)^{\sharp}\) adjunction to mark, find the homotopy pullback in \(\sSet^+_{\Joyal}\) then apply the other \((-)^{\sharp}\) to forget the markings.
In addition to being able to define the homotopy pullback in \(\sSet^+_{\Joyal}\) due to that model structure being enriched in \(\sSet_{\Quil}\), the effect of this transfer is that our choices of data in \(\Cc_{Y,[K]},\left[\bou \Delta^1,\Cc_Y\right],\) and \(\left[\Delta^1,\Cc_Y\right]\) do not need to commute with \((p_2,p_1)\) and \(i^{\ast}\), but there do need to be equivalences realizing the relations that would have been imposed in the ordinary pullback.
\[\begin{tikzcd}
\Cc_{Y,[K]}^{\Gamma} \arrow[r] \arrow[d] & \left[\Delta^1,\Cc_Y\right] \arrow{d}{i^{\ast}} \\
\Cc_{Y,[K]} \arrow{r}{(p_2,p_1)} & \left[\bou \Delta^1,\Cc_Y\right].
\end{tikzcd}
\]

\begin{proposition}\label{prop:CykGamma} 
The simplicial sets \(\Cc_{Y,[K]}^{\Gamma}\) and \(\Cc_{Y,[K]}^{\Gamma,2}\) are quasi-categories and weakly equivalent in \(\sSet_{\Joyal}\).
\end{proposition}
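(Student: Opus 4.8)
The plan is to exhibit both $\Cc_{Y,[K]}^{\Gamma}$ and $\Cc_{Y,[K]}^{\Gamma,2}$ as homotopy pullbacks in $\sSet_{\Joyal}$ of one and the same cospan of quasi-categories --- namely $(p_2,p_1)\colon\Cc_{Y,[K]}\to[\bou\Delta^1,\Cc_Y]$ against $i^{\ast}\colon[\Delta^1,\Cc_Y]\to[\bou\Delta^1,\Cc_Y]$ --- and then to invoke uniqueness of homotopy pullbacks; since a homotopy pullback of a diagram of fibrant objects is itself fibrant, the quasi-category claim will come along for the ride. First I would check that each corner of the defining square is a quasi-category: the factors $\Rf\Filt_{\ldots}$ of $\Cc_Y$ and $\Cc_{Y,[K]}$ are quasi-categories (here $\Rf$ is applied to the Kan-enriched category of cofibrant--fibrant objects of a $\sSet_{\Quil}$-enriched model category), a product of quasi-categories is again a quasi-category, and $[\Delta^1,\Cc_Y]$ and $[\bou\Delta^1,\Cc_Y]$ are quasi-categories by the fact recalled in Section~\ref{sec:quasicat} (\cite[Prop.~1.2.7.3]{HTT}, \cite[Cor.~15.2.3]{CatHomoTheory}). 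Since $\bou\Delta^1\hookrightarrow\Delta^1$ is a cofibration and $\Cc_Y\to\bullet$ is a fibration in the closed monoidal model category $\sSet_{\Joyal}$, the pullback--hom map $i^{\ast}$ is a fibration between fibrant objects (\cite[Lemma~11.1.10]{CatHomoTheory}). Hence in the strict pullback square defining $\Cc_{Y,[K]}^{\Gamma}$ the projection to $\Cc_{Y,[K]}$ is a pullback of the fibration $i^{\ast}$, so it is a fibration, and composing with $\Cc_{Y,[K]}\to\bullet$ shows $\Cc_{Y,[K]}^{\Gamma}$ is fibrant, i.e.\ a quasi-category; moreover, since $i^{\ast}$ is a fibration between fibrant objects and $\Cc_{Y,[K]}$ is fibrant, this strict pullback square is already a homotopy pullback square in $\sSet_{\Joyal}$ (one can also see this through the Reedy structure of Figure~\ref{subfig:Reedy1b} applied as in Section~\ref{subsec:modelFunc}).

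For $\Cc_{Y,[K]}^{\Gamma,2}$, by construction it is the image under the forgetful functor $\sSet^+\to\sSet$ of the homotopy pullback, computed in the simplicially enriched model category $\sSet^+_{\Joyal}$, of the same cospan after giving each quasi-category the marking (by its equivalences) under which it becomes fibrant there. A homotopy limit of a diagram of fibrant objects is fibrant, so this marked homotopy pullback is a fibrant object of $\sSet^+_{\Joyal}$, i.e.\ a quasi-category with its equivalences marked, and forgetting the marking leaves a quasi-category. The forgetful functor $\sSet^+_{\Joyal}\to\sSet_{\Joyal}$ is right Quillen --- its left adjoint, the minimal-marking functor $(-)^{\flat}$, is left Quillen --- so it preserves fibrant objects and homotopy pullbacks, and therefore $\Cc_{Y,[K]}^{\Gamma,2}$ is also a homotopy pullback in $\sSet_{\Joyal}$ of that cospan. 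Since homotopy pullbacks of a fixed cospan in a model category agree up to weak equivalence, $\Cc_{Y,[K]}^{\Gamma}$ and $\Cc_{Y,[K]}^{\Gamma,2}$ are weakly equivalent in $\sSet_{\Joyal}$.

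I expect the main obstacle to be this last transfer between $\sSet^+_{\Joyal}$ and $\sSet_{\Joyal}$. One must be careful that the marking used when forming $\Cc_{Y,[K]}^{\Gamma,2}$ makes the marked cospan consist of \emph{fibrant} objects of $\sSet^+_{\Joyal}$ --- the equivalence-marking, not the maximal marking $(-)^{\sharp}$, whose fibrant replacement would localize a quasi-category at all of its morphisms and return a Kan complex, which could not be weakly equivalent to $\Cc_{Y,[K]}^{\Gamma}$ in $\sSet_{\Joyal}$ --- and one must verify that the forgetful functor is right Quillen so that it carries the homotopy pullback computed in $\sSet^+_{\Joyal}$ to one computed in $\sSet_{\Joyal}$. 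The remaining ingredients --- function complexes into quasi-categories being quasi-categories, $i^{\ast}$ being a Joyal fibration, and a strict pullback along a fibration of fibrant objects computing the homotopy pullback --- are routine given the background assembled in Sections~\ref{sec:quasicat} and \ref{subsec:modelFunc}.
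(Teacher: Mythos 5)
Your proof is correct and follows the same strategy as the paper: both hinge on the observation that $i^{\ast}$ is a Joyal fibration (as the pullback--hom of $\bou\Delta^1\hookrightarrow\Delta^1$ against $\Cc_Y\to\bullet$), so that the strict pullback defining $\Cc_{Y,[K]}^{\Gamma}$ is fibrant and already computes the homotopy pullback of the cospan, and then on a Quillen transfer between $\sSet^+_{\Joyal}$ and $\sSet_{\Joyal}$ to compare with $\Cc_{Y,[K]}^{\Gamma,2}$. The paper packages the first step as Reedy fibrancy for the Reedy structure of Figure~\ref{subfig:Reedy1b}, which reduces to precisely the check that the only matching map, $i^{\ast}$, is a fibration; you state this directly and note the equivalence with the Reedy formulation, which is the same content. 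The one genuine difference, and the place your treatment is sharper, is the transfer adjunction: you work with $(-)^{\flat}\dashv U$, the minimal-marking and forgetful pair, with $U$ right Quillen, so that $U$ carries the fibrant homotopy pullback in $\sSet^+_{\Joyal}$ of the equivalence-marked cospan to a model for the homotopy pullback in $\sSet_{\Joyal}$ of the underlying cospan, which $\Cc_{Y,[K]}^{\Gamma}$ also models. The paper instead invokes the $(-)^{\sharp}$ adjunction, calling the right adjoint ``forgetting the markings,'' but---exactly as your caveat paragraph warns---the right adjoint of maximal marking restricts to the marked edges and so would return the core, a Kan complex, which is not what is needed to compare against the quasi-category $\Cc_{Y,[K]}^{\Gamma}$ in $\sSet_{\Joyal}$. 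Your explicit identification of the equivalence-marking as the correct marking and of $U$ as the correct right Quillen transfer resolves this ambiguity and makes the step airtight.
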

\begin{proof}
There is a Reedy structure under which the diagram defining the symplical set \(\Cc_{Y,[K]}^{\Gamma}\) is Reedy fibrant in \(\sSet^+_{\Joyal}\) (Reedy structures are discussed in Section \ref{subsec:modelFunc}).
The Reedy structure in question assigns \(\Cc_{Y,[K]}\) degree 0, \([\bou\Delta^1,\Cc_Y]\) degree 1 and \([\Delta^1,\Cc_Y]\) degree 2.
Since the only nontrivial map in \(R_{-}\) is \(i^{\ast}\) the only condition on matching maps for this diagram to be Reedy fibrant is that \(i^{\ast}\) be a fibration, which it is because it is the pullback hom of the monomorphism \(\bou\Delta^1\to \Delta^1\) and the fibration \(\Cc_Y\to \bullet\).
This guaruntees that the homotopy pullback and the ordinary pullback in \(\sSet^+\) are weakly equivalent and the ordinary pullback is fibrant in \(\sSet^+_{\Joyal}\).
 because \((-)^{\sharp}\) is a right Quillen adjoint and both the homotopy pullbakc and the ordinary pullback are fibrant, the weak equivalence is  preserved by forgetting the markings in passing to \(\sSet_{\Joyal}\)
\end{proof}

Objects in \(\Cc_{Y,[K]}^{\Gamma}\) have the form \((X,\Gamma_X)\), where \(X\) is the projection into \(\Cc_{Y,[K]}\) and  \(\Gamma_X\) is the portion coming from \([\Delta^1,\Cc_Y]\) and provides a collection of filtered maps
\[\Gamma_X^{\tf}\colon \pi_{2,\ast}^l\left(X_{\ast\ast}^{\tf}\right) \to \pi_{1,\ast}^l(X_{\ast\ast}^{\tf+\PD[K]}).\]
However, we will often leave \(\Gamma_X\) implicit and just write \(X\).

\begin{remark}
A similar argument would work if instead of \(\Delta^1\) one chose \(N(\Jc)\) where \(\Jc\) is the walking isomorphism category.
This would force \(\Gamma\) to not only give a filtered map but a filtered homotopy equivalence, which is actually guarunteed in the cases coming from knot lattice spaces.
However, that will not actually be needed for constructing the basic dual knot surgery functor, and working in that category requires tracking more information.
\end{remark}

\begin{remark}
Because Proposition \ref{prop:CykGamma} ensures \(\Cc_{Y,[K]}^{\Gamma}\) and \(\Cc_{Y,[K]}^{\Gamma,2}\) are homotopy equivalent as quasi-categories, one can allow for \(\Gamma^{\tf}\) to be defined using homotopy equivalent models for \(p_{1,!}^{\tf}\) and \(p_{2,!}^{\tf}\).
Not only does this allow one to potentially use simpler models in the calculation, but may be helpful in interpolating with models of the surgery formula that do not place such restrictions.
For example, the \(A_{\infty}\) modules in \cite{ZemkeWhat} includes a place to include what is practice is homotopy equivalent to \(p_{1,!}\) and \(p_{2,!}\) and encodes the maps from the surgery formula that require the input of \(\Gamma\), but the homotopy equivalence is not built into the data, let alone the exact forms of \(p_{1,!}\) and \(p_{2,!}\).
As such, I would not be surprised if, after applying \(PC_{\bullet}\) and converting from the language of quasi-categories to the language of \(A_{\infty}\)-modules that there was a functor from \(\Cc_{Y,[K]}^{\Gamma,2}\) to Zemke's modules.

However, allowing simpler models for individual objects in the calculation increases the complexity of the overall model, since one also has to keep track of how all these models are related. As such, for the purposes of defining our functor, we will focus on \(\Cc_{Y,[K]}^{\Gamma}\). 
\end{remark}

\subsection{A Quasi-Category modeling involutive data}\label{subsec:quasicatInvol}

There will be two main struggles in modeling the involutive information we would like.
First, the maps \(\Ii\) and \(\Jj\) (to be introduced in Sections \ref{subsec:LatticeDef} and \ref{subsec:knotLatticeDef}) that we would like to call involutions are not true endomorphisms due to both shuffling the \Spinc structures, and second \(\Jj\) is skew-filtered rather than doubly-filtered.
Note that while there are alternative methods of dealing with the shuffling of \Spinc structures, the skew filtered maps are more of a fundamental issue, as it requires one to track the action of the functor \(\sigma\).
As such, in order to capture these we cannot simply use diagrams of shape \(N(C_2)\), where \(C_2\) is the cyclic group of order two interpreted as a category with one element, but need to include the action of a functor in our objects.
Second, our desired involutions will naturally live in different categories with \(\Ii\) needing to land in some \(\Filt_{\grf(Y,\tf)}\) and \(\Jj\) needing to land in some \(\Filt_{\grf(Y,\tf)\times \grf(Y,\tf+[K])}\).
While we have already run into this in Section \ref{subsec:quasicatFlipMap} in tracking an object from \(\Cc_{Y,[K]}\) and a map from \(\Cc_{Y}\) the complexity of the relations that need tracking has increased significantly.

For both \(\Ii\) and \(\Jj\) the failure to be an endomorphism can be expressed in terms of an automorphism of the underlying category, one that shuffles the \Spinc structures appropriately and swaps filtrations for \(\Jj\).
These are made explicit in the following automorphisms of \(\Cc_Y\) and \(\Cc_{Y,[K]}\), which are automorphisms because of the comments at the end of Section \ref{subsec:SpecConj} on how \(\grf\) interacts with conjugation.
Here,
\begin{align*}
\Psi_1\colon& \Cc_Y \to \Cc_Y \\
\Psi_2\colon& \Cc_{Y,[K]} \to \Cc_{Y,[K]}\\
\Psi_1(X)_{\ast}^{\tf} &=X_{\ast}^{\overline{\tf}} \\
\Psi_2(X)_{\ast\ast}^{\tf}&=\sigma\left(X_{\ast\ast}^{\overline{\tf+[K]}}\right)
\end{align*}
where \(\sigma\) is the functor defined in Section \ref{subsec:FilteredsSet}.
In interpreting \(\Ii\) and \(\Jj\) as endomorphism-like, we are recognizing that while they is not in \(\Hom(X,X)\), they are in \(\Hom(X,\Psi_i(X))\), which by the action of \(\Psi_i\) is also \(\Hom(\Psi_i(X),X)\).
As such, \(\Ii^2\) can be interpreted as \(\Psi_1(\Ii)\Ii\), which we can require to be homotopic to the identity, and \(\Psi_1\) will take this homotopy to a homotopy relating \(\Ii\Psi_1(\Ii)\) to the identity.

More specifically, let \(\Jc_i\) represent the category with objects \(a\) and and an object \(b_i\) where \(i \in \{1,2\}\) and morphisms \(f_i\colon a\to b_i\) and \(g_i\colon b_i\to a\).
While \(\Jc_1\) and \(\Jc_2\) are naturally isomorphic, we will also be considering the colimit of \(\Jc_1\) and \(\Jc_2\) under the inclusion of \(a\), which is 3 objects with an isomorphism between each pair, and keeping names separate now will be notationally convenient.
Consider then the functor \(\Jc_i\to C_2\) which maps the non-identity maps in \(\Jc_i\) to the non-identity maps in \(C_2\), the group with 2 elements interpreted as a category.
This acts similarly to the to the universal cover of \(C_2\), and in particular, there is a free action of \(C_2\) on \(\Jc_i\) that when \(\Jc_i\) is quotiented by this action we get \(C_2\) again.
As such an equivariant functor on \(\Jc_i\), or on \(N(\Jc_i)\) in the \(\infty\)-categorical setting, to a (quasi-)category with an action by \(C_2\), such as \(\Cc_Y\) with \(\Psi_1\) or \(\Cc_{Y,[K]}\) with \(\Psi_2\), provides one way to model such an almost-involution.

This is additionally motivated by \(\iota\) and \(\iota_K\) in Heegaard Floer homology and knot Floer homology being defined using (first-order) naturality of Heegaard Floer homology and Knot Floer homology as well as the presentation of each exhibiting such a symmetry.
As such, up to care to properly model \(\iota_K^2\) not always squaring to the identity, modeling the involutive maps using symmetric functor seems reasonable, as higher order naturality statements would provide the extension of this functor across the higher order simplices on \(N(\Jc_i)\).
However, to do so we need to make sure the simplicial set of such symmetric functors is indeed a quasi-category.

\begin{proposition}
Let \(G\) be a simplicial group, in interpreted as a simplically enriched category with one object.
Let \(X\) be a simplicial set with a free \(G\) action and \(C\) a quasi-category with a \(G\) action.
Then the simplicial set of equivariant functors \([X,C]_G\) is a quasi-category.
\end{proposition}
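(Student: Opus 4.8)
The plan is to realize $[X,C]_G$ as the fiber of a fibration between mapping simplicial sets that are already known to be quasi-categories, so that being a quasi-category is inherited. Concretely, an equivariant functor $X \to C$ is a map of simplicial sets $f\colon X \to C$ satisfying $f \circ g = g \circ f$ for all $g \in G$; equivalently, writing $BG$ for the simplicial set/category with one object and $G$ as its simplicial monoid of endomorphisms, a $G$-equivariant map $X \to C$ is the same as a map of simplicial sets over $BG$ from the Borel construction (homotopy quotient presentation) $X \times_G EG \to BG$ to $C \times_G EG \to BG$, but since here I only want the strict equivariant maps I will instead work directly with the description of $[X,C]_G$ as an equalizer.

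First I would set up the equalizer diagram. Let $\Ar(G)$ denote the (simplicial set of) elements of $G$; then $[X,C]_G$ sits in an equalizer
\[
[X,C]_G \longrightarrow [X,C] \rightrightarrows [\Ar(G) \times X, C],
\]
where one map sends $f$ to the family $(g,x)\mapsto f(g\cdot x)$ and the other to $(g,x)\mapsto g\cdot f(x)$; here I am using that $[X,C]$ is a quasi-category by the already-established fact (Proposition 1.2.7.3 of \cite{HTT}, also Corollary 15.2.3 of \cite{CatHomoTheory}) recalled in Section \ref{subsec:quasicatInvol}, and that the $G$-action on $C$ is by automorphisms of the simplicial set, hence acts on $[X,C]$. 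An equalizer of simplicial sets is a pullback along the diagonal, so $[X,C]_G$ is computed as the pullback of $[X,C]\to [\Ar(G)\times X,C]$ (via the first map) against $[X,C]\to [\Ar(G)\times X,C] \times [\Ar(G)\times X,C]$, $f\mapsto (\text{map}_1(f),\text{map}_2(f))$, along the diagonal of $[\Ar(G)\times X,C]$. Since $\sSet_{\Joyal}$ is a model category, a pullback of quasi-categories along a \emph{fibration} is again a quasi-category; so the key step is to arrange one of the two legs to be a $\sSet_{\Joyal}$-fibration.

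The main obstacle, and the place where freeness of the $G$-action is used, is exactly this fibrancy step: in general the map $[X,C]\to [\Ar(G)\times X,C]$ induced by the action map $\Ar(G)\times X \to X$ need not be a fibration, because $\Ar(G)\times X \to X$ need not be a monomorphism (a cofibration in $\sSet_{\Joyal}$). This is where freeness enters. When the $G$-action on $X$ is free, one can choose a set of orbit representatives and identify $X \cong G \times X/G$ as $G$-simplicial sets (more carefully: the quotient map $X \to X/G$ is a principal $G$-bundle of simplicial sets, so $X$ is, level-wise and compatibly, $G$ times the quotient); under this identification the equivariant mapping space $[X,C]_G$ becomes simply $[X/G, C]$ with no equivariance condition at all — a $G$-equivariant map out of a free $G$-set is the same as an arbitrary map out of the quotient. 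Hence I would, after establishing the principal-bundle statement, conclude $[X,C]_G \cong [X/G,C]$, and the latter is a quasi-category by the standard fact since $C$ is. So the real content is the lemma that a free action of a simplicial group on a simplicial set is a principal bundle, i.e. the quotient map admits enough local (in this combinatorial setting, global after passing to a section on representatives) triviality; for a discrete group this is elementary, and for a simplicial group it follows by applying the discrete statement in each simplicial degree and checking compatibility with faces and degeneracies, which is routine once phrased correctly.

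Putting it together, the steps in order are: (i) recall that $[X,C]$ is a quasi-category whenever $C$ is, and that an automorphism action of $G$ on $C$ induces one on $[X,C]$; (ii) write $[X,C]_G$ as the equalizer/pullback described above; (iii) prove the structural lemma that a free $G$-action makes $X \to X/G$ a principal $G$-bundle of simplicial sets (degreewise, then check compatibility); (iv) deduce the natural isomorphism $[X,C]_G \cong [X/G,C]$; (v) conclude. I expect step (iii) to be the only nontrivial point, and even it is standard; an alternative that avoids (iii) entirely is to note directly that the first leg of the pullback in step (ii) is a fibration precisely because $\Ar(G)\times X \to X$, while not injective, factors through a cofibration after using freeness to split off a copy of $X$ as a retract — but the quotient-description is cleaner and I would present that.
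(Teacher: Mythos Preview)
Your argument has a genuine error at step (iii)--(iv). The claim that a free $G$-action on a simplicial set $X$ yields an isomorphism $X \cong G \times (X/G)$ of $G$-simplicial sets is false: this would say every simplicial principal $G$-bundle is trivial. The levelwise isomorphisms $X_n \cong G_n \times (X/G)_n$ do exist (each depends on a choice of orbit representatives), but they are \emph{not} compatible with the face and degeneracy maps in general. The universal counterexample is $X = EG$ with its free $G$-action: here $X/G = BG$, yet $EG$ is contractible while $G \times BG$ is disconnected for $|G|>1$, so no $G$-equivariant isomorphism exists. This is precisely the case relevant to the paper, since $N(\Jc_i)$ with its $C_2$-action is a model for $EC_2$. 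Consequently your conclusion $[X,C]_G \cong [X/G,C]$ also fails: take $G=C_2$, $X=EG$, and $C$ the discrete two-point simplicial set with the free $C_2$-action; then $[EG,C]_G = \emptyset$ (an equivariant map would force a constant map to be equivariant for a free action on the target), whereas $[BG,C]$ contains the two constant maps. The slogan ``an equivariant map out of a free $G$-object is a map out of the quotient'' is only valid when the target carries the \emph{trivial} $G$-action, which is not assumed here.

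The paper's proof takes a different and more robust route. It places the problem inside the projective model structure $[G,\sSet_{\Joyal}]_{\proj}$ on $G$-objects, which exists because $\sSet_{\Joyal}$ is cofibrantly generated with all objects cofibrant, and is itself enriched over $\sSet_{\Joyal}$. Freeness is used not to trivialize the bundle but to show that $X$ is \emph{projectively cofibrant}: one attaches simplices orbit by orbit, each attachment being a generating cofibration of the form $G(\ast,-)\otimes(\partial\Delta^n \hookrightarrow \Delta^n)$. Since fibrations in the projective structure are pointwise, $C$ is automatically fibrant. The enriched hom from a cofibrant object to a fibrant object is then fibrant in $\sSet_{\Joyal}$, i.e.\ a quasi-category. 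Your equalizer/pullback setup in steps (i)--(ii) is correct and could in principle be pushed through by proving the relevant leg is a Joyal fibration, but the argument you actually give to handle that step is the broken one above; the paper's model-categorical packaging is exactly what makes the fibrancy verification clean.
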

\begin{proof}
The model category \(\sSet_{\Joyal}\) is cofibrantly generated and monoidal and thus enriched in itself.
Furthermore all objects are cofibrant in \(\sSet_{\Joyal}\), so in particular the group \(G\) is cofibrant in \(\sSet_{\Joyal}\).
As such, the model structure \([G,\sSet_{\Joyal}]_{\proj}\) exists and is enriched over \(\sSet_{\Joyal}\).
In a cofibrant model category, the cofibrations are retracts of cell-complexes built out of the generating cofibrations.
Because \(X\) has a free \(G\) action, the cell attachments of each new simplex can be done orbit by orbit, with each orbit attachment being a generating cofibration of \([G,\sSet_{\Joyal}]_{\proj}\), and thus \(X\) with this \(G\) action is cofibrant in \([G,\sSet_{\Joyal}]_{\proj}\).
Additionally \(C\) is fibrant in \([G,\sSet_{\Joyal}]_{\proj}\).
By \([G,\sSet_{\Joyal}]_{\proj}\) being enriched in \(\sSet_{\Joyal}\) the hom of a cofibrant object with a fibrant object will be fibrant in \(\sSet_{\Joyal}\)and thus a quasicategory.
\end{proof}

We will now define \(\Cc_{Y,[K]}^{\Jc}\) as the limit of a diagram.
Here \(\Cc_Y\) and \(\Cc_{Y,[K]}\) in the context of quasi-categories with a \(C_2\) action will have the actions of \(\Psi_1\) and \(\Psi_2\) respectively.
Furthermore let \(j_1\) be canonical inclusion of \([N(\Jc_1),\Cc_Y]_{C_2}\) into \([N(\Jc_1),\Cc_Y]\) and \(j_2\) similarly for \([N(\Jc_2),\Cc_{Y,[K]}]_{C_2}\).
Let \(\Jc_i^{\ast}\) represent the pullback of the inclusion of \(N(\Jc_i)\) into \(N(\tilde{\Jc})\), and let \(a^{\ast}\) represent the pullback of the inclusion of \(a\) into \(N(\Jc_i)\).
Then we define \(\Cc_{Y,[K]}^{\Jk}\) as the the limit of the following diagram, and define \(\Cc_{Y,[K]}^{\Jk,2}\) to be the homotopy limit of the same diagram (in \(\sSet^+_{\Joyal}\).

\[
\begin{tikzcd}
& & {[N(\Jc_2),\Cc_{Y,[K]}]_{C_2}} \ar[d,"p_1j_2"] \\
 & {[N(\tilde{\Jc}),\Cc_Y]} \ar[d,"\Jc_1^{\ast}"]  \ar[r,"\Jc_2^{\ast}"] & {[N(\Jc_2),\Cc_Y]} \ar[d,"a^{\ast}"] \\
{[N(\Jc_1),\Cc_Y]_{C_2}} \arrow[r,"j_1"] & {[N(\Jc_1),\Cc_Y]} \arrow[r,"a^{\ast}"] & \Cc_Y
\end{tikzcd}
\]



Define \(\Cc_{Y,[K]}^{\Jk,3}\) and \(\Cc_{Y,[K]}^{\Jk,4}\) as the pullback and homotopy pullback (in \(\sSet_{\Joyal}^+\) of the following diagram
\[
\begin{tikzcd}
 & {[N(\Jc_2),\Cc_{Y,[K]}]_{C_2}} \ar[d,"a^{\ast}p_1j_1"] \\
{[N(\Jc_1),\Cc_Y]_{C_2}} \ar[r,"a^{\ast}j_1"] & \Cc_Y
\end{tikzcd}
\]

\begin{proposition}\label{prop:CJkquasicat}
The simplicial sets \(\Cc_{Y,[K]}^{\Jk},\Cc_{Y,[K]}^{\Jk,2},\Cc_{Y,[K]}^{\Jk,3},\) and \(\Cc_{Y,[K]}^{\Jk,4}\) are all quasicategories and weakly equivalent.
\end{proposition}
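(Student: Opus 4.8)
The plan is to argue exactly as in the proof of Proposition~\ref{prop:CykGamma}: I exhibit a Reedy structure on the index category of each defining diagram under which that diagram is Reedy fibrant in \(\sSet^+_{\Joyal}\), so that the strict (co)limit coincides with the homotopy (co)limit and is itself fibrant, and then transfer back to \(\sSet_{\Joyal}\) through the \((-)^{\sharp}\) adjunctions. To begin I check that all the vertices are quasi-categories: \(\Cc_Y\) and \(\Cc_{Y,[K]}\) are products of homotopy coherent nerves of Kan-complex-enriched model categories; \([N(\tilde{\Jc}),\Cc_Y]\) and \([N(\Jc_i),\Cc_Y]\) are function complexes into quasi-categories; and \([N(\Jc_i),\Cc_Y]_{C_2}\), \([N(\Jc_2),\Cc_{Y,[K]}]_{C_2}\) are equivariant function complexes for the free \(C_2\)-actions on the \(N(\Jc_i)\) (free because the generators swap the two objects, so no simplex is fixed), hence quasi-categories by the preceding proposition. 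Marked by their equivalences these are fibrant in \(\sSet^+_{\Joyal}\), and every structural map, being a functor of quasi-categories, preserves equivalences and so is a morphism there.

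The technical core is to recognize which structural maps are fibrations. Since \(\Jc_1,\Jc_2,\tilde{\Jc}\) are all equivalent to the terminal category, the inclusions \(N(a)\hookrightarrow N(\Jc_i)\), \(N(\Jc_i)\hookrightarrow N(\tilde{\Jc})\), and \(N(\Jc_1)\cup_{N(a)}N(\Jc_2)\hookrightarrow N(\tilde{\Jc})\) are monomorphisms between objects categorically equivalent to \(\Delta^0\), hence (by two-out-of-three) trivial cofibrations in \(\sSet_{\Joyal}\); taking pullback homs against \(\Cc_Y\to\bullet\) makes \(a^{\ast}\colon[N(\Jc_i),\Cc_Y]\to\Cc_Y\), \(\Jc_i^{\ast}\colon[N(\tilde{\Jc}),\Cc_Y]\to[N(\Jc_i),\Cc_Y]\), and \((\Jc_1^{\ast},\Jc_2^{\ast})\colon[N(\tilde{\Jc}),\Cc_Y]\to[N(\Jc_1),\Cc_Y]\times_{\Cc_Y}[N(\Jc_2),\Cc_Y]\) all trivial fibrations. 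Also \(a^{\ast}j_i\colon[N(\Jc_i),\Cc_Y]_{C_2}\to\Cc_Y\) (and likewise with \(\Cc_{Y,[K]}\) in place of \(\Cc_Y\)) is the composite of restriction to the free orbit of vertices \(C_2\cdot\Delta^0\subseteq N(\Jc_i)\) with the adjunction isomorphism \([C_2\cdot\Delta^0,\Cc_Y]_{C_2}\cong\Cc_Y\); the first factor is the pullback hom, in the \(\sSet_{\Joyal}\)-enriched model category \([C_2,\sSet_{\Joyal}]_{\proj}\), of the cell inclusion \(C_2\cdot\Delta^0\hookrightarrow N(\Jc_i)\) against a fibrant object, hence a fibration. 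By contrast \(j_i\) alone and \(p_1\) are not fibrations — \(p_1\) comes from the left Quillen functor \(p_{1,!}\) and is only a left adjoint of quasi-categories — so any edge factoring through them must be kept out of the matching maps.

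Given these facts the proof concludes quickly. For \(\Cc_{Y,[K]}^{\Jk,3}\) I use the Reedy structure on the cospan as in Figure~\ref{subfig:Reedy1b}, placing \([N(\Jc_2),\Cc_{Y,[K]}]_{C_2}\) in degree \(0\), \(\Cc_Y\) in degree \(1\), and \([N(\Jc_1),\Cc_Y]_{C_2}\) in degree \(2\): the sole nontrivial matching map is \(a^{\ast}j_1\) (a fibration) and the sole nontrivial latching map of the constant terminal diagram is \(\bullet\to\bullet\), so the diagram is Reedy fibrant, \(\Cc_{Y,[K]}^{\Jk,3}=\lim\) is fibrant, and \(\Cc_{Y,[K]}^{\Jk,3}\simeq\Cc_{Y,[K]}^{\Jk,4}\). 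For \(\Cc_{Y,[K]}^{\Jk}\) I use the degree assignment \([N(\Jc_2),\Cc_{Y,[K]}]_{C_2}\mapsto 0\), \(\Cc_Y\mapsto 1\), \([N(\Jc_1),\Cc_Y]_{C_2}\mapsto 2\), \([N(\Jc_1),\Cc_Y],[N(\Jc_2),\Cc_Y]\mapsto 3\), \([N(\tilde{\Jc}),\Cc_Y]\mapsto 4\), with \(R_{+}\) the degree-nondecreasing and \(R_{-}\) the degree-nonincreasing morphisms. One checks that unique factorization holds, that every matching map is one of the trivial fibrations \(a^{\ast}\) or \((\Jc_1^{\ast},\Jc_2^{\ast})\) (this last appearing over a pullback exactly as in Example~\ref{ex:Reedy2}), the fibration \(a^{\ast}j_1\), or a map to \(\bullet\), and that every latching map of the constant terminal diagram is \(\emptyset\to\bullet\) or \(\bullet\to\bullet\); hence the diagram is Reedy fibrant, \(\Cc_{Y,[K]}^{\Jk}\) is fibrant, and \(\Cc_{Y,[K]}^{\Jk}\simeq\Cc_{Y,[K]}^{\Jk,2}\). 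Finally, computing the strict limit directly, \(\Cc_{Y,[K]}^{\Jk}\) is the base change of the trivial fibration \((\Jc_1^{\ast},\Jc_2^{\ast})\) along the map \(\Cc_{Y,[K]}^{\Jk,3}\to[N(\Jc_1),\Cc_Y]\times_{\Cc_Y}[N(\Jc_2),\Cc_Y]\), \((x_1,x_2)\mapsto(j_1x_1,p_1j_2x_2)\), so \(\Cc_{Y,[K]}^{\Jk}\to\Cc_{Y,[K]}^{\Jk,3}\) is a trivial fibration; chaining the equivalences gives the claim, and the marking bookkeeping is handled via \((-)^{\sharp}\) as in Proposition~\ref{prop:CykGamma}.

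I expect the main obstacle to be precisely the choice of Reedy structure on the six-object index category: the non-fibration edges \(j_1\), \(p_1j_2\), and \(p_1\) must be arranged never to appear as matching maps, which forces the asymmetric degree assignment above in which \([N(\Jc_2),\Cc_{Y,[K]}]_{C_2}\) sits \emph{below} \(\Cc_Y\) while its \(\Jc_1\)-counterpart sits above — exploiting precisely that \(a^{\ast}j_1\) is a fibration whereas \(a^{\ast}p_1j_2\) is not. A more symmetric choice either makes a latching map fail to be a monomorphism, as in Figure~\ref{subfig:Reedy1a}, or routes a matching map through \(p_1\), and in either case the strict limit no longer computes the homotopy limit. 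Verifying unique factorization and the matching and latching conditions for the chosen structure, together with the enriched pullback-power computations in \([C_2,\sSet_{\Joyal}]_{\proj}\), is then routine.
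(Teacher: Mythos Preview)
Your proof is correct and takes essentially the same approach as the paper: the Reedy degree assignment you write out for the six-object diagram is exactly the one the paper invokes by reference to Example~\ref{ex:Reedy2} and Figure~\ref{subfig:Reedy2}, and your identification of \(a^{\ast}j_1\) and \((\Jc_1^{\ast},\Jc_2^{\ast})\) as the relevant (trivial) fibrations matches the paper's checks. The one genuine difference is the final step linking \(\Cc_{Y,[K]}^{\Jk}\) to \(\Cc_{Y,[K]}^{\Jk,3}\): the paper passes through an intermediate diagram in which \([N(\tilde{\Jc}),\Cc_Y]\) is replaced by \([X,\Cc_Y]\) for \(X=N(\Jc_1)\cup_{N(a)}N(\Jc_2)\), observes that this node becomes the strict pullback of the two below it and is therefore redundant in the limit, whereas you observe directly that \(\Cc_{Y,[K]}^{\Jk}\to\Cc_{Y,[K]}^{\Jk,3}\) is a base change of the trivial fibration \((\Jc_1^{\ast},\Jc_2^{\ast})\); your route is slightly cleaner and uses the same underlying fact that \(X\hookrightarrow N(\tilde{\Jc})\) is a trivial cofibration.
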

\begin{proof}
To see that morphism \(a^{\ast}j_1\) is a fibration, note that it can also be factored as \(k^{\ast}i_1^{\ast}\), where \(i_1\) represents the inclusion of the objects of \(N(\Jc_1)\) into \(N(\Jc_1)\) as simplicial sets with a \(C_2\) action and then the pullback to \(a\) without the \(C_2\) action.
Note that here \(k^{\ast}\) is an isomorphism, since the objects of \(N(\Jc_1)\) can be thought of as \(a\otimes C_2\), and an equivariant functor out of this is the same as a functor from \(a\).
Additionally \(i_1\) is a cofibration in \([C_2,\sSet_{\Joyal}]_{\proj}\) by the same argument that a simplical set with a free action is cofibrant.
As such both \(i_1^{\ast}\) and \(k^{\ast}\) are fibrations.

The maps \(a^{\ast}\) are also both fibrations.
Finally we need the map from the pushout of \(N(\Jc_1)\) and \(N(\Jc_2)\) along \(a\) to include into \(N(\tilde{\Jc})\).
However, this sit true since the simplices of the pushout are functors from \([n]\) to \(\tilde{\Jc}\) that factor through either \(\Jc_1\) or \(\Jc_2\), which naturally inject to functors from \([n]\) to \(\tilde{\Jc}\).

A proof that \(\Cc_{Y,[K]}^{\Jk,3}\) and \(\Cc_{Y,[K]}^{\Jk,4}\) are weakly equivalent quasicategories follows then similarly to Proposition \ref{prop:CykGamma}.
Similarly, there is a Reedy structure under which \(\Cc_{Y,[K]}^{\Jk}\) is described by a Reedy fibrant diagram and thus is a quasi-category and weakly equivalent to \(\Cc_{Y,[K]}^{\Jk}\), in particular, using the Reedy Structure described in Example \ref{ex:Reedy2} and Figure \ref{subfig:Reedy2}, we will show that this diagram is Reedy fibrant.

Finally,we would like to show that \(\Cc_{Y,[K]}^{\Jk}\) and \(\Cc_{Y,[K]}^{\Jk,3}\) are weakly equivalent.
We'll note that because \(N\) is the right adjoint of a Quillen equivalence, that while it doesn't preserve colimits the canonical map from \(\colim NF\) to \(N(\colim F)\) must be a weak equivalence (since it is an equivalence in the homotopy category).
Let \(X\) be the pushout of the \(N(\Jc_i)\) over \(a\).
Then, the map from \(X\) to \(N(\tilde{\Jc})\) is a weak equivalence forcing the corresponding map from \([N(\tilde{\Jc}),\Cc_Y]\) to \([X,\Cc_Y]\) to be a weak equivalence.
This means that the current homotopy limit used to describe \(\Cc_{Y,[K]}^{\Jc,2}\)  is weakly equivalent to limit over a diagram where instead of \([N(\tilde{\Jc}),\Cc_Y]\), you used \([X,\Cc_Y]\).
The same arguments show that this diagram is also Reedy fibrant and thus the homotopy limit over this diagram is weakly equivalent to the limit over this diagram.
However, inclusion of \([X,\Cc_Y]\) in the limit does not record any new information, as \([X,\Cc_Y]\) is the pullback of the objects below it, hence the limit here is isomorphic to the option where that object in the diagram is dropped.
The limit over this diagram in turn is isomorphic to the limit over the diagram used for \(\Cc_{Y,[K]}^{\Jk,3}\).
\end{proof}

\begin{remark}
In practice \(\Cc_{Y,[K]}^{\Jk}\) being weakly equivalent to \(\Cc_{Y,[K]}^{\Jk,2}\) means the functor we describe can be extended over \(\Cc_{Y,[K]}^{\Jk,2}\) and in particular will allow for different portions of the model to be simplified before use so long as one keeps track of all the equivalences relating these simplifications.
Additionally \(\Cc_{Y,[K]}^{\Jk}\) being equivalent to \(\Cc_{Y,[K]}^{\Jk,3}\) may prove useful in reducing the data needed to record an object, and that essentially it suffices to track each involutive structure separately.
However, it will be easier to construct our functors using \(\Cc_{Y,[K]}^{\Jk}\), so that is what we will use here.
It contains enough information to make its lift explicit without tracking too many additional homotopy coherent relations involved in the homotopy limit.
\end{remark}

\begin{proposition}
There is a functor \(\Fc\colon \Cc_{Y,[K]}^{\Jk}\to \Cc_{Y,[K]}^{\Gamma}\).
\end{proposition}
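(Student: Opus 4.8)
The plan is to build $\Fc$ by forgetting the involutive data down to just the flip-map data, which amounts to extracting from a $C_2$-equivariant homotopy-coherent diagram on $N(\tilde{\Jc})$ the ``half'' that corresponds to a single morphism $\Gamma$ together with the identifications $p_{1,!}$ and $p_{2,!}$ demand. The cleanest way to do this is to work with the homotopy-limit models $\Cc_{Y,[K]}^{\Jk,2}$ and $\Cc_{Y,[K]}^{\Gamma,2}$ (which Propositions \ref{prop:CykGamma} and \ref{prop:CJkquasicat} tell us are weakly equivalent to $\Cc_{Y,[K]}^{\Gamma}$ and $\Cc_{Y,[K]}^{\Jk}$), since homotopy limits are functorial in the diagram and a map of diagrams induces a map of homotopy limits. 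So first I would produce a map between the indexing diagrams: from the cospan defining $\Cc_{Y,[K]}^{\Gamma,2}$ (namely $\Cc_{Y,[K]} \xrightarrow{(p_2,p_1)} [\bou\Delta^1,\Cc_Y] \xleftarrow{i^{\ast}} [\Delta^1,\Cc_Y]$) to the diagram defining $\Cc_{Y,[K]}^{\Jk,2}$.

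Concretely, the key steps are as follows. Step one: observe that an object of $\Cc_{Y,[K]}^{\Jk}$ carries, via the $[N(\Jc_2),\Cc_{Y,[K]}]_{C_2}$ factor, a doubly-filtered space $X_{\ast\ast}^{\tf}$ for each $\tf$ (the image of the object $a$), which is precisely the datum of an object of $\Cc_{Y,[K]}$. Step two: the morphism $f_2 \colon a \to b_2$ in $\Jc_2$, after applying $p_1 j_2$ and then comparing via the $[N(\tilde\Jc),\Cc_Y]$ vertex, produces a filtered map between $p_{1,!}(X^{\tf}_{\ast\ast})$ and (by equivariance, using $\Psi_2$ and $\Psi_1$, plus the conjugation/swap bookkeeping from Section \ref{subsec:SpecConj}) $p_{2,!}(X^{\tf+[K]}_{\ast\ast})$; identifying this with a $1$-simplex in $[\Delta^1,\Cc_Y]$ whose restriction along $i^{\ast}$ matches $(p_2,p_1)$ of the underlying $\Cc_{Y,[K]}$-object is exactly the datum packaged by $\Cc_{Y,[K]}^{\Gamma}$. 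Step three: package this assignment as a genuine map of cospans in $\sSet^+_{\Joyal}$ --- i.e. exhibit the natural transformation of diagrams --- so that passing to homotopy limits yields $\Fc$; then compose with the weak equivalences $\Cc_{Y,[K]}^{\Jk}\simeq\Cc_{Y,[K]}^{\Jk,2}$ and $\Cc_{Y,[K]}^{\Gamma,2}\simeq\Cc_{Y,[K]}^{\Gamma}$ to land where we want. Since weak equivalences between fibrant objects in $\sSet_{\Joyal}$ have homotopy inverses, this is harmless.

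The main obstacle I expect is step two: correctly matching the $C_2$-equivariance data on $\Jc_i$ with the un-equivariant single morphism $\Gamma$. The map $\Jc_i \to C_2$ is free and $N(\tilde\Jc)$ glues $\Jc_1$ and $\Jc_2$ along $a$, so an equivariant functor out of $N(\Jc_2)$ into $(\Cc_{Y,[K]},\Psi_2)$ secretly records \emph{two} maps (the images of $f_2$ and $g_2$), related by $\Psi_2$, together with coherences witnessing $g_2 f_2 \simeq \id$ and $f_2 g_2 \simeq \id$; I only want to remember one of them (say the one arising from $f_2$), and I must check that after applying $p_1$ and transporting along the various $a^{\ast}$ and $\Jc_i^{\ast}$ legs this descends to a well-defined $1$-simplex of $[\Delta^1,\Cc_Y]$ with the correct boundary. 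This is where the ``subtle effects on various formulas'' from the orientation/conjugation conventions enter: one has to verify that $\Psi_2 = \sigma \circ (\text{conjugation})$ interacts with $p_{1,!}, p_{2,!}$ so that the skew-filtered nature of $\Jj$ becomes invisible after forgetting down to $\Gamma$ (indeed $\Gamma$ is only required filtered, not an equivalence, per the remark following Proposition \ref{prop:CykGamma}). Once that compatibility is nailed down, functoriality of homotopy limits does the rest essentially formally.
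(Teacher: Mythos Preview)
Your overall strategy---forget the involutive package down to the flip map---is right, and the paper's $\Fc$ is built in exactly this spirit. But your implementation has one cosmetic issue and one genuine gap.

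The cosmetic point: there is no need to route through the homotopy-limit models. Both $\Cc_{Y,[K]}^{\Jk}$ and $\Cc_{Y,[K]}^{\Gamma}$ are already \emph{strict} limits, so the paper just uses the universal property of the pullback defining $\Cc_{Y,[K]}^{\Gamma}$: produce functors $\Fc_1\colon \Cc_{Y,[K]}^{\Jk}\to\Cc_{Y,[K]}$ and $\Fc_2\colon \Cc_{Y,[K]}^{\Jk}\to[\Delta^1,\Cc_Y]$ agreeing on $[\bou\Delta^1,\Cc_Y]$, each obtained by composing a projection of $\Cc_{Y,[K]}^{\Jk}$ with a restriction. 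Your ``map of cospans'' packaging is not wrong but adds a layer that hides what is really a one-line check.

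The substantive gap is your step two. You propose to extract $\Gamma$ from $f_2\colon a\to b_2$ in $\Jc_2$, but the $\Jc_2$-data alone cannot do this. After $p_1 j_2$, the arrow $f_2$ records a map $p_1(X)\to p_1\Psi_2(X)$, and at the \Spinc structure $\tf$ one has $(p_1\Psi_2(X))^{\tf}=p_{2,!}\bigl(X^{\overline{\tf+[K]}}\bigr)$: an unremovable conjugation sits there, and the direction is also opposite to what $\Gamma$ requires. The paper instead projects to $[N(\tilde{\Jc}),\Cc_Y]$ and restricts to the \emph{composite} $f_1 g_2\colon b_2\to b_1$, then postcomposes with $\Psi_1$. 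By the $\Jc_2$-equivariance the source $b_2$ carries $p_1\Psi_2\Fc_1$, and by the $\Jc_1$-equivariance the target $b_1$ carries $\Psi_1 p_1\Fc_1$; after applying $\Psi_1$ the endpoints become $(\Psi_1 p_1\Psi_2\Fc_1,\,\Psi_1^2 p_1\Fc_1)=(p_2\Fc_1,\,p_1\Fc_1)$, using the directly-verified identity $\Psi_1 p_1\Psi_2=p_2$. In other words $\Gamma$ is recovered essentially as the composite of $\Jj$ with $\Ii$, so \emph{both} equivariant pieces are needed and the $\tilde{\Jc}$ vertex is not merely a ``comparison'' but the actual source of the $1$-simplex. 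Your acknowledged obstacle in step two is real, and this is how it is resolved.
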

\begin{proof}
We will provide such a functor using the universal property of limits, providing functors from \(\Cc_{Y,[K]}^{\Jk}\) to \(\Cc_{Y,[K]}\) and \([\Delta^1,\Cc_Y]\) that agree on \(\Cc_Y\).
Our functor \(\Fc_1\) from \(\Cc_{Y,[K]}^{\Jk}\) to \(\Cc_{Y,[K]}\) will be the projection to \([N(\Jc_2),\Cc_{Y,[K]}]_{C_2}\) followed by restriction to the object \(a\).
Our functor \(\Fc_2\) from \(\Cc_{Y,[K]}^{\Jk}\) to \([\Delta^1,\Cc_{Y}]\) will be the projection to \([N(\tilde{\Jc}),\Cc_Y]\) followed by restriction to the morphism \(f_1g_2\) in \(N(\tilde{\Jc})\) followed by the automorphism \(\Psi_1\).
The domain of this morphism \(f_1g_2\) is \(b_2\), which in \([N(\Jc_2),\Cc_{Y,[K]}]_{C_2}\) is forced by the symmetry to be \(p_1\Psi_2F_1\), while the codomain is \(b_1\), which in \([N(\Jc_1),\Cc_Y]_{C_2}\) is forced by the symmetry to be \(\Psi_1p_1F_1\).
As such \(\Fc_2\) restricted to the domain and codomain yields \((\Psi_1p_1\Psi_2F_1,\Psi_1^2p_1F_1)=(p_2F_1,p_1F_1)\).
One can check directly that \(\Psi_1p_1\Psi_2= p_2\).

\end{proof}

Given three-manifolds equipped with first homology classes \((Y_1,[K_1])\) and \((Y_2,[K_2])\) there exists a functor 
\[\otimes\colon \Cc^{\Jk}_{Y_1,[K_1]}\times \Cc_{Y_2,[K_2]}^{\Jk}\to \Cc_{Y_1\#Y_2,[K_1\# K_2]}^{\Jk}\]
extending that constructed in Section \ref{subsec:FilteredsSet}.
In particular, each of the homotopy coherent diagrams to \(\Cc_{Y_i}\) and \(\Cc_{Y_i,[K_i]}\) combine to give homotopy coherent diagrams to the product on which we can then postcompose with \(\otimes\).
The main question is if the homotopy coherent diagrams that need to be symmetric are.
However, this comes down to the fact that the actions of conjugation and translated conjugation on \(\Cc_{Y_1\#Y_2}\) and \(\Cc_{Y_1\# Y_2,[K_1\# K_2]}\) are compatible with the decomopsition of \(\SpincX{Y_1\# Y_2}\) as \(\SpincX{Y_1}\times \SpincX{Y_2}\).

\section{Functors for Surgery}\label{sec:InfFunc4Surgery}
Here we build functors for the infinity categories of Section \ref{subsec:quasicatFlipMap} and \ref{subsec:quasicatInvol} that we will verify model surgery.
For this section \(Y\) will be a three-manifold, \(K\) a knot inside it \(\Sigmasq \in \lkc([K],[K])\), and \(\mu\) the dual knot in \(Y_{\Sigmasq}(K)\).
With that data fixed, we will use \(\Ic\) and \(\Ic_{[\tf,i]}\) as shorthands for \(\Ic_{Y,K,\Sigmasq}\) and \(\Ic_{Y,K,\Sigmasq,[\tf,i]}\) to cut down on notational clutter.
Our construction will be broken into the following stages with each construction encoding new information on top of the previous ones.
\begin{align*}
\Xb_{\Sigmasq}\colon &\Cc_{Y,[K]}^{\Gamma} \to \Cc_{Y_{\Sigmasq}(K)} \\
\XK_{\Sigmasq}\colon &\Cc_{Y,[K]}^{\Gamma}\to \Cc_{Y_{\Sigmasq}(K),[\mu]}^{\Gamma} \\
\XKI_{\Sigmasq}\colon &\Cc_{Y,[K]}^{\Jk}\to \Cc_{Y_{\Sigmasq}(K),[\mu]}^{\Jk}.
\end{align*}
The constructions of \(\Xb_{\Sigmasq}\) and \(\XK_{\Sigmasq}\) are in line with known approaches though discussed using the framework of \(\infty\)-categories and done on the level of filtered spaces rather than chain complexes.
The construction of \(\XKI_{\Sigmasq}\) agrees with that of \cite{InvolMappingCone} and \cite{InvolDualKnot}, where those constructions are defined and to the extent that matching would be possible.
See Section \ref{subsec:CompareForm} for more details.

Additionally instead of expressing the end results as mapping cones of a single map, we will express them as homotopy colimits of homotopy coherent diagrams on \(\Ic_{Y,[K],\Sigmasq}\).
This is both because unlike chain complexes or Spectra, \(\sSet_{\Quil}\) is not additive and does not allow us to add maps together to get a single map and because we believe that tracking this structure is more true to what is going on even if collapsing to a single mapping cone were possible.

\subsection{The functor \(\Xb_{\Sigmasq}\)}\label{subsec:Xb}

To construct \(\Xb_{\Sigmasq}\) we will first construct functors
\[\XD_{\Sigmasq}^{[\tf,i]}\colon \Cc_{Y,[K]}^{\Gamma} \to [N(\Ic_{[\tf,i]}),\Rf\Filt_{\grf(Y_{\Sigmasq}(K),[\tf,i])}].\]
To do this we will have to understand not only the functors described in Section \ref{subsec:FilteredsSet} but also how they are related.

\begin{proposition}
Given cosets \([q_1],[q_2]\in \Q/2\Z\), for all \(i \in \frac{[q_1]-[q_2]}{2}\) are natural transformations
\begin{align*}
\eta_1\colon &A_i^{\ast} \to p_{1,!}\\
\eta_2\colon &A_i^{\ast} \to [2i]p_{2,!}
\end{align*}
On filtered simpicial sets this natural transformation is the identity on the underlying simplicial set.
\end{proposition}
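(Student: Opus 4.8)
The plan is to build both natural transformations formally out of the Kan-extension adjunctions, and then read off their effect on filtered simplicial sets from the height-function computations already established. The starting observation is that the relevant poset maps compose nicely: $p_1\circ A_i=\mathrm{Id}_{[q_1]}$, and $p_2\circ A_i=s$, where $s\colon[q_1]\to[q_2]$ is the translation $q\mapsto q-2i$, which is a well-defined poset isomorphism precisely because $i\in\frac{[q_1]-[q_2]}{2}$. Pulling back along these two identities gives $A_i^{\ast}\circ p_1^{\ast}=\mathrm{Id}$ and $A_i^{\ast}\circ p_2^{\ast}=s^{\ast}$, and since $s$ is a translation one checks directly from the definition of the shift functor ($X[q_3]_q=X_{q-q_3}$) that $s^{\ast}=[2i]$ as functors $\Filt_{[q_2]}\to\Filt_{[q_1]}$, both sending $Z$ to $q\mapsto Z_{q-2i}$.

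Next I would let $u\colon\mathrm{Id}\to p_1^{\ast}p_{1,!}$ and $v\colon\mathrm{Id}\to p_2^{\ast}p_{2,!}$ be the units of the adjunctions $p_{1,!}\dashv p_1^{\ast}$ and $p_{2,!}\dashv p_2^{\ast}$ coming from the left Kan extensions (no Quillen hypotheses are needed merely to have the units). Whiskering these by the functor $A_i^{\ast}$ and applying the identities above yields
\[
\eta_1 \;:=\; A_i^{\ast}u \colon A_i^{\ast}\to A_i^{\ast}\,p_1^{\ast}\,p_{1,!}=p_{1,!},
\qquad
\eta_2 \;:=\; A_i^{\ast}v \colon A_i^{\ast}\to A_i^{\ast}\,p_2^{\ast}\,p_{2,!}=s^{\ast}p_{2,!}=[2i]\,p_{2,!}.
\]
Naturality is then automatic, since $\eta_1,\eta_2$ are each the whiskering of a natural transformation by a functor.

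Finally I would identify what $\eta_1$ and $\eta_2$ do on a filtered $X_{\ast\ast}\in\Filt_{[q_1,q_2]}$. Level-wise, $(\eta_1)_X$ at $q$ is the canonical cocone inclusion $X_{q,q-2i}=(A_i^{\ast}X)_q\hookrightarrow\colim_{[q_2]}X_{q,\ast}=(p_{1,!}X)_q$ read off from the coend formula for $p_{1,!}$ as in Proposition \ref{prop:forgeth1}, and likewise $(\eta_2)_X$ at $q$ is $X_{q,q-2i}\hookrightarrow\colim_{[q_1]}X_{\ast,q-2i}=([2i]p_{2,!}X)_q$. Passing to the colimit over $[q_1]$ and using that $A_i$ is coinitial in $[q_1,q_2]$ (as in the earlier treatment of $A_i^{\ast}$), all three underlying simplicial sets become $X=\colim_{[q_1,q_2]}X_{\ast\ast}$ and the induced map is the identity; the three functors differ only in their height functions, which by the propositions above are $\min\{h_1,h_2+2i\}$, $h_1$, and $h_2+2i$ respectively, so $\mathrm{id}_X$ is genuinely a filtered morphism in each case. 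The step I expect to take the most care is exactly this last matching: checking that the level-wise cocone maps assemble into a morphism of diagrams (compatibility with the structure maps) and that the three identifications with the single simplicial set $X$ are coherent, since $A_i^{\ast}X$ need not be projectively cofibrant but only ``honestly filtered'' — everything else here is formal.
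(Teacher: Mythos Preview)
Your proposal is correct and follows essentially the same line as the paper's proof: both obtain $\eta_1$ and $\eta_2$ as the canonical cocone maps $X_{q,q-2i}\to (p_{1,!}X)_q$ and $X_{q,q-2i}\to ([2i]p_{2,!}X)_q$ coming from the coend description of the left Kan extensions, and both verify the filtered-simplicial-set claim via the height inequalities $\min\{h_1,h_2+2i\}\le h_1$ and $\min\{h_1,h_2+2i\}\le h_2+2i$. Your packaging via the adjunction units and the factorizations $p_1\circ A_i=\mathrm{Id}$, $p_2\circ A_i=s$ is a bit more explicit than the paper's ``canonical map from the coend'' phrasing, but it is the same map and the same argument.
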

\begin{proof}
Unpacking the definition of \(p_{1,!}\) as a left Kan extension and thus as a particular coend, we can see that there is a conanical map from \(X_{q,q-2i}\) to \(p_{1,!}(X_{\ast})_q\) for all \(q\), and these maps are compatible with the corresponding inclusions.
For the cofibrant objects we care about this is realized in particular from the inequality \(\min\{h_1,h_2+2i\}\leq h_1\).
Similarly \(\eta_2\) comes from the inequality \(\min\{h_1,h_2+2i\}\leq h_2+2i\).
\end{proof}

As such we can define
\begin{align*}
\XD_{\Sigmasq}^{[\tf,i]}(X)(\tf,i,a)&=A_i^{\ast}(X^{\tf}_{\ast})[\grf_{\Sigmasq}(i)] \\
\XD_{\Sigmasq}^{[\tf,i]}(X)(\tf,i,b)&=p_{1,!}(X^{\tf}_{\ast})[\grf_{\Sigmasq}(i)]\\
\XD_{\Sigmasq}^{[\tf,i]}(X)(\lambda_{\tf,i})&=\eta_1[\grf_{\Sigmasq}(i)]\\
\XD_{\Sigmasq}^{[\tf,i]}(X)(\rho_{\tf,i})&=\Gamma^{\tf}\circ \eta_2[\grf_{\Sigmasq}(i)].
\end{align*}
To reduce notational clutter \(\XD_{\Sigmasq}^{[\tf,i]}(X)(\tf,i,a)\) is often denoted \(A_{\tf,i}\), and the filtered simplical set \(\XD_{\Sigmasq}^{[\tf,i]}(X)(\tf,i,b)\) is denoted \(B_{\tf,i}\) with the fact that this is functorial in \(X\) and dependent on \(\Sigmasq\) left implicit.
In fact, while \(\Sigmasq\) is necessary for the grading shifts here, that is the only place where \(A_{\tf,i}\) and \(B_{\tf,i}\) depend on \(\Sigmasq\).
See Figure \ref{fig:XDr} for a diagram representing this functor.

There are two things to note with \(\rho_{\tf,i}\).
First, one can check directly from the formula for \(\grf_{\Sigmasq}(i)\) given in Section \ref{subsec:gradShift} that
\[\grf_{\Sigmasq}(i)+2i = \grf_{\Sigmasq}(i+\Sigmasq),\]
as needed for \(\eta_2[\grf_{\Sigmasq}(i)]\) to end at the correct place.
Second, the composition \(\Gamma\circ \eta_2[\grf_{\Sigmasq}(i)]\) is only defined in a quasi-category up to a choice for liftng an inner horn inclusion.
We can either be fine with that ambiguity, knowing that all options are homotopic, or use that there is a canonical composition in the model category.

\begin{figure}
\begin{subfigure}[b]{.9\textwidth}
\[
\begin{tikzcd}
\cdots \ar[rd]&A_{\tf,i} \ar[d,"\eta_1"]\ar[dr,"\Gamma^{\tf}\circ\eta_2" description] & A_{\tf+[K],i+\Sigmasq}\ar[d,"\eta_1"]\ar[dr,"\Gamma^{\tf+[K]}\circ\eta_2" description] & \cdots \ar[d] \\
&B_{\tf,i} &B_{\tf+{[K]},i+\Sigmasq} & B_{\tf+2{[K]},i+2\Sigmasq}
\end{tikzcd}
\]
\caption{The functor \(\XD_{\Sigmasq}^{[\tf,i]}\)}\label{subfig:XDr}
\end{subfigure}
\hfill
\begin{subfigure}[b]{.9\textwidth}
\[
\begin{tikzcd}
\cdots \ar[rd]&A_{\tf,i}^{\mu} \ar[d,"\eta_1"]\ar[dr,"\Gamma^{\tf}\circ\eta_2" description] & A_{\tf+[K],i+\Sigmasq}^{\mu}\ar[d,"\eta_1"]\ar[dr,"\Gamma^{\tf+[K]}\circ\eta_2" description] & \cdots \ar[d] \\
&B_{\tf,i}^{\mu} &B_{\tf+[K],i+\Sigmasq} & B_{\tf+2[K],i+2\Sigmasq}^{\mu}
\end{tikzcd}
\]
\caption{The functor \(\XKD_{\Sigmasq}^{[\tf,i]}\).}\label{subfig:XKDr}
\end{subfigure}
\caption{Diagrams expressing \(\XD_{\Sigmasq}^{[\tf,i]}\) and \(\XKD_{\Sigmasq}^{[\tf,i]}\). Note that the superscript \(\mu\) in Subfigure \ref{subfig:XKDr} represent using the double filtration whose first height function comes from that in Figure \ref{subfig:XDr} and the second height function comes from the same diagram but where \(i\) has been shifted to \(i+1\).
In particular \(p_{1,!}\) applied to Subfigure \ref{subfig:XKDr} gives Subfigure \ref{subfig:XDr}.
We have supressed including the grading shifts on the morphisms to reduce notational clutter.}\label{fig:XDr}
\end{figure}

Due to \(\Ic_{[\tf,i]}\) having all non-trivial morphisms atomic, this suffices to define the desired functor \(\XD^{[\tf,i]}_{\Sigmasq}\), and we can define
\[ \Xb_{\Sigmasq}:=\prod_{[\tf,i]\in \SpincX{Y_{\Sigmasq}(K)}}\hocolim\XD^{[\tf,i]}.\]

\subsection{The functor \(\XK_{\Sigmasq}\)}\label{subsec:XK}

Having established a functor \(\Xb_{\Sigmasq}\) we wish to upgrade this to a functor \(\XK_{\Sigmasq}\).
To do this note the symmetry \(\Xi\) of \(\Ic\) described in Section \ref{subsec:Ic} and consider \(\Xi^{\ast}\XD_{\Sigmasq}^{[\tf,i]}\), which now maps \(N(\Ic_{[\tf,i-1]})\) to \(\Rf\Filt_{\grf(Y_{\Sigmasq}(K),[\tf,i])}\)
Further note that on the level of underlying simplicial sets the diagrams \(\Xi^{\ast}\XD_{\Sigmasq}^{[\tf,i]}\) and \(\XD^{[\tf,i-1]}_{\Sigmasq}\) are the same.
In particular every object that is indexed by \(\tf\) has underlying simplicial set the same as \(X_{\ast\ast}^{\tf}\) and every \(\lambda_{\tf,i}\) is the identity on this underlying simplicial set, while every \(\rho_{\tf,i}\) acts as \(\Gamma^{\tf}\).
As such, together \(\XD_{\Sigmasq}^{[\tf,i-1]}\) and \(\Xi_{\ast}\XD_{\Sigmasq}^{[\tf,i]}\) form a single functor 
\[\XKD^{[\tf,i-1]}_{\Sigmasq}\colon \Cc_{Y,[K]}^{\Gamma} \to [N(\Ic_{[\tf,i-1]}),\Rf\Filt_{\grf(Y_{\Sigmasq}(K),[\tf,i-1])\times\grf(Y_{\Sigmasq}(K),[\tf,i])}].\]
In particular, we can define \(A_{\tf,i}^{\mu}\) to be the simplicial set with double filtration coming from \(A_{\tf,i}\) and \(A_{\tf,i+1}\), and let \(B_{\tf,i}^{\mu}\) be the simplicial set with double filtration coming from \(B_{\tf,i}\) and \(B_{\tf,i+1}\).
A diagram expressing the form of \(\XKD^{[\tf,i]}\) is shown in Figure \ref{fig:XDr}.

Now define,
\[\XK^1_{\Sigmasq}:= \prod_{[\tf,i]\in \SpincX{Y_{\Sigmasq}(K)}}\hocolim\XKD^{[\tf,i]},\]
which will provide a functor \(\XK^1_{\Sigmasq}\) to \(\Cc_{Y_{\Sigmasq}(K),[\mu]}\).
Furthermore,  there is a built in natural transformation \(\tilde{\Gamma}_{\Ic}^{[\tf,i]}\) from \(p_2\XKD^{[\tf,i-1]}\) to \(\Xi^{\ast}p_1\XKD^{[\tf,i]}\), as these are in fact the same functor.
Additionally there is an identification \(\xi\) of the filtered simplicial set \(\hocolim \Xi^{\ast}p_1\XKD^{[\tf,i]}\) with \(\hocolim p_1\XKD^{[\tf,i]}\) that follows from \(\Xi^{\ast}\) being an automorphism of the underlying diagram.
Together we have a natural transformation \(\tilde{\Gamma}^{[\tf,i]}\colon p_2\XK^1 \to p_1\XK^1\) as needed for our compatible functor
\[\XK_{\Sigmasq}^2\colon \Cc_{Y,[K]}^{\Gamma} \to [\Delta^1,\Cc_{Y_{\Sigmasq}(K)}]\]
Together \(\XK_{\Sigmasq}^1\) and \(\XK_{\Sigmasq}^2\) form the desired functor \(\XK_{\Sigmasq}\).

\subsection{The functor \(\XKI_{\Sigmasq}\)}\label{subsec:XKI}

By precomposing \(\XK_{\Sigmasq}\) with the functor \(\Fc\colon \Cc_{Y,[K]}^{\Jk}\to \Cc_{Y,[K]}^{\Gamma}\), we now have a functor from \(\Cc_{Y,[K]}^{\Jk}\) to  \(\Cc_{Y_{\Sigmasq}(K),[\mu]}^{\Gamma}\) that we would like to lift to have codomain \(\Cc_{Y_{\Sigmasq}(K),[\mu]}^{\Jk}\).
The key step of this will be finding a functor \(\XKI^1_{\Sigmasq}\) to \([N(\Jc_1),\Cc_{Y_{\Sigmasq}(K)}]_{C_2}\) compatible with \(\XK_{\Sigmasq}\), as the symmetry used in Section \ref{subsec:XK} will allow us to use this functor as a blueprint for what is needed at \([N(\tilde{\Jc}),\Cc_{Y_{\Sigmasq}(K)}]\) and \([N(\Jc_2),\Cc_{Y_{\Sigmasq}(K),[\mu]}]_{C_2}\).
To build \(\XKI_{\Sigmasq}^1\) up we will start by relating pieces of the diagrams \(\XD_{\Sigmasq}\) then show those pieces can be stitched together.
In this process we will be using the following notation for the canonical projections from \(\Cc_{Y,[K]}^{\Jk}\)
\begin{align*}
\pi_{1}\colon &\Cc_{Y,[K]}^{\Jk} \to [N(\Jc_1),\Cc_Y]_{C_2} \\
\pi_{\tilde{\Jc}}\colon&  \Cc_{Y,[K]}^{\Jk} \to [N(\tilde{\Jc}),\Cc_Y] \\
\pi_{2} \colon & \Cc_{Y,[K]}^{\Jk} \to [N(\Jc_2),\Cc_{Y,[K]}]_{C_2},
\end{align*}
and when we want to postcompose these with the projection to a specific \Spinc structure in \(\Cc_Y\) or \(\Cc_{Y,[K]}\) we will superscript with that \Spinc structure.
For example \(\pi_1^{\tf}\) has codomain \([N(\Jc_1),\Filt_{\grf(Y,\tf)}]\).

\begin{figure}
\begin{subfigure}[b]{.9\textwidth}
\[
\begin{tikzcd}
{A_i^{\ast}(X_{\ast\ast}^{\tf})[\grf_{\Sigmasq}(i)]} \ar[rr,"{\eta_1[\grf_{\Sigmasq}(i)]}"] \ar[d,leftrightarrow,"A_i^{\ast}\pi_2"] && {p_{1,!}(X_{\ast\ast}^{\tf})[\grf_{\Sigmasq}(i)]} \ar[d,leftrightarrow,"\pi_1"] \\
{A_{-i}^{\ast}\left(X_{\ast\ast}^{\overline{\tf+[K]}}\right)[\grf_{\Sigmasq}(-i)] }\ar[rr,"{\Gamma^{\overline{\tf}}\circ\eta_2[\grf_{\Sigmasq}(-i)]}"] & &{p_{2,!}(X_{\ast\ast}^{\overline{\tf}})[\grf_{\Sigmasq}(\Sigmasq-i)]}
\end{tikzcd}
\]
\caption{Diagram to make homotopy coherent.}\label{subfig:hcFillIn}
\end{subfigure}
\hfill

\begin{subfigure}[b]{.9\textwidth}
\[
\begin{tikzcd}
{A_i^{\ast}(X_{\ast\ast}^{\tf})} \ar[r,"{\eta_1}"] \ar[d,leftrightarrow,"A_i^{\ast}\pi_2 "] & p_{1,!}(X_{\ast\ast}^{\tf})\ar[r,"\id"]\ar[d,leftrightarrow,"p_{1,!}\pi_2^{\tf}"]& {p_{1,!}(X_{\ast\ast}^{\tf})} \ar[d,leftrightarrow,"\pi_1^{\tf}"] \\
{A_{i}^{\ast}\left(\Psi_2X_{\ast\ast}^{\overline{\tf+[K]}}\right) }\ar[r,"{\eta_2}"] & p_{2,!}\left(X_{\ast\ast}^{\overline{\tf}}\right)\ar[r,"\Gamma^{\overline{\tf}}"]&{p_{2,!}\left(X_{\ast\ast}^{\overline{\tf}}\right)}
\end{tikzcd}
\]
\caption{A useful factorization.}\label{subfig:hcFillFactor}
\end{subfigure}
\caption{Subfigure \ref{subfig:hcFillIn} depicts the diagram Lemma \ref{lem:XKIpiece} ensures can be filled in with homotopy coherent data with the grading shifts used in \(\Xb_{\Sigmasq}\). Note that \(A_i^{\ast}\pi_2\) and \(\pi_1\) indicate that the are modeling the entire homotopy coherent diagrams therein and not just a specific choice of morphism. That the lower left corner agrees with \(A_i^{\ast}\Psi_2(X)^{\tf}[\grf_{\Sigmasq}(i)]\) is confirmed in Lemma \ref{lem:AiPsi2check}. Subfigure \ref{subfig:hcFillFactor} depicts a factorization of this diagram (with grading shifts supressed to reduce notational clutter).}\label{fig:hcFillIn}
\end{figure}

\begin{lemma}\label{lem:XKIpiece}
For every \((\tf,i)\in \Ac(Y,K)\) there exists a functor
\[\XKID^{\tf,i}_{\Sigmasq}\colon \Cc_{Y,[K]}^{\Jk}\to [N(\Jc_1)\times \Delta^1,\Rf\Filt_{\grf(Y_{\Sigmasq}(K),[\tf,i])}]\]
filling in the higher homotopy coherent relations of Figure \ref{subfig:hcFillIn}.
Precisely,
\begin{enumerate}
\item restricting to \(N(a) \times \Delta^1\) yields the same result as restricting \(\XD_{\Sigmasq}^{[\tf,i]}\) to \(\lambda_{\tf,i}\).
\item restricting to \(N(b)\times \Delta^1\)  yields the same result as restricting \(\XD_{\Sigmasq}^{[\overline{\tf},\Sigmasq-i]}\) to \(\rho_{\overline{\tf+[K]},-i}\).
\item restricting to \(N(\Jc_1) \times \{0\}\) yields the same result as \(A_i^{\ast}\pi_{2}^{\tf}[\grf_{\Sigmasq}(i)]\) 
\item restricting to \(N(\Jc_1)\times\{1\}\) yields the same result as \(\pi_{1}^{\tf}[\grf_{\Sigmasq}(i)]\).
\end{enumerate}
\end{lemma}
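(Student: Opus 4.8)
The plan is to produce $\XKID^{\tf,i}_{\Sigmasq}$ as a composite of two one-simplices in the quasi-category $[N(\Jc_1),\Rf\Filt_{\grf(Y_{\Sigmasq}(K),[\tf,i])}]$, which is a quasi-category by Proposition 1.2.7.3 of \cite{HTT}, following the factorization of Figure~\ref{subfig:hcFillIn} displayed in Figure~\ref{subfig:hcFillFactor}. Concretely, I would assemble an $N(\Jc_1)\times\Delta^2$-diagram whose face on vertices $0,1$ is the left square of Figure~\ref{subfig:hcFillFactor}, whose face on vertices $1,2$ is the right square, and whose middle vertex is the $\Jc_1$-diagram $p_{1,!}\pi_2^{\tf}[\grf_{\Sigmasq}(i)]$, and then take $\XKID^{\tf,i}_{\Sigmasq}$ to be its restriction to the edge on vertices $0$ and $2$. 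The four conditions are then read off the result: conditions (3) and (4) are its source and target, while conditions (1) and (2) hold because the $a$- and $b$-components of a composite are the composites of the corresponding components.

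The left square is the easy half. The natural transformations $\eta_1\colon A_i^{\ast}\to p_{1,!}$ and $\eta_2\colon A_i^{\ast}\to [2i]p_{2,!}$ are the identity on underlying simplicial sets, and $p_{1,!}$, $p_{2,!}$, and $\sigma$ (hence the automorphisms $\Psi_1$ and $\Psi_2$) only reindex height functions; so the square of $\Jc_1$-diagrams with horizontal edges $\eta_1$, $\eta_2$ and vertical edges the reindexing maps commutes strictly as a square of natural transformations, once one checks that the height inequalities line up. That check uses the identity $\grf_{\Sigmasq}(i)+2i=\grf_{\Sigmasq}(i+\Sigmasq)$ from Section~\ref{subsec:gradShift} together with Lemma~\ref{lem:AiPsi2check}, which identifies the lower-left corner with $A_i^{\ast}\Psi_2(X)^{\tf}[\grf_{\Sigmasq}(i)]$. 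A strictly commuting square yields a one-simplex $A_i^{\ast}\pi_2^{\tf}[\grf_{\Sigmasq}(i)]\to p_{1,!}\pi_2^{\tf}[\grf_{\Sigmasq}(i)]$ in $[N(\Jc_1),\Rf\Filt_{\grf(Y_{\Sigmasq}(K),[\tf,i])}]$ with $a$-component $\eta_1[\grf_{\Sigmasq}(i)]$ and $b$-component $\eta_2[\grf_{\Sigmasq}(i)]$, with no choices involved; this assignment is clearly natural in the input object.

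The right square is where the involutive data of the input is used. By construction of $\Cc_{Y,[K]}^{\Jk}$ as the stated limit, an object supplies a homotopy-coherent $N(\tilde{\Jc})$-diagram $\pi_{\tilde{\Jc}}$ in $\Cc_Y$ whose restriction along $\Jc_1\hookrightarrow\tilde{\Jc}$ is $\pi_1$ and whose restriction along $\Jc_2\hookrightarrow\tilde{\Jc}$ agrees with $p_1 j_2\pi_2$; since $N(\tilde{\Jc})$ contains $N(\Jc_1)\cup_{N(a)}N(\Jc_2)$, the composite of its two legs $b_2\to a\to b_1$ recovers the flip map, which is exactly the recipe by which $\Fc$ extracts $\Gamma$. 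Evaluating $\pi_{\tilde{\Jc}}$ at $\tf$, shifting by $[\grf_{\Sigmasq}(i)]$, and twisting by $\Psi_1$ where needed then produces a one-simplex $p_{1,!}\pi_2^{\tf}[\grf_{\Sigmasq}(i)]\to\pi_1^{\tf}[\grf_{\Sigmasq}(i)]$ whose $a$-component is an identity and whose $b$-component is the $\Gamma$-term of the $\rho_{\overline{\tf+[K]},-i}$-edge of $\XD^{[\overline{\tf},\Sigmasq-i]}_{\Sigmasq}$; identifying the corners here uses the identity $\Psi_1 p_1\Psi_2=p_2$ and the $C_2$-equivariance of $\pi_2$. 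I would then compose this one-simplex with the one from the left square by applying, fixed once and for all, a section (of simplicial sets) of the trivial fibration $[\Delta^2,C]\to[\Lambda^2_1,C]$ with $C=[N(\Jc_1),\Rf\Filt_{\grf(Y_{\Sigmasq}(K),[\tf,i])}]$, which keeps the entire assignment a genuine map of quasi-categories $\Cc_{Y,[K]}^{\Jk}\to[N(\Jc_1)\times\Delta^1,\Rf\Filt_{\grf(Y_{\Sigmasq}(K),[\tf,i])}]$. With this, conditions (1) and (2) read as $\eta_1[\grf_{\Sigmasq}(i)]$ and $\eta_2[\grf_{\Sigmasq}(i)]$ followed by the $\Gamma$-term, which are precisely the $\lambda_{\tf,i}$-edge of $\XD^{[\tf,i]}_{\Sigmasq}$ and the $\rho_{\overline{\tf+[K]},-i}$-edge of $\XD^{[\overline{\tf},\Sigmasq-i]}_{\Sigmasq}$.

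The main obstacle is the bookkeeping in the third paragraph: one must verify that the $N(\tilde{\Jc})$-projection of an object of $\Cc_{Y,[K]}^{\Jk}$, after the grading shift and the $\Psi_1$-twist, really has source $p_{1,!}\pi_2^{\tf}[\grf_{\Sigmasq}(i)]$, target $\pi_1^{\tf}[\grf_{\Sigmasq}(i)]$, and $b$-component equal to the prescribed $\Gamma$-term, and this forces a careful trace through the maps $\Jc_1^{\ast}$, $\Jc_2^{\ast}$, $p_1 j_2$ defining the limit, the two $C_2$-equivariances, the identity $\Psi_1 p_1\Psi_2=p_2$, and Lemma~\ref{lem:AiPsi2check}. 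A secondary difficulty is making the strictness claim for the left square hold at the level of filtrations rather than merely of underlying simplicial sets, which is where the grading-shift identities of Section~\ref{subsec:gradShift} do their work.
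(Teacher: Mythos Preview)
Your proposal is correct and follows the same factorization as the paper (Figure~\ref{subfig:hcFillFactor}), but the assembly is organized differently. The paper does not compose the two squares. Instead, it produces the right square directly by pulling back $\pi_{\tilde{\Jc}}^{\tf}$ along an explicit functor $\phi\colon \Jc_1\times[1]\to\tilde{\Jc}$ (sending $\Jc_1\times\{0\}$ to $\Jc_2$, $\Jc_1\times\{1\}$ to $\Jc_1$, $a\times[1]$ to $\id_a$, and $b_1\times[1]$ to $f_1g_2$), and then observes that because every map in the left square is the identity on underlying simplicial sets, one can simply \emph{refine} the filtration at the $\{0\}$-edge of the resulting $N(\Jc_1)\times\Delta^1$-diagram from $p_{1,!}\pi_2^{\tf}$ to $A_i^{\ast}\pi_2^{\tf}$: all maps and homotopies remain filtered since $A_i^{\ast}$ carries the more restrictive height function. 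This yields the desired diagram in one step, with no horn-filling choice.

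The payoff of the paper's approach is that conditions (1) and (2) hold on the nose: the $a$-component is literally the identity-on-underlying map $A_i^{\ast}\to p_{1,!}$, which \emph{is} $\eta_1$, and the $b$-component is literally $\pi_{\tilde{\Jc}}^{\tf}(f_1g_2)$ with refined source, which is the $\Gamma\circ\eta_2$ used to define $\rho$. Your route via a section of $[\Delta^2,C]\to[\Lambda^2_1,C]$ also works and is perfectly functorial, but the composite edge it produces satisfies (1) and (2) only up to the chosen section; this is harmless given the paper's own remark that $\rho_{\tf,i}=\Gamma\circ\eta_2$ is already only defined up to an inner-horn lift, but it does introduce a choice the paper's argument avoids. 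Your identification of where the real work lies (tracing the $\tilde{\Jc}$-projection through $\Psi_1p_1\Psi_2=p_2$, the $C_2$-equivariances, and Lemma~\ref{lem:AiPsi2check}) is accurate for both approaches.
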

\begin{proof}
Note that the left square of Figure \ref{subfig:hcFillFactor} is composed entirely of maps that are the identity on the underlying simplicial sets.
As such any filtered simplicial maps and homotopies we use to fill in the right square, i.e. where restricting to \(N(\Jc_1)\times \{0\}\) yields \(p_{1,!}\pi_2^{\tf}\), will still be filtered if we change \(N(\Jc_1)\times \{0\}\) to yield \(A_i^{\ast}\pi_2^{\tf}\).
There also exists a functor \(\phi\colon \Jc_1\times [1]\to \tilde{\Jc}\) which sends
\begin{enumerate}
\item \(\Jc_1\times \{0\}\) isomorphically to \(\Jc_2\) in \(\tilde{\Jc}\) (with \(a\) mapped to \(a\))
\item \(\Jc_1\times \{1\}\) isomorphically to \(\Jc_1\) in \(\tilde{\Jc_1}\) (with \(a\) mapped to \(a\)), \item \(a\times [1]\) to \(\id\)
\item \(b_1\times \{1\}\) to \(f_1g_2\).
\end{enumerate}
Note that by these conditions \(N(\phi)^{\ast}\pi_{\tilde{\Jc}}^{\tf}\) will  satisfy the needed properties.
\end{proof}

\begin{lemma}\label{lem:AiPsi2check}
As functors on \(\Cc_{Y,[K]}\),
\[(A_i^{\ast}\Psi_2(X)^{\tf})[\grf_{\Sigmasq}(i)]=A_{-i}^{\ast}\left(X_{\ast\ast}^{\overline{\tf+[K]}}\right)[\grf_{\Sigmasq}(-i)]\]
\end{lemma}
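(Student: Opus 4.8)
The plan is to unwind both sides into their underlying simplicial sets together with their height functions, using the descriptions in Section \ref{subsec:FilteredsSet} of the effect of \(\sigma\), of \(A_i^{\ast}\) and \(A_{-i}^{\ast}\), and of the shift \([q]\), and then to reduce the claimed equality to a single numerical identity among grading shifts that is checked from the explicit formula for \(\grf_{\Sigmasq}\) in Section \ref{subsec:gradShift}. Both sides are composites of functors (evaluation of an object of \(\Cc_{Y,[K]}\) at a \Spinc structure, the automorphism \(\Psi_2\) or the conjugated evaluation, the swap \(\sigma\), a restriction \(A_{\pm i}^{\ast}\), and a shift), so functoriality in \(X\) is automatic and it suffices to compare the two filtered simplicial sets for a fixed object.

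First I would observe that each of \(\sigma\), \(A_{\pm i}^{\ast}\) (pullbacks along coinitial subcategories, which therefore do not change the underlying simplicial set, cf.\ Section \ref{subsec:FilteredsSet}), and \([q]\) leaves the underlying simplicial set unchanged, so on both sides the underlying simplicial set is that of \(X_{\ast\ast}^{\overline{\tf+[K]}}\) and only the height functions need to be matched. Write \((h_1,h_2)\) for the pair of height functions of \(X_{\ast\ast}^{\overline{\tf+[K]}}\in\Filt_{\grf(Y,\tf+[K]),\,\grf(Y,\tf)}\). Then \(\Psi_2(X)^{\tf}=\sigma\bigl(X_{\ast\ast}^{\overline{\tf+[K]}}\bigr)\) has height functions \((h_2,h_1)\), so \(A_i^{\ast}\Psi_2(X)^{\tf}\) has height function \(\min\{h_2,\,h_1+2i\}\), and after the shift \(\min\{h_2,\,h_1+2i\}+\grf_{\Sigmasq}(i)\); whereas \(A_{-i}^{\ast}\bigl(X_{\ast\ast}^{\overline{\tf+[K]}}\bigr)\) has height function \(\min\{h_1,\,h_2-2i\}\), and after its shift \(\min\{h_1,\,h_2-2i\}+\grf_{\Sigmasq}(-i)\). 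Since \(\min\{h_2,\,h_1+2i\}=\min\{h_1,\,h_2-2i\}+2i\), the two agree exactly when \(2i+\grf_{\Sigmasq}(i)=\grf_{\Sigmasq}(-i)\), and this follows from \(\grf_{\Sigmasq}(i)=\frac{(2i-\Sigmasq)^2+\Sigmasq}{4\Sigmasq}\) together with \((a+b)^2-(a-b)^2=4ab\): taking \(a=2i\), \(b=\Sigmasq\) gives \(\grf_{\Sigmasq}(-i)-\grf_{\Sigmasq}(i)=\frac{8i\Sigmasq}{4\Sigmasq}=2i\). The same identity also shows the two sides lie in the same \(\Filt\)-category: the left side is in \(\Filt_{\grf(Y,\tf)+\grf_{\Sigmasq}(i)}\) and the right side in \(\Filt_{\grf(Y,\tf+[K])+\grf_{\Sigmasq}(-i)}\) — here using \(\grf(Y,\overline{\tf})=\grf(Y,\tf)\) and \(\overline{\tf+[K]}+[K]=\overline{\tf}\) from Section \ref{subsec:SpecConj} — and the identification \(\Ac(Y,K,\tf)=\frac{\grf(Y,\tf)-\grf(Y,\tf+[K])}{2}\) of Proposition \ref{prop:AlexCoincide}, applied to \(i\in\Ac(Y,K,\tf)\), gives \(\grf(Y,\tf)\equiv\grf(Y,\tf+[K])+2i\) modulo \(2\Z\), so the cosets agree.

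The computation itself is routine; the one place I expect to have to be careful is the simultaneous bookkeeping of which of the two filtrations of \(X_{\ast\ast}^{\overline{\tf+[K]}}\) survives after applying \(\sigma\) and then \(A_i^{\ast}\) versus applying \(A_{-i}^{\ast}\) directly, together with tracking the two kinds of conjugation on \Spinc structures, so that the ambient \(\Filt\)-categories of the two expressions are literally identified rather than merely isomorphic. Once the height functions are written out in the same variables, the remaining content is precisely the grading-shift identity \(2i+\grf_{\Sigmasq}(i)=\grf_{\Sigmasq}(-i)\), which is a one-line calculation.
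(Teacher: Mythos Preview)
Your proposal is correct and follows essentially the same approach as the paper: both unwind \(\Psi_2\) and \(A_{\pm i}^{\ast}\) in terms of height functions on the common underlying simplicial set \(X_{\ast\ast}^{\overline{\tf+[K]}}\), reduce to the identity \(\grf_{\Sigmasq}(-i)-\grf_{\Sigmasq}(i)=2i\), and verify it directly from the formula for \(\grf_{\Sigmasq}\). Your additional remark checking that the two sides live in the same \(\Filt\)-category is a nice piece of bookkeeping that the paper leaves implicit.
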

\begin{proof}
The functor \(\Psi_2\) shuffles the \Spinc structures in the indicated way, and then it acts by swapping the order the height functions.
As such, if \(X_{\ast\ast}^{\overline{\tf+[K]}}\) has height functions \(h_1\) and \(h_2\) then \((A_i^{\ast}\Psi_2(X)^{\tf})[\grf_r(i)]\) has height function
\[h= \min\{h_2,h_1+2i\}+\grf_{\Sigmasq}(i),\]
while \(A_{-i}^{\ast}\left(X_{\ast\ast}^{\overline{\tf+[K]}}\right)[\grf_{\Sigmasq}(-i)]\) has height function
\begin{align*}
h'&= \min\{h_1,h_2-2i\} +\grf_{\Sigmasq}(-i)\\
&=\min\{h_2,h_1+2i\}-2i +\grf_{\Sigmasq}(-i).
\end{align*}
Moreover, 
\begin{align*}
\grf_{\Sigmasq}(i)-\grf_{\Sigmasq}(-i) &= \frac{(2i-\Sigmasq)^2+\Sigmasq}{4\Sigmasq}-\frac{(-2i-\Sigmasq)^2+\Sigmasq}{4\Sigmasq} \\
&=\frac{(2i-\Sigmasq)^2-(-2i-\Sigmasq)^2}{4\Sigmasq}\\
&=\frac{-4i\Sigmasq-4i\Sigmasq}{4\Sigmasq}\\
&=-2i,
\end{align*}
as needed.
\end{proof}

Given \(\Tc\) a subset of \(\SpincX{Y_{\Sigmasq}(K)}\) define
\[\Ic_{\Tc} := \coprod_{[\tf,i] \in \Tc}\Ic_{Y,[K],\Sigmasq,[\tf,i]}.\]
We will be particularly interested in the case where \(\Tc\) is an orbit under conjugation or under \([\mu]\)-translated conujugation.
For \(\Tc\) an orbit under conjugation, \(N(\Ic_{\Tc})\) has a \(C_2\) action \(N(\psi)\) induced by the automorphism \(\psi\) discussed in Section \ref{subsec:SpecConj}.
For every element \([\tf,i]\) of \(\Tc\), \(\grf(Y_{\Sigmasq}(K),[\tf,i])\) is the same value which we will refer to as \([q]\) for compactness. 
Furthermore, taking homotopy colimits over \(\Ic_{\Tc}\) gives a functor
\[[N(\Ic_{\Tc})\times N(\Jc_1), \Rf\Filt_{[q]}]_{C_2}\to \left[N(\Jc_1),\prod_{[\tf,i]\in \Tc}\Rf\Filt_{[q]}\right]_{C_2},\]
where \(C_2\) acts trivially on \(\Filt_{[q]}\) but shuffles the product as in \(\Psi_1\).

\begin{lemma}\label{lem:XKID1}
Let \(\Tc\) be an orbit of \(\SpincX{Y_{\Sigmasq}(K)}\) under conjugation and let \([q]=\grf(Y_{\Sigmasq}(K),[\tf,i])\) for \([\tf,i]\in \Tc\).
There exists a functor from \(\Cc_{Y,[K]}^{\Jk}\) to \([N(\Ic_{\Tc})\times N(\Jc_1),\Filt_{[q]}]_{C_2}\) so that for \([\tf,i]\in \Tc\) restriction to \(N(\Ic_{[\tf,i]})\times \{a\}\) yields \(\XD^{[\tf,i]}\).
\end{lemma}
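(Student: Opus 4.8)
The plan is to build the desired functor by assembling the per-index functors \(\XKID^{\tf,i}_{\Sigmasq}\) produced in Lemma \ref{lem:XKIpiece}, then checking that the assembled object is genuinely \(C_2\)-equivariant for the action \(N(\psi)\) on \(N(\Ic_{\Tc})\) and the action \(\Psi_1\) on the target. First I would note that an object of \([N(\Ic_{\Tc})\times N(\Jc_1),\Filt_{[q]}]_{C_2}\) is, by the universal property of the product of diagrams together with the description of equivariant functors out of a free \(C_2\)-space (as in the proof of Proposition \ref{prop:CJkquasicat}), the same as a (non-equivariant) homotopy coherent diagram on \(N(\Ic_{\Tc'})\times N(\Jc_1)\) where \(\Tc'\subset\Tc\) is a set of representatives of the \(C_2\)-orbits, subject to the constraint that its value on the fixed locus is forced by symmetry. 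Since \(\Tc\) is a single conjugation orbit, \(\psi\) either acts freely on \(\Ic_{\Tc}\) (when the two summands \(\Ic_{[\tf,i]}\) and \(\Ic_{[\overline{\tf},\Sigmasq-i]}\) are distinct) or there is a single summand with an internal involution; I would treat the generic free case first and then remark on the fixed case.

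The key steps, in order: (1) For each representative \([\tf,i]\), glue the functor \(\XKID^{\tf,i}_{\Sigmasq}\) of Lemma \ref{lem:XKIpiece} on \(N(\Jc_1)\times\Delta^1\) together with the shifted copy \(\Xi^{\ast}\XKID^{\overline{\tf},\Sigmasq-i}_{\Sigmasq}\) along the common restriction to \(N(\Jc_1)\times\partial\Delta^1\). Here I use conditions (1) and (2) of Lemma \ref{lem:XKIpiece}, which say precisely that \(\XKID^{\tf,i}_{\Sigmasq}\) restricted to \(N(a)\times\Delta^1\) is \(\XD^{[\tf,i]}_{\Sigmasq}(\lambda_{\tf,i})\) and restricted to \(N(b)\times\Delta^1\) is \(\XD^{[\overline{\tf},\Sigmasq-i]}_{\Sigmasq}(\rho_{\overline{\tf+[K]},-i})\); these are exactly the morphisms \(\lambda\) and \(\rho\) that appear as edges of \(\Ic_{[\tf,i]}\), so the \(\Delta^1\) directions of the various \(\XKID\)'s glue along the atomic morphisms of \(\Ic_{[\tf,i]}\) to give a functor on \(N(\Ic_{[\tf,i]})\times N(\Jc_1)\). (2) Check condition (4) of Lemma \ref{lem:XKIpiece}: restricting the glued diagram to \(N(\Ic_{[\tf,i]})\times\{a\}\) yields exactly \(\pi_1^{\tf}[\grf_{\Sigmasq}(i)]\)-style data assembled over all objects of \(\Ic_{[\tf,i]}\), which by the formula defining \(\XD^{[\tf,i]}\) in Section \ref{subsec:Xb} is \(\XD^{[\tf,i]}\) itself — this is the asserted compatibility. (3) Produce the equivariant structure: the involution \(\psi\) carries \(\Ic_{[\tf,i]}\) to \(\Ic_{[\overline{\tf},\Sigmasq-i]}\) swapping \(\lambda\leftrightarrow\rho\) by the formulas at the end of Section \ref{subsec:SpecConj}, and under \(\Psi_1\) the target filtered space \(\Filt_{[q]}\) (with \([q]=\grf(Y_{\Sigmasq}(K),[\tf,i])=\grf(Y_{\Sigmasq}(K),[\overline{\tf},\Sigmasq-i])\) by the conjugation-invariance of \(\grf\)) is taken to the copy indexed by the conjugate \Spinc structure; so defining the value on the non-representative half of the orbit by transporting the value on the representative half via \(N(\psi)\) and \(\Psi_1\) gives a well-defined equivariant functor, with the Lemma \ref{lem:XKIpiece} data on the representative half guaranteeing the gluing is consistent. (4) Finally, postcompose with the homotopy colimit functor \([N(\Ic_{\Tc})\times N(\Jc_1),\Filt_{[q]}]_{C_2}\to[N(\Jc_1),\prod_{[\tf,i]\in\Tc}\Filt_{[q]}]_{C_2}\) only if needed downstream; for this lemma we just need the functor into \([N(\Ic_{\Tc})\times N(\Jc_1),\Filt_{[q]}]_{C_2}\) with the stated restriction property, so step (4) can be deferred.

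The main obstacle I expect is step (1)–(3) coherence bookkeeping: one must verify that the \(\Delta^1\)-directions of the various \(\XKID^{\tf,i}_{\Sigmasq}\) really do glue into a single functor on \(N(\Ic_{[\tf,i]})\times N(\Jc_1)\) rather than merely agreeing up to homotopy, and that the transport by \(N(\psi)\) and \(\Psi_1\) is strictly compatible with this gluing on the nose (so that the equivariant functor exists, not just an equivariant functor up to coherent homotopy). The functor \(\phi\colon\Jc_1\times[1]\to\tilde{\Jc}\) constructed inside the proof of Lemma \ref{lem:XKIpiece} is the lever here: because \(\XKID^{\tf,i}_{\Sigmasq}\) is by construction \(N(\phi)^{\ast}\pi_{\tilde{\Jc}}^{\tf}\), and \(\pi_{\tilde{\Jc}}\) is a single homotopy coherent diagram on all of \(N(\tilde{\Jc})\) (not a collection of unrelated pieces), the various restrictions automatically agree where they are supposed to — so I would phrase the gluing as pulling back the single diagram \(\pi_{\tilde{\Jc}}\) (together with \(\pi_2\) on the \Spinc structures that \(\Ic_{[\tf,i]}\) mixes) along an appropriate functor \(N(\Ic_{[\tf,i]})\times N(\Jc_1)\to N(\tilde{\Jc})\) extending \(\phi\), reducing the equivariance check to a comparison of two functors of diagram shapes, which is a finite combinatorial verification using the explicit formula for \(\psi\).
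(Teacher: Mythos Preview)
Your outline—assemble the pieces \(\XKID^{\tf,i}_{\Sigmasq}\) from Lemma \ref{lem:XKIpiece} along the morphisms of \(\Ic_{\Tc}\) and then verify \(C_2\)-equivariance—is the paper's strategy, but step (1) as written does not carry it out. The category \(\Ic_{[\tf,i]}\) is an infinite zigzag, so gluing two pieces cannot cover it; the paper instead assigns a piece \(G_f\) to \emph{every} morphism \(f\) of \(\Ic_{\Tc}\): namely \(G_{\lambda_{\tf,i}}:=\XKID^{\tf,i}_{\Sigmasq}\) and \(G_{\rho_{\tf,i}}:=\phi^{\ast}\XKID^{\overline{\tf+[K]},-i}_{\Sigmasq}\), where \(\phi\) here is the \(C_2\)-involution on \(N(\Jc_1)\) (not \(\Xi\), which is the cocore shift from Section \ref{subsec:XK} and plays no role in this lemma), and the index is \((\overline{\tf+[K]},-i)\), the \(\psi\)-image of the \(a\)-vertex, not \((\overline{\tf},\Sigmasq-i)\). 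The point of this particular choice for \(G_{\rho}\) is that equivariance then holds by construction: since \(\psi\) swaps \(\lambda_{\tf,i}\leftrightarrow\rho_{\overline{\tf+[K]},-i}\), the identity \(\XKID^1_{\Sigmasq}(\psi(\sigma_1),\phi(\sigma_2))=\XKID^1_{\Sigmasq}(\sigma_1,\sigma_2)\) is immediate. Note also that the \(C_2\)-action on the target \(\Filt_{[q]}\) in this lemma is trivial; \(\Psi_1\) only appears after taking the homotopy colimit into the product, so your step (3) framing via transport by \(\Psi_1\) is not how equivariance is established here.

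The substantive check you do not address is compatibility of the \(G_f\)'s at the \(a\)-objects, and this is exactly where the involutive input enters. By condition (3) of Lemma \ref{lem:XKIpiece}, \(i_0^{\ast}G_{\lambda_{\tf,i}}=A_i^{\ast}\pi_2^{\tf}[\grf_{\Sigmasq}(i)]\), while \(i_0^{\ast}G_{\rho_{\tf,i}}=\phi^{\ast}A_{-i}^{\ast}\pi_2^{\overline{\tf+[K]}}[\grf_{\Sigmasq}(-i)]\); these agree on the nose only because \(\pi_2\) is \(C_2\)-equivariant (so that \(\phi^{\ast}\pi_2^{\overline{\tf+[K]}}=\pi_2^{\overline{\tf+[K]}}\Psi_2\)), combined with Lemma \ref{lem:AiPsi2check}. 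You invoke condition (4) but not condition (3), and you do not use the equivariance of \(\pi_2\); without it, the pieces match only up to homotopy, which is precisely the coherence obstacle you flag but do not resolve. Your closing idea of pulling back a single diagram \(\pi_{\tilde{\Jc}}\) along one functor \(N(\Ic_{[\tf,i]})\times N(\Jc_1)\to N(\tilde{\Jc})\) cannot work as stated either, since the target of each piece depends on the \Spinc index \(\tf'\) varying along the zigzag.
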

\begin{proof}
Because \(N(\Ic_{\Tc})\) is 1-skeletal, a functor
\[\XKI_{\Sigmasq}^1\colon\Cc_{Y,[K]}\to[N(\Ic_{\Tc})\times N(\Jc_2),\Rf\Filt_{[q]}]\]
can be constructed by, for every morphism \(f\in \Ic_{\Tc}\), constructing a functor 
\[G_f\colon \Cc_{Y,[K]}^{\Jk}\to[N(\Jc_1)\times N(f),\Rf\Filt_{[q]}]\]
 so that these functors agree on the objects of \(\Ic_{\Tc}\).
We will define \(G_{\lambda_{\tf,i}}\) using \(\XKID_{\Sigmasq}^{\tf,i}\) constructed in Lemma \ref{lem:XKIpiece}, and we will define \(G_{\rho_{\tf,i}}\) using the diagram\(\phi^{\ast}\XKID_{\Sigmasq}^{\overline{\tf+[K]},-i}\), where \(\phi\) represents the action on \(N(\Jc_1)\).

To check compatibility at \((\tf,i,a)\), let \(i_0\) represent the inclusion \(N(\Jc_1)\times\{0\}\) into \(N(\Jc_1)\times\Delta^1\).
Then, by the hypotheses of Lemma \ref{lem:XKIpiece} and Lemma \ref{lem:AiPsi2check} and the symmetry of \(\pi_2\)
\begin{align*}
i_0^{\ast}\XKD_{\Sigmasq}^{\tf,i} &=A_i^{\ast}\pi_{2}^{\tf}[\grf_{\Sigmasq}(i)] \\
i_0^{\ast}\phi^{\ast}\XKD_{\Sigmasq}^{\overline{\tf+[K]},-i}&= \phi^{\ast}[\grf_{\Sigmasq}(-i)]A_{-i}^{\ast}\pi_2^{\overline{\tf+[K]}} \\
&=[\grf_{\Sigmasq}(-i)]A_{-i}^{\ast}\pi_2^{\overline{\tf+[K]}}\Psi_2 \\
&=A_i^{\ast}\pi_2^{\tf}[\grf_{\Sigmasq}(i)],
\end{align*}
as needed.
A similar argument works to check compatibility at \((\tf,i,b)\), so \(\XKID_{\Sigmasq}^1\) is well defined.
The hypotheses of Lemma \ref{lem:XKIpiece} force restriction to \(N(\Ic_{[\tf,i]})\) to yield \(\XD^{[\tf,i]}\).

To show that \(\XKID_{\Sigmasq}^1\) satisfies the needed symmetries, we only need that for a simplex \((\sigma_1,\sigma_2)\) of \(N(\Ic_{\Tc})\times N(\Jc_1)\) that \(\XKID_{\Sigmasq}^1(\sigma_1,\sigma_2)= \XKI_{\Sigmasq}^1(\psi(\sigma_1),\phi(\sigma_2))\).
We can assume WLOG that \(\sigma_1\) is contained in \(N(\lambda_{\tf,i})\) for some \((\tf,i)\) and in particular it suffices to check for \(\lambda_{\tf,i}\).
However, we defined \(\XKID_{\Sigmasq}^1\) on the \(\rho_{\overline{\tf+[K]},-i}\) precisely to enforce this symmetry. 
\end{proof}

\begin{theorem}\label{thm:XKI}
There exists a functor \(\XKI_{\Sigmasq}\colon \Cc_{Y,K}^{\Jk}\to \Cc_{Y_{\Sigmasq}(K),[\mu]}^{\Jk}\), so that 
\[\Fc\XKI_{\Sigmasq} = \XK_{\Sigmasq}\Fc.\]
\end{theorem}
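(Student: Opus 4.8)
The target $\Cc_{Y_{\Sigmasq}(K),[\mu]}^{\Jk}$ is, by definition, the limit of the diagram of Section~\ref{subsec:quasicatInvol} with $(Y_{\Sigmasq}(K),[\mu])$ in place of $(Y,[K])$, so by the universal property of the limit it is enough to produce a cone over that diagram with apex $\Cc_{Y,[K]}^{\Jk}$. Concretely I would build functors from $\Cc_{Y,[K]}^{\Jk}$ to $[N(\Jc_1),\Cc_{Y_{\Sigmasq}(K)}]_{C_2}$, to $[N(\tilde{\Jc}),\Cc_{Y_{\Sigmasq}(K)}]$, and to $[N(\Jc_2),\Cc_{Y_{\Sigmasq}(K),[\mu]}]_{C_2}$, and then check that the structure maps $\Jc_1^{\ast}$, $\Jc_2^{\ast}$, $p_1j_2$ and the two copies of $a^{\ast}$ carry these to one and the same functor into $\Cc_{Y_{\Sigmasq}(K)}$. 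As in the outline of Section~\ref{subsec:XKI}, the key datum is the $\Jc_1$-component $\XKI^1_{\Sigmasq}$, modeling the involution $\Ii$ on $Y_{\Sigmasq}(K)$; the $\tilde{\Jc}$- and $\Jc_2$-components are then read off from it via the same shift-by-$\Xi$ device that upgraded $\Xb_{\Sigmasq}$ to $\XK_{\Sigmasq}$ in Section~\ref{subsec:XK}.

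To assemble $\XKI^1_{\Sigmasq}$, for each orbit $\Tc$ of $\SpincX{Y_{\Sigmasq}(K)}$ under conjugation Lemma~\ref{lem:XKID1} supplies a $C_2$-equivariant functor into $[N(\Ic_{\Tc})\times N(\Jc_1),\Rf\Filt_{[q]}]_{C_2}$ restricting to $\XD^{[\tf,i]}$ on each $N(\Ic_{[\tf,i]})\times\{a\}$; postcomposing with the homotopy colimit over $\Ic_{\Tc}$ and taking a product over all conjugation orbits lands in $[N(\Jc_1),\Cc_{Y_{\Sigmasq}(K)}]_{C_2}$, the permutation part of $\Psi_1$ matching the conjugation action on $\SpincX{Y_{\Sigmasq}(K)}$ exactly. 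Just as $\XD^{[\tf,i-1]}_{\Sigmasq}$ and $\Xi^{\ast}\XD^{[\tf,i]}_{\Sigmasq}$ agree on underlying simplicial sets and glue into $\XKD^{[\tf,i-1]}_{\Sigmasq}$ along the pairs $(A_{\tf,i-1},A_{\tf,i})$ and $(B_{\tf,i-1},B_{\tf,i})$, the $\Jc_1$-data at levels $i-1$ and $i$ glue along $\Xi^{\ast}$ into a $C_2$-equivariant functor to $[N(\Jc_2),\Cc_{Y_{\Sigmasq}(K),[\mu]}]_{C_2}$, the action $\Psi_2$ being realized by combining conjugation, the shift by $[\mu]$ induced by $\Xi$, and the filtration swap $\sigma$; the $\tilde{\Jc}$-component is obtained by viewing $\tilde{\Jc}$ as an expansion of the pushout $\Jc_1\sqcup_a\Jc_2$, putting the level-$i$ data on the $\Jc_2$-leg and the level-$(i-1)$ data on the $\Jc_1$-leg so that $\Jc_1^{\ast}$ and $\Jc_2^{\ast}$ recover the two legs and the edge $f_1g_2$ carries the flip map. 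Compatibility of $p_1j_2$ and the $a^{\ast}$'s over $\Cc_{Y_{\Sigmasq}(K)}$ then follows from the $\grf$-conjugation identities of Section~\ref{subsec:SpecConj} controlling the grading shifts, and the universal property of the limit produces $\XKI_{\Sigmasq}$.

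To see $\Fc\XKI_{\Sigmasq}=\XK_{\Sigmasq}\Fc$, I would compare the two legs of these functors into the \emph{strict} pullback $\Cc_{Y_{\Sigmasq}(K),[\mu]}^{\Gamma}$. Recall $\Fc$ sends an object of $\Cc^{\Jk}_{Y_{\Sigmasq}(K),[\mu]}$ to the $a$-vertex of its $\Jc_2$-component in $\Cc_{Y_{\Sigmasq}(K),[\mu]}$, and to the $\Psi_1$-twisted $f_1g_2$-edge of its $\tilde{\Jc}$-component in $[\Delta^1,\Cc_{Y_{\Sigmasq}(K)}]$. By the ``restriction to $a$'' clauses in Lemmas~\ref{lem:XKIpiece} and~\ref{lem:XKID1} and the way the $\Jc_2$-component was glued out of the level-$i$ and level-$(i-1)$ data, the former equals the product of the $\hocolim\XKD^{[\tf,i]}_{\Sigmasq}$ evaluated on $\Fc(X)$, i.e.\ $\XK^1_{\Sigmasq}(\Fc(X))$; by the way the $f_1g_2$-edge of the $\tilde{\Jc}$-component was assembled, the latter equals the flip map $\tilde{\Gamma}^{[\tf,i]}$ on $\Fc(X)$, i.e.\ $\XK^2_{\Sigmasq}(\Fc(X))$. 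Since $\XK^1_{\Sigmasq}$ and $\XK^2_{\Sigmasq}$ are precisely the two legs of $\XK_{\Sigmasq}$, the equality follows.

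I expect the main obstacle to be the construction of the $\Jc_2$- and $\tilde{\Jc}$-components together with the $C_2$-equivariance there, since $\Psi_2$ is not merely a permutation of the \Spinc summands (unlike $\Psi_1$ and the flip-map analogue in $\Cc_{Y,[K]}^{\Gamma}$) but genuinely exchanges the two filtrations, so every identification used to glue levels $i-1$ and $i$ and to match $\phi^{\ast}\XKID^{\overline{\tf+[K]},-i}_{\Sigmasq}$ against $\XKID^{\tf,i}_{\Sigmasq}$ must be rechecked to remain filtered for the swapped grading. This is precisely the point at which the grading-shift identities $\grf_{\Sigmasq}(i)+2i=\grf_{\Sigmasq}(i+\Sigmasq)$ and $\grf_{\Sigmasq}(i)-\grf_{\Sigmasq}(-i)=-2i$, together with Lemma~\ref{lem:AiPsi2check}, carry the weight.
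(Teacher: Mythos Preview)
Your proposal is correct and follows the paper's overall strategy: use the universal property of the limit defining $\Cc_{Y_{\Sigmasq}(K),[\mu]}^{\Jk}$, build $\XKI^1_{\Sigmasq}$ from Lemma~\ref{lem:XKID1} by taking homotopy colimits over the $\Ic$-factors, and then manufacture the $\tilde{\Jc}$- and $\Jc_2$-components from the $\Jc_1$-data via the $\Xi$-shift device of Section~\ref{subsec:XK}. Your verification of $\Fc\XKI_{\Sigmasq}=\XK_{\Sigmasq}\Fc$ by comparing the two legs into the strict pullback is also exactly what the construction forces.

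The one place where the paper proceeds differently is the order and mechanism for the $\tilde{\Jc}$- and $\Jc_2$-components. Rather than building $\XKI^2_{\Sigmasq}$ directly by gluing the $\Jc_1$-data at adjacent levels and then assembling $\XKI^{\tilde{\Jc}}_{\Sigmasq}$ from the two legs via the pushout $\Jc_1\sqcup_a\Jc_2$, the paper builds $\XKI^{\tilde{\Jc}}_{\Sigmasq}$ \emph{first} by a one-line trick: it defines a collapse functor $G\colon\Ic\times\tilde{\Jc}\to\Ic\times\Jc_1$ sending both $b_1,b_2$ to $b_1$ (hence $f_1g_2,f_2g_1\mapsto\id$), sets $\XKID^{\tilde{\Jc}}_{\Sigmasq}:=G^{\ast}\XKID^1_{\Sigmasq}$, and after taking homotopy colimits uses $\Xi$ and $\xi$ at the $b_2$-vertex to reinterpret that value as $p_2\XK_{\Sigmasq}$. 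The $\Jc_2$-component $\XKI^2_{\Sigmasq}$ is then \emph{derived} by lifting $\Jc_2^{\ast}\XKI^{\tilde{\Jc}}_{\Sigmasq}$ across $p_1j_2$, and the doubly-filtered lift exists precisely because $\tilde{\Gamma}$ is filtered from $h_2$ to $h_1$ while $\tilde{\Gamma}^{-1}$ is filtered from $h_1$ to $h_2$. The advantage of the paper's route is that $G^{\ast}$ hands you the full $N(\tilde{\Jc})$-diagram at once (no extension from the pushout needed), and the $\Psi_2$-equivariance check you flag as the main obstacle is reduced to this single $\tilde{\Gamma}/\tilde{\Gamma}^{-1}$ filteredness observation rather than a direct verification of the $\sigma$-twisted symmetry on the glued data.
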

\begin{proof}
Lemma \ref{lem:XKID1} and the observations preceding it, show that applying the functor \(\hocolim \XKID_{\Sigmasq}^1\) will yield a functor \(\XKI_{\Sigmasq}^1\) landing in \([\N(\Jc_1),\Cc_{Y_{\Sigmasq}(K),[\mu]}]_{C_2}\).
We will now focus on creating functors
\begin{align*}
\XKI_{\Sigmasq}^{\tilde{\Jc}}\colon &\Cc_{Y,[K]}^{\Jk}\to [N(\tilde{\Jc}),\Cc_{Y_{\Sigmasq}(K),[\mu]}] \\
\XKI_{\Sigmasq}^2\colon &\Cc_{Y,[K]}^{\Jk}\to [N(\Jc_2),\Cc_{Y_{\Sigmasq}(K),[\mu]}]_{C_2}
\end{align*}
so that \(a^{\ast}\XKI_{\Sigmasq}^2= \XK_{\Sigmasq}^1\) and  \(\Jc_2^{\ast}\XKI_{\Sigmasq}=p_1j_2\XKI_{\Sigmasq}^2\).

Now, define the functor
\begin{align*}
G\colon \Ic\times \tilde{\Jc}\to \Ic\times \Jc_1
\end{align*}
 be the functor that sends \(x\times a\) to \(x\times a\) and \(x\times b_i\) to \(x\times b_1\) for all \(x\in \Ic\), so in particular \(f_1g_2\) and \(f_2g_1\) map to the identity.
We can define
\[\XKID_{\Sigmasq}^{\tilde{\Jc}}:= G^{\ast}(\XKID^1).\]
Define the functor \(\XKI_{\Sigmasq}^{\tilde{\Jc}}\) by taking homotopy colimits over the \(N(\Ic)\) factors of \(\XKID_{\Sigmasq}^{\tilde{\Jc}}\), but over \(b_2\) we can use the symmetry of \(\Xi\) and the natural transfomation \(\xi\) to identify this homotopy colimit with \(p_2\XK_{\Sigmasq}\) (essentially using \(\Xi\) and \(\xi\) to modify the homotopy colimit cone).
This in particular forces \(\Jc_1^{\ast}\XKI_{\Sigmasq}^{\tilde{\Jc}}= j_1^{\ast}\XKI^1\), and restricting to \(f_1g_2\) yields \(\tilde{\Gamma}\) as constructed in Section \ref{subsec:XK}.

What remains to check is if \(\Jc_2^{\ast}\XKI_{\Sigmasq}^{\tilde{\Jc}}\) can be lifted across \(p_1j_1^{\ast}\) to get the desired functor \(\XKI_{\Sigmasq}^2\).
Note that as defined \(\Jc_2^{\ast}\XKI_{\Sigmasq}^{\tilde{\Jc}}\) is essentially \(\XKI_{\Sigmasq}^1\) but with everything precomposed with the identification \(\tilde{\Gamma}\) or equivalently postcomposed with the identification \(\tilde{\Gamma}^{-1}\).
However, by our construction \(\tilde{\Gamma}\) is filtered when going from \(h_2\) to \(h_1\), while \(\tilde{\Gamma}^{-1}\) is filtered when going from \(h_1\) to \(h_2\), so together we get that the resulting maps an homotopy coherent relations are doubly filtered from \(\XK_{\Sigmasq}\) to \(\sigma^{\ast}\XK_{\Sigmasq}\) as needed.
\end{proof}

\section{Confirming the surgery formula for knot lattice homotopy}\label{sec:Lattice}

We now introduce the lattice simplicial set and the knot lattice simplicial set. 
In order to produce a simplicial set that will work well with the surgery formula, our description will be similar to the construction of \(\hocolim D_{G,\emptyset}\) discussed in \cite{knotLatticeInvariance}.

\subsection{The Lattice Simplicial Set}\label{subsec:LatticeDef}
Let \(G\) be a graph with integer weights on the vertices.
Associated to this graph we can through the plumbing construction associate a compact four-manifold \(X_{G}\) with intersection form given by the adjaceny matrix for \(G\) with the integer weights on the diagonal.
We will denote the boundary of \(X_G\) by \(Y_G\).
In particular the vertices of \(G\) each represent disk bundles over spheres that have been glued together to form \(X_G\) and the homology classes of the zero sections of these disk bundles form a basis for \(H_2(X_G;\Z)\).
We will be conflating a vertex of \(G\) with the homology class of \(H_2(X_G;\Z)\) that the corresponding zero section represents.

Let \(\Char(G)\) be the characteristic cohomology classes of \(X_G\), i.e. elements \(K\) of \(H^2(X_G;\Z)\) so that for all vertices \(v\), \(K(v)\cong v^2\, \mbox{mod}\, 2\).
The second relative homology \(H_2(X_G,Y_G;\Z)\) and thus \(H_2(X_G;\Z)\) acts on \(\Char(G)\) by \(K+[S]:= K+2\PD[S]\).
For plumbings, the first chern class \(c_1\colon \SpincX{X_G}\to H^2(X_G;\Z)\) provides a bijection between \(\SpincX{X_G}\) and \(\Char(G)\), and the orbits of \(\Char(G)\) under the action of \(H_2(X_G;\Z)\) form a bijection with \(\SpincX{Y_G}\).
Given \(\tf \in \SpincX{Y_G}\), define \(\Char(G,\tf)\) as the orbit of \(\Char(G)\) associated to \(\tf\).

We will now be assuming that \(G\) is a forest with weighted vertices, and that the intersection form on \(X_G\) is negative definite, and that \(\tf \in \SpincX{Y_G}\).
This implies that the three-manifold is a link of a normal complex analytic singularity and a rational homology three-sphere.
We will spend the rest of the section defining the simplicial set \(\CFb^{\nat}(G,\tf)\).

With the preferred basis for \(H_2(X_G;\Z)\) coming from the vertices of \(G\), each \(\Char(G,\tf)\) forms an affine integer lattice, providing a decompositon of \(H^2(X_G;\R)\) into cubes.
In particular, the convex hull of \(\{K + \sum_{v\in I} v\, |\,I\subseteq E\}\) can be represented as a cube \([K,E]\) with \([K,\emptyset]\) as \(K\).
We will refer to the collection of all such cubes as \(\Qc_{G}\) and the cubes associated to a particular \(\tf\) as \(\Qc_{G,\tf}\). 
We can define a height function on \(\Qc_{G,\tf}\), where for \(K\in \Char(G,\tf)\),
\[h_U(K):=\grf(X_G,\sfk)=\frac{K^2+|V|}{4},\]
where \(c_1(\sfk)=K\).
For \(E\neq \emptyset\) we can then define
\[h_U([K,E]):=\min\{h_U(K')\, |\, K'\subseteq [K,E]\}\]
Lattice homology is traditionally defined with these cubes as the filtered cells, providing a filtration on Euclidean space, but it will be useful for use to consider another cell decomposition.
In particular let \(D_G\) be the category with objects \(\Qc_{G}\) and a morphism \([K,E]\) to \([K',E']\) when \([K',E']\) is in the boundary of \([K,E]\), and \(D_{G,\tf}\) the portion of \(D_G\) associated to \(\tf\).
We can then consider a functor \(\CFD\colon D_{G,\tf}\to \Filt_{\grf(Y,\tf)}\) for which \(\CFD([K,E])\) has a single point \(p_{[K,E]}\) with height \(h_U([K,E])\), and define \(\CFbn(G,\tf):=\hocolim \CFD \).
This not only produces an element of \(\Filt_{\grf(Y,\tf)}\) as a filtered simplicial set rather than a filtered simplicial space but also will be more compatible with the arguments we would like to make.

The category \(D_G\) comes equipped with an involution \(\Ii\) induced by sending \(K\) to \(-K\) where 
\[\Ii([K,E])= \left[-K-\sum_{v\in E}v,E\right].\]
Furthermore, the identity \(h_U(K)=h_U(-K)\) extends to 
\[h_U([K,E])=h_U(\Ii([K,E])).\]
This induces an involution, which we will also call \(\Ii\), on \(\sqcup_{\tf\in \SpincX{Y_G}}\CFbn(G,\tf)\) that takes \(\CFbn(G,\tf)\) to \(\CFbn(G,\overline{\tf})\).
We will refer to \(PC_{\bullet}\CFbn(G,\tf)\) as \(\CFb(G\,\tf)\), which is known to be homotopy equivalent to the Heegaard Floer homology of \((Y_G,\tf)\) after taking coefficients in \(\Fb_2[[U]]\) (see \cite{knotLatticeInvariance} for why this presentation is equivalent to the presentation used in \cite{ZemkeLattice}).
Note that this does mean that they agree with coefficients in \(\Fb_2[U]\) but without a preferred isomorphism.

\subsection{Knot Lattice Simplicial Set}\label{subsec:knotLatticeDef}

Now let \(G_{v_0}\) be a graph with all vertices except a vertex \(v_0\) having integral weights, and let \(G\) be \(G_{v_0}-\{v_0\}\).
As before we will assume that \(G_{v_0}\) is a forest, that \(X_G\) has negative definite intersection form, and that \(\tf\in \SpincX{Y_G}\).
This means that the corresponding knot \(K_{G_{v_0}}\) is a weak generalized algebraic knot (see \cite{knotLatticeInvariance} for discussion of strong versus weak generalized algebraic knots).

The addition of \(v_0\) can be seen as taking the boundary connect sum of a collection of fibers on \(G\), one fiber in each disk bundle to which \(v_0\) is adjacent.
The boundary of this fiber produces a knot \(K_{G_{v_0}}\) in \(Y_G\).
Additionally viewing \(v_0\) as a class in \(H_2(X_G,Y_G;\Z)\), we have that \(v_0\) acts on \(\Char(G)\) by \(K+v_0:= K+2\PD[v_0]\).
This then creates an action on \(D_G\) which sends \([K,E]\) to \([K+v_0,E]\)
We can pull back \(h_U\) along this automorphism to define \(h_V([K,E]):= [K+v_0,E]\), which makes \(\CFD\) into a functor
\[\CFKD\colon D_{G,\tf} \to \Filt_{\grf(Y,\tf),\grf(Y,\tf+v_0)}.\]
We will define the knot lattice space \(\CFKbn(G_{v_0},\tf)\) as \(\hocolim D_{G,\tf}\).
Again, knot lattice has traditionally been defined with the cubes \([K,E]\) as the cells, but the structure of a simplicial set here will be more useful for our purposes.

The action of \(v_0\) on \(D_G\) in the definition of \(\CFKbn(G_{v_0},\tf)\) means that we have a canonical choice of map \(\Gamma\colon p_{2,!}\CFKbn(G_{v_0},\tf)\to p_{1,!}\CFKbn(G_{v_0},\tf+v_0)\).
Applying \(\Gamma\) then \(\Ii\) or equivalently \(\Ii\) then \(\Gamma^{-1}\) then yields an involuiton \(\Jj\) of \(D_G\) where
\[\Jj[K,E] :=\left[K-v_0 - \sum_{v\in E}v,E\right].\]
Recalling that \(\sigma\) is the functor the swaps the order of the filtraitions on a doubly filtered simplicial set, this induces a doubly filtered map
\[\Jj\colon \CFKbn(G_{v_0},\tf) \to \sigma(\CFKbn(G_{v_0},\overline{\tf+v_0}).\]
Note that composing \(\sigma(\Jj)\) and \(\Jj\) yields the identity on \(\CFKbn(G_{v_0},\tf)\).
Additionally if \(G_{v_0}^1\) and \(G_{v_0}^2\) are two forests each with only one unweighted vertex, let \(G_{v_0}^1\# G_{v_0}^2\) be the pointed coproduct, i.e. the disjiont union with the two unweighted vertices identified, which corresponds to the connect sum of the corresponding knots \((Y_{G^1},K_{G_{v_0}^1})\) and \((Y_{G^2},K_{G_{v_0}^2})\) .
Then,
\[\CFKbn\left(G_{v_0}^1\# G_{v_0}^2,\tf_1\#\tf_2\right) \cong \CFKbn\left(G_{v_0}^1,\tf_1\right)\otimes\CFKbn\left( G_{v_0}^2,\tf_2\right).\]
We will use \(\CFKIn(G_{v_0})\) to refer to the combined package of the \(\CFKb(G_{v_0},\tf)\) with \(\Ii\) and \(\Jj\) as an object of \(\Cc_{Y_G,[K_{G_{v_0}}]}^{\Jc}\).
Because \(\Ii\) and \(\Jj\) are involutions on the nose for \(\sqcup_{\tf}\CFb(G,\tf)\) and \(\sqcup_{\tf} \CFKbn(G,\tf)\) respectively, they can be canonically extended to compatible choices in the following quasi-catgories
\[[N(\Jc_1),\Cc_Y]_{C_2}, \quad [N(\tilde{\Jc}),\Cc_Y], \quad [N(\Jc_2),\Cc_{Y,[K]}]_{C_2}.\]
Similarly, we will use \(\CFKbn(G_{v_0})\) to represent the combined package of the various \(\CFKb(G_{v_0},\tf)\) and \(\Gamma\) as an object of \(\Cc_{Y_G,[K_{G_{v_0}}]}^{\Gamma}\).
We will refer to the complex \(PC_{\bullet\bullet}\CFKbn(G_{v_0})\) as \(\CFKb(G_{v_0})\), and this is known to agree with knot Floer after taking coefficients in \(\Fb_2[[\Uc,\Vc]]\).
Note that this will mean agreement with coefficients in \(\Fb_2[\Uc,\Vc]/(\Uc,\Vc)\), i.e. the hat version.

\begin{remark}
The author \cite{knotLatticeInvariance} showed that the maps for naturality commuted up to homotopy with the maps \(\Ii\), \(\Jj\) and \(\Gamma\) using an equivalent model as doubly filtered topological spaces.
One can  produce a functor in \([N(\Cc_{\GenAlgKnot}),\Cc_{Y,[K]}^{\Jk}]\) (where \(\Cc_{\GenAlgKnot}\) is defined similarly to \(\Cc_{\GenAlgKnot}^{\Spin^c}\) but not tracking individual \Spinc structures, as that is now handled in the \(\Cc_{Y,{[K]}}^{\Jk}\).
The argument follows exactly as in \cite{knotLatticeInvariance} except using the homotopy coherent natural transformations on \(D_{G_{min}}\) we can instead consider symmetric homotopy coherent natural transformations on \(D_{G_{min}}\times \Jc_i\) and the \(\infty\)-categorical Grothendieck construction.
The appeal to subcontractibility on objects also ensures that whenever a homotopy is desired one exists, and is unique up to further choices of homotopy, thus ensuring that the morphism spaces are contractible for each morphism in \(\Cc_{\GenAlgKnot}\) and hence the produced domain category is equivalent to \(\Cc_{\GenAlgKnot}\) and the resulting functor can be lifted (as an \(\infty\)-functor) to \(\Cc_{\GenAlgKnot}\).
\end{remark}

\subsection{Verifying the Surgery Formula for Knot Lattice Homotopy}\label{sec:Verify}

In \cite{knotsAndLattice}, \OSS produce a surgery formula mimicking that of knot Floer homology for lattice homology in terms of what they call the master complex, which is essentially the knot lattice complex plus the flip map.
This construction can naturally be expressed in terms of a homotopy colimit, and this section will include how to formalize that notion.
Producing the corresponding height function will require us to be able to produce the corresponding gradings, which were omitted from the construction in \cite{knotsAndLattice}.
Furthermore, we will find the action of the dual knot on this complex, allowing us to recover both height functions for the knot lattice type of the dual knot as well as the involutive maps \(\Jj\) and \(\Ii\).

As in section \ref{subsec:knotLatticeDef} let \(G_{v_0}\) represent a negative-definite forest with a single unweighted vertex \(v_0\), and \(G\) will represent \(G_{v_0}\) without \(v_0\).
We will let \(G_{v_0}(n)\) represent the graph that is the same as \(G_{v_0}\) but has assigned the vertex \(v_0\) a weight of \(n\), which we will assume is negative enough to make \(G_{v_0}(n)\) negative definite.
Finally, let \(G_{v_0}(n)_{u_0}\) be the \(G_{v_0}(n)\) with the addition of a new unweighted vertex \(u_0\) adjacent only to \(v_0\).
We will let \(X_G\) and \(X_{G_{v_0}(n)}\) be the plumbings associated to these graphs and \(Y_{G}\) and \(Y_{G_{v_0}(n)}\) be their three manifold boundaries.
Let \(\Sigma\) be defined as in Section \ref{subsec:backTopSpecify}, though note that it can be speficied using just \(H_2(X_G;\Q)\) rather than working with \(H_2(Y,K;\Q)\).
In particular, let \(\Sigma_0\) be the unique element of \(H_2(X_G;\Q)\) such that \(\PD[\Sigma_0]= \PD[v_0]\).
The class \(\Sigma\) be the element of \(H^2(X_{G_{v_0}(n)},\Q)\) defined as \(\Sigma := v_0-\Sigma_0\), and viewing \(v_0\) as an element of \(H_2(X_G,K_{G_{v_0}};\Q)\) lets us reinterpret \(v_0-\Sigma_0\) as existing as a surface entirely in \(X_G\) rather than in the larger \(X_{G_{v_0}(n)}\).
The Poincare dual of this surface is 0 on \(H^2(X_G;\Q)\), so it algebraically has zero intersection with the zero sections of \(X_G\). As such, there is a choice of surface that intersects the zero sections of \(X_G\) geometrically 0 times and can be pushed off to be a surface in \(Y\) with boundary \(K\).
As noted in \cite{knotLatticeInvariance}, one can show \(\Sigma^2= n- \Sigma_0^2\).



First, we will prove that \(\Xb_{\Sigmasq}(\CFKbn(G_{v_0}))\) recovers \(\CFbn(G_{v_0}(n))\). This essentially translates the proof done by \OSS in \cite{knotsAndLattice} into homotopy theoretic terms, as while as keeping explicit track of the height functions/gradings, which were only encoded in a relative sense in the original.
We will be constructing the dual knot information and the involutive information on top of this base since \(\XK_{\Sigmasq}\) and \(\XKI_{\Sigmasq}\) are successive lifts of \(\Xb_{\Sigmasq}\).
In particular, reviewing the proof for why
\[\Xb_{\Sigmasq}(\CFKb(G_{v_0})) \to \sqcup_{\tf \in \SpincX{Y_{G_{v_0}(n)}}}\CFb(G_{v_0}(n),\tf)\]
will give us a continuous isomoprhism will allow us to show that the same ismorphism works once keeping track of the knot and involutive data.

In \cite{knotsAndLattice}, \OSS noted that \(\CFb(G_{v_0}(n))\) satisfies a short exact sequence of chain complexes
\[0 \to \Bb \to \CFb(G_{v_0}(n)) \to \Tb \to 0,\]
where both \(\Bb\) and \(\Tb\) decompose as a direct sum of pieces \(\Bb_i(\tf)\) and \(\Tb_i(\tf)\).
Here, \(\Bb_i(\tf)\) is the subcomplex of \(\CFb(G_{v_0}(n))\) generated by \([L,E]\) such that \(v_0 \notin E\), \(L|_G \in \Char(G,\tf)\), and 
\[\frac{L(\Sigma)+\Sigmasq}{2}=i\]
Equivalently this equation restricts \(L\) to the cobordism \(W_{G_{v_0}}\) from \(X_G\) to \(X_{G_{v_0}(n)}\) reinterprets it as a \Spinc structure then applies the function \(\hat{A}\) defined in Section \ref{subsec:backTopSpecify}.
We will as such refer to this equation as \(\hat{A}(L)=i\) without ambiguity, and for example, \OSS note in \cite{knotsAndLattice} that for \(v\in G\) that if \(\hat{A}(L)=i\) then \(\hat{A}(L+v)=i\).
The complex \(\Tb_i(\tf)\) is similarly defined but one requires \(v_0 \in E\) and furthermore the differential on \(\Tb_i(\tf)\) does not count the portion of the differential coming from the front and back faces of \([L,E]\) in the \(v_0\) direction, i.e. the cubes \([L,E\backslash \{v_0\}]\) and \([L+v_0,E\backslash \{v_0\}]\).

Both the \(\Bb_i(\tf)\) and the \(\Tb_i(\tf)\) can be realized as the complexes associated to filtered simplicial subsets of \(\CFbn(G_{v_0}(n))\).
In particular, the cubes \(\Qc_{G_{v_0}(n)}\) get partitioned between the \(\Bb_i(\tf)\) and \(\Tb_i(\tf)\).
For a given \(\tf' \in \SpincX{Y_{G_{v_0}}(n)}\) this partitioning realizes \(D_{G_{v_0}(n),\tf'}\) as a product \(\tilde{D}_{G,\tf'} \times \Ic_{Y_G,[K_{G_{v_0}}],\Sigmasq,\tf'}\) coming directly from how the cube decompositions of Euclidian space given by a choice of basis can be split into products of cube decompositions by partitioning said basis.
We can first take homotopy colimit of \(\CFD\) across each each part of the partition first to get
\begin{align*}
\Bbn_i(\tf) &:= \hocolim_{\tilde{D}_{G,\tf'}\times (\tf,i,b)} \CFD(G_{v_0}(n)) \\
\Tbn_i(\tf) &:= \hocolim_{\tilde{D}_{G,\tf'}\times (\tf,i,b)} \CFD(G_{v_0}(n)).
\end{align*}
Including the maps \(l_{\tf,i}\) and \(r_{\tf,i}\) induced by the functoriality of the homotopy colimit recovers the diagram in Figure \ref{fig:TBdiagram}.
Using that homotopy colimits commute across products (\cite{HTT}, Lemma 5.5.2.3), taking the homotopy colimit of said diagram yields \(\CFbn(G_{v_0}(n))\).

\begin{figure}

\[
\begin{tikzcd}
\cdots\arrow[rd,"r_{i-\Sigma,\tf-v_0}^{\nat}"]& & \Tbn_{i}(\tf) \arrow[ld,"l_{i,\tf}^{\nat}"] \arrow[rd,"r_{i,\tf}^{\nat}"'] && \Tbn_{i+\Sigma^2}(\tf+v_0) \arrow[ld,"r_{i+\Sigma^2,\tf+v_0}^{\nat}"] \arrow[rd,"l_{i+\Sigma^2,\tf+v_0}^{\nat}"] \\
&\Bbn_{i}(\tf) && \Bb_{i+\Sigma^2}(\tf+v_0) && \cdots
\end{tikzcd}
\]

\begin{caption}
A diagram showing how the \(\Tbn_i(\tf)\) and \(\Bbn_i(\tf)\) fit together in a given \Spinc structure \(\tf'\) on \(Y_{G_{v_0}(n)}\). The homotopy colimit of this diagram provides \(\CFbn(G_{v_0}(n),\tf')\). \label{fig:TBdiagram}
\end{caption}

\end{figure}

The next part of the surgery formula involves identifying the \(\Tbn_i(\tf)\), the \(\Bbn_i(\tf)\), and the maps \(l_{i,\tf}\) and \(r_{i,\tf}\) with aspects from the knot lattice simplicial set in a way that preserves the diagram.
On the level of chain complexes, \(\Tbn_i(\tf)\) gets identified with \(\CFKb(G_{v_0},\tf,i)\), the Alexander \(i\) portion of \(\CFKb(G_{v_0},\tf)\), while \(\Bbn_i(\tf)\) gets identified with \(\CFb(G,\tf) \cong \CFKb_{V^{-1}}(G_{v_0},\tf,i)\).
On the level of simplicial sets these become the \(A_{\tf,i}\) and \(B_{\tf,i}\) making up \(\Xb_{\Sigmasq}\) in Section \ref{subsec:Xb}.

Before confirming the formula, we will discuss how the functors \(A_{\tf,i}\) and \(B_{\tf,i}\) relate to the discussion of \OSS in \cite{knotsAndLattice} as to better build off of their work.
In particular, \OSS work with the Alexander filtration on \(\CFb^{\infty}(G_{v_0},\tf)\), which looks at sublevel sets of the Alexander grading on 
\[\CFKb^{\infty}(G_{v_0},\tf):=\CFb(G,\tf)\otimes \Fb_2[U,U^{-1}].\]
See the discussion in \cite{knotLatticeInvariance} for the relation between this formulation of the Alexander filtration and that in \cite{knotsAndLattice}.
There is a canonical way to interpolate between this approach and that of \(\Fb_2[\Uc,\Vc]\) modules, that maps \(U^j[K,E]\) to \(\Uc^j\Vc^{j-A([K,E])}[K,E]\), and tracking the maslov grading and the grading given by \(|E|\) allows us to reconstruct the height functions and thus the filtrations.

In \cite{knotsAndLattice}, the functor corresponding to \(A_{\tf,i}\) would be the subcomplex
\[S_i(\tf):=\Span\{U^j[K,E]\, |\, j\geq 0 \, A(U^j[K,E])= A([K,E])-j\leq i\}.\]
The functor corresponding to \(p_{2,!}(X_{\ast\ast}^{\tf})[i]\) would be
\[
V_i(\tf) = \Span\{U^j[K,E] \in \CFb^{\nat}(G,\tf) |\, A(U^j[K,E])\leq i\}.
\]
and the functor corresponding to \(B_{\tf,i}\) is represented as \(\CFb(G,\tf)\) itself.
This identification respects \(\eta_1\) and \(\Gamma \circ \eta_2\), though the notation in \cite{knotsAndLattice} has \(\eta_2\) defined to include the composition with \(\Gamma\)

Proposition \ref{prop:surgForm} summarizes the work of \OSS in proving the surgery formula in that context, and explains why it still holds for simplicial sets.

\begin{proposition}\label{prop:surgForm}
For \([\tf,i]\in \SpincX{Y_{G_{v_0}(n)}}\) the diagram  in Figure \ref{fig:TBdiagram} is isomorphic via a natural transforamtion \(\Phi\) to the diagram \(\XD_{\Sigmasq}^{[\tf,i]}(\CFKbn(G_{v_0}))\)and therefore the homotopy colimit of \(\XD_{\Sigmasq}^{[\tf,i]}(\CFKbn(G_{v_0}))\) is isomorphic to \(\CFbn(G_{v_0}(n))\).
\end{proposition}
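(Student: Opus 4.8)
The plan is to construct the natural isomorphism $\Phi$ componentwise over $\Ic_{[\tf,i]}$ and then transport it to homotopy colimits. Because $\Ic_{[\tf,i]}$ is $1$-skeletal with only the atomic morphisms $\lambda_{\tf',i'}$ and $\rho_{\tf',i'}$ and no nontrivial composites, a natural isomorphism between two diagrams of shape $N(\Ic_{[\tf,i]})$ amounts to nothing more than a choice of isomorphism at each object compatible with each $\lambda$ and each $\rho$; there are no coherence relations to verify.

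First I would identify the vertices. The partition of the cubes $\Qc_{G_{v_0}(n)}$ according to whether $v_0\in E$ and the value of $\hat{A}$ exhibits $D_{G_{v_0}(n),\tf'}$ as the product $\tilde{D}_{G,\tf'}\times\Ic_{Y_G,[K_{G_{v_0}}],\Sigmasq,\tf'}$, so that $\Tbn_{i'}(\tf')$ and $\Bbn_{i'}(\tf')$ are, as simplicial sets, the homotopy colimits of $\CFD(G_{v_0})$ over the $\tilde{D}_{G,\tf'}$-slice sitting over $(\tf',i',a)$ and $(\tf',i',b)$ respectively. Via the dictionary recorded just before the statement, these are exactly the underlying simplicial sets of $A_{i'}^{\ast}(\CFKbn(G_{v_0})^{\tf'})$ and $p_{1,!}(\CFKbn(G_{v_0})^{\tf'})$. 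The real content of this step is matching the height functions: a cube $[L,E]$ with $v_0\notin E$ and $\hat{A}(L)=i'$ must have $h_U$-height on $X_{G_{v_0}(n)}$ equal to the $A_{i'}^{\ast}$-height of the corresponding $G_{v_0}$-cube plus $\grf_{\Sigmasq}(i')$, and likewise for cubes with $v_0\in E$ and the $p_{1,!}$-height.

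Next I would identify the generating morphisms. Under these identifications $l_{i',\tf'}^{\nat}$ is the map forgetting the portion of the differential along the $v_0$-faces; it is the identity on underlying simplicial sets, matching $\eta_1$. The map $r_{i',\tf'}^{\nat}$ is, up to the $v_0$-translation built into $D_G$ in Section \ref{subsec:knotLatticeDef}, the analogous forgetful map, and it matches $\Gamma^{\tf'}\circ\eta_2$ precisely because $\Gamma$ was defined from that $v_0$-action and $\eta_2$ is again the identity on underlying simplicial sets, in line with the observation that \cite{knotsAndLattice} already absorbs $\Gamma$ into their $\eta_2$. Assembling the isomorphisms from the vertex step with this compatibility produces the natural isomorphism $\Phi$ from the diagram of Figure \ref{fig:TBdiagram} to $\XD_{\Sigmasq}^{[\tf,i]}(\CFKbn(G_{v_0}))$; since $\Ic_{[\tf,i]}$ has no relations, nothing else must be checked. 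A levelwise isomorphism of diagrams induces an isomorphism of homotopy colimits, so $\hocolim\XD_{\Sigmasq}^{[\tf,i]}(\CFKbn(G_{v_0}))$ is isomorphic to the homotopy colimit of the diagram in Figure \ref{fig:TBdiagram}, and by the commutation of homotopy colimits with the above product decomposition (\cite{HTT}, Lemma 5.5.2.3) the latter is $\CFbn(G_{v_0}(n))$ in the $\Spinc$ structure $\tf'$, giving the claim.

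The main obstacle is the grading bookkeeping in the vertex step: \OSS track only relative gradings in \cite{knotsAndLattice}, so I must verify that the absolute shift $\grf_{\Sigmasq}(i')$ from Section \ref{subsec:gradShift} is exactly the discrepancy between $h_U$ on $X_{G_{v_0}(n)}$ and the knot-lattice heights on $X_G$. Concretely this reduces to the identity $L^2 = L(\Sigma)^2/\Sigmasq$ for classes $L$ pulled back from the cobordism $W_{\Sigmasq}(K_{G_{v_0}})$, combined with the explicit formula for $\grf_{\Sigmasq}(i')$ and the fact, recorded by \OSS, that $h_U(L+v)$ depends only on $\hat{A}(L)$ for $v\in G$. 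Once that is settled, the remaining point is the elementary observation that taking the minimum over the $G$-direction basis vectors and over the $v_0$-direction separately computes the $A_{i'}^{\ast}$-height and the $p_{1,!}$-height respectively.
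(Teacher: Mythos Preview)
Your approach is essentially the paper's: build $\Phi$ from the explicit restrictions $F'\colon [L,E]\mapsto [L|_G,E]$ and $F\colon [L,E\cup\{v_0\}]\mapsto U^{a_{v_0}}[L|_G,E]$ of \OSS, check naturality against $l,\lambda$ and $r,\rho$ (which the paper cites to Propositions~5.5, 5.12 and Lemma~5.9 of \cite{knotsAndLattice}), and then upgrade the relative gradings of \OSS\ to absolute ones by computing $h_U(L)-h_U(L|_G)=\grf_{\Sigmasq}(i)$ on vertices. One slip to fix: in your height-matching sentence you have swapped the two cases---cubes with $v_0\notin E$ are the $\Bbn_i(\tf)$ elements and should carry the $p_{1,!}$-height, while cubes with $v_0\in E$ are the $\Tbn_i(\tf)$ elements carrying the $A_{i'}^{\ast}$-height, consistent with your own earlier (correct) identification $\Tbn\leftrightarrow A_{i'}^{\ast}$, $\Bbn\leftrightarrow p_{1,!}$; also, the relevant identity for the grading shift is $L^2-(L|_G)^2=L(\Sigma)^2/\Sigmasq$, not $L^2=L(\Sigma)^2/\Sigmasq$.
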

\begin{proof}
The work in \cite{knotsAndLattice} leading up to Theorem 5.1 confirms this on the algebraic level as as relatively graded \(\Fb[U]\) chain complexes.
What remains is to pull this work to the space level and recover the gradings and thus the height maps
In particular, Proposition 5.5 in \cite{knotsAndLattice} provides the map \(F'\) between \(\Bb_i(\tf)\) and \(B_{i,\tf}\) while Lemma 5.9 provides the map \(F\) between \(\Tb_i(\tf)\) and \(A_{i,\tf}\).
In particular,
\begin{align*}
F'([L,E]) &= [L|_G,E] \\
F([L,E\cup \{v_0\}])&= U^{a_{v_0}[L,E\cup\{v_0\}]}[L|_G,E].
\end{align*}
Both of these extend to isomorphisms between \(\tilde{D}_{G,[\tf,i]} \times (\tf,i,b)\) and \(D_{G,\tf}\) and between \(\tilde{D}_{G,[\tf,i]} \times (\tf,i,a)\) and \(D_{G,\tf}\).
Furthermore commuting with the \(l_{i,\tf}\) and \(\lambda_{i,\tf}\) is baked into these definitions.
The check that \(F'\) and \(F\) commute with the \(r_{i,\tf}\) and \(\rho_{i,\tf}\) essentially occurs directly after Proposition 5.12 in \cite{knotsAndLattice} (which was itself essentially proving that \(\Gamma\) is a filtered isomorphism).
In that proof they have
\begin{align*}
\rho \circ F([L,E \cup \{v_0\}]) &= U^{\heartsuit}[L|_G+v_0, E]\\
&= U^{\heartsuit}[(L+v_0)|_G,E] \\
&= F' \circ r,
\end{align*}
where \(\heartsuit\) is a particular non-negative number.
Using the standard identifications for \(F\) and \(F'\) identity in the middle becomes an equality functors.
Together all the maps \(F\) and \(F'\) form the natural transformation \(\Phi\)

The work of \OSS in \cite{knotsAndLattice} does establish that these maps preserve relative gradings on \(\CFKbn(G_{v_0},\tf)\), since that information is encoded in the \(U\) powers and the Alexander grading.
To establish an absolute grading then, we only need to pin down the grading shift on \(F'\).
Furthermore, we can do so on the vertices \(L\), since both the heights on \([L|_G,E]\) and \([L,E]\) are calculated as the minimum over the heights on the vertices.
Finally on the vertices we can verify that
\begin{align*}
h_U(L)-h_U(F'(L))&= \frac{L^2 +s+1}{4} - \frac{(L|_G)^2 +s}{4}\\
&= \frac{L^2-(L|_G)^2}{4} + \frac{1}{4} \\
&= \frac{L(\Sigma)^2}{\Sigmasq 4} + \frac{1}{4} \\
&= \frac{(2i-\Sigmasq)^2}{\Sigmasq 4} + \frac{1}{4} \\
&= \grf_{\Sigmasq}(i)
\end{align*}
as needed.
\end{proof}

\begin{proposition}\label{prop:surgFormKnot}
The map \(\hocolim \Phi\) extends to an isomoprhism between and \(\CFKbn(G_{v_0}(n)_{u_0})\) and \(\XK_{\Sigmasq}(\CFKbn(G_{v_0}))\).
\end{proposition}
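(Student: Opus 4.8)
The plan is to upgrade the isomorphism $\hocolim\Phi$ of Proposition~\ref{prop:surgForm} from an isomorphism of singly filtered simplicial sets to one of the doubly filtered-plus-flip-map data that lives in $\Cc_{Y_{G_{v_0}(n)},[\mu]}^{\Gamma}$. Over a fixed $[\tf,i]\in\SpincX{Y_{G_{v_0}(n)}}$, the object $\XK_{\Sigmasq}(\CFKbn(G_{v_0}))$ has underlying simplicial set $\hocolim\XD_{\Sigmasq}^{[\tf,i]}(\CFKbn(G_{v_0}))$ — precisely the simplicial set $\hocolim\Phi$ already identifies with $\CFbn(G_{v_0}(n),[\tf,i])$ — its first height function is the one from $\XD_{\Sigmasq}^{[\tf,i]}$, its second height function is the one from the $\Xi$-shifted diagram $\XD_{\Sigmasq}^{[\tf,i+1]}$ (so the piece $A_{\tf,i}^{\mu}$ carries the two heights of $A_{\tf,i}$ and $A_{\tf,i+1}$, and likewise for $B_{\tf,i}^{\mu}$), and its flip map is the $\tilde{\Gamma}$ built in Section~\ref{subsec:XK}. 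So it suffices to verify two things: (i) that $\hocolim\Phi$ carries the second height function $h_V$ of $\CFKbn(G_{v_0}(n)_{u_0})$ to this second height function; and (ii) that $\hocolim\Phi$ intertwines the flip map of $\CFKbn(G_{v_0}(n)_{u_0})$ with $\tilde\Gamma$. These give the two components of an isomorphism in the pullback $\Cc_{Y_{G_{v_0}(n)},[\mu]}^{\Gamma}$; their agreement over $[\bou\Delta^1,\Cc_{Y_{G_{v_0}(n)}}]$ is the content of (ii) together with Proposition~\ref{prop:surgForm} (recovered by applying $p_{1,!}$) and its analogue with $i$ replaced by $i+1$ (recovered by applying $p_{2,!}$).

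For (i), recall from Section~\ref{subsec:knotLatticeDef} that the second height function on $D_{G_{v_0}(n)}$ is $h_V([L,E])=h_U([L+u_0,E])$, where $u_0$ acts on $\Char(G_{v_0}(n))$ by $L\mapsto L+2\PD[u_0]$. The one genuinely new computation is that this action raises the Alexander index by exactly one: since $\Sigma=v_0-\Sigma_0$ with $\Sigma_0$ supported on the vertices of $G$ (none of which is $v_0$) and $u_0$ is adjacent only to $v_0$, we get $\PD[u_0](\Sigma)=u_0\cdot v_0-u_0\cdot\Sigma_0=1$, hence
\[\hat{A}(L+u_0)=\frac{(L+2\PD[u_0])(\Sigma)+\Sigmasq}{2}=\hat{A}(L)+1.\]
Thus $[L+u_0,E]$ lies in $\Tbn_{i+1}(\tf)$ (resp.\ $\Bbn_{i+1}(\tf)$) whenever $[L,E]$ lies in $\Tbn_i(\tf)$ (resp.\ $\Bbn_i(\tf)$). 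Now the grading computation carried out in the proof of Proposition~\ref{prop:surgForm}, replayed verbatim with $i+1$ in place of $i$ and combined with the earlier proposition computing the height function $\min\{h_{1},h_{2}+2i\}$ of $A_{i}^{\ast}$ and the effect of a grading shift $[\,\cdot\,]$, shows that $\hocolim\Phi$ sends $h_U$ restricted to the $\Tbn_{i+1}(\tf)$ and $\Bbn_{i+1}(\tf)$ pieces to the height functions of $A_{\tf,i+1}$ and $B_{\tf,i+1}$ respectively. Composing with $h_V([L,E])=h_U([L+u_0,E])$ then gives exactly the second height function of $\XK_{\Sigmasq}(\CFKbn(G_{v_0}))$ on the $A_{\tf,i}^{\mu}$ and $B_{\tf,i}^{\mu}$ pieces.

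For (ii), the flip map of $\CFKbn(G_{v_0}(n)_{u_0})$ is induced, as in Section~\ref{subsec:knotLatticeDef}, by the $u_0$-action on $D_{G_{v_0}(n)}$, which on underlying cube complexes is the bijection $[L,E]\mapsto[L+u_0,E]$; by the computation in (i) this is precisely the index shift that $\hocolim\Phi$ transports to the cocore automorphism $\Xi$. On the other side, $\tilde\Gamma$ was constructed in Section~\ref{subsec:XK} as the natural transformation witnessing that $p_2\XKD_{\Sigmasq}^{[\tf,i-1]}$ and the $\Xi$-shift of $p_1\XKD_{\Sigmasq}^{[\tf,i]}$ are literally the same diagram — again an identity on underlying simplicial sets up to the index shift. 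Hence $\hocolim\Phi$ intertwines the two, and assembling (i) and (ii) produces the desired isomorphism in $\Cc_{Y_{G_{v_0}(n)},[\mu]}^{\Gamma}$.

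I expect the main obstacle to be bookkeeping rather than conceptual: confirming that the grading shifts $\grf_{\Sigmasq}(i)$ and $\grf_{\Sigmasq}(i+1)$ appearing on the two height functions of $\XK_{\Sigmasq}$ are matched on the nose by the maps $F$ and $F'$ when one replays the vertex-level computation of Proposition~\ref{prop:surgForm} with the index shifted, and keeping the $h_U$/$h_V$ labelling, the $\Xi$-shift, and the minima $\min\{h_1,h_2+2i\}$ straight across the identification. The genuinely new content is the single identity $\PD[u_0](\Sigma)=1$ and the observation that the resulting shift is exactly the one encoded by $\Xi$; once that is in hand, everything else is either already contained in the proof of Proposition~\ref{prop:surgForm} or in the construction of $\XK_{\Sigmasq}$.
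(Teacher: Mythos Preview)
Your proposal is correct and follows essentially the same approach as the paper. The paper's proof is more compressed: it observes directly that since $u_0$ is adjacent only to $v_0$, the action $[L,E]\mapsto[L+u_0,E]$ leaves $L|_G$ unchanged while $(L+u_0)(v_0)=L(v_0)+2$, hence $\hat A(L+u_0)=\hat A(L)+1$; this is exactly your identity $\PD[u_0](\Sigma)=1$, and from it the paper concludes that under $F,F'$ the flip map of $\CFKbn(G_{v_0}(n)_{u_0})$ becomes the index shift $\Xi$, which is the definition of $\tilde\Gamma$. Your separation into (i) second height and (ii) flip map is a mild reorganization---since $h_V$ is by definition the pullback of $h_U$ along the $u_0$-action, verifying (ii) already yields (i)---but the content is the same.
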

\begin{proof}
Let \([\tf,i]\in \SpincX{Y_{G_{v_0}(n)}}\) and \([L,E]\in \CFb(G_{v_0}(n),[\tf,i])\), so \(\Gamma([L,E])\) is \([L+u_0,E]\).
Note that \(u_0\) is only adjacent to \(v_0\), so the end result is that for \(v\in G\), \((L+u_0)(v)=L(v)\) while \((L+u_0)(v_0)= L(v_0)+2\).
So, \(L|_G= (L+v_0)|_G\) and \(\hat{A}(L+v_0)=\hat{A}(L)+1\).
As such the restriction to \(G\) is the same in both cases, the only difference is if we are interpreting \(F'([L,E]) = [L|_G,E]\) as living in \(B_{\tf,\hat{A}(L)}\) or \(B_{\tf,i+1}\) in the case where \(v_0\notin E\) or \(F([L,E])=[L|_G,E\backslash \{v_0\}]\) as living in \(A_{\tf,\hat{A}(L)}\) or \(A_{\tf,\hat{A}(L)+1}\).
This is precisely the definition of \(\Gamma_D\) in the definition of \(\XK_{\Sigmasq}(\CFKbn(G_{v_0}))\), as needed. 
\end{proof}


Now that we have constructed the filtered simplicial set \(\CFKbn(G_{v_0}(n)_{u_0})\) in terms of the data of the knot lattice simplicial sets \(\CFKbn(G_{v_0})\), we now turn our attention to reconstructing the involution \(\Jj\) on \(\CFbn(G_{v_0}(n))\) using the data of the involutions \(J_{v_0}\) and \(J\) in \(\CFKI(G_{v_0})\).

\begin{proposition}\label{Prop:JonGv0n}
The map \(\hocolim \Phi\) extends to an isomorphism between \(\CFKIn(G_{v_0}(n)_{u_0})\) and \(\XKI_{\Sigmasq}(\CFKIn(G_{v_0}))\). 
\end{proposition}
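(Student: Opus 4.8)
The plan is to bootstrap off Propositions~\ref{prop:surgForm} and~\ref{prop:surgFormKnot}. By Theorem~\ref{thm:XKI} one has $\Fc\XKI_{\Sigmasq}=\XK_{\Sigmasq}\Fc$, so $\Fc$ applied to $\XKI_{\Sigmasq}(\CFKIn(G_{v_0}))$ is exactly $\XK_{\Sigmasq}(\CFKbn(G_{v_0}))$, which Proposition~\ref{prop:surgFormKnot} already identifies with $\CFKbn(G_{v_0}(n)_{u_0})$ via $\hocolim\Phi$. An object of $\Cc_{Y,[K]}^{\Jk}$ amounts, through the limit diagram defining that quasi-category, to the $\Cc_{Y,[K]}$-object together with a $C_2$-symmetric functor in $[N(\Jc_1),\Cc_Y]_{C_2}$ (recording $\Ii$ on $p_{1,!}$ of the space), a $C_2$-symmetric functor in $[N(\Jc_2),\Cc_{Y,[K]}]_{C_2}$ (recording $\Jj$), and a functor in $[N(\tilde{\Jc}),\Cc_Y]$ compatibly splicing these through the flip map; on the lattice side $\CFKIn(G_{v_0}(n)_{u_0})$ is by Section~\ref{subsec:knotLatticeDef} the canonical assembly of the filtered spaces $\CFbn(G_{v_0}(n),\tf')$ with the honest involutions $\Ii$, $\Jj$ and the honest flip map into such a collection. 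Hence what remains is to promote $\hocolim\Phi$ to an isomorphism of the $\Jc_1$-, $\tilde{\Jc}$-, and $\Jc_2$-components.

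First I would reduce to a strict statement. Every $\CFD([K,E])$ is a single point, so all the simplicial sets involved are homotopy colimits of diagrams of points; every morphism appearing in $N(\Jc_i)$, $N(\tilde{\Jc})$ and $N(\Ic)$ is forced, and every inner-horn filling used in Lemmas~\ref{lem:XKIpiece} and~\ref{lem:XKID1} is essentially unique. By the sub-contractibility that made those fillings available in the first place, one may therefore choose the $\XKI_{\Sigmasq}$-data so that the $\Jc_1$- and $\Jc_2$-involutions are strict involutions, and it is enough to produce one such choice that $\hocolim\Phi$ matches on the nose with the honest $\Ii$ and $\Jj$ of $\CFKIn(G_{v_0}(n)_{u_0})$. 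Since $\Ii^2=\Jj^2=\id$ strictly on both sides, the coherences witnessing this are then automatically respected.

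The heart of the argument is a cube-level computation, carried out with the product decomposition $D_{G_{v_0}(n),\tf'}\cong\tilde{D}_{G,\tf'}\times\Ic_{[\tf,i]}$ from the proof of Proposition~\ref{prop:surgForm}. I would check that the honest involution $\Ii$ on $\bigsqcup_{\tf'}\CFbn(G_{v_0}(n),\tf')$ decomposes under this splitting as the automorphism $N(\psi)$ of $\bigsqcup_{[\tf,i]}N(\Ic_{[\tf,i]})$ from Section~\ref{subsec:SpecConj}, times: on the $b$-factor (cubes with $v_0\notin E$) the honest $\Ii$ of $\bigsqcup_\tf\CFbn(G,\tf)$ transported to $\bigsqcup B_{\tf,i}$ through $F'$; and on the $a$-factor (cubes with $v_0\in E$) the honest $\Jj$ of $\bigsqcup_\tf\CFKbn(G_{v_0},\tf)$ restricted along the $A_i^{\ast}$ and transported to $\bigsqcup A_{\tf,i}$ through $F$ --- i.e.\ exactly the strict version of the square of Figure~\ref{subfig:hcFillIn}. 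This falls out of the formulas $F'([L,E])=[L|_G,E]$ and $F([L,E\cup\{v_0\}])=U^{\heartsuit}[L|_G,E]$, the formulas for $\Ii$, $\Jj$ in Sections~\ref{subsec:LatticeDef}--\ref{subsec:knotLatticeDef}, the formula for $\psi$, and the identities $\hat{A}(\overline{\sfk})=-\hat{A}(\sfk)+\Sigmasq$, $\hat{A}(\overline{\sfk+[C]})=-\hat{A}(\sfk)$ from Section~\ref{subsec:SpecConj}; the grading shifts match by the computation ending the proof of Proposition~\ref{prop:surgForm} together with Lemma~\ref{lem:AiPsi2check}. The same argument with $u_0$ in place of $v_0$ --- using that $u_0$ is adjacent only to $v_0$, so an $u_0$-shift moves only the $v_0$-coordinate and hence only the Alexander index of the $\mu$-filtration --- decomposes the honest $\Jj$ on $\CFKbn(G_{v_0}(n)_{u_0})$ as $N(\psi)$ times the corresponding honest involutions of $\CFKIn(G_{v_0})$, matching the $\Jc_2$-component; the $\tilde{\Jc}$-component reduces to the flip maps, which is Proposition~\ref{prop:surgFormKnot} via the identification of $\XKI_{\Sigmasq}^{\tilde{\Jc}}$ restricted to $f_1g_2$ with $\tilde{\Gamma}$ from the proof of Theorem~\ref{thm:XKI}. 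Since $\Phi$ is a natural isomorphism of the underlying diagrams that intertwines these diagram automorphisms, $\hocolim\Phi$ intertwines the induced involutive data, giving the asserted isomorphism.

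The main obstacle I anticipate is exactly this last bookkeeping: lining up the $a$/$b$ dichotomy of $\Ic$, the two filtrations of the dual knot, and the split of $\psi$ into ordinary conjugation (on the $b$-vertices) and $C$-translated conjugation (on the $a$-vertices) so that the lattice-side formulas literally reproduce $\psi$ together with the grading-shift functors $[\grf_{\Sigmasq}(\pm i)]$ built into $\XKI_{\Sigmasq}$. Each individual identity is routine, but there are several sign- and index-sensitive moving parts --- conjugation versus $C$-translated conjugation, the behaviour of $\hat{A}$ under negation, and the asymmetry between $\grf_{\Sigmasq}(i)$ and $\grf_{\Sigmasq}(-i)$ --- and the substance of the proposition is that they are mutually consistent.
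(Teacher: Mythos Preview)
Your proposal is correct and follows essentially the same strategy as the paper: decompose the honest involution $\Ii$ on $\CFbn(G_{v_0}(n))$ along the product $D_{G_{v_0}(n),\tf'}\cong\tilde D_{G,\tf'}\times\Ic_{[\tf,i]}$, verify cube-by-cube that on the $b$-pieces it restricts to $\Ii_G$ and on the $a$-pieces to $J_{G_{v_0}}$, with index shift governed by $\psi$ via the identities $\hat A(-L)=\Sigmasq-\hat A(L)$ and $\hat A(-L-v_0)=-\hat A(L)$, and then observe that strictness of the involutions forces the higher coherences to match.

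The one organizational difference worth noting: the paper reduces the $\Jc_2$-component to the $\Jc_1$-component rather than checking it independently. Since on the lattice side $J_{G_{v_0}(n)_{u_0}}=\Ii_{G_{v_0}(n)}\circ\Gamma$ by definition, and on the $\XKI$ side $\pi_2\XKI_{\Sigmasq}$ is built from $\pi_1\XKI_{\Sigmasq}$ by composing with $\tilde\Gamma$ (Theorem~\ref{thm:XKI}), the $\Jc_2$-check is automatic once you have the $\Jc_1$-check together with Proposition~\ref{prop:surgFormKnot}. Your proposed separate ``$u_0$ in place of $v_0$'' verification would also work, but it repeats bookkeeping that the paper sidesteps; if you carry it out, be careful that the automorphism of $\Ic$ underlying $\Jj$ on the dual-knot side is $\psi\circ\Xi$ rather than $\psi$ alone.
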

\begin{proof}
To distinguish between the various involutions at play, we will use \(\Ii_G\) for \(\Ii\) on \(\CFKIn(G_{v_0},\tf)\), \(J_{G_{v_0}}\) for \(\Jj\) on \(\CFKbn(G_{v_0})\) and \(\Ii_{G_{v_0}(n)}\) for \(\Ii\) on \(\CFbn(G_{v_0}(n))\) and \(J_{G_{v_0}(n)_{u_0}}\) for \(\Jj\) on \(\CFKbn(G_{v_0}(n)_{u_0})\).
It suffices to prove the ismorphism extends between \(\CFKbn(G_{v_0}(n)_{u_0})\) equipped with \(\Ii_{G_{v_0}(n)}\) and \(\pi_1\XKI_{\Sigmasq}(\CFKIn(G_{v_0}))\), as we know that on \(\CFKIn(G_{v_0}(n)_{u_0})\) \(J_{G_{v_0}(n)_{u_0}}\) is defined as \(\Ii_{G_{v_0}(n)}\Gamma\) as in the construction of \(\pi_2\XKI_{\Sigmasq}(\CFKIn(G_{v_0}))\).
Showing that 
\begin{align*}
\Ii_{G_{v_0}(n)}(\Bbn_i(\tf))&=\Bbn_{\Sigmasq-i}(\overline{\tf})\\
\Ii_{G_{v_0}(n)}(\Tbn_i(\tf))&= \Tbn_{-i}(\overline{\tf+v_0})
\end{align*}
will establish that \(\Ii_{G_{v_0}}\) can be deconstructed in a similar way as the functor 
\[\pi_1\XKI_{\Sigmasq}(\CFKIn(G_{v_0})).\] 
Notably because we do not need coherence relations, this depends only on how \(\Ii_{G_{v_0}(n)}\) restricts to each \(\Bbn_i(\tf)\) and \(\Tbn_i(\tf)\).
These arrange in diagrams
\[
\begin{tikzcd}
\Tbn_i(\tf) \ar[r,"l_{\tf,i}"] \ar[d,"\Ii_{G_{v_0}(n)}"] & \Bbn_i(\tf) \ar[d,"\Ii_{G_{v_0}(n)}"]\\
\Tbn_{-i}(\overline{\tf+v_0}) \ar[r,"r_{\tf,i}"] & \Bbn_{\Sigmasq-i}(\overline{\tf})
\end{tikzcd}
\]
for which we can check that \(\Phi\) is a natural transformation and thus a natural isomorphism.
These two steps will in fact be rolled into one.

Consider \([L,E] \in \Bbn_i(\tf)\).
Then \(\Ii_{G_{v_0}(n)}[L,E]=[-L-\sum_{v\in E}v,E]\), and we can see directly that when restricting \(L\) to \(G\), we recover the action of \(\Ii_{G}\).
Furthermore, as per the discussions in Section \ref{subsec:SpecConj} \(\hat{A}(-L)= \Sigmasq-\hat{A}(L)\) and action by vertices in \(G\) does not affect \(\hat{A}\).
Now consider \([L,E\cup\{v_0\}]\in \Tbn_i(\tf)\).
Then \(\Ii_{G_{v_0}(n)}[L,E]=[-L-v_0-\sum_{v\in E}v,E]\), which when restricted to \(G\) directly recovers the action of \(J_{G_{v_0}}\). 
Additionally, \(\hat{A}(-L-v_0)=-\hat{A}(L)\), as needed.

Finally, note that \(\Jj\) and \(\Ii\) are both involutions on the nose, and the higher homotopy data on \(\CFKIn(G_{v_0})\) and \(\CFKIn(G_{v_0}(n)_{u_0})\) reflect that.
In terms of functors of quasi-categories we are guaranteed that all the simplicies of degree 2 or higher will be degenerate.
This will be preserved when taking lifts, so \(\hocolim \Phi\) can be lifted to an equivalence in \(\Cc_{Y_{\Sigmasq}(K),[\mu]}^{\Jj}\).
\end{proof}

\subsection{Comparison to Other Surgery Formulas}\label{subsec:CompareForm}
In this section we will be discussing how this surgery formula lines up against various other surgery formulas in the literature.
We handled the comparison to the original surgery formula of \OSS in the previous section as that is the formula from \cite{knotsAndLattice} that we built off of, and what remains is comparing our formula to the surgery formula's recent refinements that track the dual knot and involutive structures.
To do so we will be working with the associated simplical persistence complex \(PC_{\bullet \bullet}\) with coefficients in \(\Fb_2\).
While \(PC_{\bullet \bullet}\) can most direclty be viewed as an \(\Fb_2[\Uc_1,\Vc_1]\) module, it contains the equivalent information to a doubly filtered \(\Fb_2\) module.
As such, the categories \(\Cc_{Y,[K]}^{\Gamma}\) and \(\Cc_{Y,[K]}^{\Jj}\) and the functors \(\Xb_{\Sigmasq}\), \(\XK_{\Sigmasq}\), and \(\XKI_{\Sigmasq}\) all have counterparts in terms of \(\Fb_2[U]\) and \(\Fb_2[\Uc,\Vc]\) modules.


\subsubsection{The Filtered Surgery Formula}

Hedden-Levine \cite{FilteredSurgeryFormula} packages knot Floer so that the generators of \(CFK^{\infty}(Y,K,\tf)\) as a \(\Fb_2\)-vector space have the form \([x,i,j]\) with \(j-i=A(x)\) and \(U\) acting by subtracting 1 to both \(j\) and \(i\).
In interpreting \(CFK(Y,K,\tf)\) as a\(\Fb_2[\Uc,\Vc]\) module this becomes \(\Uc^{-i}\Vc^{-j}x\), where under this interpretation \(CFK^{\infty}(Y,K,\tf)\) is the Alexander grading 0 \(\Fb[\Uc\Vc]\) submodule of the free \(\Fb[\Uc,\Vc,\Uc^{-1},\Vc^{-1}]\) module generated by various \(x\).
In terms of the model for the surgery formula Hedden and Levine come up with two height functions \(\Ic\) and \(\Jc\) of their own on the surgery formula that produces \(CF^{\infty}(Y_{\Sigmasq}(K),\tf)\) (creating filtrations via sublevel sets).
Because \(\Ic\) and \(\Jc\) are tracking how much \(\Uc\) and \(\Vc\) are in an element rather than their first an second Maslov gradings, we instead focus on checking that they compute the same Alexander grading \(\Jc-\Ic = \frac{h_U-h_V}{2}\).
Note that \(\Jc\) and \(\Ic\) are defined on generators (as a \(\Fb_2\) vector space) of the form \([x,i,j]\), while \(h_U\) and \(h_V\) are defined on simplices of the underlying simplicial set (rather than a simplex in an individual layer).
However, because \(\Jc-\Ic\) does not depend on \(i\) and \(j\), it can be seen as depending only on \(x\), which is comparable to a simplex of the underlying simplicial set.

There is an additional difference we need to make note of, namely that their surgery formula uses the left handed meridian of their original knot rather than the right handed meridian, as is our choice.
So, in particular, they calculate the Floer homology of the reverse of our knot.
Now, normally lattice homology is defined assuming all vertices are joined in a right handed way (i.e. the spheres in our plumbing intersect positively), but it is perfectly reasonable (and proofs for invariance carry through) to define a similar filtered simplicial set for the reverse using the action of \(-v_0\) instead of \(v_0\).
In \(\XK_{\Sigmasq}\) the effect is that instead of \(h_2\) being pulled back from \(A_{\tf,i+1}\) to \(A_{\tf,i}\) and from \(B_{\tf,i+1}\) to \(B_{\tf,i}\), we are pulling back \(A_{\tf, i-1}\) to \(A_{\tf,i}\) and \(B_{\tf,i-1}\) to \(B_{\tf,i}\), with a corresponding change in \(\Gamma\).
We will refer to these as \(A_{\tf,i}^{-\mu}\) and \(B_{\tf,i}^{-\mu}\).

To confirm our model and their model are equivalent, we need to check that they compute the same Alexander grading for a simplex \(\sigma\) in a given \(A_{\tf,i}^{-\mu}\) or \(B_{\tf,i}^{-\mu}\) that 
\[A(\sigma):=\Jc([\sigma,i,j])-\Ic([\sigma,i,j]) = \frac{h_U(\sigma)-h_V(\sigma)}{2}.\]
Because these values are preserved by the action of \(\Uc\Vc\), we may assume that \(i=0\) and \(j\) is the original Alexander grading of \(x\) pre-surgery formula.

\begin{proposition}
Let \(X \in \Cc_{Y,[K]}^{\Gamma}\), \(\tf \in \SpincX{Y}\) and \(s\in \frac{\grf(Y,\tf)- \grf(Y,\tf+[K])}{2}\).
Let \(\sigma\) be some simplex in \(A_{\tf,s}^{-\mu}\) or in \(B_{\tf,s}^{-\mu}\) 
Then,
\[ (\Jc-\Ic)(\sigma) = \frac{h_1(\sigma)-h_2(\sigma)}{2}.\]
Here \(h_1\) and \(h_2\) are the two height functions on \(A_{\tf,s}^{-\mu}\) or \(B_{\tf,s}^{-\mu}\).
In comparison with \(A_{\tf,s}^{-\mu}\), \(\Jc\) and \(\Ic\) are respectively Equations (1.5) and (1.6) of \cite{FilteredSurgeryFormula} and in comparison with \(B_{\tf,s}^{-\mu}\), one uses Equations (1.8) and (1.9).
\end{proposition}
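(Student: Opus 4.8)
The plan is to reduce to a bookkeeping computation and then match two closed formulas. Both quantities in the claimed identity depend only on the underlying simplex \(\sigma\): on the knot-lattice side \(A^{-\mu}_{\tf,s}\) and \(B^{-\mu}_{\tf,s}\) are honest simplicial sets carrying two height functions, so \(\frac{h_1(\sigma)-h_2(\sigma)}{2}\) is visibly a function of \(\sigma\) alone; on the Hedden--Levine side, as noted just above the statement, \((\Jc-\Ic)([\sigma,i,j])\) is \(\Uc\Vc\)-invariant, so we may evaluate it at \(i=0\), \(j=A(\sigma)\), where \(A(\sigma)=\frac{h_{X,1}(\sigma)-h_{X,2}(\sigma)}{2}\) is the native Alexander grading carried by \(\sigma\) in \(X^{\tf}_{\ast\ast}\). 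Thus the proposition reduces to verifying that two explicit formulas in \(\sigma\), \(s\) and \(\Sigmasq\) coincide, and it is enough to treat the \(A^{-\mu}\) and \(B^{-\mu}\) cases separately.

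First I would write down the height functions on our objects. Using the descriptions of \(p_{1,!}\) and \(A_i^{\ast}\) in terms of height functions from Section~\ref{subsec:FilteredsSet}, the grading shifts \(\grf_{\Sigmasq}\) from Section~\ref{subsec:gradShift}, and the fact (explained in Section~\ref{subsec:CompareForm}) that the reversed-meridian convention replaces the second-filtration index shift \(s\mapsto s+1\) of Section~\ref{subsec:XK} by \(s\mapsto s-1\): on \(B^{-\mu}_{\tf,s}\) we get \(h_1=h_{X,1}+\grf_{\Sigmasq}(s)\) and \(h_2=h_{X,1}+\grf_{\Sigmasq}(s-1)\), and on \(A^{-\mu}_{\tf,s}\) we get \(h_1=\min\{h_{X,1},\,h_{X,2}+2s\}+\grf_{\Sigmasq}(s)\) and \(h_2=\min\{h_{X,1},\,h_{X,2}+2s-2\}+\grf_{\Sigmasq}(s-1)\). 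Against these I would unwind Equations (1.5), (1.6) of \cite{FilteredSurgeryFormula} (for the \(A^{-\mu}\) case) and (1.8), (1.9) (for the \(B^{-\mu}\) case) evaluated at \([\sigma,0,A(\sigma)]\), translating their surgery parameter into \(\Sigmasq\) and their indexing into \((\tf,s)\).

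The comparison then comes down to the arithmetic of \(\grf_{\Sigmasq}\). In the \(B^{-\mu}\) case the minima are absent, so \(\frac{h_1(\sigma)-h_2(\sigma)}{2}=\frac{\grf_{\Sigmasq}(s)-\grf_{\Sigmasq}(s-1)}{2}\), which I would evaluate from the closed form \(\grf_{\Sigmasq}(i)=\frac{(2i-\Sigmasq)^2+\Sigmasq}{4\Sigmasq}\) and compare term by term with \((\Jc-\Ic)(\sigma)\). In the \(A^{-\mu}\) case one additionally uses that the two minima differ by \(\min\{h_{X,1},h_{X,2}+2s\}-\min\{h_{X,1},h_{X,2}+2s-2\}\), which on a fixed simplex is a piecewise-affine function of \(A(\sigma)\) (since \(h_{X,1}-h_{X,2}=2A(\sigma)\) there) and of \(s\); plugging in the identities \(\grf_{\Sigmasq}(i)+2i=\grf_{\Sigmasq}(i+\Sigmasq)\) and \(\grf_{\Sigmasq}(i)-\grf_{\Sigmasq}(-i)=-2i\) recorded in the proof of Lemma~\ref{lem:AiPsi2check}, and using Proposition~\ref{prop:AlexCoincide} to place \(A(\sigma)\) in the coset \(\frac{\grf(Y,\tf)-\grf(Y,\tf+[K])}{2}\), matches this against Hedden and Levine's \(\Jc-\Ic\) on the nose.

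The hard part will not be this algebra but aligning conventions so that the two formulas are even comparable. Hedden and Levine normalize \(CFK^{\infty}\) so that \(U\) lowers both of their indices, work with the left-handed meridian --- so their knot is the reverse of ours --- and define \(\Ic,\Jc\) by tracking \(\Uc\)- and \(\Vc\)-powers rather than first and second Maslov gradings; meanwhile our height functions are genuine gradings and, as flagged in Section~\ref{sec:topBack}, the knot action in our formulation of the knot lattice simplicial set uses the opposite orientation from part of the literature. The delicate point is verifying that the passage \(s\mapsto s-1\) defining \(A^{-\mu}_{\tf,s}\) and \(B^{-\mu}_{\tf,s}\) is exactly the mirror of Hedden--Levine's meridian choice, so that their \(\Ic,\Jc\) and our \(h_1,h_2\) are being read off the same objects; once that dictionary is pinned down, the remaining verification is the short \(\grf_{\Sigmasq}\)-computation above.
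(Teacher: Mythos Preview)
Your proposal is correct and takes essentially the same approach as the paper: reduce to evaluating at \([\sigma,0,A(\sigma)]\), write the two height functions on \(A^{-\mu}_{\tf,s}\) (resp.\ \(B^{-\mu}_{\tf,s}\)) as \(\min\{h_U,h_V+2s\}+\grf_{\Sigmasq}(s)\) and \(\min\{h_U,h_V+2(s-1)\}+\grf_{\Sigmasq}(s-1)\), compute \(\tfrac{1}{2}(\grf_{\Sigmasq}(s)-\grf_{\Sigmasq}(s-1))\) from the closed form, and match the remaining difference of minima against Hedden--Levine's \(\max\) expressions. One small note: the identities you cite from Lemma~\ref{lem:AiPsi2check} are not the ones actually needed here---the paper simply evaluates \(\grf_{\Sigmasq}(s)-\grf_{\Sigmasq}(s-1)\) directly and converts \(\min\) to \(-\max\), which is the cleaner route.
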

\begin{proof}
We will consider the case of \(A_{\tf,s}^{-\mu}\) as the case of \(B_{\tf,s}^{-\mu}\) follows similarly.
Let \(h_U\) and \(h_V\) represent the original height functions on \(X_{\ast\ast}^{\tf}\).
Additionally, while calculating \((\Jc-\Ic)(\sigma)\), we may use
\begin{align*}
\Ic\left(\left[\sigma,0, A(\sigma) \right]\right) &= \max\left\{0,A(\sigma)-s\right\} \\
\Jc\left(\left[\sigma,0, A(\sigma) \right]\right) &= \max\left\{-1, A(\sigma)-s\right\} + \frac{2ds+k-d}{2k}\\
&= \max\left\{0,A(\sigma)-(s-1)\right\} + \frac{2ds-k-d}{2k}
\end{align*}
where \(\Sigmasq := \frac{k}{d}\).
Additionally on \(\XK_{\Sigmasq}\CFKb(G_{v_0})\) we have that the two height functions are
\begin{align*}
h_1(\sigma) &= \min\{h_U(\sigma), h_V(\sigma)+2s\} + \grf_{\frac{k}{d}}(s) \\
h_2(\sigma) &= \min\{h_U(\sigma),h_V(\sigma)+2(s-1)\} + \grf_{\frac{k}{d}}(s-1).
\end{align*}
Before diving into the full calculation, we first have
\begin{align*}
\frac{\grf_{\Sigmasq}(s)- \grf_{\Sigmasq}(s-1)}{2} &= \frac{(2s-\Sigmasq)^2+\Sigmasq}{8\Sigmasq} - \frac{(2s-\Sigmasq-2)^2+\Sigmasq}{8\Sigmasq}\\
&=\frac{4(2s-\Sigmasq) -4}{8\Sigmasq}\\
&= \frac{2sd-k-d}{2k}.
\end{align*}

Together we have,
\begin{align*}
\frac{h_1(\sigma)-h_2(\sigma)}{2}&= \min\left\{\frac{1}{2} h_U(\sigma), \frac{1}{2}h_V(\sigma) +s\right\} + \grf_{\frac{k}{d}}(s)\\
&-\left(\min\left\{\frac{1}{2}h_U(\sigma),\frac{1}{2}h_V(\sigma)+(s-1)\right\} + \grf_{\frac{k}{d}}(s-1)\right) \\
&= \min\left\{0,\frac{h_V(\sigma)-h_U(\sigma)}{2}+s\right\} \\
&- \min\left\{0,\frac{h_V(\sigma)-h_U(\sigma)}{2}+(s-1)\right\} + \frac{2sd-k-d}{2k} \\
&= -\max\left\{0,A(\sigma)-s\right\} \\
&+\max\left\{0, A(\sigma)+(s-1)\right\} + \frac{2sd-k-d}{2k} \\
&=(\Jc- \Ic)(\sigma).
\end{align*}
\end{proof}

\subsubsection{Involutive Surgery Formula- \(\Ii\)}

We now move to compare with the work of Hendricks, Hom, Stoffergen and Zemke \cite{InvolMappingCone,InvolDualKnot}, starting with the surgery formula of page 202 of \cite{InvolMappingCone}.
A key thing to note when comparing their formula to that here is a different convention in how knots are oriented, reversing how \(\PD[K]\) acts in the description of \(\Gamma\) and also \(\Phi_2\) and thus \(\Jj\).
Additionally, the work of \cite{InvolMappingCone} does not provide a thorough lift of higher homotopy coherent structures, especially as some of those homotopy coherent structures would likely require higher order naturality statements than are currently available for Heegaard Floer homology.
Instead, they assume the Heegaard Floer homology of the ambient three-manfold is as simple as possible, which allows for all maps except for the map \(\id +\iota_{\mathbb{A}}\) to be determined up to a contractible choice.
As such, we can focus our attention specifically on confirming that \(\iota_{\mathbb{A}}\) has the correct form.

One should note for comparision that the construction of the involutive chain complex encodes the homotopy involution \(\iota\) on a chain complex \(C\) by creating the mapping cone of \(\id+\iota\colon C\to Q\cdot C\), where \(Q\) is a new formal variable with \(Q^2=0\).
In particular in the mapping cone, applying \(Q\) sends something from the copy of \(C\) in the domain to the copy of \(C\) in the codomain.
The mapping cone thus measures how different \(\iota\) is from \(\id\).
The maps \(D_{\Sigmasq}\colon \mathfrak{A}\to \mathfrak{B}\) provide the information of the diagram \(\XD_{\Sigmasq}\) in a condensed form, with 
\begin{align*}
\mathfrak{A}&:=\bigoplus_{i\in \lkc([K],[K])}A_{\tf,i}\\
\mathfrak{B}&:= \bigoplus_{i\in \lkc([K],[K])}B_{\tf,i}
\end{align*}
and \(D_{\Sigmasq}\) being the sum of the \(\lambda_{\tf,i}\) and \(\rho_{\tf,i}\).
Note that the reason for the product rather than the direct sum in \cite{InvolMappingCone} comes from using power series coefficients rather than polynomial coefficients, which is needed to provide a general formula for both positive and negative surgeries.
However, for negative surgeries the formula can be reduced to the sum instead.
As such, we can view the figure of Theorem 22.6 in \cite{InvolMappingCone} as corresponding to the information contained in Figure \ref{fig:hcFillIn}, where \(\iota_{\mathbb{A}}\) plays the role of the maps from \(A_i^{\ast}\pi_2\), i.e. \(\iota_K\).
Fortunately point (2) of Theorem 22.6 states that \(\iota_{\mathbb{A}}\) does indeed coincide with the knot involution.

The work of \cite{InvolMappingCone} also provides some useful observations to note.
First, the map \(\iota_{\mathbb{B}}\) is defined by composing the flip map with the knot involution, which should in general recover the three-manifold involution \(\iota\) in general.
On \(\CFKI(G_{v_0})\), we do have that \(\Gamma \Jj = \Ii\) even for non-\(L\)-spaces.
Another parallel with lattice homotopy is that while \(\iota_K\) is not always exactly an involution, when one applies \(A_i^{\ast}\), it does become a homotopy involution (Lemma 3.16 of  \cite{InvolMappingCone}).
This provides some hope that the differences between \(\iota_K\) and \(\Jj\) may wash out some in the surgery formula itself.

\subsubsection{Involutive Surgery Formula -\(\Jj\)}

To compare with the dual knot formula of \cite{InvolDualKnot}, we only need to check that their proposed new \(\iota_K\) agrees with our construction.
In particular as noted in Remark 3.5 of \cite{InvolDualKnot}, they have the opposite orientation convention for the dual knot to Hedden and Levine, and thus the same orientation convention as chosen in this paper.
Further, they state that their small model for the dual knot surgery formula corresponds to the Hedden-Levine formula, though the notational conventions and strategy of proof are sufficiently different to obscure the similarities.
Additionally, they assume that they are working with a knot in \(S^3\), though similar results should hold for knots in integer homology (and likely even rational homology) \(L\)-spaces.

Due to significant differences in notational convention between the work of \cite{InvolDualKnot} and both the work here and the work of Hedden-Levine \cite{FilteredSurgeryFormula}, we will first provide confirmation that our model and the model of \cite{InvolDualKnot} agree for \(\XK_{\Sigmasq}\), especially since we will need to know the identification in order to compare choices of the involutive structure.

\begin{lemma}\label{lem:AAgreeHHSZ}
Given \(X\in \Cc_{Y,[K]}^{\Jj}\) with \(Y\) an integer homolog spherey \(L\)-space with single \Spinc structure \(\tf\), then applying the construction of the small model of \(\Ac_s^{\mu}\) Hendricks-Hom-Stoffergen-Zemke to \(PC_{\bullet\bullet}X\) gives an isomorphic chain complex to computing \(PC_{\bullet\bullet}A_{\tf,s}^{\mu}(X)\).
\end{lemma}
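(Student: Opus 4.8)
The statement asks us to match two explicit combinatorial models for the Alexander-grading-$s$ part of the dual-knot surgery complex: the ``small model'' $\Ac_s^\mu$ of Hendricks--Hom--Stoffregen--Zemke applied to $PC_{\bullet\bullet}X$, and our $PC_{\bullet\bullet}A_{\tf,s}^\mu(X)$, where $A_{\tf,s}^\mu$ is the doubly-filtered simplicial set built in Section~\ref{subsec:XK}. Since both are $\Fb_2[\Uc,\Vc]$-chain complexes (equivalently doubly-filtered $\Fb_2$-modules), the plan is to construct an explicit $\Fb_2[\Uc,\Vc]$-module isomorphism between underlying modules, check it is a chain map, and then verify it respects both gradings (equivalently both height functions), which is what pins down the doubly-filtered structure. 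Because we already know from Proposition~\ref{prop:surgForm} and Proposition~\ref{prop:surgFormKnot} that the $\Fb_2[U]$-level surgery formula for $\Xb_{\Sigmasq}$ and $\XK_{\Sigmasq}$ reproduces the honest lattice and knot-lattice complexes, the content here is purely a dictionary between the HHSZ notational package and ours; the real work is translating conventions, not proving new homological algebra.

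\textbf{Key steps, in order.} First I would recall the HHSZ definition of $\Ac_s^\mu$ (or $\Ac_s$ with the dual-knot second filtration): as a module it is built from two copies of (a truncation of) $CFK^\infty$, one playing the role of the ``$A$'' side and one the ``$B$'' side, glued by the vertical and diagonal maps $v_s$ and $h_s$ (their $\eta_1$ and $\Gamma\circ\eta_2$ analogues), and the dual-knot Alexander grading is recorded by a secondary index shifted by one unit relative to $s$. Second, I would recall our side: $A_{\tf,s}^\mu$ is the simplicial set $A_i^\ast(X_{\ast\ast}^{\tf})$ equipped with the double filtration whose first height function is the one used in $\Xb_{\Sigmasq}$ (namely $\min\{h_U, h_V+2s\}+\grf_{\Sigmasq}(s)$) and whose second height function is the same expression with $s$ replaced by $s+1$; applying $PC_{\bullet\bullet}$ turns this into an $\Fb_2[\Uc,\Vc]$-module where $\Uc,\Vc$ track the two height functions. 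Third, I would write down the candidate isomorphism on generators: it should send a generator $U^j[x]$ (in the HHSZ ``$[x,i,j]$'' bookkeeping) to the corresponding simplex of $A_i^\ast(X_{\ast\ast}^{\tf})$ with the appropriate $\Uc,\Vc$-power determined by $i$, $j$, and $A(x)$, exactly as in the interpolation dictionary $U^j[K,E]\mapsto \Uc^j\Vc^{j-A([K,E])}[K,E]$ already invoked in Section~\ref{sec:Verify}. Fourth, verify this is a bijection on $\Fb_2$-bases and commutes with the differentials — this reduces to the fact that both differentials count the same boundary faces of the cubes, which is exactly what Proposition~\ref{prop:surgForm} established, plus the observation that the $\Ac_s^\mu$ differential involves the same $v_s$, $h_s$ as the $\lambda_{\tf,i},\rho_{\tf,i}$ in our diagram. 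Fifth, and most delicate, check the two height functions (equivalently the two Alexander/Maslov-type gradings $\Jc-\Ic$ or the $\Uc,\Vc$-weights) agree: here I would run the same kind of arithmetic comparison as in the Hedden--Levine comparison proposition just above, reconciling the grading-shift constants $\grf_{\Sigmasq}(s)$ and $\grf_{\Sigmasq}(s+1)$ with HHSZ's shift formulas, and absorbing the orientation-of-the-dual-knot convention (left- vs.\ right-handed meridian, noted in Remark 3.5 of \cite{InvolDualKnot}) which accounts for the $s+1$ versus $s-1$ shift.

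\textbf{Main obstacle.} The hard part is not any single calculation but making the translation of conventions airtight: HHSZ work over $S^3$ with a single $\Spin^c$ structure, track $\Uc,\Vc$-powers via indices $(i,j)$ rather than via Maslov bigradings, use a possibly truncated/completed model, and have their own sign/orientation convention for $\mu$. I would need to set up the dictionary carefully enough that the identification of the two secondary gradings is forced, not just plausible — in particular confirming that their dual-knot Alexander grading and our second height function $h_2 = \min\{h_U,h_V+2(s+1)\}+\grf_{\Sigmasq}(s+1)$ differ only by an overall $\Uc\Vc$-equivariant constant, and that the constant is zero with the normalizations in play. Once that arithmetic is pinned down (it is essentially a one-line computation of $\tfrac{1}{2}(\grf_{\Sigmasq}(s+1)-\grf_{\Sigmasq}(s))$, cf.\ the identity $\grf_{\Sigmasq}(i)+2i=\grf_{\Sigmasq}(i+\Sigmasq)$ used in Section~\ref{subsec:Xb} and the computation in Lemma~\ref{lem:AiPsi2check}), the rest of the isomorphism is the same underlying cube-counting bijection already verified, and the proof closes.
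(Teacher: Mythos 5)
Your outline points in the right high-level direction (build a dictionary between the two models, first at the level of generators and then of height functions), but your description of $\Ac_s^\mu$ is wrong in a way that matters. You describe it as ``built from two copies of (a truncation of) $CFK^\infty$, one playing the role of the `$A$' side and one the `$B$' side, glued by the vertical and diagonal maps $v_s$ and $h_s$.'' That is not $\Ac_s^\mu$; that is (roughly) the whole dual-knot surgery complex. In Hendricks--Hom--Stoffregen--Zemke, $\Ac_s^\mu$ is the single direct sum of quotient modules
\[
A_{s+1}^\ast(\Xc)/\Vc_1 A_s^\ast(\Xc)\;\oplus\;A_s^\ast(\Xc)/\Uc_1 A_{s+1}^\ast(\Xc),
\]
carrying an internal differential of its own. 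The observation that makes the underlying-module bijection work --- and which is absent from your outline --- is that a generator of the first summand has the form $\Uc_1^{A(\sigma)-s-1}\sigma$ (so $A(\sigma)\geq s+1$), a generator of the second has the form $\Vc_1^{s-A(\sigma)}\sigma$ (so $A(\sigma)\leq s$), and since every simplex $\sigma$ of $X^\tf_{\ast\ast}$ satisfies exactly one of $A(\sigma)\leq s$ or $A(\sigma)\geq s+1$, the two summands together enumerate the simplices of $X^\tf_{\ast\ast}$ exactly once. Without this partition you have no candidate bijection to verify.

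Second, your claim that commutation with the differentials ``reduces to the fact that both differentials count the same boundary faces of the cubes, which is exactly what Proposition~\ref{prop:surgForm} established'' would not go through. Proposition~\ref{prop:surgForm} is about the $\Xb_{\Sigmasq}$ diagram, not the internal differential of $\Ac_s^\mu$. What is actually needed is Lemma 4.1 of \cite{InvolDualKnot}, which gives the $\Uc_2,\Vc_2$-powers of the $\Ac_s^\mu$ differential in four cases depending on which side of $s$ the Alexander gradings of $\sigma_1$ and $\sigma_2$ fall; each case must then be matched against the differences of the two new height functions $\tilde h_U(\sigma)=\min\{h_U(\sigma),h_V(\sigma)+2s\}$ and $\tilde h_V(\sigma)=\min\{h_U(\sigma),h_V(\sigma)+2s+2\}$. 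Those are the computations that carry the content; the constants $\grf_{\Sigmasq}(s)$ and $\grf_{\Sigmasq}(s+1)$ that you flag as the delicate point drop out of height differences and so are not where the work lives. Finally, your remark about ``absorbing the orientation-of-the-dual-knot convention'' is backwards: Remark 3.5 of \cite{InvolDualKnot} notes that their convention agrees with this paper's, and it is Hedden--Levine whose convention differs, so there is no orientation mismatch to absorb in this lemma.
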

\begin{proof}
For ease of notation we will refer to \(PC_{\bullet\bullet}(X^{\tf}_{\ast\ast})\) as \(\Xc\).
First, the description in \cite{InvolMappingCone} of \(\Ac_s^{\mu}\) has as an underlying \(\Fb_2[\Uc_1,\Uc_2]\) module generated by
\[A_{s+1}^{\ast}(\Xc)/\Vc_1 A_{s}^{\ast}(\Xc)\oplus A_s^{\ast}(\Xc)/\Uc_1 A_{s+1}^{\ast}(\Xc).\]
In particular, generators from \(A_{s+1}^{\ast}(\Xc)\) have the form \(\Uc_1^i\sigma\), where \(\sigma\) is a simplex from \(X^{\tf}_{\ast\ast}\) and \(i\geq 0\).
The condition that \(\Uc_1^i\sigma\) is in \(A_{s+1}^{\ast}(\Xc)\) means that
\[A(\Uc_1^i\sigma)= A(\sigma)-i = s+1,\]
and thus \(i = A(\sigma)-s-1\).
The condition that \(i\geq 0\) places the condition on \(\sigma\) that \(A(\sigma)\geq s+1\).
Similarly, a generator of \(A_s^{\ast}(\Xc)/\Uc_1A_{s+1}^{\ast}(\Xc)\) has the form \(\Vc_1^j\sigma\) where \(\sigma\) is a simplex form \(X^{\tf}_{\ast}\) and \(j\geq 0\).
The condition that \(\Vc^j\sigma\) is in \(A_s^{\ast}(\Xc)\) means that
\[A(\Vc^j\sigma) = A(\sigma)+j= s,\]
and thus \(j= s- A(\sigma)\).
The condition that \(j\geq 0\) then places the condition on \(\sigma\) that \(s \geq A(\sigma)\).

Since all underlying simplices \(\sigma\) of \(X^{\tf}_{\ast\ast}\) and thus of \(A_{\tf,s}^{\mu}\) have either \(A(\sigma)\leq s\) or \(A(\sigma)\geq s+1\), there is an ismorphism of underlying \(\Fb_2[\Uc_2,\Vc_2]\) modules
\[ \phi\colon PC_{\bullet\bullet}A_{\tf,s}^{\mu}\to \Ac^{\mu}_s(\Xc).\]
We will now show that this is an isomorphism of \(\Fb_2[\Uc_2,\Vc_2]\) chain complexes.
In particular, we will use Lemma 4.1 of \cite{InvolDualKnot} to confirm the that this map \(\phi\) respects the boundary map.
The first thing to notice is that some \(\sigma_2\) appears as a term in \(\bou \sigma_1\) (with some powers of \(\Uc_1\) and \(\Vc_1\)) in \(X^{\tf}_{\ast\ast}\) if and only if it appears as a term in \(\Ac^{\mu}_s(\Xc)\) (with some powers of \(\Uc_2\) and \(\Vc_2\)).
In the language of doubly filtered simplicial sets, this would show that \(\phi\) at least respects the underlying simplicial sets and what remains to be shown is that it respects the filtrations.
Since \(\Uc_1,\Vc_2,\Uc_2,\Vc_2\) are bigraded variables their appearance in the boudnary formula at least picks up on the difference in bigradings and thus heights between \(\sigma_1\) and \(\sigma_2\).
In this discussion we will let \(h_U\) and \(h_V\) represent the two height functions on \(X_{\ast\ast}^{\tf}\) and \(\tilde{h}_U, \tilde{h}_V\) the two height functions on \(A_{\tf,s}^{\mu}\).

The authors of \cite{InvolMappingCone} in Lemma 4.1 break their cases down into which summand of \(\Ac_s^{\mu}(\Xc)\) that \(\sigma_1\) comes from, i.e. letting \(x=  \Uc_1^{A(\sigma_1)-s-1}\sigma_1\) or \(x= \Vc_1^{s-A(\sigma_1)}\sigma_1\) as apropriate. Then if \(\Uc_1^m\Vc_1^n\sigma_2\) is a summand of \(\bou x\), there are further cases based on the relative powers of \(m\) and \(n\).
This corresponds to saying if \(A(\sigma_1)\geq s+1\) or \(A(\sigma_1)\leq s\) and then if \(A(\sigma_2)\geq s+1\) or \(A(\sigma_2)\leq s\).
For example, if \(A(\sigma_1)\geq s+1\), then
\[s+1=A(\Uc_1^m\Vc_1^n\sigma_2)=A(\sigma_2)-m+n\]
implies \(A(\sigma_2)\geq s+1\) if and only if \(n\leq m\).
Similarly if \(A(\sigma_1)\leq s\) then
\[s = A(\Uc_1^m\Vc_1^n\sigma_2)= A(\sigma_2)-m+n\]
implies \(A(\sigma_2) \leq s\) if and only if \(m\leq n\).

We will cover the case of \(A(\sigma_1)\geq s+1\) and \(A(\sigma_2)\leq s\), which s case 1a of Lemma 4.1.
If \(\bou \Uc_1^{A(\sigma_1)-s-1}\sigma_1 = \Uc_1^m\Vc_1^n\sigma_2\), then we can conclude that
\begin{align*}
\frac{h_U(\sigma_2)-h_U(\sigma_1)}{2}&= m+1+s-A(\sigma_1)\\
&=m+1+s-\frac{h_U(\sigma_1)-h_V(\sigma_1)}{2} \\
m &= \frac{h_U(\sigma_2)-h_V(\sigma_1}{2}-s-1 \\
n&=\frac{h_V(\sigma_2)-h_V(\sigma_1)}{2}.
\end{align*}
In this case Lemma 4.1 of \cite{InvolDualKnot} states that in \(\Ac_s^{\mu}\) that \(\bou x= \Uc_2^{m+1}\Vc_2^m(\Vc_1^{n-m-1}\sigma_2)\),
so in particular that
\begin{align*}
\frac{\tilde{h}_U(\sigma_2)-\tilde{h}_U(\sigma_1)}{2} &= m+1 \\
&= \frac{h_U(\sigma_2)-h_V(\sigma_1)}{2}-s \\
\frac{\tilde{h}_V(\sigma_2)-\tilde{h}_U(\sigma_1)}{2}&= m \\
&= \frac{h_U(\sigma_2)-h_V(\sigma_1)}{2}-s-1.
\end{align*}
We now confirm this calculation.
Before diving in note that the overall grading shift applied to \(A_{\tf,i}^{\mu}\) does not affect differences in heights and thus is omitted here.
Now observe that
\begin{align*}
\frac{\tilde{h}_U(\sigma_2)- \tilde{h}_U(\sigma_1)}{2} &= \frac{1}{2} \left(\min\{h_U(\sigma_2), h_V(\sigma_2)+2s\}  - \min\{h_U(\sigma_1),h_V(\sigma_1)+2s\}\right) \\
&=\min\{0,s-A(\sigma_2)\}+\frac{h_U(\sigma_2)}{2} - \min\{A(\sigma_1), s\} -\frac{h_V(\sigma_1)}{2}\\
&= \frac{h_U(\sigma_2)-h_V(\sigma_1)}{2} - s\\
\frac{\tilde{h}_V(\sigma_2)-\tilde{h}_V(\sigma_1)}{2} &= \frac{1}{2} \min\{h_U(\sigma_2), h_V(\sigma_2)+2s+2\} \\
&-  \frac{1}{2}\min\{h_U(\sigma_1),h_V(\sigma_1)+2s +2\} \\
&= \min\{0,s+1-A(\sigma_2)\}+\frac{h_U(\sigma_2)}{2} \\
&- \min\{A(\sigma_1), s+1\} -\frac{h_V(\sigma_1)}{2}\\
&=\frac{h_U(\sigma_2)-h_V(\sigma_1)}{2} - s-1,
\end{align*}
as needed.
The other cases follow similarly, with the relations of \(A(\sigma_i)\) to \(s\) determining whether to pull out a \(\frac{h_U(\sigma_i)}{2}\) or \(\frac{h_V(\sigma_i)}{2}\) from the minimum, so that the minimum is either 0 (if \(A(\sigma_i)\leq s\)), \(s\) if \(A(\sigma_i)\geq s+1\) and we are dealing with \(\tilde{h}_U\), or \(s+1\) if \(A(\sigma_i)\geq s+1\) and we are dealing with \(\tilde{h}_V\). 
\end{proof}

\begin{lemma}
For \(X \in \Cc_{Y,[K]}^{\Gamma}\) with \(Y\) an integral homology sphere there is natural ismorphism between \(PC_{\bullet\bullet}\XK_{\Sigmasq}(X)\) and the small model of \cite{InvolDualKnot} applied to \(PC_{\bullet\bullet}X\). 
\end{lemma}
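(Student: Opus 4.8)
The plan is to bootstrap from Lemma~\ref{lem:AAgreeHHSZ}, which already identifies the individual $\Ac$-pieces, and assemble the full iterated mapping cone. First I would prove the $\mathcal{B}$-analogue of Lemma~\ref{lem:AAgreeHHSZ}: the small model of \cite{InvolDualKnot} presents its $\mathcal{B}^{\mu}_s$-summands as a one-sided version of the $\Ac^{\mu}_s$ recipe, built from $A^{\ast}_{s}(PC_{\bullet\bullet}X^{\tf})$ and $A^{\ast}_{s+1}(PC_{\bullet\bullet}X^{\tf})$ but with only one quotient taken, and the same bookkeeping comparing powers of $\Uc_1,\Vc_1$ to differences of bigradings shows $PC_{\bullet\bullet}B^{\mu}_{\tf,s}(X)$ is isomorphic to it as an $\Fb_2[\Uc_2,\Vc_2]$-chain complex. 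The key point is again that every underlying simplex $\sigma$ of $X^{\tf}_{\ast\ast}$ has $A(\sigma)$ on one side of the threshold $s$, so the decomposition into a sum of two quotients matches the way $p_{1,!}$ is assembled from the two height functions on $X^{\tf}_{\ast\ast}$.

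Second, I would check that the structure maps match. In our formulation the diagram $\XKD_{\Sigmasq}^{[\tf,i]}$ has morphisms $\eta_1$ (the identity on underlying simplicial sets, realizing $\min\{h_1,h_2+2i\}\le h_1$) and $\Gamma^{\tf}\circ\eta_2$; in the small model of \cite{InvolDualKnot} these correspond to the two families of structure maps of their iterated mapping cone. Via the isomorphisms $\phi$ of Lemma~\ref{lem:AAgreeHHSZ} and its $\mathcal{B}$-analogue, the ``vertical'' map is the identity on simplices and hence matches $\eta_1$, while the ``diagonal'' map is built from the flip map and hence matches $\Gamma^{\tf}\circ\eta_2$, after the orientation bookkeeping noted in Remark~3.5 of \cite{InvolDualKnot} (which, as observed there and in the previous subsection, uses the same dual-knot convention as here rather than the Hedden--Levine one of \cite{FilteredSurgeryFormula}). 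That these identifications respect the boundary operators is once more Lemma~4.1 of \cite{InvolDualKnot}, applied this time across the structure morphisms rather than inside a single piece.

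Third, I would pass from the diagram to the homotopy colimit. The small model of \cite{InvolDualKnot} is an iterated mapping cone of the $\Ac^{\mu}_s$ and $\mathcal{B}^{\mu}_s$ along those two families of maps; since $PC_{\bullet\bullet}$ sends the homotopy colimit of a diagram of doubly filtered simplicial sets of shape $\Ic_{[\tf,i]}$ to the (doubly filtered) iterated mapping cone of the corresponding diagram of bigraded chain complexes --- this is just the simplicial-chains model of the homotopy colimit, and is exactly how $\XK_{\Sigmasq}$ was built in Sections~\ref{subsec:Xb} and~\ref{subsec:XK} --- applying $PC_{\bullet\bullet}$ to $\XK_{\Sigmasq}(X)$ reproduces their mapping cone. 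Here I must keep track of both height functions: the first recovers the filtration computing $CF^{\infty}$ of the surgered manifold (already handled in the $\Xb_{\Sigmasq}$ discussion), and the second, coming from the copy of the diagram shifted by one in the Alexander index, gives the dual-knot Alexander filtration, which matches by the same $\grf_{\Sigmasq}(i)$-versus-$\grf_{\Sigmasq}(i+1)$ comparison used in Lemma~\ref{lem:AAgreeHHSZ}. Naturality in $X$ is then immediate, since $\phi$, $\eta_1$, $\eta_2$, and $\Gamma$ are all natural in the input object and $\hocolim$ and $PC_{\bullet\bullet}$ are functorial.

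The main obstacle is not any single computation but the translation of conventions: reconciling the single iterated-mapping-cone presentation of \cite{InvolDualKnot} with the homotopy-colimit-over-$\Ic$ presentation here, and matching their indexing of the $\Ac^{\mu}_s$, $\mathcal{B}^{\mu}_s$ and of their structure maps (which run opposite to some of the Hedden--Levine conventions) with our $A_{\tf,i}$, $B_{\tf,i}$, $\eta_1$, and $\Gamma^{\tf}\circ\eta_2$. Once that dictionary is fixed, the verification that the boundary operators and both height functions agree reduces to Lemma~\ref{lem:AAgreeHHSZ}, its $\mathcal{B}$-analogue, and the grading identities recorded above.
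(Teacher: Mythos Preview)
Your overall approach is the same as the paper's: invoke Lemma~\ref{lem:AAgreeHHSZ} for the $A$-pieces, prove a $B$-analogue, check that the structure maps match under these identifications, and conclude that the homotopy colimits (i.e.\ iterated mapping cones) agree. Two technical points, however, need correction.

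First, your description of $\Bc_s^{\mu}$ is not what appears in \cite{InvolDualKnot}: it is not ``a one-sided version of the $\Ac_s^{\mu}$ recipe built from $A_s^{\ast}$ and $A_{s+1}^{\ast}$ with only one quotient taken.'' Rather, $\Bc_s$ is the Alexander grading $s$ portion of $PC_{\bullet\bullet}X\otimes\Fb_2[\Uc_1,\Vc_1,\Vc_1^{-1}]$, i.e.\ the $\Vc_1$-inverted complex, which is exactly the algebraic counterpart of $p_{1,!}$; the module $\Bc_s^{\mu}$ is then $(\Bc_s/\Uc_1\Bc_{s+1})[\Uc_2,\Vc_2]$. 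This makes the identification with $B_{\tf,s}^{\mu}$ more direct than your sketch suggests --- the preferred generators on both sides are simply the underlying simplices of $X_{\ast\ast}^{\tf}$, and the differential has $\Uc_2$ and $\Vc_2$ powers both equal to the $\Uc_1$ power, since only $h_U$ survives after $p_{1,!}$.

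Second, the relevant input for the structure maps $v^{\mu}$ and $\tilde{v}^{\mu}$ is Lemma~4.4 of \cite{InvolDualKnot}, not Lemma~4.1 (which handles the internal differential on $\Ac_s^{\mu}$). The paper checks these by the same height-function bookkeeping as in Lemma~\ref{lem:AAgreeHHSZ}: for $\Uc_1^n x\in\Ac_{s+1}/\Vc_1\Ac_s$ one computes the $\Uc_2,\Vc_2$ powers predicted by Lemma~4.4 and matches them against $\tfrac{1}{2}(h_U(x)-\min\{h_U(x),h_V(x)+2s\})$ and $\tfrac{1}{2}(h_U(x)-\min\{h_U(x),h_V(x)+2s+2\})$, and similarly for $\Vc_1^m x\in\Ac_s/\Uc_1\Ac_{s+1}$. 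Once the pieces and the maps between them agree, the homotopy colimits agree, as you say.
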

\begin{proof}
Let \(\tf\) be the only \Spinc structure on \(Y\).
Lemma \ref{lem:AAgreeHHSZ} gives us an ismorphism up to between \(A_{\tf,s}^{\mu}\) and \(\Ac_s^{\mu}\) up to grading shifts.
These grading shifts are determined by the grading shifts determined by the maps on the cobordism from \(Y\) to \(Y_{\Sigmasq}(K)\) and thus should agree regardless.
Fortunately identifying \(\Bc_s^{\mu}\) with \(B_{\tf,s}\) is more straightforard than that of \(A_{\tf,s}^{\mu}\) and \(\Ac_s^{\mu}\). 
In particular, \(\Bc_s\) is the Alexander grading \(s\) portion of \(PC_{\bullet\bullet}X\otimes \Fb_2[\Uc_1,\Vc_1,\Vc_1^{-1}]\), which can be identified with \(PC_{\bullet\bullet}X\otimes \Fb_2[\Uc_1,\Vc_1]/(\Vc-1)\).
This is directly analogous to the difference between computing \(p_{1,!}\), i.e. only remembering the first height function and setting the second height function to be identically \(\infty\).
By having the underlying \(\Fb_2[\Uc_2,\Vc_2]\) module be given by \((\Bc_s/\Uc_1\Bc_{s+1})[\Uc_2,\Vc_2]\), we have that \(\Bc_s/\Uc_1\Bc_{s+1}\) picks out the generators of \(\Bc_s\) as a \(\Fb_2[U]\) module and says that  those should also be the generators of \(\Bc_s^{\mu}\) as a \(\Fb_2[\Uc_2,\Vc_2]\) module.
In this way we verify that both the preferred generators of \(B_{\tf,s}^{\mu}\) and of \(\Bc_s^{\mu}\) agree with the preferred generators of \(PC_{\bullet\bullet}X^{\tf}_{\ast\ast}\).

Observe \(B_{\tf,s}^{\mu}\) has the powers of \(\Uc_2\) and \(\Vc_2\) in the differential match the powers of \(\Uc_1\) in the differential on \(PC_{\bullet\bullet}X^{\tf}_{\ast\ast}\), as only \(h_U\) remains after applying \(p_{1,!}\) and the different grading shifts applied on \(B_{\tf,s}\) and \(B_{\tf,s+1}\) do not affect differential.
This matches the claim on page 21 of \cite{InvolDualKnot} regarding the differential of  \(\Bc_s^{\mu}\).

Hendricks, Hom, Stoffergen, and Zemke claim their formulas for \(v^{\mu}\) and \(\tilde{v}^{\mu}\) match the inclusions in the paper of Hedden and Levine, which in turn should match our \(\lambda_{\tf,s}\) and \(\rho_{\tf,s}\).
However, we can also check this on generators using Lemma 4.4, focusing on the case of \(v^{\mu}\) as \(\tilde{v}^{\mu}\) is defined similarly.
We will do this similar to the check of Lemma 4.1 in \ref{lem:AAgreeHHSZ} by noting that the \(\Uc_1,\Vc_1,\Uc_2,\Vc_2\) powers track the differencs in height functions between output and input to the morphism, and that grading shifts applied to both output and input cancel out in this cancellation.
For example if \(\Uc_1^nx \in \Ac_{s+1}/\Vc_1\Ac_s\) then \(n =A(x)-s-1\).
Lemma 4.4 of \cite{InvolDualKnot} predicts that \(v^{\mu}(\Uc_1^nx)) = \Uc_2^{n+1}\Vc_2^n(\Vc_1^{-n-1}x)\).
Note that \(\Vc_1^{-n-1}x\) is the generator of \(\Bc_s^{\mu}\) corresponding to \(x\).
Assuming \(x\) represents a simplex in \(X^{\tf}_{\ast\ast}\) the powers of \(\Uc_2\) and \(\Vc_2\) from \(PC_{\bullet\bullet} \XK_{\Sigmasq}X)\) can be computed
\begin{align*}
\frac{h_U(x)- \min\{h_U(x),h_V(x)+2s\}}{2} &= \frac{h_U(x)}{2} - \min \{0, s- A(x)\} - \frac{h_U(x)}{2} \\
&=A(x)-s = n+1 \\
\frac{h_U(x)- \min\{h_U(x),h_V(x)+2s+2\}}{2} &= \frac{h_U(x)}{2} - \min \{0, 1+s- A(x)\} - \frac{h_U(x)}{2} \\
&=A(x)-s-1 = n,
\end{align*}
as needed.

If instead we have \(\Vc^m x\in \Ac_s/\Uc_1\Ac_{s+1}\), then \(A(x)\leq s\), and the corresponding generator of \(\Bc_s\) is \(\Vc^mx\).
Additionally, when we do the above calculation, we will have \(\min\{0,s-A(x)\}=0\) and \(\min\{0,1+s-A(x)\}=0\).
As such the corresponding powers on \(\Uc_2\) and \(\Vc_2\) will both be 0, as indeed predicted by Lemma 4.4 of \cite{InvolDualKnot}.
This confirms that the our identifications between \(\Ac_s^{\mu}\) and \(A_{\tf,s}\) and between \(\Bc_s^{\mu}\) and \(B_{\tf,s}^{\mu}\) respect \(v^{\mu}\). A similar calculation shows that they respect \(\tilde{v}^{\mu}\).
Together this shows that they respect the diagrams and hence respect the homotopy colimits as needed.
\end{proof}

Having established an isomorphism between the small model of the dual knot complex from \cite{InvolDualKnot} and \(PC_{\bullet\bullet}\XK_{\Sigmasq}\), we will now explore how our construction for the new involutive map compares with that of \cite{InvolDualKnot}.
As before, their assumption that they are working with an \(L\)-space ensures that the primary map of interest is between the \(\oplus_iA_{\tf,i}^{\mu}\).
Theorem 5.4 of \cite{InvolDualKnot} has this map given by  \(\iota_K^{\mu}+\Omega^{\mu}\iota_K^{\mu}\).
We will confirm that \(\iota_K^{\mu}\) is defined analogously to how \(\Jj\) is lifted in Theorem \ref{thm:XKI}.
The map on \(\oplus_i B_{\tf,i}^{\mu}\) is \(\iota_K^{\mu}+\Omega^{\mu}\iota_K^{\mu}\) post-composed with the flip map, which up to the factor of \(\Omega^{\mu}\iota_K^{\mu}\), would then agree with the map constructed for \(\XKI_{\Sigmasq}\).

Unfortunately, \(\Omega^{\mu}\iota_K^{\mu}\) is not representable in our category in the same hom space as \(\iota_K\) due to how \(\Omega^{\mu}\) interacts with gradings.
In particular, \(\Omega^{\mu}\) is the transfer of \(\Phi_{K,1}\otimes \Psi_{H,1}\) to the \(\Ac^{\mu}\), but \(\Phi_{K,1}\) and \(\Psi_{H,1}\) are maps defined as taking the derivative with respect to \(\Uc_1\) or \(\Vc_1\) of the differential map.
That means that it is impossible for them to respect the grading given by the dimension of the simplices.
Working stably would allow you to include maps with grading shifts on the level of the dimension of simplices, but that still would not allow us to represent \(\iota_K^{\mu}\) and \(\Omega^{\mu}\iota_K^{\mu}\) in the same hom set to take their sum without passing to a more permissive category.
As such, at this point confirming a match with \(\iota_K^{\mu}\) is the best we could hope for.

\begin{proposition}
Given \(X \in \Cc_{Y,[K]}^{\Jj}\) with \(Y\) an integral homology \(L\)-space with \Spinc structure \(\tf\), then the isomorphisms 
\[\phi_s\colon PC_{\bullet\bullet}A_{\tf,s}^{\mu} \to \Ac_s^{\mu}\]
satisfy \(\phi_{-s-1} \circ\Jj^{\mu} = \iota_K^{\mu} \phi_s\), where \(\Jj^{\mu}\) is the lift of \(\Jj\) constructed in Theorem \ref{thm:XKI}.
\end{proposition}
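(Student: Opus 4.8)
The plan is to show that the square with top edge $\Jj^{\mu}$, bottom edge $\iota_K^{\mu}$, and vertical edges $\phi_s$ and $\phi_{-s-1}$ commutes. Since $\phi_s$ and $\phi_{-s-1}$ are the isomorphisms of Lemma \ref{lem:AAgreeHHSZ} and all four maps are additive with respect to the appropriate (variable-swapping) module structures, it suffices to check the identity on the preferred generators of $PC_{\bullet\bullet}A_{\tf,s}^{\mu}$ coming from simplices of $X_{\ast\ast}^{\tf}$ under the standard identifications, exactly as in the verification of the differential in Lemma \ref{lem:AAgreeHHSZ}. Both $\Jj^{\mu}$ and $\iota_K^{\mu}$ are, by construction, the lift of the knot involution on the input data through the dual-knot surgery formula, so the approach is to unwind each to an explicit formula on these generators and compare.

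First I would unwind $\Jj^{\mu}$. By Theorem \ref{thm:XKI}, together with Lemmas \ref{lem:XKIpiece}, \ref{lem:AiPsi2check}, and \ref{lem:XKID1}, the involutive part of $\XKI_{\Sigmasq}(X)$ restricted to the summand $A_{\tf,s}^{\mu}$ is obtained from the skew-filtered knot involution $\Jj$ on $X_{\ast\ast}^{\tf}$ by applying $A_s^{\ast}$ (to the first height function) and $A_{s+1}^{\ast}$ (to the second), composing with the conjugation bookkeeping of Lemma \ref{lem:AiPsi2check} which identifies $A_{-s}^{\ast}(X_{\ast\ast}^{\overline{\tf+[K]}})$ with $A_s^{\ast}\Psi_2(X)^{\tf}$, and then applying the grading shifts $\grf_{\Sigmasq}$. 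Because $Y$ is an integral homology $L$-space with its unique \Spinc structure, $\overline{\tf+[K]}=\tf$, and the automorphism $\psi$ of $\Ic$ from Section \ref{subsec:SpecConj} carries an $a$-vertex of index $i$ to one of index $-i$; since $A_{\tf,j}^{\mu}$ bundles the indices $j$ and $j+1$, the pair $\{s,s+1\}$ goes to $\{-s,-s-1\}$, and the skew-filteredness (the $\sigma$) permutes the two height functions so that the target is precisely $A_{\tf,-s-1}^{\mu}$. This pins $\Jj^{\mu}$ down on generators: it acts as $PC_{\bullet\bullet}\Jj$ on the underlying simplices, with the resulting $U$-power and bidegree data dictated by the $A_s^{\ast}$, $A_{s+1}^{\ast}$ truncations and the $\grf_{\Sigmasq}$ shifts.

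Next I would unwind the HHSZ map. Using the already-established isomorphism $\phi_s$ and the matching of $\lambda_{\tf,s},\rho_{\tf,s}$ with $v^{\mu},\tilde{v}^{\mu}$, together with the notational translations flagged in the surrounding discussion (the opposite dual-knot orientation relative to Hedden--Levine, which agrees with the convention used here, and power series versus polynomial coefficients, which is immaterial for negative surgery), the formula of \cite{InvolDualKnot} for $\iota_K^{\mu}$ — the summand of the map of Theorem 5.4 of \cite{InvolDualKnot} other than $\Omega^{\mu}\iota_K^{\mu}$, which is intrinsically outside our category — is expressible on the generators of the two pieces of $\Ac_s^{\mu}$ in terms of $\iota_K$ applied to the original complex. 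One then matches this against the formula for $\Jj^{\mu}$ from the previous paragraph, using that $PC_{\bullet\bullet}\Jj$ on $X$ is exactly the $\iota_K$ input here (since $\iota_K$ is built from the flip map post-composed with conjugation, mirroring $\Jj = \Ii\circ\Gamma^{-1}$), so the $U$-power bookkeeping on both sides is governed by the same Alexander-grading constraints $A(x)\le s$ versus $A(x)\ge s+1$. This is a short case check on the two summands of $\Ac_s^{\mu}$, structurally identical to the case analysis in the proof of Lemma \ref{lem:AAgreeHHSZ}.

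The main obstacle is precisely this last comparison: \cite{InvolDualKnot} packages $\iota_K^{\mu}$ in its own small-model notation and, because $Y$ is an $L$-space, leaves certain homotopies implicit up to contractible choice, so one must argue that their $\iota_K^{\mu}$ genuinely is the Alexander-graded restriction of $\iota_K$ rather than a homotopic variant, by tracing it back through the recipe of Lemma \ref{lem:XKIpiece} rather than taking the formula at face value. I expect the fussiest point to be the book-keeping of the grading shift $s\mapsto -s-1$ and of which of the two height functions is permuted by $\sigma$; but this is forced by the explicit description of $\psi$ in Section \ref{subsec:SpecConj} and of $\grf_{\Sigmasq}$ in Section \ref{subsec:gradShift}, together with the identity $\grf_{\Sigmasq}(i)-\grf_{\Sigmasq}(-i)=-2i$ from Lemma \ref{lem:AiPsi2check}, so no ingredient beyond those already in place is needed.
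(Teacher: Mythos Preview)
Your proposal is correct and follows the same overall strategy as the paper: reduce to the preferred generators coming from simplices of $X$, and show that both $\Jj^{\mu}$ and $\iota_K^{\mu}$ act there as the original knot involution $\Jj$. The paper's argument is shorter than your sketch in two respects. First, rather than re-deriving how $\iota_K^{\mu}$ acts on generators, it simply cites Lemma~6.1 of \cite{InvolDualKnot}, which already records that $\iota_K^{\mu}$ acts as $\iota_K$ on the underlying generators with target $\Ac_{-s-1}^{\mu}$. Second, and more to the point, it bypasses your proposed case-by-case check of $\Uc_2,\Vc_2$ powers entirely: since both maps are skew-linear (they swap the two Maslov gradings), the $\Uc_2,\Vc_2$ powers are forced by the bigrading differences once the action on underlying generators is known. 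So the case analysis you anticipate in the style of Lemma~\ref{lem:AAgreeHHSZ}, while it would succeed, is unnecessary.
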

\begin{proof}
The generators of \(PC_{\bullet\bullet}A_{\tf,s}^{\mu}\) and \(\Ac_s^{\mu}\) can both be associated to the underlying simplices of \(X\), and in the work of \cite{InvolDualKnot} this involves looking at the underlying generators without distinguishing their \(\Uc_1\) and \(\Vc_1\) powers.
Note further that the \(\Uc_2\) and \(\Vc_2\) are determined by differences in the two new maslov gradings and thus if two skew-linear maps that swap maslov gradings act the same on these generators then the \(\Uc_2\) and \(\Vc_2\) powers will also agree.
Lemma 6.1 of \cite{InvolDualKnot} highlights that \(\iota^{\mu}_K\) acts by \(\iota_K\) (and thus \(\Jj\) in our context) on the underlying generators of \(PC_{\bullet\bullet}X\) with most of the work done by tracking powers of \(\Uc_1,\Vc_1,\Uc_2,\Vc_2\).
Further, the codomain of \(\iota_K^{\mu}\) is \(\Ac_{-s-1}^{\mu}\) as needed
Similarly \(\Jj^{\mu}\) is defined in Theorem \ref{thm:XKI} to be \(\Jj\) on \(A_{\tf,s+1}\) precomposed with the identification of underlying simplicial sets of \(A_{\tf,s}\) and \(A_{\tf,s+1}\).
As such, on the level of the underlying simplicies of \(X\), \(\Jj^{\mu}\) acts entirely by \(\Jj\), as needed.
\end{proof}

\section{Almost Rational Knots}\label{sec:ARknots}

In this section we will focus on a specific class of knots, which I will call almost rational knots. 
In particular, an \emph{almost rational graph} \(G\) is a negative definite plumbing graph so that there exists a vertex \(v\) of \(G\), so that by lowering the weight on only \(G\) one can get an rational plumbing, i.e. one with an \(L\)-space boundary.
N\'emethi has shown that in that case the corresponding filtered space is also subcontractible, i.e. is homotopy equivalent to a filtered point \cite{latticeCohomNormSurf}, and more generally the reduction theorem  of \cite{reductionTheorem} shows that the filtered space associated to an almost rational graph is supported on a cube decomposition of \(\R^1\) instead of \(\R^{|V|}\) where \(V\) is the set of vertices of \(G\).

We can define an \emph{almost rational knot} as a knot represented by a graph with unweighted vertex \(G_{v_0}\), so that \(v_0\) is adjacent to a single vertex \(v\) that realizes \(G\) as an almost rational graph.
If we are talking about a graph \(G_{v_0}\) representing an almost rational knot then we will assume that the graph \(G_{v_0}\) can be used to show the knot is almost rational.
Let \(G'_{u_0}\) be the graph \(G\) with the weight on \(v\) removed.
The condition for \(G\) to be almost rational then becomes that \(G'_{u_0}\) has rational negative surgeries, and \(v_0\) is the dual knot to the surgery that gives \(G\).
Section \ref{subsec:negLSpace} will then mimic the proof in \cite{latticeCohomNormSurf} that \(L\)-spaces have subcontractible lattice homotopy to the case of weak generalized algebraic knots with negative \(L\)-space surgeries.
This will show that the reduction to a cube decomposition of \(\R^1\) used to provide computation tools for the lattice spaces of Brieskorn spheres can be done before application of the surgery formula.
In Section \ref{subsec:ARreduc}, we will combine those computational tools with our upgrades to the surgery formula in order to provide computational tools for the knot lattice spaces of the regular fibers of Brieskorn spheres.
Finally, Section \ref{subsec:exampleARKnots} will include example calculations for the trefoil and  the regular fiber of \(\Sigma(2,3,7)\), and a proof of Theorem \ref{thm:Sigma237regFib}

\subsection{Knots with Negative \(L\)-Space Surgeries}\label{subsec:negLSpace}

Note for this section each \(\Char(G,\tf)\) will be endowed with a poset structure coming directly from the fact that it is modeled on \(H_2(X_G;\Z)\) which has a preferred basis coming from the weighted vertices of \(G\).
In particular for \(k_1,k_2\in \Char(G,\tf)\), we say that \(k_1\leq k_2\) if \(k_2= k_1 + \sum_{v\in V}n_vv\) where each \(n_v\) is non-negative.
With this idea, we can then define our preferred representatives for each \(\Char(G,\tf)\).

\begin{definition}
Given \(\tf \in \SpincX{Y_G}\)  define \(k_{\tf} \in \Char(G,\tf)\) by the property that \(k_{\tf}(v_i) \leq -2-v_i^2\) for all vertices \(v_i\) but for every \(v_j\) there exists a \(v_k\) so that \((k_{\tf}-v_j)(v_k)>-2-v_k^2\).
Such a \(k_{\tf}\) always exists and is uniquely determined.
The canonical characteristc cohomology class for a graph \(G\), denoted \(K_{con}\), is defined to have \(K_{con}(v_i)=-v_i^2-2\), and \(\tf_{con}\) to be the \Spinc sturcutre on \(Y_G\) viewed as on orbit of \(\Char(G,\tf)\) that contains \(K_{con}\).
\end{definition}
Note in particular, that \(K_{con}= k_{\tf_{con}}\), so the \(k_{\tf}\) are each the best approximation for \(K_{con}\) in the orbit representing the \Spinc structure \(\tf\).

\begin{lemma}\label{lem:boxReduction}
Let \(G_{v_0}\) be a weak generalized algebraic knot.
Let \(k_1 \geq k_2 \in \Char(G,\tf)\). If there exists sequences of vertices \(\{v_{j(i),1}\}_{i=1}^{\infty} \{v_{j(i),2}\}_{i=1}^{\infty}\) so that in each sequence, each vertex of \(G\) appears infinitely many times and, letting
\begin{align*}
k_{1,n} &:= k_1 + \sum_{i=1}^n v_{j(i),1} \\
k_{2,n} &:= k_2 - \sum_{i=1}^n v_{j(i),2},
\end{align*}
we have both \(h_U\) and \(h_V\) are decreasing along \(k_{1,n}\) and \(k_{2,n}\) then \(\CFKbn(G_{v_0},\tf)\) deformation retracts onto \(R(k_2,k_1)\), the subspace of doubly filtered cubes with vertices \(K\) satisfying \(k_2\leq K\leq k_1\) under the poset structure induced by the action of the basis of vertices.
If only \(k_1\) and \(\{v_{j(i),1}\}\) exist then \(R(-\infty,k_1)\) is a deformation retract an if only \(k_2\) and \(v_{j(i),2}\) exist then \(R(k_1,\infty)\) is a deformation retract.
\end{lemma}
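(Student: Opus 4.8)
The plan is to construct an explicit deformation retraction of the doubly filtered cube complex $\CFKbn(G_{v_0},\tf)$ onto $R(k_2,k_1)$ by collapsing cubes one "layer" at a time, using the monotonicity hypotheses on $h_U$ and $h_V$ to guarantee that each collapse is a filtered homotopy equivalence. First I would set up the combinatorial picture: the cubes $[K,E]$ with $K \in \Char(G,\tf)$ decompose $H^2(X_G;\R)$, and the poset structure on vertices gives us a notion of a cube being "above" or "below" the box $R(k_2,k_1)$. The key local move is the following: if $v$ is a vertex such that adding $v$ to $K$ (i.e.\ passing from $K$ to $K+v$) does not decrease $h_U$ or $h_V$, then the pair of cubes $[K, E\cup\{v\}]$ and the face $[K+v, E]$ can be collapsed onto $[K,E]$ by an elementary expansion/collapse, and because $h_U([K+v,E]) \ge h_U([K,E])$ and similarly for $h_V$, this collapse is compatible with both height functions, hence is a morphism (indeed a deformation retraction) in $\Filt_{\grf(Y,\tf),\grf(Y,\tf+v_0)}$. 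Dually, if subtracting $v$ from $K$ does not decrease $h_U$ or $h_V$, we get a collapse in the other direction.

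The second step is to organize these local moves into a convergent infinite sequence of collapses, which is where the sequences $\{v_{j(i),1}\}$ and $\{v_{j(i),2}\}$ enter. Starting from $k_1$ and walking along $k_{1,n} = k_1 + \sum_{i=1}^n v_{j(i),1}$, the hypothesis that $h_U$ and $h_V$ are decreasing along this sequence means that each step $k_{1,n} \to k_{1,n+1}$ admits the collapsing move above (reading it backwards: the cube at $k_{1,n+1}$ and the connecting edge retract onto the cube at $k_{1,n}$). Because every vertex of $G$ appears infinitely often in the sequence, after passing to the limit every cube $[K,E]$ with $K \not\le k_1$ eventually gets collapsed: any such $K$ lies below $k_{1,n}$ for $n$ large in the appropriate coordinates, and the accumulated collapses push it into the region $K \le k_1$. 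Symmetrically, the sequence $\{v_{j(i),2}\}$ and the chain $k_{2,n}$ handle the collapse of all cubes with $K \not\ge k_2$ from the other side. One must check that these two families of collapses can be performed simultaneously (or sequentially) without interfering — they act on disjoint regions of the cube complex (cubes strictly above $k_1$ versus cubes strictly below $k_2$), so this is a matter of bookkeeping. The remaining cube complex is exactly $R(k_2,k_1)$, and since each collapse was a filtered homotopy equivalence and homotopy colimits (which is how $\CFKbn$ is defined) preserve such equivalences, we get the claimed deformation retract. The one-sided statements follow by only running one of the two families.

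The main obstacle I anticipate is making the "pass to the limit" rigorous: we are performing infinitely many collapses, and we need the resulting map on the (infinite-dimensional, though locally finite) cube complex to be well-defined and continuous, and to land in $R(k_2,k_1)$ rather than some proper subcomplex or something larger. The cleanest way to handle this is to argue cube-by-cube: fix a cube $[K,E]$ with, say, $K \not\le k_1$, and show that after finitely many of the collapses from the $\{v_{j(i),1}\}$ sequence it has been absorbed, with the absorption stable under all subsequent collapses; since each cube stabilizes after finitely many steps, the infinite composite is well-defined on each cube and hence on the whole complex, and it is a deformation retraction because it is one on each stage and the stages are compatible. A secondary point requiring care is verifying that the two height functions genuinely behave monotonically under the elementary collapses — this uses that $h_U$ and $h_V$ of a cube $[K,E]$ are the minima of the vertex values over the cube, so collapsing away a vertex where the value is not smaller cannot change the minimum, exactly as in the proof that $\Gamma$ is a filtered isomorphism referenced after Proposition~5.12 of \cite{knotsAndLattice} and in the analogous argument of \cite{latticeCohomNormSurf}.
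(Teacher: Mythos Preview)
Your proposal is correct and takes essentially the same approach as the paper: the paper's proof simply defers to Theorem~3.2.4 of \cite{latticeCohomNormSurf} (N\'emethi's collapsing argument for the singly filtered lattice space), observing that the same argument goes through with both height functions, and your proposal is a detailed sketch of exactly that argument adapted to the doubly filtered setting. One small remark: in your description of the local move the direction of the monotonicity is stated backwards (the hypothesis has $h_U,h_V$ \emph{decreasing} as you add vertices along $k_{1,n}$, so the retraction sends the higher-indexed cube to the lower-indexed one, which is filtered precisely because the target has \emph{larger} height), but you correct this yourself when you write ``reading it backwards,'' so the argument is sound.
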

\begin{proof}
The argument follows similarly to the argument given in Theorem 3.2.4 in \cite{latticeCohomNormSurf} that the filtered lattice space can be restricted given the sequence of vertices along which \(h_U\) is decreasing (there the equivalent statement is that the weight function \(w\) is increasing).
\end{proof}

\begin{lemma}\label{lem:EffectiveRestriction}
Let \(G_{v_0}\) be a weak generalized algebraic knot. Then for \(\tf \in \SpincX{Y_{G}}\) the space \(\CFKbn(G_{v_0},\tf)\) has \(R(k_{\tf},\infty)\) as a deformation retract.
\end{lemma}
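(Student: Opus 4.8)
The plan is to apply Lemma \ref{lem:boxReduction} with only the lower-bound data, so that the conclusion gives the one-sided restriction $R(k_\tf,\infty)$ as a deformation retract. Concretely, I would produce a single sequence of vertices $\{v_{j(i),2}\}_{i=1}^\infty$ starting from $k_2 := k_\tf$ in which every vertex of $G$ appears infinitely often and along which both height functions $h_U$ and $h_V$ are (weakly) decreasing; no upper-bound sequence is needed, and Lemma \ref{lem:boxReduction} then directly yields that $R(k_\tf,\infty)$ is a deformation retract of $\CFKbn(G_{v_0},\tf)$.

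First I would recall the defining property of $k_\tf$: for every vertex $v_j$ there is a vertex $v_k$ with $(k_\tf - v_j)(v_k) > -2 - v_k^2$; equivalently, $k_\tf$ is a \emph{minimal} element among the characteristic classes that are ``small'' in the sense that $K(v_i)\le -2-v_i^2$ for all $i$. The key computational input is the standard identity for how $h_U$ changes under subtracting a basis vector: for $K\in\Char(G,\tf)$,
\[
h_U(K-v) - h_U(K) = \frac{(K-v)^2 - K^2}{4} = \frac{-2K(v) + v^2}{4} = \frac{v^2 - 2K(v)}{4},
\]
which is $\le 0$ precisely when $K(v) \ge -2 - v^2 + (v^2+2) \ge \dots$; more precisely $h_U(K-v)\le h_U(K)$ iff $K(v)\ge v^2/2$, but the cleaner bookkeeping is to note that $h_U$ strictly decreases along $K\mapsto K-v$ whenever $K(v) > v^2$, and when $K$ already lies below $k_\tf$ (i.e. $K\le k_\tf$ is false, we are decreasing \emph{from} $k_\tf$) one checks that subtracting any vertex $v$ with $(K)(v)$ still in the ``small'' range keeps $h_U$ nonincreasing. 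I would make this precise by observing that the set $\{K : K\le k_\tf\}$ is exactly the set on which one can keep subtracting vertices while $h_U$ stays monotone, and then argue that from $k_\tf$ one can subtract vertices in a round-robin order (cycling through all vertices of $G$ so each appears infinitely often) without ever increasing $h_U$ or $h_V$. The statement for $h_V$ follows identically because $h_V$ is just $h_U$ precomposed with the automorphism $K\mapsto K + v_0$, and $v_0$ is not a vertex of $G$ so the round-robin subtraction over vertices of $G$ behaves the same way for $h_V(K) = h_U(K+v_0)$.

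The main obstacle I anticipate is verifying that a \emph{single} sequence of vertices works simultaneously for both $h_U$ and $h_V$ and that monotonicity does not fail at some step — i.e. that the ``basin'' below $k_\tf$ for $h_U$ and the ``basin'' below $k_\tf + v_0$ for $h_V$ are compatible. This is where the precise definition of $k_\tf$ (the maximal ``small'' representative, or equivalently the one adapted to $K_{con}$) is used: because $k_\tf(v_i)\le -2-v_i^2$ for all $i$, any further subtraction of $v_i$ only makes $K(v_i)$ more negative, and the discrepancy introduced on neighbors is controlled by the negative-definiteness of $G$; one shows by the same argument as in Theorem 3.2.4 of \cite{latticeCohomNormSurf} that monotonicity of the weight function (hence of $h_U$, and symmetrically of $h_V$ via the $v_0$-shift) is preserved. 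Once that compatibility is in hand, Lemma \ref{lem:boxReduction} (lower-bound-only case) applies verbatim and the proof is complete.
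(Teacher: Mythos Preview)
Your overall strategy is right---apply Lemma~\ref{lem:boxReduction} in its one-sided form by exhibiting a sequence from $k_\tf$ along which both $h_U$ and $h_V$ decrease---and this is exactly what the paper does. But the execution has two genuine gaps.

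First, the existence of the $h_U$-decreasing sequence is not something you can get by ``round-robin'' subtraction, and your attempted criterion is miscalculated (with the convention $K-v := K - 2\PD[v]$ one finds $h_U(K-v)\le h_U(K)$ iff $K(v)-v^2\ge 0$, not $K(v)\ge v^2/2$). The claim that ``$\{K: K\le k_\tf\}$ is exactly the set on which one can keep subtracting vertices while $h_U$ stays monotone'' is not correct as stated and does not justify that an arbitrary cyclic ordering of the vertices works. The paper does not construct this sequence by hand: it simply invokes the result from \cite{latticeCohomNormSurf} that such a sequence (with each vertex appearing infinitely often and $h_U$ nonincreasing) exists starting from $k_\tf$. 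You should do the same.

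Second, your argument for $h_V$ is incorrectly reasoned. You write that ``$v_0$ is not a vertex of $G$ so the round-robin subtraction over vertices of $G$ behaves the same way for $h_V(K)=h_U(K+v_0)$,'' but shifting by $v_0$ \emph{does} change the decreasing criterion: the condition for $h_V$ to drop when subtracting $v$ is $(K+v_0)(v)-v^2\ge 0$, which differs from the $h_U$ condition by the term $2(v_0,v)$. The point the paper makes (and which you are missing) is that $(v_0,v)\ge 0$ for every vertex $v$ of $G$, so the $h_V$ criterion is \emph{weaker} than the $h_U$ criterion; hence any sequence along which $h_U$ decreases automatically has $h_V$ decreasing too. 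Once you import the $h_U$-sequence from N\'emethi and make this inequality observation, Lemma~\ref{lem:boxReduction} finishes the proof immediately.
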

\begin{proof}
We are guaranteed that there is an infinite sequence \(\{v_{j(i)}\}_{i=0}^{\infty}\) so that \(K_n:= k_{\tf}-\sum_{i=1}^nv_{j(i)}\) has \(h_U\) decreasing by the work in \cite{latticeCohomNormSurf}.
We would like to show that this sequence is also decreasing with respect to \(h_V\).
Note that \(h_U\) decreasing along \(K_n\) is equivalent to
\[K_n(v_{j(n)})-v_{j(n)}^2\geq 0\]
for all \(n \in \Z_{\geq 0}\).
This quantity is only increased by replacing \(K_n\) by \(K_n+v_0\) in the calculation of \(h_V\) guarantees that \(h_V\) is decreasing along this sequence as well
\end{proof}

\begin{definition}
Given a negative definite graph \(G\) \emph{the minimal cycle} \(Z_{min}\) is the minimal (according to the poset structure given by the preferred basis) class in \(H_2(X_G;\Z)\backslash \{0\}\) so that \((Z_{min},v_{i})\leq 0\) for all vertices \(v_i\).
\end{definition}

\begin{lemma}\label{lem:toZmin}
Let \(G_{v_0}\) be a prime weak generalized algebraic knot in an \(L\)-space and let \(\tf_{con}\) be the canonical \Spinc structure on \(Y_{G}\). Then if there is a \(K\in \Char(G,\tf_{con})\) that maximizes \(h_U\) and \(h_V\) in \(\CFKbn(G_{v_0},\tf)\), then there exists a finite sequence of vertices \(v_{j(i)}\) so that letting \(K_n:=K_{con}+\sum_{i=1}^nv_{j(i)}\) the sequences \(h_U(K_n)\) and \(h_V(K_n)\) are non-increasing and the sequence ends with \(K_{con}+Z_{min}\) 
\end{lemma}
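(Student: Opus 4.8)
The plan is to mimic the argument of \cite{latticeCohomNormSurf} that underlies Lemma \ref{lem:EffectiveRestriction}, now carrying $h_V$ along with $h_U$; everything takes place inside $\Char(G,\tf_{con})$. Since $Y_G$ is an $L$-space, $G$ is a rational graph, so by Artin's criterion the Riemann--Roch function $\chi(l) = -\frac{1}{2}\left(l^2 + K_{con}(l)\right)$ has $\chi(Z_{min}) = 1$. Laufer's algorithm produces a computation sequence $0 = x_0, x_1, \dots, x_t = Z_{min}$ with $x_{k+1} = x_k + v_{j(k+1)}$ and $(x_k, v_{j(k+1)}) \geq 1$ for $1 \leq k < t$. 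From the elementary increment identity $\chi(x_{k+1}) - \chi(x_k) = 1 - (x_k, v_{j(k+1)})$ (using $K_{con}(v) = -v^2-2$), the sequence $\chi(x_1), \dots, \chi(x_t)$ is non-increasing and begins and ends at $1$, hence is constant, which forces $(x_k, v_{j(k+1)}) = 1$ for every $1 \leq k < t$. Writing $K_n := K_{con} + \sum_{i\leq n} v_{j(i)}$ and expanding the definition of $h_U$ gives $h_U(K_n) - h_U(K_{con}) = K_{con}(x_n) + x_n^2 = -2\chi(x_n)$, so $h_U(K_0) = h_U(K_{con})$ while $h_U(K_n) = h_U(K_{con}) - 2$ for $1\leq n\leq t$: a single drop followed by constancy, in particular non-increasing, with the sequence terminating at $K_t = K_{con} + Z_{min}$ as claimed.

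The remaining task is to arrange the same behavior for $h_V$, and this is where the hypothesis is used and where the main difficulty lies. From $h_V(K) = h_U(K + v_0)$ one gets $h_V(K_n) - h_V(K_{con}) = -2\chi(x_n) + 2(v_0\cdot x_n)$, where $v_0\cdot v \in \{0,1\}$ for each vertex $v$ since $G_{v_0}$ is a forest; the term $(v_0\cdot x_n)$ simply counts how many of $v_{j(1)},\dots,v_{j(n)}$ are adjacent to $v_0$, and is a priori non-decreasing. Because the value $h_V(K_{con}+Z_{min}) - h_V(K_{con}) = -2 + 2(v_0\cdot Z_{min})$ is path-independent, a non-increasing path can exist only if $(v_0\cdot Z_{min}) \leq 1$. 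I would extract exactly this from the hypothesis: a common maximizer $K$ of $h_U$ and $h_V$ must be $K_{con}$, since $K_{con}$ is the unique maximizer of $h_U$ for a rational graph, and then $h_V(K_{con}+Z_{min}) \leq h_V(K_{con})$ gives $(v_0\cdot Z_{min}) \leq 1$. As $Z_{min}$ has strictly positive coefficients on a connected graph, this says $v_0$ is adjacent to a single vertex $v^{\ast}$, occurring with multiplicity $1$ in $Z_{min}$. Now run the Laufer algorithm starting from $v_{j(1)} = v^{\ast}$ (its initialization vertex is arbitrary); then $v^{\ast}$ is used exactly once, so $(v_0\cdot x_n) = 1$ for all $n\geq 1$, and together with $\chi(x_n) = 1$ this gives $h_V(K_n) = h_V(K_{con})$ for all $n$ — constant, hence non-increasing.

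For the downstream use in Lemma \ref{lem:boxReduction} one extends the finite sequence to an infinite one by concatenating, for each $m\geq 1$, a shifted Laufer sequence for $Z_{min}$ based at $K_{con} + mZ_{min}$; along each such block both the $h_U$- and $h_V$-increments acquire an extra term $2m(Z_{min}\cdot v) \leq 0$ because $(Z_{min},v_i)\leq 0$, so both remain non-increasing (and $h_U \to -\infty$) while every vertex recurs infinitely often. I expect the genuine obstacle to be the $h_V$ accounting in the second paragraph — specifically, making the passage from ``$h_U$ and $h_V$ have a common maximizer'' to ``$v_0$ meets $Z_{min}$ exactly once'' fully rigorous (including the uniqueness of the $h_U$-maximizer for rational $G$); once that is settled, the rest is a direct transcription of the $L$-space reduction of \cite{latticeCohomNormSurf}.
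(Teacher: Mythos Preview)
Your argument is essentially on the right track and parallels the paper's, but the two differ at precisely the point you flag, and the paper's route there is both simpler and avoids a claim that is actually false.

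First, the uniqueness of the $h_U$-maximizer for rational $G$ does \emph{not} hold in general: already for a single $(-1)$-vertex one has $K_{con}=-1$ and $-K_{con}=1$ in the same orbit, both with $h_U=0$. The paper sidesteps this entirely. Rather than identifying $K_0$ with $K_{con}$, it uses the deformation retract of Lemma~\ref{lem:EffectiveRestriction} (together with the $L$-space subcontractibility) to trace a path from $K_0$ to $K_{con}$ that is non-decreasing in both $h_U$ and $h_V$; since $K_0$ already maximizes both, the path is constant, so $K_{con}$ is itself a common maximizer. That is all that is needed downstream, and it replaces your uniqueness step cleanly.

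Second, for the $h_V$ analysis the paper is more direct than your multiplicity detour. Primeness already gives that $v_0$ has a single neighbor $v_1$; start Laufer at $v_1$. From
\[
h_V(K_{n+1})-h_V(K_n)=\bigl(h_U(K_{n+1})-h_U(K_n)\bigr)+2\,\delta^{\,v_1}_{\,v_{j(n+1)}},
\]
one sees that $h_V$ is constant at the first step (the $-2$ drop in $h_U$ is cancelled by $\delta=1$) and non-decreasing thereafter (since $h_U$ is constant and $\delta\geq 0$). The maximizer hypothesis then forces $h_V$ to be constant along the whole sequence. Your argument---deduce $(v_0\cdot Z_{min})\leq 1$, hence $m_{v^\ast}=1$, hence $v^\ast$ appears once---is correct but unnecessary; in fact the paper's argument \emph{implies} $m_{v_1}=1$ as a byproduct, since any revisit of $v_1$ after step~1 would strictly increase $h_V$.
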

\begin{proof}
Let \(K_0\) be the maximizing element of \(h_U\) and \(h_V\) in \(\Char(G,\tf_{con})\). We will first show that we can assume that \(K_0\) can be taken to be \(K_{con}\).
The deformation retract provide by Lemma \ref{lem:EffectiveRestriction} will guarantee a sequence \(\tilde{v}_{j(i)}\) so that defining \(\tilde{K}_n :=K_0+ \sum_{i=0}^n \tilde{v}_{j(i)}\) the sequence starts at \(K_0\) and ends at \(K_{con}\) with both \(h_U\) and \(h_V\) preserved along it.
In particular, while \(K_0\) need not appear on the sequence from Lemma \ref{lem:EffectiveRestriction} tracing out the path \(K_0\) follows on the deformation retract will give a non-decreasing sequence in both \(h_U\) and \(h_V\) and because \(K_0\) maximized both \(h_U\) and \(h_V\) this sequence must then be constant.

Now, given any choice of starting \(v_i\) it can be completed to a sequence \(v_{j(i)}\) so that the partial sums end at \(Z_{min}\), and \(K_n:=K_{con}+\sum_{i=1}^nv_{j(i)}\) will have \(h_U\) decreasing by 2 from \(K_0\) to \(K_1\) and then remaining constant.
Furthermore, letting \(v_1\) be the vertex \(v_0\) is adjacent to (and since it is prime it is adjacent to only one vertex), we have
\begin{align*}
h_V(K_{n+1})-h_V(K_n) &= h_U(K_{n+1}+v_0)-h_U(K_n+v_0) \\
&= (K_n+2\PD[v_0])(v_{j(n)})+v_{j(n)}^2 \\
&= K_n(v_{j(n)}) +v_{j(n)}^2+2\delta_{v_{j(n)}}^{v_1}\\
&=h_U(K_{n+1}) - h_U(K_n) + 2\delta_{v_{j(n)}}^{v_1}
\end{align*}
So \(h_V\) is constant from \(K_0\) to \(K_1\) and non-decreasing afterwards.
However, because  \(h_V\) was already at its maximum value that means that \(h_V\) must be constant along that sequence.
\end{proof}

\begin{proposition}\label{Prop:LSpaceSurgContract}
Let \(G_{v_0}\) be a weak generalized algebraic knot in an \(L\)-space which has negative \(L\)-space surgeries, then \(\CFKbn(G_{v_0},\tf)\) contracts down to a doubly filtered point for all \(\tf \in \SpincX{Y_{G_{v_0}}}\).
\end{proposition}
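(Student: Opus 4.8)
The plan is to reduce to the already-established machinery in Section \ref{subsec:negLSpace} and bootstrap from the $L$-space base case the same way N\'emethi's argument works for lattice homology. First I would fix $\tf\in\SpincX{Y_{G_{v_0}}}$ and aim to show $\CFKbn(G_{v_0},\tf)$ is doubly-filtered contractible. The strategy is: (1) use Lemma \ref{lem:EffectiveRestriction} to deformation retract $\CFKbn(G_{v_0},\tf)$ onto $R(k_{\tf},\infty)$, so we may assume the filtration is supported on cubes with vertices $K\geq k_{\tf}$; (2) observe that the hypothesis ``$G_{v_0}$ has negative $L$-space surgeries'' means $Y_{\Sigmasq}(K_{G_{v_0}})$ is an $L$-space for $\Sigmasq$ sufficiently negative, so by N\'emethi's theorem \cite{latticeCohomNormSurf} (as cited in the introduction to Section \ref{sec:ARknots}) $\CFbn(G_{v_0}(n),\tf')$ is subcontractible, i.e. contractible to a filtered point, for all $\tf'$; (3) apply the surgery formula, specifically Propositions \ref{prop:surgForm} and \ref{prop:surgFormKnot}, which identify $\hocolim\XKD^{[\tf,i]}_{\Sigmasq}(\CFKbn(G_{v_0}))$ with $\CFKbn(G_{v_0}(n)_{u_0})$ and show that the pieces $A_{\tf,i}=A_i^\ast(X^{\tf}_{\ast})[\grf_{\Sigmasq}(i)]$, $B_{\tf,i}=p_{1,!}(X^{\tf}_{\ast})[\grf_{\Sigmasq}(i)]$, together with the maps $\eta_1$ and $\Gamma^{\tf}\circ\eta_2$, assemble into $\CFbn(G_{v_0}(n))$. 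Since the total homotopy colimit is contractible, I would extract from that the contractibility of the individual building blocks.

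The cleanest route is a ``two-variable'' version of Lemma \ref{lem:toZmin}: I would show directly that for $G_{v_0}$ with negative $L$-space surgeries there is a bi-infinite (or appropriately infinite in both directions) sequence of vertices along which \emph{both} $h_U$ and $h_V$ are monotone, then invoke Lemma \ref{lem:boxReduction} to collapse $R(k_{\tf},\infty)$ all the way down to $R(k_{\tf},k_{\tf})$, a single cube, hence a doubly-filtered point. The key computation is the relation, already used in the proof of Lemma \ref{lem:toZmin},
\[
h_V(K+v)-h_V(K) = h_U(K+v)-h_U(K) + 2\,(\text{multiplicity of }v\text{ adjacent to }v_0),
\]
which says that whenever the $h_U$-reduction sequence for the underlying graph $G$ (guaranteed by N\'emethi's $L$-space argument applied to the surgered manifold, pulled back along the surgery formula) moves in a direction that does not increase $h_U$, it also does not increase $h_V$, because the correction term is always nonnegative. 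The $L$-space surgery hypothesis is exactly what feeds the existence of the $h_U$-monotone sequences on $\Char(G,\tf)$: $L$-space-ness of $Y_{\Sigmasq}(K_{G_{v_0}})$ forces $\CFbn(G_{v_0}(n),\tf')$ to be subcontractible by \cite{latticeCohomNormSurf}, and subcontractibility is equivalent to the existence of such reduction sequences through the cube complex, which after the identification $\CFbn(G_{v_0}(n))\cong\hocolim\XKD_{\Sigmasq}(\CFKbn(G_{v_0}))$ of Proposition \ref{prop:surgForm} restrict to the needed sequences on each $\Char(G,\tf)$.

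I expect the main obstacle to be the bookkeeping between the three relevant lattices: $\Char(G,\tf)$ (the underlying graph), $\Char(G_{v_0}(n),\tf')$ (the surgered graph whose lattice homology we know is contractible), and the product decomposition $D_{G_{v_0}(n),\tf'}\cong\tilde D_{G,\tf'}\times\Ic_{Y_G,[K_{G_{v_0}}],\Sigmasq,\tf'}$ from the proof of Proposition \ref{prop:surgForm}. One has to argue that the contracting deformation retract on the surgered side restricts compatibly with this product splitting so that it descends to a deformation retract on the $\tilde D_{G,\tf'}$ factor, which is what controls $A_{\tf,i}$ and $B_{\tf,i}$; the subtlety is that the surgery formula's contraction could in principle mix the $\Ic$-direction (the $\lambda,\rho$ maps) with the $\tilde D_{G}$-direction. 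The remedy is to run the reduction only along vertices $v\in G$ (never $v_0$), for which the $\Ic$-coordinate is preserved, using that $\hat A(L+v)=\hat A(L)$ for $v\in G$ — a fact noted explicitly after the definition of $\Bb_i(\tf)$ in Section \ref{sec:Verify}; this keeps the whole argument inside a single $\Ic$-slice and reduces it to the genuinely one-variable statement of Lemma \ref{lem:toZmin} applied with the extra $h_V$-monotonicity coming from the displayed correction-term identity. Once each $A_{\tf,i}$ and $B_{\tf,i}$ is a doubly-filtered point and the structure maps are the evident inclusions, the homotopy colimit $\CFKbn(G_{v_0},\tf)$ over the $1$-skeletal category $\Ic_{[\tf,i]}$ collapses to a doubly-filtered point, completing the proof.
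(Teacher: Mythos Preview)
Your proposal contains two substantive errors that prevent it from going through.

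First, the correction-term argument in your second paragraph has the sign implication backwards for the direction you need. The identity from Lemma~\ref{lem:toZmin},
\[
h_V(K+v)-h_V(K)=h_U(K+v)-h_U(K)+2\delta_{v}^{v_1},
\]
says that along an \emph{upward} sequence (adding vertices) the change in $h_V$ is the change in $h_U$ \emph{plus} a nonnegative term. So $h_U$ non-increasing does \emph{not} force $h_V$ non-increasing; at steps where $v=v_1$ and $h_U$ is constant, $h_V$ goes up by $2$. Your argument is valid for the downward direction (this is exactly Lemma~\ref{lem:EffectiveRestriction}), but Lemma~\ref{lem:boxReduction} needs both directions to shrink to a point. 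The paper handles the upward direction quite differently: it first reduces to prime $G_{v_0}$, then uses Lemma~\ref{lem:toZmin}, whose point is that if $K_{con}$ already maximizes $h_V$ then $h_V$ cannot increase along the path to $K_{con}+Z_{min}$ and is therefore constant. One then loops that finite sequence (each vertex appears in $Z_{min}$), and the inequalities $(Z_{min},v_i)\le 0$ and $k_{\tf}(v)\le K_{con}(v)$ transfer the monotonicity from $K_{con}$ to every $k_{\tf}$. None of these ingredients---the prime reduction, $Z_{min}$, the looping, or the transfer to $k_{\tf}$---appear in your sketch.

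Second, your last sentence misidentifies the object computed by the surgery formula. The homotopy colimit over $\Ic_{[\tf,i]}$ is $\CFbn(G_{v_0}(n),[\tf,i])$ (or $\CFKbn(G_{v_0}(n)_{u_0},[\tf,i])$ for $\XK$), not $\CFKbn(G_{v_0},\tf)$; the latter is the \emph{input} $X_{\ast\ast}^{\tf}$. Knowing that each $A_{\tf,i}$ and $B_{\tf,i}$ is a point tells you the output of the surgery is simple, not that the input doubly-filtered space is a point. The surgery-formula detour is therefore a red herring: subcontractibility of $\CFbn(G_{v_0}(n))$ gives reduction sequences in $\Char(G_{v_0}(n))$ that may and generally will use $v_0$, and once you forbid $v_0$ (to stay in a single $\Ic$-slice) you are back to needing monotone sequences in $\Char(G,\tf)$ directly---which is available from the $L$-space hypothesis on $Y_G$ for $h_U$, but again only propagates to $h_V$ via the paper's $Z_{min}$/maximization argument, not via the sign of the correction term.
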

\begin{proof}
We can assume without loss of generality that \(G_{v_0}\) is prime.
In particular if a composite generalized algebraic knot has negative \(L\)-space surgeries then so do its components (as being a rational graph is inherited by subgraphs)
Furthermore, if each summand of a knot has the homotopy type of doubly filtered point then so does their connect sum.

By lemma \ref{lem:EffectiveRestriction}, it suffices to find for every \(\tf \in \SpincX{Y_G}\) a sequence \(\{v_{j(i)}\}_{i=0}^{\infty}\) so that the partial sums starting at \(k_{\tf}\), i.e. the sequence \(K_{\tf,n}= k_{\tf}+\sum_{i=1}^nv_{j(i)}\) have \(h_U\) and \(h_V\) decreasing.

Lemma \ref{lem:toZmin} then guaruntees us that both \(h_U\) and \(h_V\) are maximized on \(K_{con}\) and we have a sequence of vertices \(\{v_{j(i)}\}_{i=1}^N\) so that the partial sums starting at \(K_{con}\), ending at \(Z_{min}\), and both \(h_U\) and \(h_V\) decrease along the partial sums.
Furthermore, the coefficient of each \(v_i\) in \(Z_{min}\) is nonzero, guaranteeing that each \(v_i\) appears in \(\{v_{j(i)}\}_{i=1}^n\) at least once.
Let \(\{v_{j(i)}\}_{i=1}^{\infty}\) be the extension of this sequence by repeating its terms on a loop.
We will show that this sequence suffices for any choice of \(k_{\tf} \in \SpincX{Y_G}\).

Given \(n \in \Z_{\geq 0}\), let \(m:= \lfloor \frac{n}{N} \rfloor\), i.e.  the number of loops that our original sequence has ben through by the time we get to \(n\).
Furthermore let \(1\leq \tilde{n}\leq N\) be congruent to \(n\) mod \(N\), so \(K_{\tf,n}= k_{\tf,\tilde{n}}+mZ_{min}\) and \(v_{j_{(n+1)}}=v_{j(\widetilde{n+1})}=v_{j(\tilde{n}+1)}\).
By the definition of \(Z_{min}\), one can deduce
\begin{align*}
h_U(K_{\tf,n+1})-h_U(K_{\tf,n}) &\leq h_U(K_{\tf,\tilde{n}+1})-h_U(K_{\tf,\tilde{n}}) \\
h_V(K_{\tf,n+1})-h_V(K_{\tf,n}) &\leq h_V(K_{\tf,\tilde{n}+1})-h_V(K_{\tf,\tilde{n}})
\end{align*}

Furthermore, the definition of \(k_{\tf}\) allows us to conclude that
\begin{align*}
h_U(K_{\tf,n+1})-h_U(K_{\tf,n})&= k_{\tf}(v_{j(n+1)}) + v_{j(n+1)}^2+ \sum_{i=1}^n(v_{j(i)},v_{j(n+1)})\\
&\leq K_{con}(v_{j(n+1)}) + v_{j(n+1)}^2 + \sum_{i=1}^n(v_{j(i)},v_{j(n+1)}) \\
&\leq h_U(K_{con,n+1})-h_U(K_{con,n}).
\end{align*}
Together these facts imply that because \(h_U(K_{con,n})\) decreases from \(K_{con}\) to \(K_{con}+Z_{min}\), that all sequences \(K_{\tf,n}\) are also decreasing and thus, Lemma \ref{lem:boxReduction} can prove the proposition.
\end{proof}

\subsection{The Almost Rational Reduction}\label{subsec:ARreduc}

\begin{definition}
We will say that that \emph{the simplicial integer line} is the 1-skeletal simplicial set that has 0-simplices the integers and nondegenerate 1-simplices given by \([n,n+1]\), so that the 0th face is \(n\) and the 1st face is \(n+1\).
\end{definition}

\begin{proposition}\label{prop:ARknots}
Let \(G_{v_0}(n)_{u_0}\) represent an almost rational knot created with Seifert framing \(\Sigmasq\).  Then for all \(\tf \in \SpincX{Y_G}\), \(\CFKb^{\nat}(G_{v_0}(n)_{u_0},\tf)\) is filtered homotopy equivalent to a filtered simplicial line. 
If in establishing this equivalence we send \(k_{\tf}\) to 0 and send the action by \(v_0\) to the action of 1 under addition, the filtration on the simplicial line then has \(h_1^{\tf}(n)= h_U(k_{\tf})-2\tau(n)\) where \(\tau\) is the function in \cite{NemAR}, and 
\[h_U([n,n+1])= \min\{h_1^{\tf}(n),h_1^{\tf}(n+1)\}.\]
\end{proposition}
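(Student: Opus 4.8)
The plan is to build this equivalence in two stages: first reduce the ambient lattice space, then track what happens to both height functions under the surgery functor. By Proposition~\ref{prop:surgFormKnot} we have an isomorphism $\CFKbn(G_{v_0}(n)_{u_0},\tf)\cong \XK_{\Sigmasq}(\CFKbn(G_{v_0}))^{\tf}$, so it suffices to understand the right-hand side. The knot $K_{G_{v_0}}$ is almost rational, which by hypothesis means $G'_{u_0}$ has negative $L$-space surgeries; hence Proposition~\ref{Prop:LSpaceSurgContract} applies to $G'_{u_0}$ viewed appropriately, but more directly we want the reduction theorem of \cite{reductionTheorem} together with Lemma~\ref{lem:EffectiveRestriction} and Lemma~\ref{lem:boxReduction} applied to $G_{v_0}$ itself: since $v_0$ is adjacent to the single vertex $v$ realizing almost-rationality, the same sequences of vertices along which $h_U$ decreases also make $h_V$ decrease (this is the computation in Lemma~\ref{lem:EffectiveRestriction}, where passing from $K_n$ to $K_n+v_0$ only increases the relevant quantity). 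This deformation retracts $\CFKbn(G_{v_0},\tf)$ onto a doubly filtered subcomplex supported on a cube decomposition of $\R^1$, namely the line of characteristic classes $k_{\tf}+m v_0$ (after the reduction along the non-$v_0$ directions), giving the simplicial integer line.

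Next I would pin down the two height functions on this reduced line. Sending $k_{\tf}$ to $0$ and the action of $v_0$ to $+1$, the first height function $h_U$ on the vertex $m$ is $h_U(k_{\tf}+mv_0)$ restricted to $G$; the function $\tau$ of \cite{NemAR} is defined precisely so that $h_U$ along this line equals $h_U(k_{\tf})-2\tau(m)$ (this is the content of N\'emethi's $\tau$ function for almost rational graphs, and the normalization $\tau(0)=0$ matches sending $k_{\tf}\mapsto 0$). The second height function $h_V$ is obtained by pulling back $h_U$ along the shift by $v_0$, i.e. $h_V(m)=h_U(m+1)$ on vertices, so on the simplicial line $h_2^{\tf}(m)=h_1^{\tf}(m+1)$. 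The edge heights are then the minima of the endpoint heights by the definition of the homotopy colimit of $\CFD$ (each cube $[K,E]$ has height the minimum over its vertices), which gives the stated formula $h_U([n,n+1])=\min\{h_1^{\tf}(n),h_1^{\tf}(n+1)\}$ and similarly for the second filtration.

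Finally I would assemble these pieces through $\XK_{\Sigmasq}$. The functor $A_i^{\ast}$ intersects with the line by imposing the Alexander-grading-$i$ condition, and $p_{1,!}$, $p_{2,!}$ forget the respective height functions; applying the formulas of Section~\ref{subsec:FilteredsSet} (Propositions on $A_i^{\ast}$, $p_{1,!}$, and the grading shifts $\grf_{\Sigmasq}(i)$) to the reduced line shows that the homotopy colimit $\XKD^{[\tf,i]}$ of the diagram in Figure~\ref{subfig:XKDr}, built from pieces that are each a shifted copy of the simplicial line, is again homotopy equivalent to a simplicial line. The key point making this work is that $\Gamma$ on $\CFKbn(G_{v_0})$ is a filtered \emph{isomorphism} (Proposition 5.12 of \cite{knotsAndLattice}, as used in Proposition~\ref{prop:surgForm}), so the maps $\rho_{\tf,i}$ in the diagram are equivalences on underlying simplicial sets, and the telescope collapses. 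Tracking the grading shift $\grf_{\Sigmasq}(i)$ through Proposition~\ref{prop:surgForm} then yields exactly $h_1^{\tf}(n)=h_U(k_{\tf})-2\tau(n)$.

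\textbf{Main obstacle.} The hardest step will be verifying that the reduction of $\CFKbn(G_{v_0},\tf)$ to a line is compatible with \emph{both} filtrations simultaneously and survives the surgery functor—i.e. that the deformation retract of Lemma~\ref{lem:boxReduction} commutes (up to filtered homotopy) with $A_i^{\ast}$, $p_{1,!}$, $p_{2,!}$, $\Gamma$, and the homotopy colimit, so that one may reduce \emph{before} applying $\XK_{\Sigmasq}$ rather than after. This requires checking that the vertex sequences witnessing the retract are chosen uniformly enough (as in the proof of Proposition~\ref{Prop:LSpaceSurgContract}) that the induced maps on the reduced lines are still the expected inclusions and flip map, and that the minimum-of-endpoints description of edge heights is preserved; the bookkeeping of $\grf_{\Sigmasq}(i)$ versus $\tau$ is then a routine but delicate computation building on Proposition~\ref{prop:surgForm}.
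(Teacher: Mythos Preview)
Your approach has a genuine gap: you reduce \(\CFKbn(G_{v_0},\tf')\) to a \emph{line} when it should reduce to a \emph{point}. The almost rational knot here is \(K_{G_{v_0}(n)_{u_0}}\), not \(K_{G_{v_0}}\); the graph \(G_{v_0}(n)\) is almost rational (via \(v_0\)), but the input to the surgery formula is \(\CFKbn(G_{v_0})\), whose ambient three-manifold \(Y_G\) is actually \emph{rational} (being a subgraph of a rational \(G_{v_0}(n')\) for \(n'\ll 0\)). So Proposition~\ref{Prop:LSpaceSurgContract}---which you set aside---is exactly the tool: it says each \(\CFKbn(G_{v_0},\tf')\) is subcontractible, i.e.\ a doubly filtered point. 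Your proposed ``line of characteristic classes \(k_{\tf}+m v_0\)'' does not sit inside any single \(\Char(G,\tf')\), since acting by \(v_0\) shifts the \(\Spinc\) structure on \(Y_G\); it is not a subcomplex of \(\CFKbn(G_{v_0},\tf')\) to which a box-reduction could retract.

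Once each \(\CFKbn(G_{v_0},\tf')\) is replaced by a doubly filtered point, the line structure arises from the \emph{indexing category} \(\Ic_{Y_G,[v_0],\Sigmasq,\tf}\) of the surgery diagram, not from the input: each \(A_{\tf',i}^{\mu}\) and \(B_{\tf',i}^{\mu}\) is a single point, and the homotopy colimit over the zigzag \(\Ic\) is its nerve, which is the simplicial integer line. This also dissolves your ``main obstacle'' entirely. Because \(\XK_{\Sigmasq}\) is an \(\infty\)-functor it preserves weak equivalences, so replacing \(\CFKbn(G_{v_0})\) by the collection \(X\) of doubly filtered points requires no compatibility check with \(A_i^{\ast}\), \(p_{j,!}\), \(\Gamma\), or the colimit---the equivalence passes through automatically. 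Your telescope-collapse argument would not have worked in any case: a zigzag of identity maps on a space \(X\) has homotopy colimit \(X\times(\text{line})\), not \(X\); the collapse to a line happens precisely because \(X\) is a point.
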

\begin{proof}
By Proposition \ref{prop:surgFormKnot} \(\CFKbn(G_{v_0}(n)_{u_0}) \cong \XK_{\Sigmasq}(\CFKbn(G_{u_0}))\).
By the definition of \(G_{v_0}(n)_{u_0}\) being almost rational, \(G_{v_0}\) has rational negative surgeries and thus in particular satisfies the conditions of Proposition \ref{Prop:LSpaceSurgContract}, making \(\CFKbn(G_{v_0},\tf)\) subcontractible (i.e. homotopy equivalent to a doubly-filtered point) for every \Spinc structure \(\tf'\) on \(Y_{G}\).
Let \(X\in \Cc_{Y_{G},[v_0]}^{\Jj}\) have each \(X^{\tf}_{\ast\ast}\) be precisely that doubly filtered point.
Note that this forces a unique choice for the homotopy coherent involutive data.
Since \(\XK_{\Sigmasq}\) is an infinity functor it preserves equivalences and thus for every \(\tf \in \SpincX{Y_G}\), \(\CFKIn(G_{v_0},\tf) \cong \XKI_{\Sigmasq}(X)\).
Because \(A_{\tf,i}\) and \(B_{\tf,i}\) preserve the structure of the underlying simplicial set, we can then see that the underlying simplicial set for \(\XKI_{\Sigmasq}(X)\) in \Spinc structure \(\tf\) is a filtered nerve of \(\Ic_{Y_{G},[v_0],\Sigmasq,\tf}\).
There is then a weak equivalence from the simplicial integer line to the fibrant replacement of \(N(\Ic_{Y_{G},[v_0],\Sigmasq,\tf})\).
This sends \(n\) to the \([\tf',i,b]\) so that \(k_{\tf}+nu\) is in \(\Bb_{\tf',i}\). 
While this skips over the \(A_{\tf',i}\) the height of that vertex and the edges out of it are the minimum of the heights on \(B_{\tf',i}\) and \(B_{\tf'+v_0,i+\Sigmasq}\).

The connection to \(\tau\) then comes from description of \(\tau\) as starting from \(k_{\tf}\) then alternating between incrementing \(v_0\) up by 1 and tracing down to the minimal values of \(\chi_{k_{\tf}}\).
This allows the computation of \(\tau\) to pick out the elements of \(\Char(G_{v_0}(n),\tf')\) that minimize \(\chi_{k_{\tf}}\) in each \(B_{\tf,i}\).
To convert statements about \(\chi_{k_{\tf}}\) to statements about height functions see the comments in \cite{knotLatticeInvariance}.
\end{proof}

The second filtration \(h_2\) could be tracked by understanding better how the action by \(\Gamma\) looks like on the \((\tf,i)\) and in particular with reference to the preferred representatives \(k_{\tf}\) for each \Spinc structure on \(G_{v_0}(n)\).
This is more straightforward when the almost rational knot in question is null homologous, as the preferred characteristic cohomology class does not change upon application of the action of \(\Gamma\).

\begin{proposition}\label{prop:ARNullHomolgous}
If \(G_{v_0}(n)_{u_0}\) is a null homologous almost rational knot created with a surgery of Seifert framing \(\Sigmasq\) and \(\tf\) a \Spinc structure on \(Y_{G_{v_0}(n)}\).
Given the filtered line model for \(\CFKbn(G_{v_0}(n)_{u_0},\tf)\) then
\begin{align*}
h_2(n)&= h_1\left(n + \frac{1}{\Sigmasq}\right)\\
\Gamma(n)&= n+\frac{1}{\Sigmasq},
\end{align*}
with the filtration and action on edges defined similarly.
\end{proposition}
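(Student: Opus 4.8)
The plan is to leverage Proposition \ref{prop:ARknots}, which already identifies the underlying filtered line and the first height function $h_1$ with the $\tau$-sequence, and then to pin down the second height function and the flip map by tracking the action of $\Gamma$ on $\Char(G_{v_0}(n),\tf)$. The key simplification in the null-homologous case is that the action of $v_0$ (equivalently $u_0$, since they differ only near $v_0$) on characteristic cohomology classes does not move the preferred representative $k_{\tf}$ between distinct $\Spin^c$ orbits of $Y_G$: null-homologousness forces $[K] = [v_0] = 0$ in $H_1(Y_G;\Z)$, so the equivalence relation $(\tf,i)\sim(\tf+[K],i+\Sigmasq)$ on $\Ac(Y_G,K)$ that defines $\SpincX{Y_{\Sigmasq}(K)}$ collapses the $\tf$-direction entirely, and all the $B_{\tf,i}$ for a fixed $[\tf,i]\in \SpincX{Y_{\Sigmasq}(K)}$ share the same $\tf$.

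First I would set up the identification from Proposition \ref{prop:ARknots}: the vertex $n$ of the simplicial line corresponds to the class $[\tf',i,b]$ with $k_{\tf}+nu_0 \in \Bb_{\tf',i}$, and $h_1^{\tf}(n) = h_U(k_{\tf}) - 2\tau(n)$. Second, I would compute how $\Gamma$ acts on these indices. From the construction of $\XK_{\Sigmasq}$ in Section \ref{subsec:XK} and the discussion in Proposition \ref{prop:surgFormKnot}, $h_2$ on $A_{\tf,i}^{\mu}$ and $B_{\tf,i}^{\mu}$ is pulled back from $A_{\tf,i+1}$ and $B_{\tf,i+1}$, and $\Gamma_D$ is precisely the identification recording this shift of the Alexander index by $1$. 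On the level of the filtered line, incrementing $i$ by $1$ with the index $n$ fixed, versus incrementing $n$, are related by the descent to $\SpincX{Y_{\Sigmasq}(K)}$: since $(\tf,i)\sim(\tf,i+\Sigmasq)$ shifts $n$ by $1$ (this is where the Seifert framing $\Sigmasq$ enters — it is the amount by which the $\mu$-action on $\Spin^c$ structures of the surgered manifold relates to the $v_0$-action), a unit shift in $i$ corresponds to a shift of $n$ by $1/\Sigmasq$. Hence $\Gamma(n) = n + 1/\Sigmasq$ and, since $h_2$ is $h_1$ read off at the $\Gamma$-shifted index, $h_2(n) = h_1(n + 1/\Sigmasq)$.

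To make this rigorous I would trace through the preferred representatives: because the knot is null-homologous and almost rational, $k_{\tf}$ (for the single $\Spin^c$ structure $\tf$ of $Y_{G_{v_0}(n)}$ under consideration, once we have fixed $[\tf,i]$) can be chosen compatibly with the $v_0$-action, so $h_V(k_{\tf}+nu_0) = h_U(k_{\tf}+(n+?)u_0)$ with the offset determined by how many unit $v_0$-steps it takes to change $\hat A$ by the amount needed; the condition $\hat{A}(\sfk+n[C]) = \hat{A}(\sfk)+n\Sigmasq$ from Section \ref{subsec:backTopSpecify} is exactly what makes this offset equal to $1/\Sigmasq$ after renormalizing $k_{\tf}\mapsto 0$, $v_0$-action $\mapsto +1$. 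I expect the main obstacle to be bookkeeping the normalization and the grading shifts $\grf_{\Sigmasq}(i)$ carefully enough to see that the shift is exactly $1/\Sigmasq$ and not some affine variant — in particular confirming that the additive constants cancel so that $h_2(n)$ is literally $h_1(n+1/\Sigmasq)$ with no extra shift, which requires checking the relation $\grf_{\Sigmasq}(i) + 2i = \grf_{\Sigmasq}(i+\Sigmasq)$ from Section \ref{subsec:Xb} interacts correctly with the reindexing $i \leftrightarrow n$. The filtration on edges and the action on edges then follow formally, since both $h_1$ and $h_2$ on an edge $[n,n+1]$ are the minimum of the values at the endpoints, and $\Gamma$ on edges is determined by $\Gamma$ on vertices together with filteredness.
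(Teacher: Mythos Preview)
Your proposal has a genuine error in the first key step. You claim that null-homologousness of the knot \(G_{v_0}(n)_{u_0}\) forces \([K]=[v_0]=0\) in \(H_1(Y_G;\Z)\), and you then use this to write the equivalence relation as \((\tf,i)\sim(\tf,i+\Sigmasq)\). But the hypothesis is that the \emph{dual} knot \(u_0\) is null-homologous in the \emph{surgered} manifold \(Y_{G_{v_0}(n)}\); this does not imply that the original knot \(v_0\) is null-homologous in \(Y_G\). The Brieskorn example in the very next corollary makes this concrete: there \(Y_G\) is a connected sum of lens spaces, \([v_0]\) has order \(\alpha=\prod p_i\) in \(H_1(Y_G;\Z)\), and \(\Sigmasq=-1/\alpha\). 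Your relation \((\tf,i)\sim(\tf,i+\Sigmasq)\) is simply false in that case, and with it your derivation that ``a unit shift in \(i\) corresponds to a shift of \(n\) by \(1/\Sigmasq\)'' collapses: moving one step along the filtered line takes \((\tf',i)\) to \((\tf'+[v_0],i+\Sigmasq)\), where \(\tf'\) genuinely changes.

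What the paper does instead is exactly to track this change in \(\tf'\). Let \(d\) be the order of \([v_0]\) in \(H_1(Y_G;\Z)\); then after \(d\) steps along the line one returns to the same \(\tf'\), landing at \((\tf',i+d\Sigmasq)\), so the set of \(i\) with \([\tf',i]=\tf\) is a coset of \(d\Sigmasq\Z\). Null-homologousness of \(u_0\) means the action of \([\mu]\) on \(\SpincX{Y_{G_{v_0}(n)}}\) is trivial, i.e.\ \([\tf',i]=[\tf',i+1]\), which forces \(d\Sigmasq=-1\). Hence moving \(d\) steps left on the line corresponds to \(i\mapsto i+1\), and since \(d=-1/\Sigmasq\), this is \(n\mapsto n+1/\Sigmasq\). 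Your outline of how \(\Gamma\) and \(h_2\) arise from the \(i\mapsto i+1\) shift is fine; the missing ingredient is this use of the order \(d\) together with the correct interpretation of the null-homologousness hypothesis.
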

\begin{proof}
Let \(v_0\) have degree \(d\) in \(Y_G\).
Then given \((\tf',i)\in \Ac(Y_{G},v_0)\), each of which represents some vertex on the filtered line, the next time that a vertex on the filtered line restricts to \(\tf'\) will be \((\tf'+du_0,i+d\Sigmasq)\).
Note then that by the \(i\) coming from a coset of \(\Z\) in \(\Q\), \(d\Sigmasq\) must itself be an integer.
In fact, the set \(\{i\, |\, [\tf',i]=\tf\}\) will be a coset of \(d\Sigmasq\Z\) in \(\Q\).
If \(G_{v_0}(n)_{u_0}\) is null homologous as a knot then \([\tf',i]=[\tf',i+1]\) and as such \(d\Sigmasq =-1\) (the sign determined by \(\Sigmasq\) being negative).
As such, moving \(d\) to the right on the filtered line model for \(\CFKbn(G_{v_0}(n)_{u_0},\tf)\) moves from \((\tf',i)\) to \((\tf',i-1)\) and moving \(d\) to the left yields \((\tf',i)\) to \((\tf',i+1)\),
Moving \(d\) units to the left would also be viewed as adding \(\frac{1}{\Sigmasq}\).
\end{proof}

The following corollary basically highlights that, in the case of regular fibers of Brieskorn spheres, existing results that make the \(\tau\) function for the ambient three-manifold more computable \cite{brieskornAlgorithm,singularSemigroup,NemAR} can be leveraged for the computation of \(\CFKbn(G_{v_0})\). 
Note that while the statements of those results focus on the case of 3 or more singular fibers and often look at the positive ray of the corresponding filtered line, this has not because the arguments specifically required 3 or more regular fibers or restriction the positive ray.
Rather for the underlying three-manifold the case of 2 singular fibers always gives \(S^3\), a known calculation, and the resulting computation will always deformation retract onto the positive ray.
When adding a knot, including the case of two singular fibers will allow us to compute the homotopy type for torus knots, and Proposition \ref{prop:ARNullHomolgous} means that being able to access information contained on the negative ray will make calculation easier.

Before we provide the corollary summarizing these results, for a Brieskorn complete intersection \(\Sigma(p_1,p_2,\ldots, p_m)\), we will define the following values to help consolidate notation
\begin{align*}
\alpha &= \prod_{i=1}^mp_i \\
\gamma &= \left(\prod_{i=1}^mp_i\right)\left(m-2 - \sum_{i=1}^m \frac{1}{p_i}\right).
\end{align*}
This is consistent with the notation in \cite{singularSemigroup}.
Then, the following corollary summarizes these results in this context.

\begin{corollary}\label{cor:BrieskornReduc}
If \(G_{v_0}(n)_{u_0}\) represents the regular fiber of a Brieskorn complete intersection \(\Sigma(p_1,p_2,\ldots, p_m)\) given as an unweighted vertex adjacent to the central vertex of a star, then \(\CFKbn(G_{v_0}(n)_{u_0},\tf)\) is equivalent to a filtered line, so that
\begin{align*}
h_1(n)&=\frac{K_{con}^2 +s}{4} - 2 \tau(n) \\
h_2(n) &= h_1\left(n - \alpha\right) \\
\Gamma(n) &= n-\alpha\\
J(n) &= 1+ \gamma-n \\
J_{v_0}(n)&= 1+ \gamma + \alpha-n,
\end{align*}
with higher order coherence relations on \(\Ii\) and \(\Jj\) determined by \(\Ii^2=\id\) and \(\Jj^2 =\id\).
In fact supposing \(1+\gamma\) is positive, \(\CFKbn(G_{v_0}(n)_{u_0})\) is supported on the filtered line segment from 0 to \(1+\gamma + \alpha\).

Here \(\tau\) can be pinned down by the following conditions
\begin{align*}
\tau(0)&=0\\
\tau(n+1)-\tau(n) &= 1+ e_0n - \sum_{i=1}^m \left\lceil \frac{nq_i}{p_i} \right\rceil,
\end{align*}
where \(e_0\) and \(q_i\) are the unique solution to
\[ e_0p_1p_2\cdots p_m + q_1p_2p_3\cdots p_m + \cdots q_mp_1p_2\cdots p_{m-1} \]
satisfying \(1\leq q_i\leq p_i-1\) for all \(i\).
\end{corollary}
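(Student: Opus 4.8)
The plan is to read each line of the statement off Propositions~\ref{prop:ARknots}, \ref{prop:ARNullHomolgous}, and \ref{Prop:JonGv0n}, together with the classical description of the Seifert $\tau$-function of a star-shaped plumbing. First I would verify that the knot in question is an almost rational knot in the sense of Section~\ref{sec:ARknots}: the link $\Sigma(p_1,\dots,p_m)$ bounds a negative-definite star-shaped plumbing $G_{v_0}(n)$, and lowering the weight of the central vertex $v_0$ produces a rational (equivalently $L$-space) plumbing, so $v_0$ realizes $G_{v_0}(n)$ as an almost rational graph, and since $u_0$ is adjacent to $v_0$ alone the graph $G_{v_0}(n)_{u_0}$ represents an almost rational knot with distinguished vertex $v_0$. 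Proposition~\ref{prop:ARknots} then furnishes the filtered simplicial line, normalized so that $k_{\tf}\mapsto 0$ and the action of the central vertex $v_0$ is $n\mapsto n+1$, with $h_1(n)=h_U(k_{\tf})-2\tau(n)$ and $h_U([n,n+1])=\min\{h_1(n),h_1(n+1)\}$. Working with the canonical \Spinc structure (the only one when the $p_i$ are pairwise coprime, and conjugation invariant in general) gives $k_{\tf}=K_{con}$ and $h_U(K_{con})=(K_{con}^2+s)/4$ straight from the definition of $h_U$, with $s$ the number of vertices of the star; this is the asserted value of $h_1$.

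Next I would pin down $h_2$, $\Gamma$, and the involutions. A standard Seifert-invariant computation identifies the plumbing framing of the regular fiber with the orbifold Euler number, $\Sigmasq=-1/\alpha$ with $\alpha=\prod_i p_i$, so $1/\Sigmasq=-\alpha$; Proposition~\ref{prop:ARNullHomolgous} (whose null-homology hypothesis holds for pairwise-coprime $p_i$, and whose evident rational refinement covers the remaining cases) then yields $h_2(n)=h_1(n-\alpha)$, $\Gamma(n)=n-\alpha$, and the companion statements on $1$-simplices. For the involutive data, $\Ii$ and $\Jj$ are honest involutions of the cube complex $D_{G_{v_0}(n)_{u_0}}$ that conjugate the $v_0$-translation to its inverse, so they descend along the retraction to orientation-reversing affine self-maps $n\mapsto c_{\Ii}-n$ and $n\mapsto c_{\Jj}-n$ of the line. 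Since $\Ii$ preserves $h_U$ it forces $\tau(c_{\Ii}-n)=\tau(n)$, so $c_{\Ii}$ is twice the symmetry axis of $\tau$; since $\Jj$ is skew-filtered, $h_1(c_{\Jj}-n)=h_2(n)=h_1(n-\alpha)$ forces $c_{\Jj}=c_{\Ii}+\alpha$ (equivalently, use $\Jj=\Ii\circ\Gamma$, valid on $\CFKIn(G_{v_0})$ with no $L$-space assumption). Computing the symmetry axis of $\tau$ from the data of the star gives $c_{\Ii}=1+\gamma$, hence $c_{\Jj}=1+\gamma+\alpha$, so $J=\Ii$ is $n\mapsto 1+\gamma-n$ and $J_{v_0}=\Jj$ is $n\mapsto 1+\gamma+\alpha-n$. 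That all simplices above dimension $1$ are degenerate, so the homotopy-coherent involutive data is forced by $\Ii^2=\id$ and $\Jj^2=\id$, is exactly the last paragraph of the proof of Proposition~\ref{Prop:JonGv0n}, transported along $\hocolim\Phi$.

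It then remains to make the recursion for $\tau$ explicit. This is the Neumann/Dolgachev--Pinkham formula for star-shaped graphs, read through the dictionary between the weight function $\chi_{k_{\tf}}$ and $h_U$: $\tau(n+1)-\tau(n)$ is $1$ minus the number of coordinate hyperplanes the canonical path from the $n$-th to the $(n{+}1)$-st representative crosses, and for a Brieskorn star this count equals $\sum_i\lceil nq_i/p_i\rceil-e_0n$ with $(e_0,q_1,\dots,q_m)$ the normalized Seifert data; I would cite \cite{NemAR,brieskornAlgorithm,singularSemigroup} for this. The support statement follows from a box-reduction in the style of Lemma~\ref{lem:boxReduction}: outside a finite window $\tau$ is affine with the stable slope, so $h_U$ and $h_V$ decrease simultaneously along suitable sequences of vertices running out of that window, and translating its endpoints into the $n$-coordinate — using $J_{v_0}(n)=1+\gamma+\alpha-n$ to place the lower end opposite the upper — gives the segment $[0,1+\gamma+\alpha]$; the hypothesis $1+\gamma>0$ is precisely what keeps the two ends consistently ordered, failing for $m=2$ where $1+\gamma=1-p_1-p_2<0$ and the reduction behaves differently, as it must for torus knots.

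The main obstacle will be the bookkeeping underlying the last two steps: matching the abstract normalization ``$k_{\tf}\mapsto 0$, central vertex $\mapsto n+1$'' and the framing $\Sigmasq$ against the Seifert arithmetic so that the symmetry axis of $\tau$ lands exactly at $(1+\gamma)/2$ and so that the $\tau$-recursion comes out exactly as $1+e_0 n-\sum_i\lceil nq_i/p_i\rceil$. Everything else is either a direct appeal to the three propositions above or routine plumbing calculus.
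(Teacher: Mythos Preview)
Your proposal is correct and follows essentially the same route as the paper: invoke Proposition~\ref{prop:ARknots} for the filtered line and $h_1$, Proposition~\ref{prop:ARNullHomolgous} for $h_2$ and $\Gamma$ via $1/\Sigmasq=-\alpha$, and cite \cite{NemAR} for the $\tau$-recursion. The only substantive difference is how $c_{\Ii}=1+\gamma$ is pinned down: rather than computing the symmetry axis of $\tau$ (which you assert but do not derive), the paper observes that $\Ii$ sends $K_{con}$ to $-K_{con}$ and cites \cite{singularSemigroup} for the $v_0$-coefficient of the anticanonical class, giving $\Ii(0)=1+\gamma$ directly; for the support statement it uses Lemma~\ref{lem:EffectiveRestriction} to get the positive ray and then transports that sequence through $\Jj$ to obtain the upper cutoff at $1+\gamma+\alpha$.
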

\begin{proof}
The formula for \(\tau\) is precisely that provided by \cite{NemAR}.
After all \(B_{\tf,i}\) have been shrunk to a point the actions of \(\Gamma\), \(\Ii\) and \(\Jj\) are determined completely by how they act on the indices of the \(B_{\tf,i}\).
The formula for the second height function and action of \(\Gamma\) come from computing \(\frac{1}{\Sigmasq}\) for a Brieskorn sphere.
Note that a Brieskorn sphere \(\Sigma(p_1,\ldots, p_m)\) is surgery on a connect sum of cores of lens spaces, so \(H_1(Y_{G};\Z)=\alpha\).
In order for this to reduce to a integer homology sphere, the order of \(v_0\) in \(H_1(Y_G;\Z)\) must be \(\alpha\).

Next we move to the formula for \(J\).
Note that \cite{singularSemigroup} calculates the coefficient of \(v_0\) of the anticanonical class in \(G_{v_0}(n)\) is \(1+\gamma\).
The distance in terms of \(v_0\) between the canonical and anticanonical classes as cohomology classes is then \(2\gamma +2\), but the action on \(\Char(G_{v_0}(n))\) includes a factor of 2, so this distance is actually \(\gamma+1\).
This verifies that \(0\), representing the canonical class on the filtered line, is sent to \(\gamma+1\), representing the anticanonical class.
More generally under \(\Ii\), the action of \(v_0\) will be comes the action of \(-v_0\) and thus \(n\) is sent to \(\gamma+1-n\).
The formula for \(J_{u_{0}}\) can be constructed from the formula for \(\Ii\) and the formula for \(\Gamma\).

The proof of Lemma \ref{lem:EffectiveRestriction} works to show that if \(n\) is negative \(h_1(n)\leq h_1(0)\) and \(h_2(n)\leq h_2(0)\) and thus \(\CFKbn(G_{v_0}(n)_{u_0},\tf)\) is supported on the filtered positive ray.
The map \(\Jj\) can take the infinite sequence realizing that reduction via the sequence \(k_{2,n}\) of Lemma \ref{lem:boxReduction} to an infinite sequence \(k_{1,n}\) starting at \(1+\gamma+\alpha\) and moving upwards.
As such this allows us to use Lemma \ref{lem:boxReduction} to restrict ourselves to the line segment.
\end{proof}

\subsection{Examples with analysis}\label{subsec:exampleARKnots}
\begin{example}
We will now consider the case where \(G_{v_0}\) represents the trefoil as depicted in Figure \ref{fig:trefGraph}, which can be interpreted as the regular fiber of \(\Sigma(2,3)\).
We will show that this has knot lattice space equivalent to that in Figure \ref{fig:trefFiltered}.
First note that for the trefoil 
\[\alpha = 6 \quad \gamma = 6\left(\frac{-1}{2}+\frac{-1}{3}\right)=-5,\]
so in particular \(\CFKbn(G_{v_0},\tf)\) is supported on the line segment from 0 to 2.
Additionally, the trefoil has
\[\tau(n+1) -\tau(n)= 1+n - \left\lceil \frac{n}{2}\right\rceil - \left\lceil \frac{n}{3}\right\rceil.\]
Note that \(\tau(1)-\tau(0)=1\) and \(\tau(2)-\tau(1)=0\).
We can also conclude from \(S^3\) being an \(L\)-space that \(K_{con}\) must maximize \(h_U\) across the vertices, and which for \(S^3\) is known to be maximized at 0.
As such,
\begin{align*}
h_1(0) &=0 -2 \tau(0)=0  \\
h_1(1) &= h_1(0)-2(\tau(1)-\tau(0))=-2 \\
h_1(2) &= h_1(1)-2(\tau(2)-\tau(1))=-2 \\
h_2(0)&= h_1(\Jj(0))= h_1(2)=-2 \\ 
h_2(1) &=h_1(\Jj(1))=h_1(1)=-2 \\
h_2(2) &=h_1(\Jj(2))=h_1(0)=0.
\end{align*}
These heights are extended across the edges by taking minima.
The resulting three vertex simplicial set is given by Figure \ref{subfig:trefcubesExpand}, which is homotopy equivalent to Figure \ref{subfig:trefcubes}.

Additionally, we can see that \(\Ii\) acts here by \(\Ii(n)=1-2-3-n = -4-n\).
This takes the line segment from 0 to 2 to the line segment from \(-6\) to \(-4\), which is homotopic to the map going to 0.
We could have made this argument knowing that the underlying three-manifold is \(S^3\) and thus an \(L\)-space and the filtered lattice simplicial set is subcontractible (and thus every map is homotopic to 0).
However, even for regular fibers of non-\(L\)-space Brieskorn spheres, the map \(\Ii\) will likely reflect the portion of the line segment from \(1+\gamma\) to \(1+\gamma+\alpha\) into negative indices, and that portion is then deformed to 0 relative to the rest of \(\Ii\) under the homotopy equivalence between the whole line and the line segment.
Meanwhile, the map \(\Jj\) acts by \(\Jj(n)= 2-n\), i.e. a reflection of the line segment in Figure \ref{fig:trefFiltered} across its middle segment.
The map \(\Gamma\) acts by \(\Gamma(n)= n-6\), which on the line segment sends everything to 0.
\end{example}

\begin{figure}
\begin{center}
\begingroup%
  \makeatletter%
  \providecommand\color[2][]{%
    \errmessage{(Inkscape) Color is used for the text in Inkscape, but the package 'color.sty' is not loaded}%
    \renewcommand\color[2][]{}%
  }%
  \providecommand\transparent[1]{%
    \errmessage{(Inkscape) Transparency is used (non-zero) for the text in Inkscape, but the package 'transparent.sty' is not loaded}%
    \renewcommand\transparent[1]{}%
  }%
  \providecommand\rotatebox[2]{#2}%
  \newcommand*\fsize{\dimexpr\f@size pt\relax}%
  \newcommand*\lineheight[1]{\fontsize{\fsize}{#1\fsize}\selectfont}%
  \ifx\svgwidth\undefined%
    \setlength{\unitlength}{123.93844128bp}%
    \ifx\svgscale\undefined%
      \relax%
    \else%
      \setlength{\unitlength}{\unitlength * \real{\svgscale}}%
    \fi%
  \else%
    \setlength{\unitlength}{\svgwidth}%
  \fi%
  \global\let\svgwidth\undefined%
  \global\let\svgscale\undefined%
  \makeatother%
  \begin{picture}(1,0.67205921)%
    \lineheight{1}%
    \setlength\tabcolsep{0pt}%
    \put(0,0){\includegraphics[width=\unitlength]{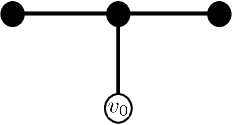}}%
    \put(0.46297578,0.54646565){\color[rgb]{0,0,0}\makebox(0,0)[lt]{\lineheight{1.25}\smash{\begin{tabular}[t]{l}-1\end{tabular}}}}%
    \put(0.00874146,0.54647702){\color[rgb]{0,0,0}\makebox(0,0)[lt]{\lineheight{1.25}\smash{\begin{tabular}[t]{l}-2\end{tabular}}}}%
    \put(0.9036697,0.54765033){\color[rgb]{0,0,0}\makebox(0,0)[lt]{\lineheight{1.25}\smash{\begin{tabular}[t]{l}-3\end{tabular}}}}%
  \end{picture}%
\endgroup%

\end{center}
\caption{Graph depicting the trefoil, aka the regular fiber of \(\Sigma(2,3)\).}\label{fig:trefGraph}
\end{figure}

\begin{figure}

\begin{subfigure}[b]{\textwidth}
\begin{center}
\begingroup%
  \makeatletter%
  \providecommand\color[2][]{%
    \errmessage{(Inkscape) Color is used for the text in Inkscape, but the package 'color.sty' is not loaded}%
    \renewcommand\color[2][]{}%
  }%
  \providecommand\transparent[1]{%
    \errmessage{(Inkscape) Transparency is used (non-zero) for the text in Inkscape, but the package 'transparent.sty' is not loaded}%
    \renewcommand\transparent[1]{}%
  }%
  \providecommand\rotatebox[2]{#2}%
  \newcommand*\fsize{\dimexpr\f@size pt\relax}%
  \newcommand*\lineheight[1]{\fontsize{\fsize}{#1\fsize}\selectfont}%
  \ifx\svgwidth\undefined%
    \setlength{\unitlength}{155.44858437bp}%
    \ifx\svgscale\undefined%
      \relax%
    \else%
      \setlength{\unitlength}{\unitlength * \real{\svgscale}}%
    \fi%
  \else%
    \setlength{\unitlength}{\svgwidth}%
  \fi%
  \global\let\svgwidth\undefined%
  \global\let\svgscale\undefined%
  \makeatother%
  \begin{picture}(1,0.34069901)%
    \lineheight{1}%
    \setlength\tabcolsep{0pt}%
    \put(0,0){\includegraphics[width=\unitlength]{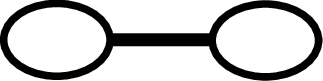}}%
    \put(0.18082044,0.11905248){\makebox(0,0)[t]{\lineheight{1.25}\smash{\begin{tabular}[t]{c}$\left(0,-2\right)$\end{tabular}}}}%
    \put(0.83155499,0.12717908){\makebox(0,0)[t]{\lineheight{1.25}\smash{\begin{tabular}[t]{c}$\left(-2,0\right)$\end{tabular}}}}%
    \put(0.52084746,0.03975565){\makebox(0,0)[t]{\lineheight{1.25}\smash{\begin{tabular}[t]{c}$\left(-2,-2\right)$\end{tabular}}}}%
  \end{picture}%
\endgroup%

\end{center}
\caption{A simplified representation of the knot lattice simplicial set for the trefoil.}\label{subfig:trefcubes}
\end{subfigure}

\begin{subfigure}[b]{\textwidth}
\begin{center}
\begingroup%
  \makeatletter%
  \providecommand\color[2][]{%
    \errmessage{(Inkscape) Color is used for the text in Inkscape, but the package 'color.sty' is not loaded}%
    \renewcommand\color[2][]{}%
  }%
  \providecommand\transparent[1]{%
    \errmessage{(Inkscape) Transparency is used (non-zero) for the text in Inkscape, but the package 'transparent.sty' is not loaded}%
    \renewcommand\transparent[1]{}%
  }%
  \providecommand\rotatebox[2]{#2}%
  \newcommand*\fsize{\dimexpr\f@size pt\relax}%
  \newcommand*\lineheight[1]{\fontsize{\fsize}{#1\fsize}\selectfont}%
  \ifx\svgwidth\undefined%
    \setlength{\unitlength}{258.16956287bp}%
    \ifx\svgscale\undefined%
      \relax%
    \else%
      \setlength{\unitlength}{\unitlength * \real{\svgscale}}%
    \fi%
  \else%
    \setlength{\unitlength}{\svgwidth}%
  \fi%
  \global\let\svgwidth\undefined%
  \global\let\svgscale\undefined%
  \makeatother%
  \begin{picture}(1,0.20185076)%
    \lineheight{1}%
    \setlength\tabcolsep{0pt}%
    \put(0,0){\includegraphics[width=\unitlength]{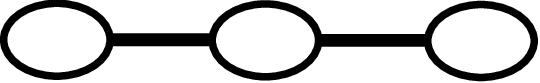}}%
    \put(0.1088753,0.06840557){\makebox(0,0)[t]{\lineheight{1.25}\smash{\begin{tabular}[t]{c}$\left(0,-2\right)$\end{tabular}}}}%
    \put(0.8865995,0.06870464){\makebox(0,0)[t]{\lineheight{1.25}\smash{\begin{tabular}[t]{c}$\left(-2,0\right)$\end{tabular}}}}%
    \put(0.31361175,0.02065951){\makebox(0,0)[t]{\lineheight{1.25}\smash{\begin{tabular}[t]{c}$\left(-2,-2\right)$\end{tabular}}}}%
    \put(0.49320466,0.06907699){\makebox(0,0)[t]{\lineheight{1.25}\smash{\begin{tabular}[t]{c}$\left(-2,-2\right)$\end{tabular}}}}%
    \put(0.69927345,0.02785382){\makebox(0,0)[t]{\lineheight{1.25}\smash{\begin{tabular}[t]{c}$\left(-2,-2\right)$\end{tabular}}}}%
  \end{picture}%
\endgroup%

\end{center}
\caption{The representation of the knot lattice simplicial set for the trefoil produced by the arguments of Corollary \ref{cor:BrieskornReduc}.}\label{subfig:trefcubesExpand}
\end{subfigure}
\caption{The doubly filtered line segment associated to the trefoil.}\label{fig:trefFiltered}
\end{figure}

\begin{example}\label{ex:regFibSig237}
Let \(G_{v_0}\) represent the regular fiber of the Brieskorn sphere \(\Sigma(2,3,7)\) with the standard star shaped plumbing.
It has formula for \(\tau\) given by
\[\tau(n+1)-\tau(n)=1+n - \left\lceil \frac{n}{2}\right\rceil -\left\lceil \frac{n}{3}  \right\rceil -\left\lceil \frac{n}{7}\right\rceil.\]
We also have that \(\CFKbn(G_{v_0},\tf)\) under the filtered line model is supported on the line segment from 0 to \(44\), as \(\gamma = 1\) and \(\alpha=42\).
As such \(\Ii(n)= 2-n\), while \(\Jj(n)= 44-n\) and \(\Gamma(n)=n-42\).
Because \(\Jj(n)\) exchanges the two height functions, it will suffice to compute \(h_1\) and \(h_2\) from \(0\) to \(22\).
Using \(\tau\) as above as well as a computer we can construct a table of heights on the vertices of the linear model of \(\CFKbn(G_{v_0},\tf)\).
This is still long enough that we will do some simplification to focus on the key values rather than overwhelming with a table of 45 different numbers.
This process will also provide a framework for doing simplifications in other similar scenarios.

To conduct those simplifications, we will use a fact stated in \cite{singularSemigroup} and previously observed in \cite{singDim}.
In particular, for a Brieskorn complete intersection, the value of \(\tau(l+1)- \tau(l)\) can be written as \(1+N(l)\) and \(N(l+\alpha)=N(l)+1\).
It follows that
\begin{align*}
h_2(n+1)- h_2(n)&= h_1(n+1+\alpha)-h_1(n+\alpha) \\
&= -2(\tau(n+1+\alpha) -\tau(n+\alpha)) \\
&= -2(1+N(l+\alpha)) \\
&=-2(1+N(l)+1) \\
&= -2(\tau(n+1)-\tau(n)+1) \\
&=h_1(n+1)-h_1(n)-2.
\end{align*}
Direct computation reveals that for all \(0\leq n \leq 22\), \(h_1(n+1)-h_1(n)\) equals 2,0, or -2.
In those cases \(h_2(n+1)-h_2(n)\) is 0, -2, and -4 respectively.
With this in mind, it is not possible for one of \(h_1\) and \(h_2\) to be strictly increasing across an edge in that range and the other height function to be strictly decreasing across that same edge.
Either both are decreasing, \(h_1\) is increasing and \(h_2\) is constant, or \(h_1\) is constant and \(h_2\) is decreasing.

As such the only vertices \(n\) that matter are the vertices which are local minima (relative to their neighboring vertices) for both \(h_1\) and \(h_2\) or local maxima for both \(h_1\) and \(h_2\).
In the first case, the edge from \(n-1\) to \(n\) will have \(h_1\) change by 0 or -2 (and thus \(h_2\) also decreases), while the edge from \(n\) to \(n+1\) will have \(h_1\) change by 2 (and thus \(h_2\) remains the same).
The second case swaps the roles of the incoming and outgoing edges.
Any vertex \(n\) that has \(h_1\) and \(h_2\) increasing or decreasing across both the incoming and outgoing edges can be removed, replacing the incoming and outgoing edges with a single edge minimizing \(h_1\) and \(h_2\) across both the two removed edges.
Next, the vertices that locally minimize \(h_1\) and \(h_2\) can also be removed, and their heights will be the height given to the edges between local maximums interspersed between them.
We report these local minimums and maximums and their heights below.
\[
\begin{array}{c|c|c|c}
n & (h_1(n),h_2(n)) & n & (h_1(n),h_2(n)) \\ \hline
0 & (0,-44) & 24 & (-12,-8) \\
1 & (-2, -44) & 25 & (-14,-8) \\
6 & (0,-32) & 26 & (-14,-6) \\
7 & (-2,-32) & 29 & (-20,-6) \\
12 & (-2,-22) & 30 & (-20,-4) \\
13 & (-4,-22) & 31 & (-22,-4) \\
14 & (-4,-20) & 32 & (-22,-2) \\
15 & (-6,-20) & 37 & (-32,-2) \\
18 & (-6,-14) & 38 & (-32,0) \\
19 & (-8,-14) & 43 & (-44,-2) \\
20 & (-8,-12) & 44 & (-44,0) \\
22 & (-12,-2) & & 
\end{array}
\]
\end{example}

We now provide some analysis of the regular fiber of \(\Sigma(2,3,7)\) using the example calculation above that the persistence complex of the knot lattice simplicial set computes knot Floer homology \cite{linkLattice}.
Note that we will talk about the knot Floer complex in terms of the vertices and edges of the simplicial line segment of Example \ref{ex:regFibSig237}, but these arguments require functoriality statements that are only known in terms of the chain complexes and  not the doubly filtered simplicial sets.

It is known that knot Floer homology detects the genus for null homologous knots in rational homology three spheres as \(\widehat{HFK}(Y,K,g(K))\neq  0\) and for \(i>g(K)\), \(\widehat{HFK}(Y,K,i)=0\), where \(\widehat{HFK}(Y,K,i)\) is the Alexander grading \(i\) portion of knot Floer homology taken with \(\Fb_2[\Uc,\Vc]/(\Uc,\Vc)\) coefficients.
A proof of this fact for knots in \(S^3\) was given in \cite{origGenus}, though the result was stated for null-homologous knots in rational homology three-spheres in \cite{suturedGenus}.
Note that the largest Alexander grading of any simplex in figure \ref{fig:XKIZ} is \(22\) acheived on the vertex \(n=0\), which provides the generator of \(\widehat{HFK}(Y,K,6)\), and thus 22 is the highest inhabited Alexander grading.
Therefore the genus of this knot is 22.

Given that this is a generalized algebraic knot, it should not be surprising that the top grading has rank 1, indicative of the knot being fibered \cite{fiberedHFK} as the same argument from algebraic knots in \(S^3\) carries over to the case of strong generalized algebraic knots.
Note strong algebraic knots are those realized with an actual nested analytic singularity (in contrast to those that are only guaranteed to be analytic in any resolution), which is equivalent to the knot being null homologous (see the discussion in Section 2.1.2 of \cite{knotLatticeInvariance}).
In this argument, the Milnor fiber of the singularity defining the knot gives the Seifert surface, and varying the parameter used to define the milnor fiber provides the fibration.

This argument also shows that the genus of the knot in \(Y\times I\) matches the Seifert genus, a bound that is also encoded in the knot lattice homology.
In particular, a genus \(g\) surface in \(Y\times I\) would provide a map from \(A_{g,!}(\CFb(G,\tf))\) of the underlying 3-manifold to \(\CFKb(G_{v_0},\tf)\) that is a homotopy equivalence after the application of \(p_{1,!}\).
We can further assume that up to further homotopy, it is the homotopy equivalence given by naturality, so representing \(\CFb(G,\tf)\) by \(p_{1,!}(\CFKb(G_{v_0},\tf)\) the map should be homotopic to the identity.
Because the vertex \(n=0\) that realized the relevant Alexander grading was isolated, allowing the identity up to homotopy does not actually provide more freedom, and one needs \(g=22\).

Finally, we can make claims about the genus of \((Y,K)\) in self-homology cobordisms of \(Y\), i.e. the genus of a surface \(S\) in a homology cobordism \((W,S)\) from \((Y,\emptyset)\) to \((Y,K)\), where by homology cobordism we mean that the inclusion of \(Y\) on either end of \(W\) induces an isomorphism on singular homology with integer coefficients.
This relaxes the condition on the map of lattice chain complexes for the underlying three-manifolds from being a chain homotopy equivalence to only being so after the inversion of the variable \(U\).
In other words we want a map
\[f\colon A_{g,!}(\CFb(G,\tf))\to \CFKb(G_{v_0},\tf)\]
so that after forgetting both filtrations it becomes a homotopy equivalence.
The process of forgetting both filtrations can be modeled algebraically by inverting both \(\Uc\) and \(\Vc\), where inverting \(\Vc\) forgets the knot filtration and inverting \(\Uc\) then forgets the original filtration.
An example of this would be the map that sends the entire complex to the vertex \(n=6\), which has height \((0,-32)\), indicating that the genus of \((Y,K)\) in any self homology cobordism is at least 16.

However, we do know in this case that the involutive map from lattice homology agrees with that from Heegaard Floer homology (see \cite{involCheck}).
This allows us to use the additional restriction that our maps on the level of lattice chain complexes have to commute with \(\Ii\) up to homotopy, so our maps on the lattice chain complexes need to be local equivalences in the sense of \cite{involConnect}.
In that case, one could not send both vertices of height \((0,-2g)\) to the vertex \(n=6\) as \(\Ii\) interchanges the vertices \(n=0\) and \(n=6\).
However, that means that some vertex of height \((0,-2g)\) must be mapped to a linear combination of the vertices \(n=0\) and \(n=6\).
As such, the minimal genus in a self homology cobordism would actually be 22.
Since this is the genus of the knot, it is indeed the minimum genus in a self homology cobordism.

\section{An Iterated Example}\label{sec:IteratedEx}

\subsection{Iterating Framings}
The power of this surgery formula goes well beyond almost rational knots, as one could hypothetically use it to build out to any knot via a sequence of surgeries and connect sums.
By breaking it down into such a sequence one has the opportunity to perform simplifications at each step, thereby reducing the complexity of further computations.

However, to do so we will need to keep track of the relationship between the Seifert and graphical framings during this process.
First note the following observation from Section 5.1.2 of \cite{knotLatticeInvariance}: letting \(\Sigmasq\) be the Seifert framing, \(n\) the framing on the graph, and \(\Sigma_0 \in H_2(X_G;\Q)\) so that \(\PD[\Sigma_0]=\PD[v_0]\) we have that
\[\Sigmasq = n -\left(\Sigma_0\right)^2.\]
The following lemma will then be useful to track the value of the new \(\left(\Sigma_0\right)^2\) after doing surgery and thus what the new difference between the graphical and Seifert framings will be.

\begin{lemma}\label{lem:newframing}
Let \(G_{v_0}\) be a negative definite graph with an unweighted vertex \(v_0\) and let \(n\) be such that \(G_{v_0}(n)_{u_0}\) is also negative definite. Let \(\Sigma_0 \in H_2(X_{G_{v_0}(n)};\Q)\) have \(\PD[\Sigma_0]=\PD[u_0]\).
Then, if \(\Sigmasq\) is the Seifert framing used in doing the surgery from \(G_{v_0}\) to \(G_{v_0}(n)\), then \((\Sigma_0)^2 = \frac{1}{\Sigmasq}\).
\end{lemma}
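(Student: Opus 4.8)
The statement is a linear-algebra computation in the rational homology of the plumbed four-manifolds involved, so the plan is to work entirely with the intersection forms and Poincaré duality. Let me set notation: write $X := X_{G_{v_0}(n)}$, and let $X' := X_{G_{v_0}(n)_{u_0}}$ be the plumbing obtained by attaching the two-handle along $u_0$ with framing $0$ (recording only the attaching data; the point is that $u_0$ is adjacent only to $v_0$). The class $\Sigma_0 \in H_2(X;\Q)$ is characterized by $\langle \Sigma_0, v\rangle = \langle v_0^{\vee}, v\rangle$ for every vertex $v$ of $G_{v_0}(n)$, where $v_0^{\vee}$ is the element of $H^2(X;\Q) \cong H_2(X,\partial X;\Q)$ dual to the zero-section $v_0$ — that is, $\PD[\Sigma_0] = \PD[u_0]$ read inside $X$ via the adjacency $u_0 \leftrightarrow v_0$. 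So $\Sigma_0$ is the unique rational class with $\langle \Sigma_0, v_i\rangle = \delta_{v_i, v_0}$ as $v_i$ ranges over vertices.

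First I would express $\Sigma_0$ in the preferred basis: $\Sigma_0 = \sum_i a_i v_i$ where the vector $(a_i)$ solves $M(a_i) = e_{v_0}$, $M$ being the (negative definite, hence invertible over $\Q$) intersection matrix of $G_{v_0}(n)$ and $e_{v_0}$ the standard basis vector at $v_0$. Then $(\Sigma_0)^2 = \langle \Sigma_0, \Sigma_0\rangle = (a_i)^T M (a_i) = (a_i)^T e_{v_0} = a_{v_0} = (M^{-1})_{v_0,v_0}$. So the claim reduces to showing $(M^{-1})_{v_0 v_0} = 1/\Sigmasq$. Now I would bring in the relation recalled just before the lemma and in Section~\ref{subsec:backTopSpecify}: the Seifert framing satisfies $\Sigmasq = \Sigma^2$ where $\Sigma = v_0 - \Sigma_0'$ inside $X_{G_{v_0}(n)}$ viewed as surgery on $(Y_G, K_{G_{v_0}})$, with $\Sigma_0'$ the analogous dual class computed one level down in $X_G$. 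The key computational identity is that $\Sigma$ pairs to $1$ with $\PD[v_0]$: indeed $\langle \Sigma, v_0^{\vee}\rangle = \langle v_0, v_0^{\vee}\rangle - \langle \Sigma_0', v_0^\vee\rangle$, and a short check (using that $\Sigma_0'$ lives in $X_G$, where it pairs trivially with everything Poincaré-dual) gives $\langle \Sigma, v_0^\vee\rangle = 1$, i.e. $c_1$-pairings aside, $\Sigma$ is a ``dual'' to $v_0$ up to the framing normalization.

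The cleanest route is then to observe that $\Sigma_0$ (computed in $X = X_{G_{v_0}(n)}$, dual to $u_0$) and $\Sigma$ (computed in the same $X$, but as the surgery class with $\Sigma^2 = \Sigmasq$) are proportional: both are, up to scale, the unique rational class whose intersection pairing with the vertex basis is supported at $v_0$. Writing $\Sigma_0 = c\,\Sigma$ for a scalar $c$, pair both sides with $\Sigma$: on one hand $\langle \Sigma_0, \Sigma\rangle = \langle \Sigma_0, v_0 - \Sigma_0'\rangle = \langle\Sigma_0, v_0\rangle = a_{v_0} = (\Sigma_0)^2$ (using $\langle \Sigma_0,\Sigma_0'\rangle = 0$ since $\Sigma_0'$ is null in $H^2(X_G;\Q)$); on the other hand $\langle c\,\Sigma, \Sigma\rangle = c\,\Sigmasq$. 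Pairing instead with $v_0$: $\langle \Sigma_0, v_0\rangle = (\Sigma_0)^2$ and $\langle c\Sigma, v_0\rangle = c\langle\Sigma, v_0\rangle$. Combining these, $(\Sigma_0)^2 = c\,\Sigmasq$ and $(\Sigma_0)^2 = c\cdot\langle \Sigma, v_0\rangle$, and since $\langle\Sigma,v_0\rangle$ works out to $1$ by the normalization of the Seifert framing (this is exactly the statement that $u_0$ is a $0$-framed meridian-type attachment detecting a single intersection), we get $c = (\Sigma_0)^2$ from the second and then $(\Sigma_0)^2 = (\Sigma_0)^2\cdot\Sigmasq\cdot(\text{something})$ forcing $(\Sigma_0)^2 = 1/\Sigmasq$.

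I expect the main obstacle to be bookkeeping: there are two different ``$\Sigma_0$''s floating around — one dual to $v_0$ inside $X_G$ (used to define the Seifert framing of $K_{G_{v_0}}$ going from $G_{v_0}$ to $G_{v_0}(n)$) and one dual to $u_0$ inside $X_{G_{v_0}(n)}$ (the class named $\Sigma_0$ in the lemma, used for the \emph{next} surgery) — and one must be scrupulous about which ambient manifold each intersection pairing is computed in, and about the fact that the adjacency $u_0 \leftrightarrow v_0$ (with $u_0$ unweighted, i.e. not yet contributing to the intersection form of the relevant plumbing) is what makes $\PD[\Sigma_0] = \PD[u_0]$ translate into the support-at-$v_0$ condition. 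Once that is pinned down, the identity $(\Sigma_0)^2 = (M^{-1})_{v_0 v_0}$ and the complementary computation $\Sigmasq = \Sigma^2$ with $\Sigma = v_0 - \Sigma_0'$ are both two-line checks, and they are reciprocal because inverting a negative definite form and then reading off the $(v_0,v_0)$ entry is precisely the operation that turns the framing $\Sigmasq$ into $1/\Sigmasq$. A sanity check I would include: for a Brieskorn fiber this recovers $(\Sigma_0)^2 = 1/\Sigmasq$ consistent with $\Sigmasq = -1/(d\cdot\text{(order of }v_0)) $-type formulas used in Corollary~\ref{cor:BrieskornReduc}, where $H_1(Y_G;\Z)$ has order $\alpha$ and $1/\Sigmasq$ shows up as the shift $-\alpha$ in $\Gamma$.
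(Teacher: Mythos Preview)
Your overall strategy---observing that $\Sigma_0$ and $\Sigma$ both lie in the one-dimensional orthogonal complement of $H_2(X_G;\Q)$ inside $H_2(X_{G_{v_0}(n)};\Q)$, hence are proportional---is sound and is different from the paper's determinant/cofactor argument. But the pairing computations in your second paragraph are wrong, and as written the argument does not close. Specifically: you write $\langle\Sigma_0,v_0\rangle=a_{v_0}=(\Sigma_0)^2$, but by construction $\langle\Sigma_0,v_0\rangle=1$ (that is the defining property of $\Sigma_0$), whereas $a_{v_0}=(\Sigma_0)^2$ is generally a different number. Separately, you claim $\langle\Sigma,v_0\rangle=1$, but in fact $\langle\Sigma,v_0\rangle=\langle\Sigma,\Sigma+\Sigma_0'\rangle=\Sigma^2+0=\Sigmasq$, using that $\Sigma\perp H_2(X_G;\Q)$ and $\Sigma_0'\in H_2(X_G;\Q)$. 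With these two errors present your ``forcing'' step is not a computation but a guess.

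The corrected version of your idea is short: from $\Sigma_0=c\,\Sigma$, pair with $v_0$ to get $1=\langle\Sigma_0,v_0\rangle=c\,\langle\Sigma,v_0\rangle=c\,\Sigmasq$, so $c=1/\Sigmasq$, and therefore $(\Sigma_0)^2=c^2\Sigma^2=(1/\Sigmasq)^2\cdot\Sigmasq=1/\Sigmasq$. This is cleaner than the paper's route, which instead computes $(\Sigma_0)^2=(M^{-1})_{v_0v_0}=\det(\Gc)/\det(\Gc')$ via the cofactor formula, interprets the determinants as orders of $H_1$, and uses $-\Sigmasq\,|H_1(Y_G;\Z)|=|H_1(Y_{G_{v_0}(n)};\Z)|$. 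Your proportionality argument avoids determinants entirely; the paper's argument has the advantage of making the role of $|H_1|$ explicit, which ties in with the surrounding discussion. Either way works once the pairings are evaluated correctly.
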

\begin{proof}
Let \(\Gc\) represent the adjacency matrix of \(G\) and \(\Gc'\) the adjacency matrix of \(G_{v_0}(n)\).
The value of \(\left(\Sigma_0\right)^2\) will be \((v_0,v_0)\) entry in \((\Gc')^{-1}\).
In particular, with \(\PD[u_0]\) being the dual (with respect to the basis of vertices) of \(v_0\), \(\Sigma_0= \Gc^{-1} v_0^{\ast}\) and squaring uses the matrix \(\Gc\), giving 
\[\Sigma_0^T\Gc\Sigma_0= (v_0^{\ast})^T(\Gc^{-1})^T\Gc\Gc^{-1}v_0^{\ast}=(v_0^{\ast})^T\Gc^{-1}v_0^{\ast}.\]
Using the adjoint formula for the inverse of a matrix, as well as the block form of \(\Gc'\), we have that this matrix entry is can be computed as \(\frac{\det(\Gc)}{\det(\Gc')}\).
Recall that for a plumbing graph \(G\) the absolute value of the determinant of the adjacency matrix computes \(|H_1(Y_G;\Z)|\), so \(\Sigma_0^2= \frac{-|H_1(Y_G;\Z)|}{|H_1(Y_{G_{v_0}(n)};\Z)|}\), with the negative coming from how the negative seifert framing should affect signs of determinants.
At the same time our seifert framing comes from \(\Sigma^2= (v_0-\Sigma_0)^2\) with \(\Sigma\) orthogonal to everything in \(H_2(X_G;\Q)\).
This process does not change the resulting determinant of \(\Gc'\).
As such, we have \(-\Sigmasq |H_1(Y_G;\Z)|= |H_1(Y_{G_{v_0}(n)};\Z)|\).
Comparing with our formula for \(\Sigma_0^2\) yields the desired result.
\end{proof}

\subsection{The First Iteration}
In this section, we will be exploring the knot given in Figure \ref{fig:nonARknot}.
If \(G_{v_0}\) represents the trefoil as in Figure \ref{fig:trefGraph}, we can decompose the graph of Figure \ref{fig:nonARknot} as
\[\left(G_{v_0}(-8)_{u_0}\# G_{v_0}(-9)_{u_0}\right)(-1)_{w_0}.\]
We will refer to this 3-manifold and knot as \((Y,K)\), and condence notation for the graph to \(\tilde{G}_{w_0}\).
Because \(G_{v_0}(-7)\) is the integral homology sphere \(\Sigma(2,3,7)\) the graph framing on this trefoil is 6 less than the Seifert framing.
Therefore, \(G_{v_0}(-8)_{u_0}\) must be the dual of knot a Seifert framing -2 surgery and \(G_{v_0}(-9)_{u_0}\) the dual knot of a Seifert framing -3 surgery.
Since \(G\) represents \(S^3\), we can use Lemma \ref{lem:newframing} to compute that the graph framing of -1 on the subsequent connect sum corresponds to a Seifert framing of \(-1+\frac{1}{2} + \frac{1}{3} = \frac{-1}{6}\).
As such letting \(X\) represent the filtered space with involutive structures in Figure \ref{subfig:trefcubes}, we have that there exists a doubly filtered homotopy equivalence
\[ \CFKIn(\left(G_{v_0}(-8)_{u_0}\# G_{v_0}(-9)_{u_0}\right)(-1)_{w_0}) \cong \XKI_{\frac{-1}{6}}\left(\XKI_{-2}(X)\otimes \XKI_{-3}(X)\right).\]

\begin{figure}
\begin{subfigure}[b]{.47\textwidth}
\begingroup%
  \makeatletter%
  \providecommand\color[2][]{%
    \errmessage{(Inkscape) Color is used for the text in Inkscape, but the package 'color.sty' is not loaded}%
    \renewcommand\color[2][]{}%
  }%
  \providecommand\transparent[1]{%
    \errmessage{(Inkscape) Transparency is used (non-zero) for the text in Inkscape, but the package 'transparent.sty' is not loaded}%
    \renewcommand\transparent[1]{}%
  }%
  \providecommand\rotatebox[2]{#2}%
  \newcommand*\fsize{\dimexpr\f@size pt\relax}%
  \newcommand*\lineheight[1]{\fontsize{\fsize}{#1\fsize}\selectfont}%
  \ifx\svgwidth\undefined%
    \setlength{\unitlength}{155.44858437bp}%
    \ifx\svgscale\undefined%
      \relax%
    \else%
      \setlength{\unitlength}{\unitlength * \real{\svgscale}}%
    \fi%
  \else%
    \setlength{\unitlength}{\svgwidth}%
  \fi%
  \global\let\svgwidth\undefined%
  \global\let\svgscale\undefined%
  \makeatother%
  \begin{picture}(1,0.34069901)%
    \lineheight{1}%
    \setlength\tabcolsep{0pt}%
    \put(0,0){\includegraphics[width=\unitlength]{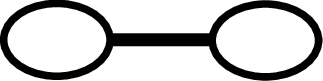}}%
    \put(0.18082044,0.11905248){\makebox(0,0)[t]{\lineheight{1.25}\smash{\begin{tabular}[t]{c}$\left(\frac{-1}{4},\frac{-7}{4} \right)$\end{tabular}}}}%
    \put(0.83155499,0.12717908){\makebox(0,0)[t]{\lineheight{1.25}\smash{\begin{tabular}[t]{c}$\left(\frac{-1}{4},\frac{1}{4}\right)$\end{tabular}}}}%
    \put(0.52084746,0.03975565){\makebox(0,0)[t]{\lineheight{1.25}\smash{\begin{tabular}[t]{c}$\left( \frac{-9}{4},\frac{-7}{4}\right)$\end{tabular}}}}%
  \end{picture}%
\endgroup%

\caption{The portion of \(\XKI_{-2}(X)\) in \Spinc structure \([0]\).}\label{subfig:XKI20}
\end{subfigure}
\hfill
\begin{subfigure}[b]{.47\textwidth}
\begingroup%
  \makeatletter%
  \providecommand\color[2][]{%
    \errmessage{(Inkscape) Color is used for the text in Inkscape, but the package 'color.sty' is not loaded}%
    \renewcommand\color[2][]{}%
  }%
  \providecommand\transparent[1]{%
    \errmessage{(Inkscape) Transparency is used (non-zero) for the text in Inkscape, but the package 'transparent.sty' is not loaded}%
    \renewcommand\transparent[1]{}%
  }%
  \providecommand\rotatebox[2]{#2}%
  \newcommand*\fsize{\dimexpr\f@size pt\relax}%
  \newcommand*\lineheight[1]{\fontsize{\fsize}{#1\fsize}\selectfont}%
  \ifx\svgwidth\undefined%
    \setlength{\unitlength}{155.44858437bp}%
    \ifx\svgscale\undefined%
      \relax%
    \else%
      \setlength{\unitlength}{\unitlength * \real{\svgscale}}%
    \fi%
  \else%
    \setlength{\unitlength}{\svgwidth}%
  \fi%
  \global\let\svgwidth\undefined%
  \global\let\svgscale\undefined%
  \makeatother%
  \begin{picture}(1,0.34069901)%
    \lineheight{1}%
    \setlength\tabcolsep{0pt}%
    \put(0,0){\includegraphics[width=\unitlength]{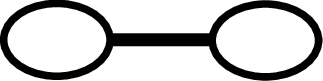}}%
    \put(0.18082044,0.12095996){\makebox(0,0)[t]{\lineheight{1.25}\smash{\begin{tabular}[t]{c}$\left(\frac{1}{4},\frac{-1}{4} \right)$\end{tabular}}}}%
    \put(0.83346243,0.12717908){\makebox(0,0)[t]{\lineheight{1.25}\smash{\begin{tabular}[t]{c}$\left(\frac{-7}{4},\frac{-1}{4}\right)$\end{tabular}}}}%
    \put(0.52275494,0.03784817){\makebox(0,0)[t]{\lineheight{1.25}\smash{\begin{tabular}[t]{c}$\left( \frac{-7}{4},\frac{-9}{4}\right)$\end{tabular}}}}%
  \end{picture}%
\endgroup%

\caption{The portion of \(\XKI_{-2}(X)\) in \Spinc structure \([1]\).}\label{subfig:XKI21}
\end{subfigure}
\begin{subfigure}[b]{.47\textwidth}
\begingroup%
  \makeatletter%
  \providecommand\color[2][]{%
    \errmessage{(Inkscape) Color is used for the text in Inkscape, but the package 'color.sty' is not loaded}%
    \renewcommand\color[2][]{}%
  }%
  \providecommand\transparent[1]{%
    \errmessage{(Inkscape) Transparency is used (non-zero) for the text in Inkscape, but the package 'transparent.sty' is not loaded}%
    \renewcommand\transparent[1]{}%
  }%
  \providecommand\rotatebox[2]{#2}%
  \newcommand*\fsize{\dimexpr\f@size pt\relax}%
  \newcommand*\lineheight[1]{\fontsize{\fsize}{#1\fsize}\selectfont}%
  \ifx\svgwidth\undefined%
    \setlength{\unitlength}{155.44858437bp}%
    \ifx\svgscale\undefined%
      \relax%
    \else%
      \setlength{\unitlength}{\unitlength * \real{\svgscale}}%
    \fi%
  \else%
    \setlength{\unitlength}{\svgwidth}%
  \fi%
  \global\let\svgwidth\undefined%
  \global\let\svgscale\undefined%
  \makeatother%
  \begin{picture}(1,0.34069901)%
    \lineheight{1}%
    \setlength\tabcolsep{0pt}%
    \put(0,0){\includegraphics[width=\unitlength]{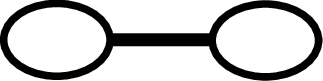}}%
    \put(0.18272791,0.12095996){\makebox(0,0)[t]{\lineheight{1.25}\smash{\begin{tabular}[t]{c}$\left(\frac{-1}{2},\frac{-11}{6} \right)$\end{tabular}}}}%
    \put(0.83155499,0.11764169){\makebox(0,0)[t]{\lineheight{1.25}\smash{\begin{tabular}[t]{c}$\left(\frac{-1}{2},\frac{1}{6}\right)$\end{tabular}}}}%
    \put(0.52084746,0.03975565){\makebox(0,0)[t]{\lineheight{1.25}\smash{\begin{tabular}[t]{c}$\left( \frac{-5}{2},\frac{-11}{6}\right)$\end{tabular}}}}%
  \end{picture}%
\endgroup%

\caption{The portion of \(\XKI_{-3}(X)\) in \Spinc structure \([0]\).}\label{subfig:XKI30}
\end{subfigure}\hfill
\begin{subfigure}[b]{.47\textwidth}
\begingroup%
  \makeatletter%
  \providecommand\color[2][]{%
    \errmessage{(Inkscape) Color is used for the text in Inkscape, but the package 'color.sty' is not loaded}%
    \renewcommand\color[2][]{}%
  }%
  \providecommand\transparent[1]{%
    \errmessage{(Inkscape) Transparency is used (non-zero) for the text in Inkscape, but the package 'transparent.sty' is not loaded}%
    \renewcommand\transparent[1]{}%
  }%
  \providecommand\rotatebox[2]{#2}%
  \newcommand*\fsize{\dimexpr\f@size pt\relax}%
  \newcommand*\lineheight[1]{\fontsize{\fsize}{#1\fsize}\selectfont}%
  \ifx\svgwidth\undefined%
    \setlength{\unitlength}{155.44858437bp}%
    \ifx\svgscale\undefined%
      \relax%
    \else%
      \setlength{\unitlength}{\unitlength * \real{\svgscale}}%
    \fi%
  \else%
    \setlength{\unitlength}{\svgwidth}%
  \fi%
  \global\let\svgwidth\undefined%
  \global\let\svgscale\undefined%
  \makeatother%
  \begin{picture}(1,0.34069901)%
    \lineheight{1}%
    \setlength\tabcolsep{0pt}%
    \put(0,0){\includegraphics[width=\unitlength]{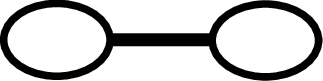}}%
    \put(0.18082044,0.12095996){\makebox(0,0)[t]{\lineheight{1.25}\smash{\begin{tabular}[t]{c}$\left(\frac{1}{6},\frac{-1}{2} \right)$\end{tabular}}}}%
    \put(0.82774003,0.11764168){\makebox(0,0)[t]{\lineheight{1.25}\smash{\begin{tabular}[t]{c}$\left(\frac{-11}{6},\frac{-1}{2}\right)$\end{tabular}}}}%
    \put(0.52084746,0.03975565){\makebox(0,0)[t]{\lineheight{1.25}\smash{\begin{tabular}[t]{c}$\left( \frac{-11}{6},\frac{-5}{2}\right)$\end{tabular}}}}%
  \end{picture}%
\endgroup%

\caption{The portion of \(\XKI_{-3}(X)\) in \Spinc structure \([2]\).}\label{subfig:XKI32}
\end{subfigure}

\begin{subfigure}[b]{.47\textwidth}
\begingroup%
  \makeatletter%
  \providecommand\color[2][]{%
    \errmessage{(Inkscape) Color is used for the text in Inkscape, but the package 'color.sty' is not loaded}%
    \renewcommand\color[2][]{}%
  }%
  \providecommand\transparent[1]{%
    \errmessage{(Inkscape) Transparency is used (non-zero) for the text in Inkscape, but the package 'transparent.sty' is not loaded}%
    \renewcommand\transparent[1]{}%
  }%
  \providecommand\rotatebox[2]{#2}%
  \newcommand*\fsize{\dimexpr\f@size pt\relax}%
  \newcommand*\lineheight[1]{\fontsize{\fsize}{#1\fsize}\selectfont}%
  \ifx\svgwidth\undefined%
    \setlength{\unitlength}{45.3893226bp}%
    \ifx\svgscale\undefined%
      \relax%
    \else%
      \setlength{\unitlength}{\unitlength * \real{\svgscale}}%
    \fi%
  \else%
    \setlength{\unitlength}{\svgwidth}%
  \fi%
  \global\let\svgwidth\undefined%
  \global\let\svgscale\undefined%
  \makeatother%
  \begin{picture}(1,0.84764082)%
    \lineheight{1}%
    \setlength\tabcolsep{0pt}%
    \put(0,0){\includegraphics[width=\unitlength]{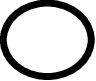}}%
    \put(0.50168222,0.38259051){\makebox(0,0)[t]{\lineheight{1.25}\smash{\begin{tabular}[t]{c}$\left(\frac{1}{6},\frac{1}{6} \right)$\end{tabular}}}}%
  \end{picture}%
\endgroup%

\caption{The portion of \(\XKI_{-3}(X)\) in \Spinc structure \([1]\).}\label{subfig:XKI31}
\end{subfigure}

\caption{Cube decompositions for \(\XKI_{-2}(X)\) and \(\XKI_{-3}(X)\) for different \Spinc structures on \(Y_{G_{v_0}(-2)}\) and \(Y_{G_{v_0}}(-3)\). Because there is only one \Spinc structure \(\tf\) on \(S^3\) that is suppressed in our notation, so \([\tf,i]\) is represented as \([i]\). Note some simplification has already been done, using that each \(A_{\tf,i}\) is homotopy equivalent to a filtered point and then using where \(\eta_1\colon A_{\tf,i}\to B_{\tf,i}\) and/or \(\Gamma^{\tf}\circ \eta_2\colon A_{\tf,i}\to B_{\tf,i+\Sigmasq}\) are themselves weak equivalences to truncate \(\Ic_{S^3,T_{2,3},\Sigmasq,[i]}\) to the portion needed }\label{fig:XKIX}
\end{figure}

First, we can compute \(\XKI_{-2}(X)\) and \(\XKI_{-3}(X)\), which are shown in Figure \ref{fig:XKIX}.
Note here that because there is only one \Spinc structure on \(S^3\), we will be suppressing its contribution to the notation, and thus \(\Ic_{S^3,T_{2,3},\Sigmasq}\) will have objects \((i,a)\) and \((i,b)\) and \Spinc structures on \(S^3_{\Sigmasq}(T_{2,3})\) have the form \([i]\) viewing \(i\) in \(\Z/\Sigmasq\Z\).
For these computations, note that the map \(\eta_1\colon A_i^{\ast}(X)\to p_{1,!}(X)\) is a filtered homotopy equivalence when \(i\geq 1\) (in fact, it is a filtered isomorphism), and similarly the map \(\eta_2\colon A_i^{\ast}(X) \to p_{2,!}(X)[2i]\) becomes a filtered homotopy equivalence when \(i\leq -1\) and thus \(\eta_2\colon A_i^{\mu}\to B_i^{\mu}\) a filtered homotopy equivalence when \(i\leq -2\)).
This allows for a deformation retract so that \(A_0^{\mu}\) and \(A_{-1}^{\mu}\) are the only \(A_i^{\mu}\) to appear.
For \([i]\) equal to \([0]\) or \([-1]\) for both \(S^3_{-2}(T_{2,3})\) and \(S^3_{-3}(T_{2,3})\) one can use the diagram containing only \(A_i^{\mu}\) along with its \(\lambda_i\) and \(\rho_i\).
For \([i]\) equal to \([-2]\) on \(S^3_{-3}(T_{2,3})\), the filtered homotopy equivalences above allow one to defomration retract \(\XKI_{-3}(X)\) in that \Spinc structure down to \(B_{-2}^{\mu}\).
These arguments are similar to other arguments in the literature using surgery formulas, such as formula (1.12) of \cite{FilteredSurgeryFormula}.

Note now that \(A_{i}^{\mu}\) only depends on \(\Sigmasq\) up to an overall grading shift.
By direct computation for both \(i=0\) and \(i=-1\), \(A_i^{\mu}\) is filtered homotopy equivalent to doubly filtered point.
The height on this point is set so that \(\lambda_{0,\ast}\) acts on persistent homology  by \(\Uc\) and \(\rho_{0,\ast}\) acts on persistent homology by \(\Uc\Vc\).
By the symmetry induced by \(J_{u_0}\), \(\lambda_{-1,\ast}\) acts by \(\Uc\Vc \) and \(\rho_{-1,\ast}\) by \(\Vc\).
Between these and using the grading shifts specified by \(\grf_{\Sigmasq}(i)\), we can reconstruct \(\XKI_{-2}(X)\) and \(\XKI_{-3}(X)\) as shown in Figure \ref{fig:XKIX}.
On \(\XKI_{-2}(X)\) this grading information is pinned down by observing that \(B_0^{\mu}\) is a point with height \(\left( \frac{-1}{4},\frac{-7}{4}\right)\), and then using the actions of \(\lambda_{i,\ast},\rho_{i,\ast}\) and \(J_{u_0}\).
On \(\XKI_{-3}(X)\) the grading information is pinned down by observing that \(B_i^{\mu}\) is always a doubly filtered point and when \(i=-3\) it has height \(\left(\frac{-1}{2},\frac{1}{6}\right)\).
The actions of \(\lambda_{i,\ast},\rho_{i,\ast},J,\) and \(J_{u_0}\) allows the rest of the grading information to be recovered.

This argument also allows us to recover both of the involutive maps, since \(\XKD_{\Sigmasq}(X)\) has objects all doubly filtered points.
Filtered points are subterminal meaning that the space of morphisms from any other doubly filtered space and the object in question is always empty or a point, so given the initial maps \(\Jj\) and \(\Ii\) needed for our homotopy coherent diagrams exist, there is a contractible choice of how to fill in the higher homotopy coherent relations.  
This persists even after taking the homotopy colimit as we can observe that \(\XKI_{\Sigmasq}(X)_{m,n}\) is always a disjoint union of contractible spaces, so, up to a contractible choice, the only relevant information is how \(J\) and \(J_{u_0}\) permute these spaces.
As such, further homotopy coherence relations do not need to be recorded.
Figure \ref{fig:XKIX} is displayed so that on \(\XKI_{-2}(X)\), \(J\) acts by reflection within each \Spinc structure and \(J_{u_0}\) acts by reflecting across the page.
The doubly filtered spaces for \(\XKI_{-3}(X)\) are displayed similarly with respect to \(J_{u_0}\) acting as a reflection across the page between \([0]\) and \([2]\), and \(J\) acts by a reflection on \([2]\).

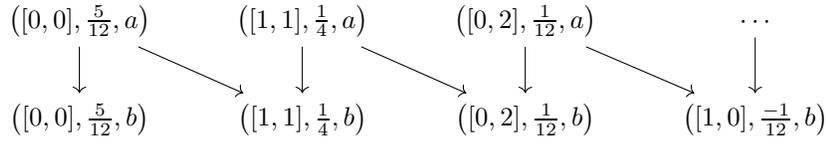
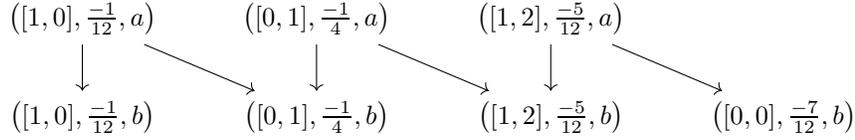
\begin{figure}
\begin{subfigure}[b]{.95\textwidth}
\[
\begin{tikzcd}
\left([0,0],\frac{5}{12}, a\right) \ar[d] \ar[rd] & \left([1,1],\frac{1}{4}, a\right)\ar[d] \ar[rd] & \left([0,2],\frac{1}{12}, a\right)\ar[d] \ar[rd] &\cdots \ar[d]\\
\left([0,0],\frac{5}{12}, b\right) & \left([1,1],\frac{1}{4}, b\right) & \left([0,2],\frac{1}{12}, b\right) &\left([1,0],\frac{-1}{12}, b\right)
\end{tikzcd}
\]
\caption{The first half of \(\Ic_{Y,[K],\frac{-1}{6}}\) needed.}
\end{subfigure}
\begin{subfigure}[b]{.95\textwidth}
\[
\begin{tikzcd}
 \left([1,0],\frac{-1}{12}, a\right) \ar[d] \ar[rd]& \left([0,1],\frac{-1}{4}, a\right) \ar[d] \ar[rd]& \left([1,2],\frac{-5}{12}, a\right) \ar[d] \ar[rd]&   \\
\left([1,0],\frac{-1}{12}, b\right) & \left([0,1],\frac{-1}{4}, b\right) & \left([1,2],\frac{-5}{12}, b\right) &   \left([0,0],\frac{-7}{12}, b\right)
\end{tikzcd}
\]
\caption{The second half of \(\Ic_{Y,[K],\frac{-1}{6}}\) needed.}
\end{subfigure}
\caption{These figures together show the truncation of \(\Ic_{Y,[K],\frac{-1}{6}}\) needed to compute \(\Xb_{\frac{-1}{6}}(Z)\) based on when the maps \(\eta_1\) and \(\eta_2\) are weak equivalences. Due to length, the diagram has been split over 2 subfigures with \(([1,0], \frac{-1}{12},b)\) being the same object in both subfigures.
The \Spinc structures on \(Y\) are represented by \([i,j]\) for the connect sum of \([i]\) from \(S^3_{-2}(T_{2,3})\) and \([j]\) from \(S^3_{-3}(T_{2,3})\).
While this truncation would not be enough to contain all of \(\XKI_{\frac{-1}{6} }(Z)\), the action of \(\psi\circ \xi\) on top of which the action of \(J_{w_0}\) is constructed acts here as a reflection across \(([0,0],\frac{-7}{12})\), and thus by understanding \(\XKID_{\frac{-1}{6}}(Z)\) on the truncation provided, we can reconstruct a complete picture of of \(\XKI_{\frac{-1}{6}}(Z)\).}\label{fig:XKIDTrunc}
\end{figure}

The flip map \(\Gamma\) is the composition \(JJ_{u_0}\) and sends Figure \ref{subfig:XKI21} to Figure \ref{subfig:XKI20} by translation, while Figure \ref{subfig:XKI20} is mapped to the leftmost point of Figure \ref{subfig:XKI21}.
The flip map also sends Figure \ref{subfig:XKI30} to the single point in Figure \ref{subfig:XKI31}.
That single point is then in turn sent to the left most point of  Figure \ref{subfig:XKI32}, and Figure \ref{subfig:XKI32} is mapped via translation to Figure \ref{subfig:XKI30}.

\subsection{The Second Iteration}

To conserve space, for this subsection
\begin{align*}
(Y',K')&:= (S^3_{-2}(T_{2,3})\# S^3_{-3}(T_{2,3}),\mu_{-2}\#\mu_{-3})\\
 Z&:= \XKI_{-2}(X)\otimes\XKI_{-3}(X).
\end{align*}
We will denote \Spinc structores on \(Y'\) by \([j,k]\), where \([j]\) is the restriction to \(S^3_{-2}(T_{2,3})\) and \([k]\) is the restriction to \(S^3_{-3}(T_{2,3})\).

We will now discuss how to provide a similar truncation to \(\Ic_{Y,K,\frac{-1}{6}}\) as we used for \(\Ic_{S^3,T_{2,3},\Sigmasq}\).
In particular, note that \(\eta_{1,[j,k],i}\) will be an isomorphism if \(i\) is bigger than or equal to the largest value of the Alexander grading \(\frac{h_U-h_V}{2}\) that appears on any cube of \(Z^{[j,k]}_{\ast\ast}\)
If that maximum value is realized on a simplex \(\sigma_1\) in \(\XKI_{-2}(X)^{[j]}\) and a simplex \(\sigma_2\) in \(\XKI_{-3}(X)^{[k]}\), then \((\sigma_1,\sigma_2)\) will be similarly maximizing on \(Z\).
Similarly, \(\eta_{2,[j,k],i}\) will be an ismorphism if \(i\) is less than or equal to the smallest value of the Alexander grading that appears on any cube of \(Z^{\tf}_{\ast\ast}\) which can similarly be pinned down by finding the minimums across \(\XKI_{-2}(X)^{[j]}\) and \(\XKI_{-3}(X)^{[k]}\).
This allows us to compute the following table.

\[
\setlength{\delimitershortfall}{0pt}
\begin{array}{c|c|c}
{[j,k]} & \min \frac{h_U-h_V}{2} & \max \matfrac{h_U-h_V}{2} \\ \hline
{[0,0]} & \matfrac{-7}{12} & \matfrac{17}{12}\\[.7ex]
{[1,0]} & \matfrac{-13}{12}& \matfrac{11}{12} \\[.7ex]
{[0,1]} &  \matfrac{-1}{4}& \matfrac{3}{4}\\[.7ex]
{[1,1]} & \matfrac{-3}{4}& \matfrac{1}{4} \\[.7ex]
{[0, 2]} & \matfrac{-11}{12} & \matfrac{13}{12}\\[.7ex]
{[1, 2]} & \matfrac{-17}{12} & \matfrac{7}{12}\\
\end{array}
\]

From this table we can see that if \(i\geq \frac{7}{12}\) then \(\eta_{1,[j,k],i}^{\mu}\) is an isomorphism regardless of the \Spinc structure \([j,k]\) chosen.
Similarly if \(i\leq \frac{-7}{12}\) then \(\eta_{2,[j,k],i}\) is an isomoprhism and if \(i\leq \frac{-19}{12}\) \(\eta_{2,[j,k],i}^{\mu}\) is an isomorphism.
However, note that  \(\psi \circ \xi\) acts on \(\Ic_{Y,K,\Sigmasq}\) as a reflection with \(([0,0],\frac{-7}{12},b)\) as a fixed point.
As such in particular the values of \(A_{[j,k],i}\) for \(i\leq \frac{-7}{12}\) and \(B_{[j,k],i}\) for \(i<\frac{-7}{12}\) can be reconstructed from the objects indexed with \(i\geq \frac{-7}{12}\).
The truncation we will be working with is shown in Figure \ref{fig:XKIDTrunc}.

\begin{figure}
\begin{subfigure}[c]{.3\textwidth}
\scalebox{.7}{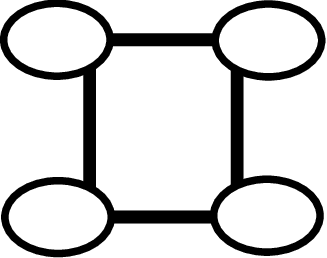}
\caption{\(A_{[0,0],\frac{5}{12}}^{\mu}\)}\label{subfig:A 00 5 12}
\end{subfigure}\hfill
\begin{subfigure}[c]{.3\textwidth}
\scalebox{.7}{
\begingroup%
  \makeatletter%
  \providecommand\color[2][]{%
    \errmessage{(Inkscape) Color is used for the text in Inkscape, but the package 'color.sty' is not loaded}%
    \renewcommand\color[2][]{}%
  }%
  \providecommand\transparent[1]{%
    \errmessage{(Inkscape) Transparency is used (non-zero) for the text in Inkscape, but the package 'transparent.sty' is not loaded}%
    \renewcommand\transparent[1]{}%
  }%
  \providecommand\rotatebox[2]{#2}%
  \newcommand*\fsize{\dimexpr\f@size pt\relax}%
  \newcommand*\lineheight[1]{\fontsize{\fsize}{#1\fsize}\selectfont}%
  \ifx\svgwidth\undefined%
    \setlength{\unitlength}{155.44858437bp}%
    \ifx\svgscale\undefined%
      \relax%
    \else%
      \setlength{\unitlength}{\unitlength * \real{\svgscale}}%
    \fi%
  \else%
    \setlength{\unitlength}{\svgwidth}%
  \fi%
  \global\let\svgwidth\undefined%
  \global\let\svgscale\undefined%
  \makeatother%
  \begin{picture}(1,0.34069901)%
    \lineheight{1}%
    \setlength\tabcolsep{0pt}%
    \put(0,0){\includegraphics[width=\unitlength]{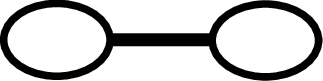}}%
    \put(0.17700548,0.11714502){\makebox(0,0)[t]{\lineheight{1.25}\smash{\begin{tabular}[t]{c}$\left(0,-10\right)$\end{tabular}}}}%
    \put(0.83155499,0.11764169){\makebox(0,0)[t]{\lineheight{1.25}\smash{\begin{tabular}[t]{c}$\left(-2,-12\right)$\end{tabular}}}}%
    \put(0.52084746,0.03975565){\makebox(0,0)[t]{\lineheight{1.25}\smash{\begin{tabular}[t]{c}$\left( -2,-12\right)$\end{tabular}}}}%
  \end{picture}%
\endgroup%
}
\caption{\(A_{[1,1],\frac{1}{4}}^{\mu}\)}\label{subfig:A 11 1 4}
\end{subfigure} \hfill
\begin{subfigure}[c]{.3\textwidth}
\scalebox{.7}{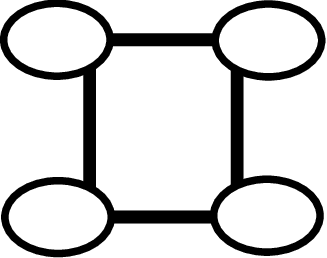}
\caption{\(A_{[0,2],\frac{1}{12}}^{\mu}\)}\label{subfig:A 02 1 12}
\end{subfigure}

\begin{subfigure}[c]{.3\textwidth}
\scalebox{.7}{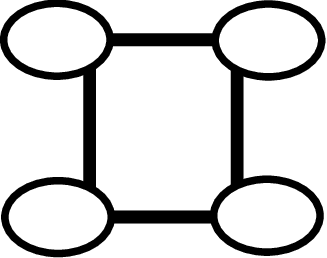}
\caption{\(A_{[1,0],\frac{-1}{12}}^{\mu}\)}\label{subfig:A 10 n1 12}
\end{subfigure}\hfill
\begin{subfigure}[c]{.3\textwidth}
\scalebox{.7}{
\begingroup%
  \makeatletter%
  \providecommand\color[2][]{%
    \errmessage{(Inkscape) Color is used for the text in Inkscape, but the package 'color.sty' is not loaded}%
    \renewcommand\color[2][]{}%
  }%
  \providecommand\transparent[1]{%
    \errmessage{(Inkscape) Transparency is used (non-zero) for the text in Inkscape, but the package 'transparent.sty' is not loaded}%
    \renewcommand\transparent[1]{}%
  }%
  \providecommand\rotatebox[2]{#2}%
  \newcommand*\fsize{\dimexpr\f@size pt\relax}%
  \newcommand*\lineheight[1]{\fontsize{\fsize}{#1\fsize}\selectfont}%
  \ifx\svgwidth\undefined%
    \setlength{\unitlength}{155.44858437bp}%
    \ifx\svgscale\undefined%
      \relax%
    \else%
      \setlength{\unitlength}{\unitlength * \real{\svgscale}}%
    \fi%
  \else%
    \setlength{\unitlength}{\svgwidth}%
  \fi%
  \global\let\svgwidth\undefined%
  \global\let\svgscale\undefined%
  \makeatother%
  \begin{picture}(1,0.34069901)%
    \lineheight{1}%
    \setlength\tabcolsep{0pt}%
    \put(0,0){\includegraphics[width=\unitlength]{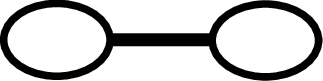}}%
    \put(0.17700548,0.11714502){\makebox(0,0)[t]{\lineheight{1.25}\smash{\begin{tabular}[t]{c}$\left(-2,-4\right)$\end{tabular}}}}%
    \put(0.83346243,0.11382673){\makebox(0,0)[t]{\lineheight{1.25}\smash{\begin{tabular}[t]{c}$\left(0,-4\right)$\end{tabular}}}}%
    \put(0.52084746,0.03975565){\makebox(0,0)[t]{\lineheight{1.25}\smash{\begin{tabular}[t]{c}$\left( -2,-6\right)$\end{tabular}}}}%
  \end{picture}%
\endgroup%
}
\caption{\(A_{[0,1],\frac{-1}{4}}^{\mu}\)}\label{subfig:A 01 n1 4}
\end{subfigure}\hfill
\begin{subfigure}[c]{.3\textwidth}
\scalebox{.7}{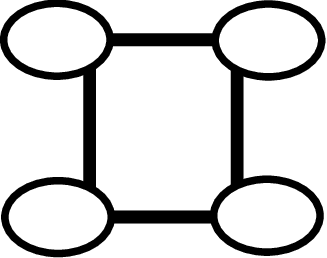}
\caption{\(A_{[1,2],\frac{-5}{12}}^{\mu}\)}\label{subfig:A 12 n5 12}
\end{subfigure}

\caption{The various filtered cube decompositions for \(A_{\tf,i}^{\mu}\) when calculating \(\XKI_{\frac{-1}{6}}(Z)\). Note that when computing \(Z\) the contribution from \(\XKI_{-2}(X)\) is the horizontal direction and the contribution from \(\XKI_{-3}(X)\) is the vertical direction.}\label{fig:Afigs}
\end{figure}

\begin{figure}
\begin{subfigure}[c]{.4\textwidth}
\scalebox{.75}{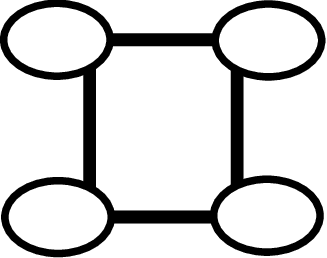}
\caption{\(B_{[0,0],\frac{5}{12}}^{\mu}\)}\label{subfig:B 00 5 12}
\end{subfigure} \hfill
\begin{subfigure}[c]{.15\textwidth}
\scalebox{.75}{
\begingroup%
  \makeatletter%
  \providecommand\color[2][]{%
    \errmessage{(Inkscape) Color is used for the text in Inkscape, but the package 'color.sty' is not loaded}%
    \renewcommand\color[2][]{}%
  }%
  \providecommand\transparent[1]{%
    \errmessage{(Inkscape) Transparency is used (non-zero) for the text in Inkscape, but the package 'transparent.sty' is not loaded}%
    \renewcommand\transparent[1]{}%
  }%
  \providecommand\rotatebox[2]{#2}%
  \newcommand*\fsize{\dimexpr\f@size pt\relax}%
  \newcommand*\lineheight[1]{\fontsize{\fsize}{#1\fsize}\selectfont}%
  \ifx\svgwidth\undefined%
    \setlength{\unitlength}{45.3893226bp}%
    \ifx\svgscale\undefined%
      \relax%
    \else%
      \setlength{\unitlength}{\unitlength * \real{\svgscale}}%
    \fi%
  \else%
    \setlength{\unitlength}{\svgwidth}%
  \fi%
  \global\let\svgwidth\undefined%
  \global\let\svgscale\undefined%
  \makeatother%
  \begin{picture}(1,0.84764082)%
    \lineheight{1}%
    \setlength\tabcolsep{0pt}%
    \put(0,0){\includegraphics[width=\unitlength]{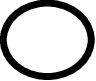}}%
    \put(0.50168222,0.38259051){\makebox(0,0)[t]{\lineheight{1.25}\smash{\begin{tabular}[t]{c}$\left(0,-10 \right)$\end{tabular}}}}%
  \end{picture}%
\endgroup%
}
\caption{\(B_{[1,1],\frac{1}{4}}^{\mu}\)}\label{subfig:B 11 1 4}
\end{subfigure}\hfill
\begin{subfigure}[c]{.4\textwidth}
\scalebox{.75}{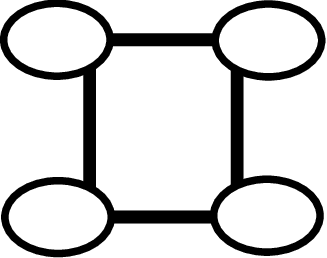}
\caption{\(B_{[0,2],\frac{1}{12}}^{\mu}\)}\label{subfig:B 02 1 12}
\end{subfigure}

\begin{subfigure}[c]{.4\textwidth}
\scalebox{.8}{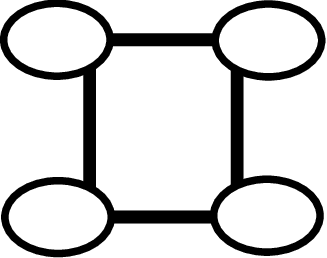}
\caption{\(B_{[1,0],\frac{-1}{12}}^{\mu}\)}\label{subfig:B 10 n1 12}
\end{subfigure} \hfill
\begin{subfigure}[c]{.4\textwidth}
\scalebox{.75}{
\begingroup%
  \makeatletter%
  \providecommand\color[2][]{%
    \errmessage{(Inkscape) Color is used for the text in Inkscape, but the package 'color.sty' is not loaded}%
    \renewcommand\color[2][]{}%
  }%
  \providecommand\transparent[1]{%
    \errmessage{(Inkscape) Transparency is used (non-zero) for the text in Inkscape, but the package 'transparent.sty' is not loaded}%
    \renewcommand\transparent[1]{}%
  }%
  \providecommand\rotatebox[2]{#2}%
  \newcommand*\fsize{\dimexpr\f@size pt\relax}%
  \newcommand*\lineheight[1]{\fontsize{\fsize}{#1\fsize}\selectfont}%
  \ifx\svgwidth\undefined%
    \setlength{\unitlength}{155.44858437bp}%
    \ifx\svgscale\undefined%
      \relax%
    \else%
      \setlength{\unitlength}{\unitlength * \real{\svgscale}}%
    \fi%
  \else%
    \setlength{\unitlength}{\svgwidth}%
  \fi%
  \global\let\svgwidth\undefined%
  \global\let\svgscale\undefined%
  \makeatother%
  \begin{picture}(1,0.34069901)%
    \lineheight{1}%
    \setlength\tabcolsep{0pt}%
    \put(0,0){\includegraphics[width=\unitlength]{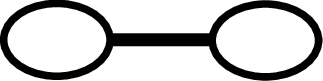}}%
    \put(0.17700548,0.11714502){\makebox(0,0)[t]{\lineheight{1.25}\smash{\begin{tabular}[t]{c}$\left(0.-4\right)$\end{tabular}}}}%
    \put(0.83346243,0.11382673){\makebox(0,0)[t]{\lineheight{1.25}\smash{\begin{tabular}[t]{c}$\left(0,-4\right)$\end{tabular}}}}%
    \put(0.52084746,0.03975565){\makebox(0,0)[t]{\lineheight{1.25}\smash{\begin{tabular}[t]{c}$\left( -2,-6\right)$\end{tabular}}}}%
  \end{picture}%
\endgroup%
}
\caption{\(B_{[0,1],\frac{-1}{4}}^{\mu}\)}\label{subfig:B 01 n1 4}
\end{subfigure}\hfill
\begin{subfigure}[c]{.15\textwidth}
\scalebox{.75}{
\begingroup%
  \makeatletter%
  \providecommand\color[2][]{%
    \errmessage{(Inkscape) Color is used for the text in Inkscape, but the package 'color.sty' is not loaded}%
    \renewcommand\color[2][]{}%
  }%
  \providecommand\transparent[1]{%
    \errmessage{(Inkscape) Transparency is used (non-zero) for the text in Inkscape, but the package 'transparent.sty' is not loaded}%
    \renewcommand\transparent[1]{}%
  }%
  \providecommand\rotatebox[2]{#2}%
  \newcommand*\fsize{\dimexpr\f@size pt\relax}%
  \newcommand*\lineheight[1]{\fontsize{\fsize}{#1\fsize}\selectfont}%
  \ifx\svgwidth\undefined%
    \setlength{\unitlength}{45.3893226bp}%
    \ifx\svgscale\undefined%
      \relax%
    \else%
      \setlength{\unitlength}{\unitlength * \real{\svgscale}}%
    \fi%
  \else%
    \setlength{\unitlength}{\svgwidth}%
  \fi%
  \global\let\svgwidth\undefined%
  \global\let\svgscale\undefined%
  \makeatother%
  \begin{picture}(1,0.84764082)%
    \lineheight{1}%
    \setlength\tabcolsep{0pt}%
    \put(0,0){\includegraphics[width=\unitlength]{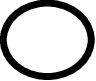}}%
    \put(0.50168222,0.38259051){\makebox(0,0)[t]{\lineheight{1.25}\smash{\begin{tabular}[t]{c}$\left(0,-2 \right)$\end{tabular}}}}%
  \end{picture}%
\endgroup%
}
\caption{\(B_{[1,2],\frac{-5}{12}}^{\mu}\)}\label{subfig:B 12 n5 12}
\end{subfigure}

\begin{subfigure}[c]{.4\textwidth}
\scalebox{.75}{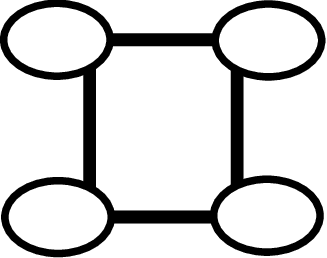}
\caption{\(B_{[0,0],\frac{-7}{12}}^{\mu}\)}\label{subfig:B 00 n7 12}
\end{subfigure}
\caption{The various filtered cube decompositions for \(B_{\tf,i}^{\mu}\) when calculating \(\XKI_{\frac{-1}{6}}(Z)\). Note that when computing \(Z\) the contribution from \(\XKI_{-2}(X)\) is the horizontal direction and the contribution from \(\XKI_{-3}(X)\) is the vertical direction.
To make identifications with Figure \ref{fig:XKIX} clearer especially in the identification of \(\Gamma\), simplifications have only been done when it results in a doubly filtered point}\label{fig:Bfigs}
\end{figure}

\begin{figure}
\begin{subfigure}{\textwidth}
\input{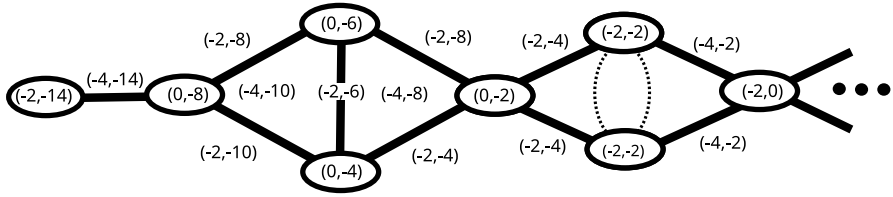}
\caption{The doubly filtered homotopy type for \(\XK_{\frac{-1}{6}}(Z)\).}\label{subfig:XKIZ}
\end{subfigure}
\centering
\begin{subfigure}{.3\textwidth}
\input{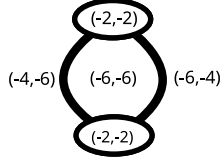}
\caption{The cross section of the right most section of Figure \ref{subfig:XKIZ}.}\label{subfig:crossSec}
\end{subfigure}
\caption{The doubly filtered homotopy type for \(\XK_{\frac{-1}{6}}(Z)\) with cross section provided to understand the double filtration on the 2 cell and the cells in its boundary. This can be represented with a simplicial cube \(\left(\Delta^1\right)^3\).}\label{fig:XKIZ}
\end{figure}

Figure \ref{fig:Afigs} shows the \(A_{[j,k],i}^{\mu}\) for different \(([j,k],i)\) in \(\Ac(Y,K)\) that show up in Figure \ref{fig:XKIDTrunc}, and similarly Figure \ref{fig:Bfigs} shows the  \(B_{[j,k],i}^{\mu}\).
Here the horizontal direction is the contribution from \(\XKI_{-2}(X)\) and the vertical contribution is from \(\XKI_{-3}(X)\).
These have not been further simplified in order to the process of transfering the maps such as the involutive maps and the flip maps from the individual factors to the tensor product.
We have presented the complexes in Figures \ref{fig:Afigs} and \ref{fig:Bfigs} as cube complexes to reduce clutter, but splitting the squares along their northeast diagonal would give the corresponding simplicial complexes.

Then Figure \ref{fig:XKIZ} illustrates the homotopy type of \(\XKI_{\frac{-1}{6}}(Z)\) after the surgery formula has been applied, with the ellipses representing a continuation of the structure provided as needed under symmetry.
Note that for simplicity, we have treated \(\XKI_{\frac{-1}{6}}(Z)\) as a cell complex rather than a simplicial set, though an appropriate simplicial structure can be put on it.
The involutive map \(\Jj\) acts as a reflection around an origin placed between the two rightmost \((-2,-2)\) points, so in particular \(\Jj\) interchanges these two points.
Additionally note that this origin would be placed in the middle of the cross section in Figure \ref{subfig:crossSec}, so the 2-cell of height \((-4,-6)\) is interchanged with the 2-cell of height \((-6,-4)\).
The 3-cell of height \((-6,-6)\) has its orienatation reversed.
Meanwhile, \(\Ii\) acts by reflection around an origin placed in the middle of the 1-cell of height \((-2,-6)\) connecting the vertices with heights \((0,-6)\) and \((0,-4)\).
 As such the orientation of the 1-cell is swapped by \(\Ii\) and the two vertices at its end are swapped.
The two vertices at height \((-2,-2)\) are, along with the vertex at height \((0,-2)\) are sent by \(\Ii\) to the vertex of height \((0,-8)\), while the vertex of \((-2,0)\) sent by \(\Ii\) to the vertex of height \((-2,-14)\).

We will now describe some of the simplifications needed to get from Figures \ref{fig:Afigs} and Figure \ref{fig:Bfigs} to Figure \ref{fig:XKIZ}.
We will start with the simplifications done to the section from \(B_{[0,2],\frac{1}{12}}^{\mu}\) to \(B_{[0,1],\frac{-1}{4}}^{\mu}\), then cover the simplifications to \(B_{[0,0],\frac{5}{12}}^{\mu}\) and then to the portion depicted in the cross section in Figure \ref{subfig:crossSec} (i.e. between \(B_{[1,2],\frac{-5}{12}}^{\mu}\) and \(B_{[1,1],\frac{-3}{4}}^{\mu}\).

\begin{figure}
\[
\begin{tikzcd}
(0,-10) \ar[r,dash,"{(-2,-12)}"] \ar[rd,dash,"{(0,-10)}"'] & (0,-8) \ar[r,dash,"{(0,-8)}"]\ar[d,dash] & (0,-6) \ar[r,dash,"{(-2,-8)}"]\ar[d,dash]\ar[rd,dash]& (0,-4)\ar[d,dash]\ar[rd,dash,"{(0,-4)}"] & \\
& (0,-8) \ar[r,dash,"{(2,-10)}"']\ar[ru,dash] & (0,-6)\ar[r,dash,"{(0,-6)}"'] & (0,-4) \ar[r,dash,"{(-2,-4)}"'] & (0,-2)
\end{tikzcd}
\]
\caption{The shape of the portion of \(\XKI_{\frac{-1}{6}}(Z)\) going from \(B_{[1,1],\frac{1}{4}}^{\mu}\) to \(B_{[1,2],\frac{-5}{12}}^{\mu}\). Each colum with vertices and edges represents a different \(B_{[i,j],k}^{\mu}\), and the columns with edges and 2-cells between represent different \(A_{[i,j],k}^{\mu}\). Only the outer edges have their heights labeled to provide some way to anchor as to which edges are which without cluttering the diagram so much it becomes hard to read.} \label{fig:semiSimplified}
\end{figure}
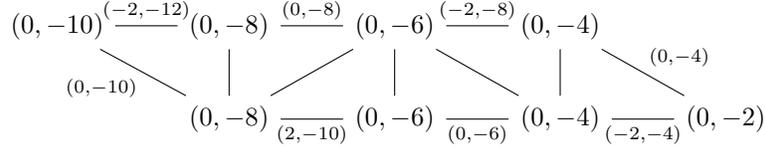

Both \(B_{[0,2],\frac{1}{12}}^{\mu}\) and \(B_{[1,0],\frac{-1}{12}}^{\mu}\) can be simplified down to doubly filtered 1-cubes, with \(B_{[0,2],\frac{1}{12}}^{\mu}\) deformation retracting to the top edge and \(B_{[1,0],\frac{-1}{12}}^{\mu}\) deformation retracting to the left edge.
Meanwhile both \(A_{[0,2],\frac{1}{12}}^{\mu}\) and \(A_{[1,0],\frac{-1}{12}}^{\mu}\) deformation retract to the subcomplex consisting of the leftmost and topmost edges, via respectively a vertical and horizontal homotopy.
A careful analysis of the flip maps induced from \(\XKI_{-2}(X)\) and \(\XKI_{-3}(X)\) reveal  that between \(B_{[0,2],\frac{1}{12}}\) and \(B_{[0,1]\frac{-1}{4}}\), the space \(\XKI_{-1/6}(Z)\) has a form summarized in Figure \ref{fig:semiSimplified}.
For example, the flip map on \(\XKI_{-2,[0]}(X)\) in \Spinc structure \([0]\) collapes all of \(\XKI_{-2,[0]}(X)\) to the leftmost point in \(\XKI_{-2,[1]}(X)\), while the flip map on \(\XKI_{-3,[2]}(X)\) acts simply by translation.
As such \(\eta_{2,[0,2],\frac{1}{12}}\) acts by collapsing \(A_{[0,2],\frac{1}{12}}^{\mu}\) in the horizontal direction to the leftmost edge of \(B_{[1,0],\frac{-1}{12}}^{\mu}\), and thus \(\XKI_{\frac{-1}{6}}(Z)\) has two 1-cells from \(A_{[0,2],\frac{1}{12}}^{\mu}\) that attach to the uppermost vertex in \(B_{[1,0],\frac{-1}{12}}^{\mu}\), one of which is the 1-cell with height \((0,-8)\).
Meanwhile for \(\eta_{1,[1,0],\frac{-1}{12}}\) we take the direct inclusion of \(A_{[1,0],\frac{-1}{12}}^{\mu}\) into \(B_{[1,0],\frac{-1}{12}}^{\mu}\) then apply our homotopy equivalence for \(B_{[1,0],\frac{-1}{12}}^{\mu}\) that collapses it horizontally to its leftmost edge.
This leads the uppermost vertx in \(B_{[1,0],\frac{-1}{12}}^{\mu}\) to also have two 1-cells attaching to it from \(A_{[1,0],\frac{-1}{12}}^{\mu}\) but the 1-cell of height \((0,-6)\) attaches  to the lowermost vertex of \(B_{[1,0],\frac{-1}{12}}^{\mu}\).
Similar tracking of the flip map can be used to confirm the rest of Figure \ref{fig:semiSimplified}.

The only reason Figure \ref{fig:semiSimplified} does not appear to be symmetric with respect to \(\Ii\) is because our choices of homotopy equivalences needed to respect both filtrations, and \(\Ii\) does  not  respect the second filtration.
As such, our choices of homotopy equivalences were not symmetric with respect to \(\Ii\), for example between \(A_{[0,2],\frac{1}{12}}^{\mu}\) and \(A_{[1,0],\frac{-1}{12}}^{\mu}\).
Fortunately this can be corrected up to further homotopy.

Finally, we can use Figure \ref{fig:semiSimplified} to provide the needed simplifications to get to the relevant portion of Figure \ref{fig:XKIZ}.
In particular, by analysis of the height function on the interior of Figure \ref{fig:semiSimplified}, not only can the edges on the outside of height \((0,n)\) collapse to the vertex in their boundary of maximum height, but that this collapse extends to the 2-simplices on the interior that have them as faces.

Additionally, the fact that 
\[\eta_1\colon A_{[0,0],\frac{5}{12}}^{\mu} \to B_{[0,0],\frac{5}{12}}^{\mu}\]
would be an isomorphism if it weren't for the height \((-4,-14)\) vertex in \(A_{[0,0],\frac{5}{12}}^{\mu}\) means all but the upper left vertex in \(B_{[0,0],\frac{5}{12}}^{\mu}\) can be deformation retracted to \(B_{[1,1]\,\frac{1}{4}}^{\mu}\) through \(A_{[0,0],\frac{5}{12}}^{\mu}\).
This is what leaves us with the left most edge and vertex in Figure \ref{fig:XKIZ}.

Finally, we cover the simplifications needed between \(B_{[1,2],\frac{-5}{12}}^{\mu}\) and \(B_{[1,1],\frac{-3}{4}}^{\mu}\).
Because left-most and top-most edges of \(A_{[1,2],\frac{-5}{12}}^{\mu}\) include into \(B_{[0,0],\frac{-7}{12}}^{\mu}\) with their filtration intact, the top right height \((-2,-2)\) vertex of \(B_{[0,0],\frac{-7}{12}}^{\mu}\) can be collapsed to the vertex of \(B_{[1,2],\frac{-5}{12}}^{\mu}\).
Note that applying \(\Jj\) to relate \(A_{[1,2],\frac{-5}{12}}^{\mu}\) and \(A_{[0,0],\frac{-7}{12}}^{\mu}\), we see that the leftmost and rightmost 1-cells there have height \((-6,-4)\), and because they are connected by a 0-cell of the same height, they can be viewed as essentially one long 1-cell of height \((-6,-4)\).
In the homotopy colimit this becomes a 2-cell that gets pulled to cover one half of the height \((-6,-6)\) 3-cell's boundary, during the homotopy equivalence described above.
Applying \(\Jj\) allows us to apply a symmetric argument, yielding the cross section of Figure \ref{subfig:crossSec}.

\subsection{Analysis of the knot}\label{subsec:Analysis}

We will now analyze the knot \((Y,K)\) similar to how we analyzed the regular fiber of \(\Sigma(2,3,7)\) in Section \ref{subsec:exampleARKnots}.
For example, the largest Alexander grading of any simplex in figure \ref{fig:XKIZ} is \(6\) acheived on the leftmost vertex, which provides the generator of \(\widehat{HFK}(Y,K,6)\), and thus 6 is the highest inhabited Alexander grading.
Therefore the genus of this knot is 6, and a similar argument to the case of the regular fiber of \(\Sigma(2,3,7)\) shows that this Seifert surface fibers the complement and provides the genus in \(Y\times I\).

Finally, we can make claims about the genus of \((Y,K)\) in self-homology cobordisms of \(Y\), i.e. the genus of a surface \(\Sigma\) in a homology cobordism \((W,\Sigma)\colon (Y,\emptyset)\to (Y,K)\). 
As discussed in section \ref{subsec:exampleARKnots}, we need the existence of a map 
\[f\colon A_{g,!}(\CFb(\tilde{G},\tf))\to \CFKb(\tilde{G}_{w_0},\tf)\]
so that after inverting both \(U\) and \(V\), \(f\) becomes a chain homotopy equivalence.
An example of this would be the map that sends the entire complex to the vertex of height \((0,-2)\), which uses \(g=1\), and the lack of vertices of height \((0,0)\) means no such map exists for \(g=0\).
As such, the genus of \((Y,K)\) in any self homology cobordism is at least 1.

If the involutive map on lattice homology agrees with the involutive map from Heegaard Floer homology, we get the additional restriction that our map on the level of lattice complexes must be a local equivalence, i.e become an isomorphism after inverting \(U\) and commute up to homotpy with \(\Ii\)
In this case mapping everything to a vertex of height \(0\) would not be a local equivalence since none of the vertices of height 0 in \(\CFb(\tilde{G},\tf)\) are homotopy fixed points.
As such one would use that the \(\iota\)-local equivalence type of \(\CFKb(\tilde{G},\tf)\) is the same as that of the complex \(X'\) for \(\Sigma(2,3,7)\), a single edge of height -2, with vertices of height 0 where \(\iota\) acts by reflection.
A map from \(\CFKb(\tilde{G},\tf)\) to \(X'\), sends the top and left vertices of height 0 to one vertex and the bottom and right vertices of height 0 to the other, whereas the map from \(X'\) to \(\CFb(\tilde{G},\tf)\) sends \(X'\) to the edge between the top and bottom vertices with heights \((0,-4)\) and \((0,-6)\) respectively in Figure \ref{fig:XKIZ}.

To get a map from \(A_{g,!}(X')\) to \(\CFKb(\tilde{G}_{w_0},\tf)\) that commuted with \(\Ii\) on the level of \(p_{1,!}\) would require \(g\) to be at least 3.
In particular, we can consider what the map does specifically to cycles whose first height is 0.
These are all isolated in the filtration layer of first height 0, so considering maps up to filtered homotopy will not change the coefficients of these cycles in a map from \(A_{g,!}(X')\).
Additionally, the condition that the map be an isomorphism after inverting \(U\) then requires that the image of such a map have a cycle with non-zero coefficient from at least one of these 4 vertices.
If a map from \(A_{g,!}(X')\) contains in its image a cycle with non-zero coefficient from the vertex of height \((0,-2)\) then by the action of \(\Ii\), it must also contain in its image cycle with non-zero coefficient from the vertex of height \((0,-8)\) thus forcing \(g\) to be at least 4.
As such, we may assume that the cycles in our image only have coefficients from the vertices of heights \((0,-4)\) and \((0,-6)\).
However, the requirement that some coefficient be nonzero and the fact that \(\Ii\) swaps these two vertices, means that the vertex of height \((0,-6)\) cannot completely be avoided.

\bibliographystyle{abbrvnat}

\bibliography{references}
\end{document}